\title{Operations on the set of scalar and matrix-valued quiddity sequences}
\author{\bf{Ra\'{u}l Felipe} \\
        CIMAT \\
        Callej\'{o}n Jalisco s/n Mineral de Valenciana \\
        Guanajuato, Gto, M\'exico. \\
        raulf@cimat.mx}
\date{}
\newtheorem{theorem}{\textbf{Theorem}}
\newtheorem{corollary}[theorem]{\textbf{Corollary}}
\newtheorem{example}{\textbf{Example}}
\newtheorem{definition}[theorem]{\textbf{Definition}}
\newtheorem{lemma}[theorem]{\textbf{Lemma}}
\newtheorem{proposition}[theorem]{\textbf{Proposition}}
\newtheorem{remark}[theorem]{Remark}
\newenvironment{proof}[1][Proof]{\noindent\textbf{#1.} }{\ \rule{0.5em}{0.5em}}
\begin{document}

\maketitle

\begin{abstract}
Our purpose with this paper is, in first place, to recast the space of quiddity sequences corresponding to usual frieze patterns as a different type of SET operad, and second to introduce and study $\mathfrak{M}$-quiddity sequences where $\mathfrak{M}$ is a monodromy block matrix of order two. Also, we examine some related topic as are the possibility of to define matrix-valued friezes patterns and noncommutative signed Chebyshev polynomials. \\

\qquad\qquad\qquad\qquad\qquad\qquad\qquad\qquad\qquad\qquad\qquad To the memory of my colleague Nancy L\'{o}pez-Reyes \\

\bigskip

\noindent{\it{2020 Mathematics Subject Classification (MSC2020): 39A06, 15A24}}

\bigskip

\noindent{\it{Key words:}} Frieze pattern; operad; matrix periodic difference equations.

\end{abstract}

\section{Introduction}
We star with some necessary definitions. Given a block matrix
\begin{equation}\label{i1}
\mathfrak{M}=\left(
               \begin{array}{cc}
                 m_{11} & m_{12} \\
                 m_{21} & m_{22} \\
               \end{array}
             \right),
\end{equation}
being each $m_{ij}$ a complex matrix of order $l$ for $i,j=1,2$ such that $|\mathfrak{M}|\neq 0$, we say that the finite sequence of complex block matrices
\begin{equation*}
\left(
  \begin{array}{cc}
    \mathfrak{p}_{1} & \mathfrak{q}_{1} \\
    I & O \\
  \end{array}
\right),\cdots, \left(
  \begin{array}{cc}
    \mathfrak{p}_{n} & \mathfrak{q}_{n} \\
    I & O \\
  \end{array}
\right),
\end{equation*}
\textbf{decomposes} $\mathfrak{M}$, if the following equality holds
\begin{equation}\label{i2}
\left(
  \begin{array}{cc}
    \mathfrak{p}_{n} & \mathfrak{q}_{n} \\
    I & O \\
  \end{array}
\right)\cdots \left(
  \begin{array}{cc}
    \mathfrak{p}_{1} & \mathfrak{q}_{1} \\
    I & O \\
  \end{array}
\right)=\mathfrak{M},
\end{equation}
in which case, the bi-vector $(\mathfrak{p}_{1},\ldots, \mathfrak{p}_{n},\mathfrak{q}_{1},\ldots, \mathfrak{q}_{n})$ is said to be
an \textbf{$\mathfrak{M}$-quiddity sequence} of length $n$. The set of all $\mathfrak{M}$-quiddity sequences is denoted by $\mathfrak{QS}(\mathfrak{M})$. It is clear that $\mathfrak{QS}(\mathfrak{M})=\bigcup_{n=1}^{\infty}\mathfrak{QS}_{n}(\mathfrak{M})$ where
for all $n\in \mathbb{N}$, $\mathfrak{QS}_{n}(\mathfrak{M})$ denotes the set of all $\mathfrak{M}$-quiddity sequences of length $n$.
Here, $I$ is the identity matrix of order $l$. We can also say that $\mathfrak{QS}(\mathfrak{M})$ is the set of solutions of equation
(\ref{i2}) or of the decomposition problem for the matrix $\mathfrak{M}$ and that the $\mathfrak{M}$-quiddity sequences are its solutions.

The main question addressed in the present research is the following\,: to provide the space $\mathfrak{QS}(\mathfrak{M})$ with certain
products such that from two $\mathfrak{M}$-quiddity sequences we can construct a new $\mathfrak{M}$-quiddity sequence in such a way that
$\mathfrak{QS}(\mathfrak{M})$ acquires a certain structure of SET operad. Taking into account that this topic is connected with difference equations with periodic coefficients, below $\mathfrak{M}$ will be called \textbf{monodromy matrix}.

We have to distinguish two cases of special interest\,:
\begin{itemize}
  \item $l=1$ for $\mathfrak{M}=\pm\,\left(
                                  \begin{array}{cc}
                                    1 & 0 \\
                                    0 & 1 \\
                                  \end{array}
                                \right)=\pm\,I_{2}$, and $\mathfrak{q}_{1}=\cdots=\mathfrak{q}_{n}=-1$. In this context, we say that
                                the vector $(\mathfrak{p}_{1},\ldots, \mathfrak{p}_{n})$ is an $\mathfrak{M}$-quiddity sequence.
  \item $l$ arbitrary for $\mathfrak{M}=\pm\,\left(
                                  \begin{array}{cc}
                                    I & 0 \\
                                    0 & I \\
                                  \end{array}
                                \right)$, and $\mathfrak{q}_{1},\cdots,\mathfrak{q}_{n}\in M_{l}(\mathbb{R})$.
\end{itemize}

An important part of our work is dedicated to show that this question is connected with extensions to the non-commutative setting
(\textbf{matrix spaces of certain order}) of different long-studied subjects for scalar case. We list some of these themes\,:
\begin{enumerate}
  \item matrix version of the theory of frieze patterns.
  \item difference equations with periodic matrix coefficients and non-commutative Chebyshev polynomials.
\end{enumerate}

Our work suggests that many of these objects related to $\mathfrak{M}$-quiddity sequences can also be multiplied of some manner
giving rise to an object of the same type which will be the subject of future research.

For convenience and for reasons which will be clear after, in this point, we will make a brief summary of the theory of frieze patterns.
The finite frieze patterns of positive integers were introduced by Coxeter in $1971$ ,and studied later in detail by Conway and Coxeter in $1973$.
They are nothing more that objects formed by $n+1$ bi-infinite rows which turn out to be of period $n$. The first and the last
rows are composed of zeros; the second row and the row before the last are filled by $1$. The entries in the remaining rows are positive
integers and they are calculated by the following unimodular rule: for every diamond with entries $a, b, c$ and $d$ we have $ad=1+bc$.
In general, a finite frieze can be seen in the form
\small.
\begin{equation*}
  \begin{array}{ccccccccccccccccc}
      & 0 &  & 0 &  & 0 &  &  &  &  & &  & & & \\
      &  & 1 &  & 1 &  & 1 &  &  &  &  &  & & & \\
      &  &  & m_{-1,-1} &  & m_{0,0} &  & m_{1,1} &  &  & &  & & & \\
      &  & \cdots &  & \ddots &  & \ddots &  & \ddots &  & \cdots & & & &  \\
      &  &  &  &  & m_{-1,n-5} &  & m_{0,n-4} &  & m_{1,n-3} & & & & & \\
      &  &  &  &  &  & 1 &  & 1 &  & 1 &  & &  &  \\
      &  &  &  &  &  &  & 0 &  & 0 &  & 0 &  & &  \\
  \end{array}.
  \end{equation*}
\normalsize
For instance, for $n=5$, we have the following finite frieze pattern
\begin{equation}\label{ejemplo1}
  \begin{array}{ccccccccccccccccc}
    0 &  & 0 &  & 0 &  & 0 &  & 0 &  & 0 & &  & & & \\
     & 1 &  & 1 &  & 1 &  & 1 &  & 1 &  & 1 &  & & & \\
     &  & 2 &  & 2 &  & 1 &  & 3 &  & 1 & & 2 & & & \\
     & \cdots &  & 3 &  & 1 &  & 2 &  & 2 &  & 1 & & 3 &  & \cdots \\
     &  &  &  & 1 &  & 1 &  & 1 &  & 1 &  & 1 &  & 1 & \\
     &  &  &  &  & 0 &  & 0 &  & 0 &  & 0 & & 0 & & 0 \\
  \end{array}.
  \end{equation}

Conway and Coxeter showed that there is an one to one correspondence between finite frieze patterns with $n+1$ rows and triangulations of
plane convex $n$-gons. \textbf{From now on, given an $n$-gon their vertices must be labeled counterclockwise by the numbers of $1$ to $n$}.
Then, this correspondence is related with elements of the first nontrivial row of a finite frieze pattern as follows, for a triangulation
of a plane convex $n$-gon, let $t_{i}$ be the number of triangles incident with the vertex $i$, resulting the vector $(t_{1},\ldots,t_{n})$. Now, if the third row is filled with infinitely many repeated copies of this vector, and the unimodular rule is used to calculate the other rows, then we obtain a frieze pattern with $n+1$ rows. Conversely, every finite frieze pattern with $n+1$ rows arises in this way. We must indicate that the frieze patterns are a particular case of more general patterns arising in the theory of bilinear discrete Hirota equations (the reader can consult \cite{zabrodin} and the references in this paper). From the beginning Conway and Coxeter found a connection between the frize patterns and the difference equations with periodic coefficients. Recent research in this direction appears in \cite{Krich}, \cite{Ilina} (development of spectral theory for periodic difference operators using integrable systems techniques), \cite{felipe}, \cite{glick}, \cite{kedem}, \cite{Maribeffa}, \cite{Maribeffa1}, \cite{oven}, \cite{ovsienko} (in relation with the pentagram map).

The vector $(t_{1},\ldots,t_{n})$ was called the \textbf{quiddity sequence} after of Conway and Coxeter. The natural numbers $t_{i}$ for $i=\overline{1,n}$
are called the entries of the quiddity sequence. For instance, the frieze pattern (\ref{ejemplo1})
has as quiddity sequence $(2,2,1,3,1)$ and it corresponds to the following triangulation of the $5$-gon
\begin{equation}\label{figura1}
 \begin{array}{c}
   \includegraphics[height=3cm]{./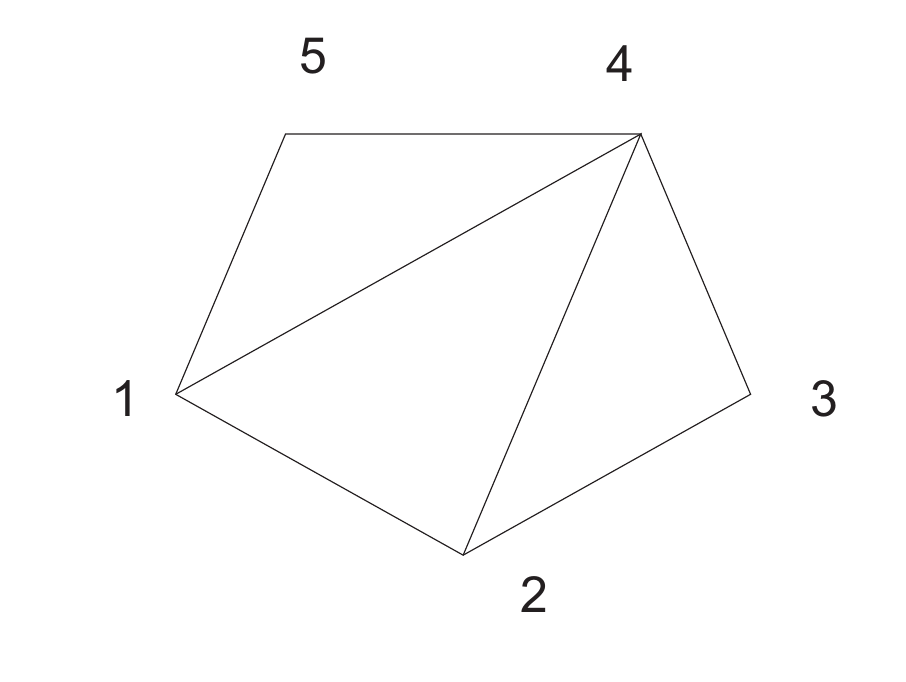}
 \end{array}.
\end{equation}

It is well known that the Conwey-Coxeter quiddity sequences are solutions of (\ref{i2}) for $\mathfrak{M}=-\,I_{2}$, see \cite{Conway}.
However, these are not all $(-I_{2})$-quiddity sequences whose components are all positive integers. Indeed, from Ovsienko's work in
\cite{ovsienko1}, it follows that in order to obtain all $(-I_{2})$-quiddity sequences of positive integers, it is necessary
to consider $3D$-dissections of a special type which result in the notion of $3D$-quiddity sequences (the reader will find more details
in this regard in the section \ref{secllamada1}).

We also wish to point that interesting results on frieze patterns can be found in the following recent papers\,: \cite{baur}, \cite{baur1}, \cite{cuntz}, \cite{cuntz1}, \cite{cuntz2},\cite{MorGen}, \cite{MorGen1} which represent only a small selection of all the articles that can
be consulted on the subject (we apologize in advance for this).

Operads first appeared in the context of algebraic topology in the 70s of the last century. Recently, in particular Set Operads have been used
in other areas more applied such as free probability and database theory in computer science, see \cite{male} and \cite{spi} for more details. Next, we make a different utility of the Operad theory.

Below, the set $\{1,2,\cdots,n-1,n\}$ is abbreviated to [n]. We now define a nonsymmetric (ns) SET \textbf{quiddity-operad} as a collection
of sets
\begin{equation}\label{operad1}
\mathcal{P}=\bigsqcup_{n\geq 1}\mathcal{P}(n),
\end{equation}
together with partial compositions maps
\begin{equation}\label{operad2}
\circ_{i}:\mathcal{P}(n)\times \mathcal{P}(m)\longrightarrow \mathcal{P}(n+m-1),\,\,\,\,\,\,\,\,\,\,\,\,\,\,n,m\geq 1,\,\,\, i\in [n]=\{1,\cdots,n\},
\end{equation}
and a distinguished element $\textbf{1}\in \mathcal{P}(1)$, the unit of $\mathcal{P}$.
This object has to satisfy the following properties
\begin{equation}\label{operad3}
(x\circ_{i} y)\circ_{i+j-1} z=x\circ_{i}(y\circ_{j}z),\,\,\,\forall x\in\mathcal{P}(n), \forall y\in \mathcal{P}(m), \forall z\in\mathcal{P}(k),
\end{equation}
for $i\in [n]$ and $j\in [m-1]$ where $2\leq m$.
\begin{equation}\label{operad4}
(x\circ_{i} y)\circ_{j+m-1} z=(x\circ_{j} z)\circ_{i} y,\,\,\,\forall x\in\mathcal{P}(n), \forall y\in \mathcal{P}(m), \forall z\in\mathcal{P}(k),
\end{equation}
and $i,j\in [n]$ such that $i< j$ for $1< n$. Finally
\begin{equation}\label{operad5}
\textbf{1}\circ_{1} x=x=x\circ_{i} \textbf{1},\,\,\,\forall x\in\mathcal{P}(n),
\end{equation}
for $i\in[n]$.

The reason for not including, as usual, $j = m$ in (\ref{operad3}) will be shown below. The interested reader may find a simple axiomatization of the customary non-symmetric operad theory in the category of sets in \cite{Chapoton} page $8$.

This paper is organized as following\,: section $2$ has as its central purpose that of introducing a structure of SET quiddity-operad on the set of all Conway-Coxeter quiddity sequences of positive integers corresponding to frieze patterns. To establish this structure, we must first define what we mean by a convex $1$-gon and a convex $2$-gon and assign their corresponding quiddity sequences to them. With this purpose, we can consider a point and a line segment as polygons without triangulations and then define their quiddity sequences as $(0)$ and $(00)$ respectively. Then, we introduce the products (\ref{producto}), (\ref{productocasoextremo}) and (\ref{2})-(\ref{6}) from which we can obtain the theorem
\ref{teorema1}. This states that for any of these products, the multiplication of two Conway-Coxeter quiddity sequences of length greater than
$3$ is again a Conway-Coxeter quiddity sequence, that is, the result is a vector of positive integers that has associated a triangulation of a
convex polygon. The main result of this section is that the set of all Conway-Coxeter quiddity sequences constitutes a nonsymmetric SET
quiddity-operad. In the section $3$, we show that the products defined in section $2$ on the set of all Conway-Coxeter quiddity sequences
also work in the space of $3D$-dissections introduced recently by Ovsienko, in other words, the multiplication of two $3D$-dissections with respect some of these products is again a $3D$-dissection. This section concludes by showing a product on the space of quiddity sequences in the case for which the monodromy matrix is the identity matrix. Concretely, the section $4$ has been developed from to introduce the notion of matrix quiddity sequence in its more general form trying to replicate the relationship that has this concept in the scalar case with difference equations. In this sense, we have developed in the matricial setting the following subjects: $a)$ the space of matrix quiddity bi-sequence, a concept introduced by us, and products on this space, $b)$ we give some elements in order to construct a theory of matrix-valued frieze patterns and $c)$ we propose a theory of noncommutative signed Chebyshev polynomials.

\section{Non symmetric set quiddity-operad structure for quiddity sequences}

\subsection{A first proposal of ns set quiddity-operad structure}

For $n\geq 3$, let us denote by $\mathcal{QS}(n)$ the set of all quiddity sequences for a plane convex $n$-gon and $\mathcal{QS}=\bigsqcup_{n\geq 1}\mathcal{QS}(n)$, where the sets $\mathcal{QS}(1)$ and $\mathcal{QS}(2)$ will be defined bellow. By simple inspection we have
\begin{equation*}
\mathcal{QS}(3)=\{(1,1,1)\},
\end{equation*}
\begin{equation*}
\mathcal{QS}(4)=\{(2,1,2,1),(1,2,1,2)\},
\end{equation*}
and
\begin{equation*}
\mathcal{QS}(5)=\{(3,1,2,2,1),(1,3,1,2,2),(2,1,3,1,2),(2,2,1,3,1),(1,2,2,1,3)\},
\end{equation*}
for $3\leq n\leq 5$.

Assume for the moment that $3\leq n,m$ and consider $\mathcal{T}=(t_{1},\ldots,t_{n})\in \mathcal{QS}(n)$, $\mathcal{S}=(s_{1},\ldots,s_{m})\in \mathcal{QS}(m)$ arbitraries, then we define for any $i\in[n]$
\begin{align}\label{producto}
\mathcal{T}\circ_{i}\mathcal{S}&=(t_{1},\ldots,t_{n})\circ_{i}(s_{1},\ldots,s_{m}) \nonumber \\
&=(t_{1},\ldots,t_{i-1}, t_{i}+s_{1}+1,s_{2},\ldots,s_{m-1}, s_{m}+1, t_{i+1}+1,t_{i+2},\ldots,t_{n}),
\end{align}
we must clarify that if $i=n$ then one takes $i+1$ as $1$, in other words
\begin{align}\label{productocasoextremo}
\mathcal{T}\circ_{n}\mathcal{S}&=(t_{1},\ldots,t_{n})\circ_{n}(s_{1},\ldots,s_{m}) \nonumber \\
&=(t_{1}+1,t_{2},\ldots,t_{n-1}, t_{n}+s_{1}+1,s_{2},\ldots,s_{m-1}, s_{m}+1).
\end{align}

If $\mathcal{S}\in\mathcal{QS}(n)$ then $\mathcal{S}_{\bigtriangleup}$ denotes the triangulation for a plane convex $n$-gon which gives rise to the quiddity sequence $\mathcal{S}$. Also, below we will use the notation $(i,i+1)(\mathcal{P})$ to indicate the side $(i,i+1)$ of polygon $\mathcal{P}$. If the polygon $\mathcal{P}$ has $n$ vertices we identify the vertex $n+1$ with the vertex $1$.

\begin{theorem}\label{teorema1}Let us suppose that $\mathcal{T}=(t_{1},\ldots,t_{n})\in \mathcal{QS}(n)$ and $\mathcal{S}=(s_{1},\ldots,s_{m})\in \mathcal{QS}(m)$, where $n,m\geq 3$. Then $\mathcal{T}\circ_{i}\mathcal{S}\in \mathcal{QS}(n+m-1)$ for all $i\in [n]$.
\end{theorem}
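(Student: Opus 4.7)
The plan is to leverage the Conway--Coxeter correspondence and exhibit an explicit triangulation of a convex $(n+m-1)$-gon whose vertex incidence vector coincides with $\mathcal{T}\circ_{i}\mathcal{S}$. To that end, I would fix a convex $(n+m-1)$-gon $\mathcal{P}$ with vertices $1,2,\dots,n+m-1$ labeled counterclockwise (indices read mod $n+m-1$), and draw the two diagonals $(i,i+m-1)$ and $(i,i+m)$. These cut $\mathcal{P}$ into three pieces: a sub-$m$-gon $\mathcal{P}_{\mathcal{S}}$ on the vertices $i,i+1,\dots,i+m-1$; a single triangle $T$ with vertex set $\{i,i+m-1,i+m\}$; and a sub-$n$-gon $\mathcal{P}_{\mathcal{T}}$ on the vertices $i,i+m,i+m+1,\dots,i-1$.

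Next, I would transport the triangulation $\mathcal{S}_{\bigtriangleup}$ into $\mathcal{P}_{\mathcal{S}}$ by matching vertex $k$ of the original $m$-gon with vertex $i+k-1$ of $\mathcal{P}$ (so vertex $1$ lands at $i$ and vertex $m$ at $i+m-1$), and transport $\mathcal{T}_{\bigtriangleup}$ into $\mathcal{P}_{\mathcal{T}}$ by matching vertex $i$ of the original $n$-gon with vertex $i$ of $\mathcal{P}$ and vertex $i+1$ with vertex $i+m$, extending cyclically through the remaining labels. The union $\mathcal{S}_{\bigtriangleup}\cup T\cup\mathcal{T}_{\bigtriangleup}$ is then a triangulation of $\mathcal{P}$ with $(m-2)+1+(n-2)=n+m-3$ triangles, which is the right count.

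The verification then reduces to a vertex-by-vertex tally. The three vertices of $T$ accumulate contributions from the adjacent pieces plus a $+1$ from $T$ itself: vertex $i$ picks up $t_{i}+s_{1}+1$, vertex $i+m-1$ picks up $s_{m}+1$, and vertex $i+m$ picks up $t_{i+1}+1$. The interior vertices of $\mathcal{P}_{\mathcal{S}}$ produce $s_{2},\dots,s_{m-1}$ in order, and the remaining vertices of $\mathcal{P}_{\mathcal{T}}$ produce $t_{i+2},\dots,t_{n},t_{1},\dots,t_{i-1}$. Reading the incidence vector around $\mathcal{P}$ starting at vertex $1$ recovers exactly (\ref{producto}); the extreme case $i=n$ is subsumed once indices are read cyclically, since the role of $t_{i+1}$ is then played by $t_{1}$, which yields (\ref{productocasoextremo}). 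By the Conway--Coxeter theorem, any positive integer vector arising as the vertex incidence sequence of a triangulation is a legitimate element of $\mathcal{QS}(n+m-1)$.

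The argument is essentially geometric, and the only delicate point is the cyclic index bookkeeping, especially when $i=n$: one must check that the two segments $(n,n+m-1)$ and $(n,1)$ are genuine diagonals of $\mathcal{P}$ and that the three pieces still have the advertised numbers of vertices. This is routine but is where I would expect off-by-one errors to slip in, so I would treat $i=n$ as a separate sanity check before claiming the uniform formula.
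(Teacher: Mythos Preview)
Your argument is correct and is essentially the paper's proof viewed from the other direction: the paper glues the two triangulated polygons together through an extra triangle (overlapping vertex $i$ of $\mathcal{P}_n$ with vertex $1$ of $\mathcal{P}_m$ and adding the edge from $i+1$ to $m$), whereas you start with the ambient $(n+m-1)$-gon and cut it into the same three pieces via the diagonals $(i,i+m-1)$ and $(i,i+m)$. The resulting triangulation and the vertex-by-vertex incidence check are identical in both arguments, and your top-down formulation has the modest advantage that convexity of the final polygon is automatic rather than something to be arranged after gluing.
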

\begin{proof}We take advantage of the one to one correspondence mentioned above between the set of triangulations of plane convex polygons and quiddity sequences. We claim that $\mathcal{T}\circ_{i}\mathcal{S}$ is the quiddity sequence for a triangulation of a plane convex polygon $\mathcal{P}_{n+m+1}$ with $n+m-1$ vertices. In fact, we know that $\mathcal{T}$ and $\mathcal{S}$ correspond to triangulations $\mathcal{T}_{\bigtriangleup}$ and $\mathcal{S}_{\bigtriangleup}$ of two polygons $\mathcal{P}_{n}$ and $\mathcal{P}_{m}$ with $n$ and $m$ vertices respectively.
The vertices of each one of them labeled counterclockwise by the elements of $[n]$ for
$\mathcal{T}$ and by the set $[m]$ for $\mathcal{S}$. Then, $\mathcal{T}\circ_{i}\mathcal{S}$ is the quiddity sequence of a triangulation $(\mathcal{T}\circ_{i}\mathcal{S})_{\bigtriangleup}$ for a polygon with $n+m-1$
vertices constructed by identifying or overlapping the vertex $i$ of $\mathcal{P}_{n}$ with the vertex $1$ of $\mathcal{P}_{m}$ and joining by means
of introducing a side, the vertex $i+1$ of $\mathcal{P}_{n}$ with the vertex $m$ of $\mathcal{P}_{m}$. Next, we label the vertices of the new polygon from the vertex $1$ of $\mathcal{P}_{n}$. The triangulation $(\mathcal{T}\circ_{i}\mathcal{S})_{\bigtriangleup}$ associated to $\mathcal{T}\circ_{i}\mathcal{S}$ is formed by the union of the triangulations corresponding to $\mathcal{T}$ and $\mathcal{S}$, taking into account the new labeled of the vertices, plus the sides (now turned into internal sides of the new polygon) $(i,i+1)$ of $\mathcal{P}_{n}$ and $(1,m)$ of $\mathcal{P}_{m}$ but now with the new labels. In other words,
$(\mathcal{T}\circ_{i}\mathcal{S})_{\bigtriangleup}=\mathcal{T}_{\bigtriangleup}\cup\mathcal{S}_{\bigtriangleup}\cup(i,i+1)(\mathcal{P}_{n})\cup(m,1)
(\mathcal{P}_{m})$.

To see that $\mathcal{T}\circ_{i}\mathcal{S}$ is the quiddity sequence of the above
triangulation $(\mathcal{T}\circ_{i}\mathcal{S})_{\bigtriangleup}$ for the polygon $\mathcal{P}_{n+m+1}$, observe that the number of triangles incident in the vertex $i$ with respect to $(\mathcal{T}\circ_{i}\mathcal{S})_{\bigtriangleup}$ is conformed for the triangles incident in $i$ as vertex of $\mathcal{P}_{n}$ for the triangulation $\mathcal{T}_{\bigtriangleup}$, the triangles incident in the
vertex $1$ of $\mathcal{P}_{m}$ for $\mathcal{S}_{\bigtriangleup}$, plus $1$
(the latter given by the triangle formed with the vertices $i$ and $i+1$ of $\mathcal{P}_{n}$ and the vertex $m$ of $\mathcal{P}_{m}$ in the old labeling). On other hand, it is clear that with respect to the triangulation constructed $(\mathcal{T}\circ_{i}\mathcal{S})_{\bigtriangleup}$ of polygon $\mathcal{P}_{n+m+1}$, the number of triangles incident  for the vertices $i+1$ of $\mathcal{P}_{n}$ and $m$ of $\mathcal{P}_{m}$ have increased
by one. The remaining vertices keep the same number of triangles that incident them.
\end{proof}

It follows from the proof of previous theorem that the product (\ref{producto}) can be adapted to the plane convex polygons with their triangulations when the number of vertices of both polygons is greater than or equal to $3$.

\begin{example}\label{ejemplo2}
To show this fact we give an example
\begin{equation*}
 \begin{array}{c}
   \includegraphics[height=3cm]{./susy1.pdf}
 \end{array}\circ_{2} \begin{array}{c}
   \includegraphics[height=3cm]{./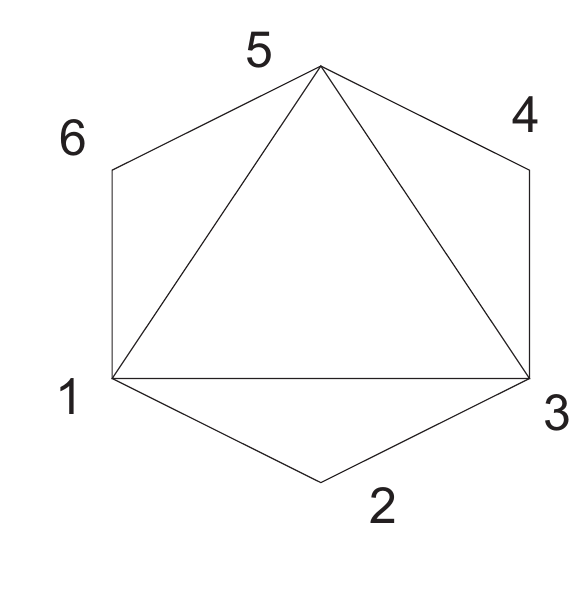}
 \end{array}=\begin{array}{c}
   \includegraphics[height=3cm]{./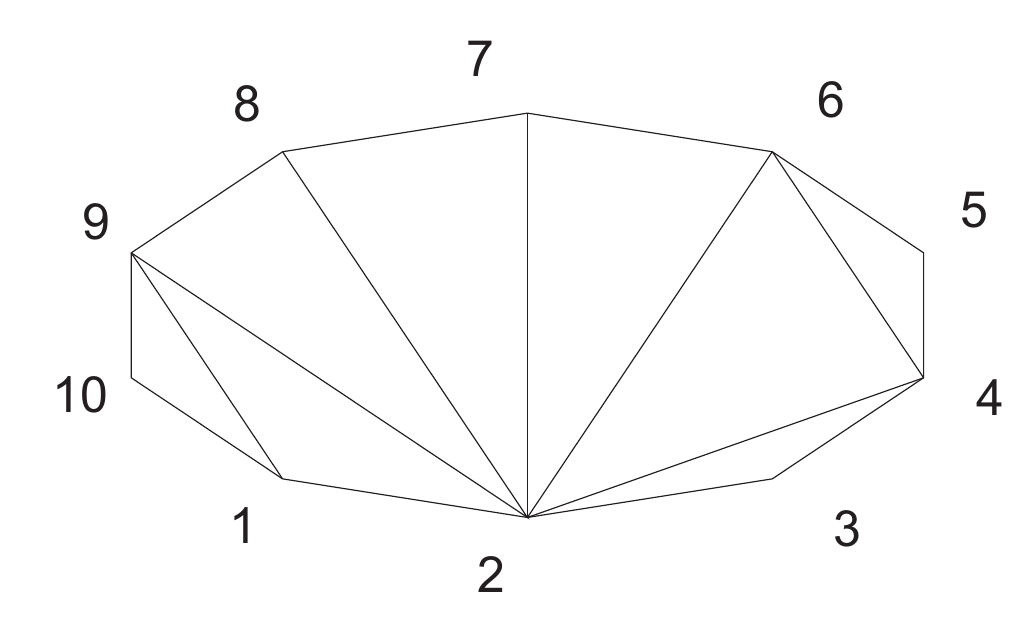}
 \end{array},
\end{equation*}
thus, as it was already defined
\begin{equation*}
(2,2,1,3,1)\circ_{2}(3,1,3,1,3,1)=(2,6,1,3,1,3,2,2,3,1).
\end{equation*}
\end{example}

Now, we set
\begin{equation}\label{1}
\mathcal{QS}(1)=\{(0)=1_{\mathcal{QS}}\},\,\,\,\, \mathcal{QS}(2)=\{(00)\}.
\end{equation}

The assignation (\ref{1}) is justified if a point and a line segment are considered as polygons without triangulations, labeled by $[1]=\{1\}$ and $[2]=\{1,2\}$ respectively, and next we move by graphical considerations: since, when it is superimposing a point with any vertex of a polygon, then the polygon is recovered, this suggests to define
\begin{equation}\label{2}
(0)\circ_{1}(t_{1},\ldots,t_{n})=(t_{1},\ldots,t_{n})\circ_{i}(0)=(t_{1},\ldots,t_{n}),
\end{equation}
for all $(t_{1},\ldots,t_{n})\in \mathcal{QS}(n)$ where $n\geq 1$ and $i\in[n]$. On other hand, we set
\begin{equation}\label{3}
(t_{1},\cdots,t_{n})\circ_{i}(00)=(t_{1},\cdots,t_{i-1},t_{i}+1,1,t_{i+1}+1,t_{i+2},\cdots,t_{n}),
\end{equation}
in particular
\begin{equation}\label{3 caso extremo}
(t_{1},\cdots,t_{n})\circ_{n}(00)=(1,t_{1}+1,t_{2},\cdots,t_{n-1},t_{n}+1),
\end{equation}
also we put
\begin{equation}\label{4}
(00)\circ_{1}(t_{1},\cdots,t_{n})=(t_{1}+1,t_{2},\cdots,t_{n-1},t_{n}+1,1),
\end{equation}
\begin{equation}\label{5}
(00)\circ_{2}(t_{1},\cdots,t_{n})=(1,t_{1}+1,t_{2},\cdots,t_{n-1},t_{n}+1),
\end{equation}
$\forall\,\,\mathcal{T}=(t_{1},\ldots,t_{n})\in \mathcal{QS}(n)$ where $n\geq 3$. Finally, we define
\begin{equation}\label{6}
(00)\circ_{1}(00)=(00)\circ_{2}(00)=(1,1,1).
\end{equation}

Next, we can illustrate (\ref{3}), (\ref{4}) and (\ref{5}) with three examples:
\begin{equation*}
 \begin{array}{c}
   \includegraphics[height=3cm]{./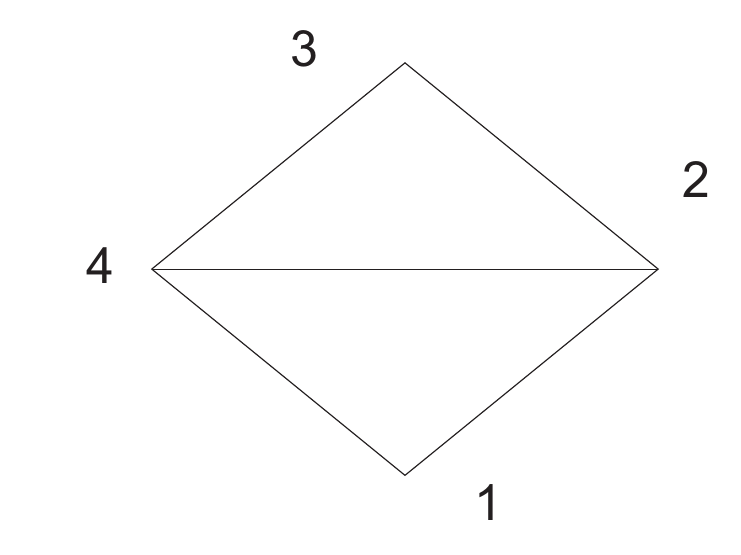}
 \end{array}\circ_{2} \begin{array}{c}
   \includegraphics[height=3cm]{./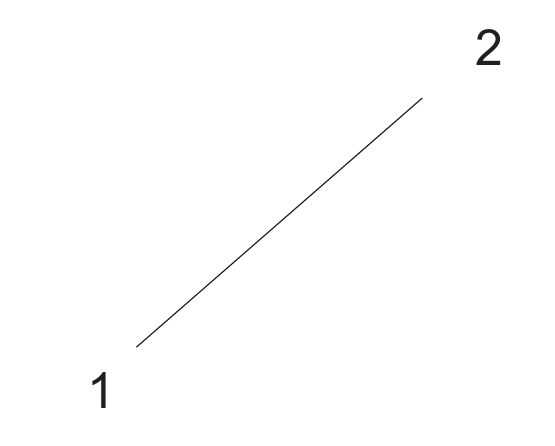}
 \end{array}=\begin{array}{c}
   \includegraphics[height=3cm]{./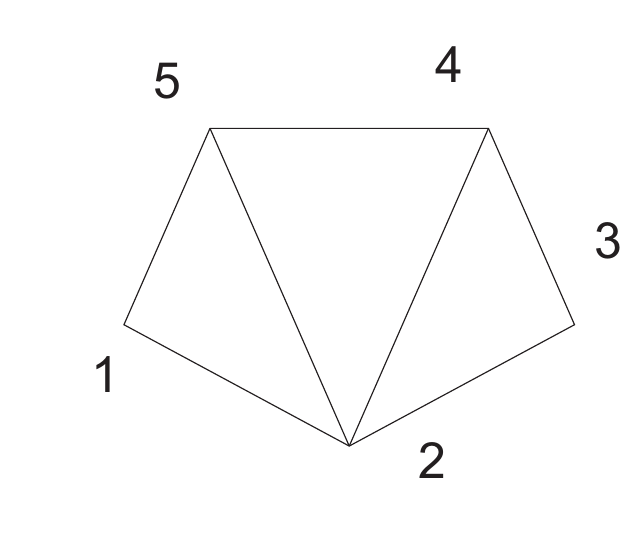}
 \end{array},
\end{equation*}
that is
\begin{equation*}
(1,2,1,2)\circ_{2}(00)=(1,3,1,2,2).
\end{equation*}

\begin{equation*}
 \begin{array}{c}
   \includegraphics[height=3cm]{./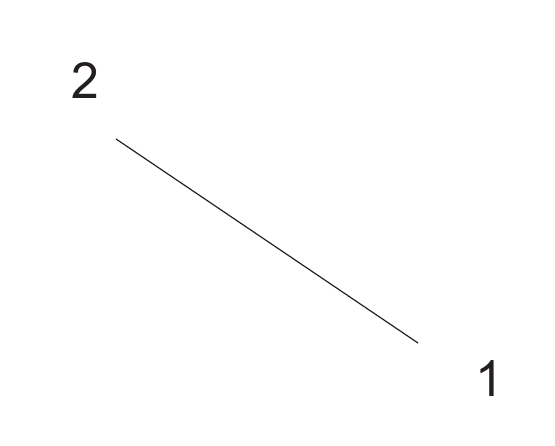}
 \end{array}\circ_{1} \begin{array}{c}
   \includegraphics[height=3cm]{./susy4.pdf}
 \end{array}=\begin{array}{c}
   \includegraphics[height=3cm]{./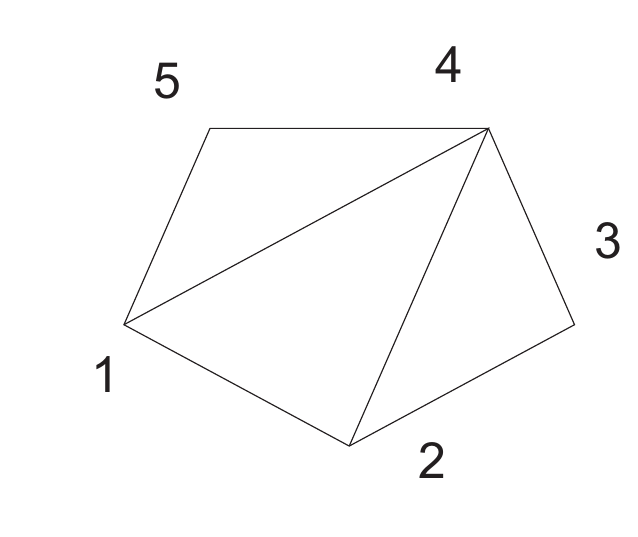}
 \end{array},
\end{equation*}
that translated into the quiddity sequences give us
\begin{equation*}
(00)\circ_{1}(1,2,1,2)=(2,2,1,3,1).
\end{equation*}

Now, $(00)\circ_{2}(1,2,1,2)=(2,2,1,3,1)$ corresponding to
\begin{equation*}
 \begin{array}{c}
   \includegraphics[height=3cm]{./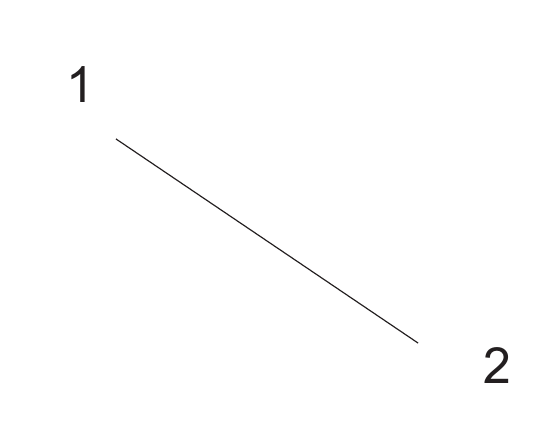}
 \end{array}\circ_{2} \begin{array}{c}
   \includegraphics[height=3cm]{./susy4.pdf}
 \end{array}=\begin{array}{c}
   \includegraphics[height=3cm]{./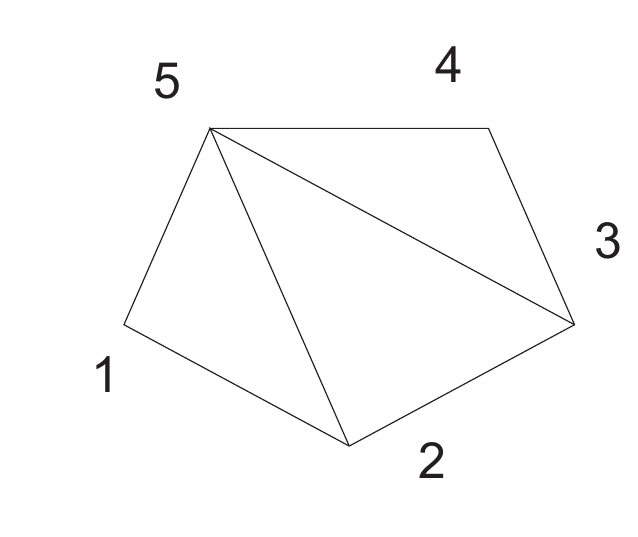}
 \end{array}.
\end{equation*}

The necessity to impose (\ref{6}) is evident.

\begin{corollary}Suppose that $1\leq n,m$. Then, for all $\mathcal{T}=(t_{1},\ldots,t_{n})\in \mathcal{QS}(n)$ and any $\mathcal{S}=(s_{1},\ldots,s_{m})\in \mathcal{QS}(m)$, we have that $\mathcal{T}\circ_{i}\mathcal{S}\in \mathcal{QS}(n+m-1)$.
\end{corollary}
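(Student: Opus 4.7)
The plan is to reduce the statement to case analysis on whether $n$ and $m$ lie in $\{1\}$, $\{2\}$, or $\{\geq 3\}$. The case $n,m\geq 3$ is exactly Theorem \ref{teorema1}, so it suffices to verify the remaining combinations using the explicit formulas (\ref{2})--(\ref{6}). In each degenerate case, the geometric interpretation from the proof of Theorem \ref{teorema1} extends naturally once $(0)$ and $(00)$ are viewed as a degenerate convex $1$-gon (a point) and $2$-gon (a line segment).

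First I would dispose of the cases involving length one: if $n=1$, then $\mathcal{T}=(0)$ and (\ref{2}) gives $\mathcal{T}\circ_{1}\mathcal{S}=\mathcal{S}\in\mathcal{QS}(m)=\mathcal{QS}(1+m-1)$; symmetrically, if $m=1$, then $\mathcal{T}\circ_{i}\mathcal{S}=\mathcal{T}\in\mathcal{QS}(n)=\mathcal{QS}(n+1-1)$. The case $n=m=2$ is immediate from (\ref{6}), since $(1,1,1)\in\mathcal{QS}(3)=\mathcal{QS}(2+2-1)$, and this corresponds to two line segments glued into a triangle.

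Next I would handle $n\geq 3$ with $m=2$ using (\ref{3}) and (\ref{3 caso extremo}). Geometrically, gluing $(00)$ along the side $(i,i+1)$ of $\mathcal{P}_n$ amounts to attaching a single new triangle there: a new vertex is inserted cyclically between $i$ and $i+1$, receiving incidence $1$; vertices $i$ and $i+1$ each gain one incident triangle; all remaining vertices are unchanged. Relabeling counterclockwise from vertex $1$ of $\mathcal{P}_n$ produces precisely the tuples stated in (\ref{3}) and (\ref{3 caso extremo}), so the result is the quiddity sequence of a triangulation of an $(n+1)$-gon, i.e.\ an element of $\mathcal{QS}(n+m-1)$. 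The case $n=2$ with $m\geq 3$ is dual: by (\ref{4}) and (\ref{5}) one attaches a single new vertex to $\mathcal{P}_m$ across vertices $m,1$ or $1,2$ respectively, giving a new triangle with the same incidence bookkeeping, and the tuple of length $m+1=n+m-1$ predicted by the formulas.

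The main obstacle is essentially bookkeeping: one must verify that in each case the cyclic relabeling of the merged polygon puts the incremented entries exactly where the formulas (\ref{3})--(\ref{5}) place them, and that the convention ``label counterclockwise starting from vertex $1$ of $\mathcal{P}_n$'' from the proof of Theorem \ref{teorema1} still applies consistently to the degenerate $1$-gon and $2$-gon. This is routine but must be checked for each of the four small-case patterns.
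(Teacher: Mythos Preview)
Your proposal is correct and follows essentially the same approach as the paper: the paper's proof reads simply ``It follows from theorem \ref{teorema1} and (\ref{2})--(\ref{6}),'' and your case analysis is exactly the unpacking of that one-line argument. Your added geometric justification for why formulas (\ref{3})--(\ref{5}) produce genuine quiddity sequences (attaching a single triangle along an edge) is more detailed than what the paper writes, but it is in the same spirit as the proof of Theorem \ref{teorema1} and is a welcome elaboration rather than a different route.
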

\begin{proof}It follows from theorem \ref{teorema1} and (\ref{2})-(\ref{6}).
\end{proof}

\begin{theorem}\label{llamada}
The collection $\mathcal{QS}$ equipped with the products (\ref{producto}), (\ref{2})-(\ref{6}) is a ns SET quiddity-operad.
\end{theorem}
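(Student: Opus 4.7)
The plan is to verify in turn the three operad axioms (\ref{operad5}), (\ref{operad3}), (\ref{operad4}), relying whenever possible on the Conway--Coxeter correspondence between triangulations of plane convex polygons and quiddity sequences. Well-definedness of every $\circ_i$ on $\mathcal{QS}$ is already guaranteed by Theorem \ref{teorema1} together with the Corollary preceding this theorem, so I may identify each composition with its underlying triangulated polygon. The unit axiom (\ref{operad5}) is immediate from (\ref{2}), since $(0) = 1_{\mathcal{QS}}$ was \textbf{defined} so that composition with it on either side returns the original sequence.

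For the parallel-composition axiom (\ref{operad3}) in the generic case where $x, y, z$ each have length at least $3$, I would argue geometrically: both $(x\circ_i y)\circ_{i+j-1}z$ and $x\circ_i(y\circ_j z)$ produce the same triangulated polygon, obtained by gluing $\mathcal{P}_m$ to $\mathcal{P}_n$ along edge $(i,i+1)$ of $\mathcal{P}_n$ and, simultaneously, gluing $\mathcal{P}_k$ to $\mathcal{P}_m$ along edge $(j,j+1)$ of $\mathcal{P}_m$. Since $j \leq m-1$, these two gluings involve disjoint sides of $\mathcal{P}_m$, so they may be performed in either order; the relabeling $j \mapsto i+j-1$ in the composite polygon exactly records the shift produced by inserting $\mathcal{P}_m$ into $\mathcal{P}_n$. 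For the commutativity axiom (\ref{operad4}) with $i < j$, the same idea applies: the gluings of $y$ at vertex $i$ of $x$ and of $z$ at vertex $j$ of $x$ involve disjoint sides of $\mathcal{P}_n$ and therefore commute; the shift $j \mapsto j+m-1$ on the left records the insertion of a length-$m$ sequence at the earlier position $i$. In both axioms, matching the set of diagonals and new boundary edges on the two sides shows that the triangulated polygons coincide, and equality of quiddity sequences then follows from the Conway--Coxeter bijection.

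The edge cases involving $(0) \in \mathcal{QS}(1)$ and $(00) \in \mathcal{QS}(2)$ must be verified directly from the definitions (\ref{2})--(\ref{6}). Compositions involving $(0)$ collapse by the unit identity. For configurations that include at least one factor equal to $(00)$, I would enumerate the possibilities for which of $x, y, z$ equals $(00)$ and for each case match the coordinates of both sides of the relevant axiom entry by entry. These formulas were designed so that $(00)$ behaves as a degenerate ``$2$-gon'' whose insertion creates a single new triangle on the chosen edge of the host polygon, so the verifications, while tedious, are routine.

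The main obstacle will be the wrap-around case $i = n$ of (\ref{producto}), handled by (\ref{productocasoextremo}) and its analogue (\ref{3 caso extremo}). This is also the conceptual reason for excluding $j = m$ in (\ref{operad3}): when $j = m$, vertex $m$ of $\mathcal{P}_m$ is already the endpoint of the new edge introduced by $x \circ_i y$, so attaching $z$ there inside the composite uses that new edge, whereas attaching $z$ to $y$ via the wrap-around in $y \circ_m z$ uses the old edge $(m,1)$ of $\mathcal{P}_m$; the two triangulations differ by a choice of diagonal in a quadrilateral and the corresponding quiddity sequences are not equal. Isolating this sub-case cleanly, together with the $(00)$ enumeration, forms the most delicate bookkeeping of the argument.
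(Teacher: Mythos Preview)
Your proposal is correct and takes a genuinely different route from the paper. The paper's proof is a long brute-force verification: for axiom (\ref{operad3}) it writes out both sides of $(x\circ_i y)\circ_{i+j-1}z$ and $x\circ_i(y\circ_j z)$ coordinate by coordinate, splitting into the nine sub-cases $i\in\{1,\,\text{interior},\,n\}$ crossed with $j\in\{1,\,\text{interior},\,m-1\}$, and then treats $n=1,2$ separately; for (\ref{operad4}) it records one representative computation and declares the rest analogous. Your approach instead invokes Theorem \ref{teorema1} to translate $\circ_i$ into the geometric gluing of triangulated polygons and observes that the two sides of each axiom describe the same labelled triangulation (two independent gluings performed in either order), whence the Conway--Coxeter bijection forces equality of quiddity sequences. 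What your approach buys is conceptual clarity and a uniform treatment of the generic case; what the paper's approach buys is that nothing is left to the reader's geometric imagination, in particular the wrap-around relabelling for $i=n$ is made fully explicit. Both approaches must still handle the degenerate inputs $(0)$ and $(00)$ by direct inspection of (\ref{2})--(\ref{6}), and both recognise that the exclusion $j\le m-1$ in (\ref{operad3}) is exactly what prevents the second gluing from touching the new edge created by the first; your final paragraph identifies this correctly and in fact anticipates Remark \ref{observacion1}.
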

\begin{proof}(\ref{2}) implies that (\ref{operad5}) holds. Assume $3\leq n$, $k$ arbitrary and recall that $2\leq m$. Let us suppose that $\mathcal{T}=(t_{1},\ldots,t_{n})\in \mathcal{QS}(n)$, $\mathcal{S}=(s_{1},\ldots,s_{m})\in \mathcal{QS}(m)$ and $\mathcal{R}=(r_{1},\ldots,r_{k})\in \mathcal{QS}(k)$ are arbitrary.
We pass to prove (\ref{operad3}).

Suppose that $1<i< n$. For all $j\in[m-1]$ we have $i\leq i+j-1\leq i+m-2$. Hence
\begin{align}
(\mathcal{T}\circ_{i}\mathcal{S})\circ_{i+j-1}\mathcal{R}&=((t_{1},\ldots,t_{n})\circ_{i}(s_{1},\ldots,s_{m}))\circ_{i+j-1}(r_{1},\ldots,r_{k})
\nonumber \\
&=(t_{1},\ldots,t_{i-1}, t_{i}+s_{1}+1,s_{2},\ldots,s_{m-1}, s_{m}+1, t_{i+1}+1,t_{i+2},\ldots,t_{n}) \nonumber \\
&\quad\,\,\circ_{i+j-1}(r_{1},\ldots,r_{k}). \nonumber
\end{align}

We have three cases: $1)\,\,i=i+j-1$, $2)\,\,i< i+j-1< i+m-2$ and $3)\,\,i+j-1=i+m-2$.

For the first case ($i=i+j-1$,  that is, $j=1$), we have (also, $j\leq n+m-1$)
\begin{equation*}
(\mathcal{T}\circ_{i}\mathcal{S})\circ_{i}\mathcal{R}=(t_{1},\ldots,t_{i-1}, t_{i}+s_{1}+r_{1}+2,r_{2},\ldots,r_{k}+1,s_{2}+1,\ldots,s_{m-1},
s_{m}+1, t_{i+1}+1,t_{i+2},\ldots,t_{n-1},t_{n}).
\end{equation*}

On other hand,
\begin{equation*}
\mathcal{T}\circ_{i}(\mathcal{S}\circ_{1}\mathcal{R})=(t_{1},\ldots,t_{n})\circ_{i}((s_{1},\ldots,s_{m})\circ_{1}(r_{1},\ldots,r_{k}))
=(t_{1},\ldots,t_{n})\circ_{i}(s_{1}+r_{1}+1,r_{2},\cdots,r_{k}+1,s_{2}+1,\cdots,s_{m}). \nonumber
\end{equation*}

Hence, it shows that
\begin{equation}\label{7}
(\mathcal{T}\circ_{i+(1)-1}\mathcal{S})\circ_{i}\mathcal{R}=\mathcal{T}\circ_{i}(\mathcal{S}\circ_{1}\mathcal{R}),
\end{equation}
equation, in this case equivalent to (\ref{operad3}) for $1<i<n$ and $j=1$. Let us assume that $i+j-1=i+m-2$, thus $j=m-1$. Then
\begin{equation*}
(\mathcal{T}\circ_{i}\mathcal{S})\circ_{i+(m-1)-1}\mathcal{R}= (t_{1},\ldots,t_{i-1}, t_{i}+s_{1}+1,s_{2},\ldots,s_{m-1}+r_{1}+1,r_{2},\cdots,r_{k}+1,
s_{m}+2,t_{i+1}+1,t_{i+2},\ldots,t_{n}),
\end{equation*}
and
\begin{align}
\mathcal{T}\circ_{i}(\mathcal{S}\circ_{m-1}\mathcal{R})&=(t_{1},\ldots,t_{n})\circ_{i}((s_{1},\ldots,s_{m})\circ_{m-1}(r_{1},\ldots,r_{k})) \nonumber  \\
&\,\,\,\,\,\,\,(t_{1},\ldots,t_{n})\circ_{i}(s_{1},\ldots,s_{m-1}+r_{1}+1,r_{2},\cdots,r_{k-1},r_{k}+1,s_{m}+1). \nonumber
\end{align}

The last two equalities imply that
\begin{equation}\label{8}
(\mathcal{T}\circ_{i}\mathcal{S})\circ_{i+(m-1)-1}\mathcal{R}=\mathcal{T}\circ_{i}(\mathcal{S}\circ_{m-1}\mathcal{R}),
\end{equation}
then (\ref{operad3}) holds for $1<i<n$ and $j=m-1$.

Assume now that $i< i+j-1< i+m-2$, then $1< j<m-1$. We have
\begin{align}
(\mathcal{T}\circ_{i}\mathcal{S})\circ_{i+j-1}\mathcal{R}&=(t_{1},\ldots,t_{i-1}, t_{i}+s_{1}+1,s_{2},\cdots,s_{j}+r_{1}+1,r_{2},\cdots,r_{k}+1,
\nonumber \\
&\quad\,\,\,s_{j+1}+1,\cdots,s_{m-1},s_{m}+1, t_{i+1}+1,t_{i+2},\ldots,t_{n}), \nonumber
\end{align}
also
\begin{align}
\mathcal{T}\circ_{i}(\mathcal{S}\circ_{j}\mathcal{R})&=(t_{1},\ldots,t_{n})\circ_{i}((s_{1},\ldots,s_{m})\circ_{j}(r_{1},\ldots,r_{k})) \nonumber  \\
&\,\,\,\,\,\,\,(t_{1},\ldots,t_{n})\circ_{i}(s_{1},\ldots,s_{j}+r_{1}+1,r_{2},\cdots,r_{k}+1,s_{j+1}+1,\cdots,s_{m-1},s_{m}). \nonumber
\end{align}

Thus, the equation (\ref{operad3}) holds in the case $1<i<n$ and $1< j<m-1$. Hence, taking into account also (\ref{7}) and (\ref{8}) we conclude that
for $1<i<n$ and $j\in [m-1]$ the equation (\ref{operad3}) is satisfied.

Now, we suppose that $i=1$ and $j=1$ then
\begin{align}\label{81}
(\mathcal{T}\circ_{1}\mathcal{S})\circ_{1}\mathcal{R}&=((t_{1},\ldots,t_{n})\circ_{1}(s_{1},\ldots,s_{m}))\circ_{1}(r_{1},\ldots,r_{k}) \nonumber \\
&=(t_{1}+s_{1}+1,s_{2},\ldots,s_{m-1},s_{m}+1,t_{2}+1,t_{3},\ldots,t_{n})\circ_{1}(r_{1},\ldots,r_{k}) \nonumber \\
&=(t_{1}+s_{1}+r_{1}+2,r_{2},\ldots,r_{k-1},r_{k}+1,s_{2}+1,s_{3},\ldots,s_{m-1},s_{m}+1,t_{2}+1,t_{3},\ldots,t_{n}),
\end{align}
and
\begin{align}\label{82}
\mathcal{T}\circ_{1}(\mathcal{S}\circ_{1}\mathcal{R})&=(t_{1},\ldots,t_{n})\circ_{1}((s_{1},\ldots,s_{m})\circ_{1}(r_{1},\ldots,r_{k})) \nonumber \\
&=(t_{1},\ldots,t_{n})\circ_{1}(s_{1}+r_{1}+1,r_{2},\ldots,r_{k-1},r_{k}+1,s_{2}+1,s_{3},\ldots,s_{m}) \nonumber \\
&=(t_{1}+s_{1}+r_{1}+2,r_{2},\ldots,r_{k-1},r_{k}+1,s_{2}+1,s_{3},\ldots,s_{m-1},s_{m}+1,t_{2}+1,t_{3},\ldots,t_{n}),
\end{align}

Hence, (\ref{81}) and (\ref{82}) imply that $(\mathcal{T}\circ_{1}\mathcal{S})\circ_{1}\mathcal{R}=\mathcal{T}\circ_{1}(\mathcal{S}\circ_{1}\mathcal{R})$.
Consider now the case $i=1$ and $j=m-1$. We have
\begin{align}\label{83}
(\mathcal{T}\circ_{1}\mathcal{S})\circ_{m-1}\mathcal{R}&=((t_{1},\cdots,t_{n})\circ_{1}(s_{1},\cdots,s_{m}))\circ_{m-1}(r_{1},\cdots,r_{k}) \nonumber  \\
&=(t_{1}+s_{1}+1,s_{2},\cdots,s_{m-1},s_{m}+1,t_{2}+1,t_{3},\cdots,t_{n})\circ_{m-1}(r_{1},\cdots,r_{k}) \nonumber \\
&=(t_{1}+s_{1}+1,s_{2},\cdots,s_{m-2},s_{m-1}+r_{1}+1,r_{2},\cdots,r_{k-1},r_{k}+1,s_{m}+2,t_{2}+1,t_{3},\cdots,t_{n}),
\end{align}
furthermore
\begin{align}\label{84}
\mathcal{T}\circ_{1}(\mathcal{S}\circ_{m-1}\mathcal{R})&=(t_{1},\cdots,t_{n})\circ_{1}((s_{1},\cdots,s_{m})\circ_{m-1}(r_{1},\cdots,r_{k})) \nonumber  \\
&=(t_{1},\cdots,t_{n})\circ_{1}(s_{1},\cdots,s_{m-2},s_{m-1}+r_{1}+1,r_{2},\cdots,r_{k-1},r_{k}+1,s_{m}+1) \nonumber \\
&=(t_{1}+s_{1}+1,s_{2}\cdots,s_{m-2},s_{m-1}+r_{1}+1,r_{2},\cdots,r_{k-1},r_{k}+1,s_{m}+2,t_{2}+1,t_{3},\cdots,t_{n}),
\end{align}
it shows of (\ref{83}) and (\ref{84}) that $(\mathcal{T}\circ_{1}\mathcal{S})\circ_{m-1}\mathcal{R}=\mathcal{T}\circ_{1}(\mathcal{S}\circ_{m-1}\mathcal{R})$. Let us look the case for which $i=1$ and $1< j< m-1$,
we have
\begin{align}\label{85}
(\mathcal{T}\circ_{1}\mathcal{S})\circ_{j}\mathcal{R}&=(t_{1}+s_{1}+1,s_{2},\cdots,s_{j-1},s_{j}+r_{1}+1,r_{2},\cdots,r_{k-1},r_{k}+1,s_{j+1}+1,s_{j+2},\cdots,s_{m-1},s_{m}+1,t_{2}+1, \nonumber \\
&\,\,\,\,\,\,\,\,\,\,\,t_{3},\cdots,t_{n}),
\end{align}
\begin{align}\label{86}
\mathcal{T}\circ_{1}(\mathcal{S}\circ_{j}\mathcal{R})&=(t_{1},\cdots,t_{n})\circ_{1}((s_{1},\cdots,s_{m})\circ_{j}(r_{1},\cdots,r_{k})) \nonumber  \\
&=(t_{1},\cdots,t_{n})\circ_{1}(s_{1},\cdots,s_{j-1},s_{j}+r_{1}+1,r_{2},\cdots,r_{k-1},r_{k}+1,s_{j+1}+1,s_{j+2},\cdots,s_{m}) \nonumber \\
&=(t_{1}+s_{1}+1,s_{2},\cdots,s_{j-1},s_{j}+r_{1}+1,r_{2},\cdots,r_{k-1},r_{k}+1,s_{j+1}+1,s_{j+2},\cdots,s_{m-1},s_{m}+1,t_{2}+1, \nonumber \\
&\,\,\,\,\,\,\,\,\,\,\,t_{3},\cdots,t_{n}),
\end{align}
hence (\ref{85}) and (\ref{86}) prove that $(\mathcal{T}\circ_{1}\mathcal{S})\circ_{j}\mathcal{R}=\mathcal{T}\circ_{1}(\mathcal{S}\circ_{j}\mathcal{R})$. Then, the axiom (\ref{operad3}) is fulfilled  in the case $i=1$ and $1\leq j\leq m-1$.

We continue the proof for $i=n$ and $j=1$. Thus, one must prove that
\begin{equation}\label{87}
(\mathcal{T}\circ_{n}\mathcal{S})\circ_{n}\mathcal{R}=\mathcal{T}\circ_{n}(\mathcal{S}\circ_{1}\mathcal{R}),
\end{equation}
we have
\begin{align}\label{88}
(\mathcal{T}\circ_{n}\mathcal{S})\circ_{n}\mathcal{R}&=((t_{1},\cdots,t_{n})\circ_{n}(s_{1},\cdots,s_{m}))\circ_{n}(r_{1},\cdots,r_{k}) \nonumber \\
&=(t_{1}+1,t_{2},\cdots,t_{n-1},t_{n}+s_{1}+1,s_{2},\cdots,s_{m-1},s_{m}+1)\circ_{n}(r_{1},\cdots,r_{k}) \nonumber \\
&=(t_{1}+1,t_{2},\cdots,t_{n-1},t_{n}+s_{1}+r_{1}+2,r_{2},\cdots,r_{k-1},r_{k}+1,s_{2}+1,s_{3},\cdots,s_{m-1},s_{m}+1),
\end{align}
and
\begin{align}\label{89}
\mathcal{T}\circ_{n}(\mathcal{S}\circ_{1}\mathcal{R})&=(t_{1},\cdots,t_{n})\circ_{n}((s_{1},\cdots,s_{m})\circ_{1}(r_{1},\cdots,r_{k})) \nonumber \\
&=(t_{1},\cdots,t_{n})\circ_{n}(s_{1}+r_{1}+1,r_{2},\cdots,r_{k-1},r_{k}+1,s_{2}+1,s_{3}\cdots,s_{m}) \nonumber \\
&=(t_{1}+1,t_{2},\cdots,t_{n-1},t_{n}+s_{1}+r_{1}+2,r_{2},\cdots,r_{k-1},r_{k}+1,s_{2}+1,s_{3}\cdots,s_{m-1},s_{m}+1),
\end{align}
then of (\ref{88}) and (\ref{89}) we have (\ref{87}). Suppose now that $i=n$ and $j=m-1$ then
\begin{align}
(\mathcal{T}\circ_{n}\mathcal{S})\circ_{n+m-2}\mathcal{R}&=((t_{1},\cdots,t_{n})\circ_{n}(s_{1},\cdots,s_{m}))\circ_{n+m-2}(r_{1},\cdots,r_{k}) \nonumber \\
&=(t_{1}+1,t_{2},\cdots,t_{n-1},t_{n}+s_{1}+1,s_{2},\cdots,s_{m-1},s_{m}+1)\circ_{n+m-2}(r_{1},\cdots,r_{k}) \nonumber \\
&=(t_{1}+1,t_{2},\cdots,t_{n-1},t_{n}+s_{1}+1,s_{2},\cdots,s_{m-1}+r_{1}+1,r_{2},\cdots,r_{k-1},r_{k}+1,s_{m}+2), \nonumber
\end{align}
on the other hand
\begin{align}
\mathcal{T}\circ_{n}(\mathcal{S}\circ_{m-1}\mathcal{R})&=(t_{1},\cdots,t_{n})\circ_{n}((s_{1},\cdots,s_{m})\circ_{m-1}(r_{1},\cdots,r_{k})) \nonumber \\
&=(t_{1},\cdots,t_{n})\circ_{n}(s_{1},\cdots,s_{m-1}+r_{1}+1,r_{2},\cdots,r_{k-1},r_{k}+1,s_{m}+1) \nonumber \\
&=(t_{1}+1,t_{2},\cdots,t_{n-1},t_{n}+s_{1}+1,s_{2},\cdots,s_{m-1}+r_{1}+1,r_{2},\cdots,r_{k-1},r_{k}+1,s_{m}+2), \nonumber
\end{align}
the two last equalities imply that $(\mathcal{T}\circ_{n}\mathcal{S})\circ_{n+m-2}\mathcal{R}=\mathcal{T}\circ_{n}(\mathcal{S}\circ_{m-1}\mathcal{R})$.

For $i=n$ and $1<j<m-1$ we have
\begin{align}
(\mathcal{T}\circ_{n}\mathcal{S})\circ_{n+j-1}\mathcal{R}&=((t_{1},\cdots,t_{n})\circ_{n}(s_{1},\cdots,s_{m}))\circ_{n+j-1}(r_{1},\cdots,r_{k}) \nonumber \\
&=(t_{1}+1,t_{2},\cdots,t_{n-1},t_{n}+s_{1}+1,s_{2},\cdots,s_{m-1},s_{m}+1)\circ_{n+j-1}(r_{1},\cdots,r_{k}) \nonumber \\
&=(t_{1}+1,t_{2},\cdots,t_{n-1},t_{n}+s_{1}+1,s_{2},\cdots,s_{j-1},s_{j}+r_{1}+1,r_{2},\cdots,r_{k-1},r_{k}+1,s_{j+1}+1,s_{j+2}, \nonumber \\
&\,\,\,\,\,\,\,\,\,\,\,\cdots,s_{m-1},s_{m}+1), \nonumber
\end{align}
and also
\begin{align}
\mathcal{T}\circ_{n}(\mathcal{S}\circ_{j}\mathcal{R})&=(t_{1},\cdots,t_{n})\circ_{n}((s_{1},\cdots,s_{m})\circ_{j}(r_{1},\cdots,r_{k})) \nonumber \\
&=(t_{1},\cdots,t_{n})\circ_{n}(s_{1},\cdots,s_{j-1},s_{j}+r_{1}+1,r_{2},\cdots,r_{k-1},r_{k}+1,s_{j+1}+1,s_{j+2},\cdots,s_{m}) \nonumber \\
&=(t_{1}+1,t_{2},\cdots,t_{n-1},t_{n}+s_{1}+1,s_{2},\cdots,s_{j-1},s_{j}+r_{1}+1,r_{2},\cdots,r_{k-1},r_{k}+1,s_{j+1}+1,s_{j+2}, \nonumber \\
&\,\,\,\,\,\,\,\,\,\,\cdots,s_{m-1},s_{m}+1), \nonumber
\end{align}
thus $(\mathcal{T}\circ_{n}\mathcal{S})\circ_{n+j-1}\mathcal{R}=\mathcal{T}\circ_{n}(\mathcal{S}\circ_{j}\mathcal{R})$. It implies that axiom (\ref{operad3}) holds for $3\leq n$, $2\leq m$ and $1\leq k$.

Now, in any context, when $n=1$ then necessarily $x=\mathcal{T}=\textbf{1}=\textbf{1}_{\mathcal{QS}}$, hence the axiom (\ref{operad3}) reduces to the identity
$$\mathcal{S}\circ_{j} \mathcal{R}=\mathcal{S}\circ_{j} \mathcal{R},\,\,\,\, for\,\,\,\, \mathcal{S}\in\mathcal{QS}(m)\,\,\,\,\,and\,\,\,\,\, \mathcal{R}\in\mathcal{QS}(k),$$
if (\ref{operad5}) holds. Thus, in our case, taking into account that (\ref{operad5}) is satisfied, then (\ref{operad3}) holds if $n=1$, for all $j\in[m-1]$ where $2\leq m$ is arbitrary and for any $k$ such that $1\leq k$.

We turn next to the proof when $n=2$, for $2\leq m$ and $1\leq k$. Two identities shall be proved
\begin{equation}\label{90}
((00)\circ_{1}\mathcal{S})\circ_{j}\mathcal{R}=(00)\circ_{1}(\mathcal{S}\circ_{j}\mathcal{R}),
\end{equation}
and
\begin{equation}\label{91}
((00)\circ_{2}\mathcal{S})\circ_{j+1}\mathcal{R}=(00)\circ_{2}(\mathcal{S}\circ_{j}\mathcal{R})
\end{equation}
for all $\mathcal{S}\in\mathcal{QS}(m)$ and for any $\mathcal{R}\in\mathcal{QS}(k)$. One divides the proof of (\ref{90}) in $3$ cases. Suppose $j=1$ then
\begin{align}
((00)\circ_{1}\mathcal{S})\circ_{1}\mathcal{R}&=(s_{1}+1,s_{2},\cdots,s_{m-1},s_{m}+1,1)\circ_{1}\mathcal{R} \nonumber \\
&=(s_{1}+r_{1}+2,r_{2},\cdots,r_{k-1},r_{k}+1,s_{2}+1,s_{3},\cdots,s_{m-1},s_{m}+1,1), \nonumber
\end{align}
and
\begin{align}
(00)\circ_{1}(\mathcal{S}\circ_{1}\mathcal{R})&=(00)\circ_{1}(s_{1}+r_{1}+1,r_{2},\cdots,r_{k-1},r_{k}+1,s_{2}+1,s_{3},\cdots,s_{m-1},s_{m}) \nonumber \\
&=(s_{1}+r_{1}+2,r_{2},\cdots,r_{k-1},r_{k}+1,s_{2}+1,s_{3},\cdots,s_{m-1},s_{m}+1,1), \nonumber
\end{align}
thus $((00)\circ_{1}\mathcal{S})\circ_{1}\mathcal{R}=(00)\circ_{1}(\mathcal{S}\circ_{1}\mathcal{R})$. For $j=m-1$ we have
\begin{align}
((00)\circ_{1}\mathcal{S})\circ_{m-1}\mathcal{R}&=(s_{1}+1,s_{2},\cdots,s_{m-1},s_{m}+1,1)\circ_{m-1}\mathcal{R} \nonumber \\
&=(s_{1}+1,s_{2},\cdots,s_{m-1}+r_{1}+1,r_{2},\cdots,r_{k-1},r_{k}+1,s_{m}+2,1), \nonumber
\end{align}
moreover
\begin{align}
(00)\circ_{1}(\mathcal{S}\circ_{m-1}\mathcal{R})&=(00)\circ_{1}(s_{1},\cdots,s_{m-2},s_{m-1}+r_{1}+1,r_{2},\cdots,r_{k-1},r_{k}+1,s_{m}+1) \nonumber \\
&=(s_{1}+1,\cdots,s_{m-2},s_{m-1}+r_{1}+1,r_{2},\cdots,r_{k-1},r_{k}+1,s_{m}+2,1), \nonumber
\end{align}
it shows that $((00)\circ_{1}\mathcal{S})\circ_{m-1}\mathcal{R}=(00)\circ_{1}(\mathcal{S}\circ_{m-1}\mathcal{R})$. Let us consider $1<j<m-1$ then
\begin{align}
((00)\circ_{1}\mathcal{S})\circ_{j}\mathcal{R}&=(s_{1}+1,s_{2},\cdots,s_{m-1},s_{m}+1,1)\circ_{j}\mathcal{R} \nonumber \\
&=(s_{1}+1,s_{2},\cdots,s_{j-1},s_{j}+r_{1}+1,r_{2},\cdots,r_{k-1},r_{k}+1,s_{j+1}+1,s_{j+2},\cdots,s_{m-1},s_{m}+1,1), \nonumber
\end{align}
on other hand
\begin{align}
(00)\circ_{1}(\mathcal{S}\circ_{j}\mathcal{R})&=(00)\circ_{1}(s_{1},\cdots,s_{j-1},s_{j}+r_{1}+1,r_{2},\cdots,r_{k-1},r_{k}+1,s_{j+1}+1,s_{j+2},\cdots,s_{m})
\nonumber \\
&=(s_{1}+1,s_{2},\cdots,s_{j-1},s_{j}+r_{1}+1,r_{2},\cdots,r_{k-1},r_{k}+1,s_{j+1}+1,s_{j+2},\cdots,s_{m-1},s_{m}+1,1). \nonumber
\end{align}

Hence, we have proved (\ref{90}). To check (\ref{91}) we must work in a similar manner. First, suppose $j=m-1$ then
\begin{align}
((00)\circ_{2}\mathcal{S})\circ_{m}\mathcal{R}&=(1,s_{1}+1,s_{2},\cdots,s_{m-1},s_{m}+1)\circ_{m}(r_{1},\cdots,r_{k}) \nonumber \\
&=(1,s_{1}+1,s_{2},\cdots,s_{m-2},s_{m-1}+r_{1}+1,r_{2},\cdots,r_{k-1},r_{k}+1,s_{m}+2), \nonumber
\end{align}
and
\begin{align}
(00)\circ_{2}(\mathcal{S}\circ_{m-1}\mathcal{R})&=(00)\circ_{2}(s_{1},\cdots,s_{m-2},s_{m-1}+r_{1}+1,r_{2},\cdots,r_{k-1},r_{k}+1,s_{m}+1) \nonumber \\
&=(1,s_{1}+1,s_{2},\cdots,s_{m-2},s_{m-1}+r_{1}+1,r_{2},\cdots,r_{k-1},r_{k}+1,s_{m}+2), \nonumber
\end{align}
thus, $((00)\circ_{2}\mathcal{S})\circ_{m}\mathcal{R}=(00)\circ_{2}(\mathcal{S}\circ_{m-1}\mathcal{R})$. Consider $j=1$
\begin{align}
((00)\circ_{2}\mathcal{S})\circ_{2}\mathcal{R}&=(1,s_{1}+1,s_{2},\cdots,s_{m-1},s_{m}+1)\circ_{2}(r_{1},\cdots,r_{k}). \nonumber
\end{align}

The proof of (\ref{operad4}) essentially follows from the fact that $i< j$ where $i,j\in [n]$, so it will be omitted. For example,
for $1\leq i \leq n-1$ and $j=i+1$ we have
\begin{equation*}
(\mathcal{S}\circ_{i}\mathcal{T})=(s_{1},\cdots,s_{i-1},s_{i}+t_{1}+1,t_{2},\cdots,t_{m}+1,s_{i+1}+1,s_{i+2},\cdots,s_{n}),
\end{equation*}
and
\begin{equation*}
(\mathcal{S}\circ_{i+1}\mathcal{R})=(s_{1},\cdots,s_{i},s_{i+1}+r_{1}+1,r_{2},\cdots,r_{k}+1,s_{i+2}+1,s_{i+3},\cdots,s_{n}),
\end{equation*}
hence,
\begin{align*}
(\mathcal{S}\circ_{i}\mathcal{T})\circ_{i+m}\mathcal{R}&=(\mathcal{S}\circ_{i+1}\mathcal{R})\circ_{i}\mathcal{T} \\
&=(s_{1},\cdots,s_{i-1},s_{i}+t_{1}+1,t_{2},\cdots,t_{m}+1,s_{i+1}+r_{1}+2,r_{2},\cdots,r_{k-1},r_{k}+1,s_{i+2}+1,s_{i+3},\cdots,s_{n}).
\end{align*}
\end{proof}

\begin{remark}\label{observacion1}
Note that in general for $\mathcal{T}=(t_{1},\ldots,t_{n})\in \mathcal{QS}(n)$, $\mathcal{S}=(s_{1},\ldots,s_{m})\in \mathcal{QS}(m)$ and $\mathcal{R}=(r_{1},\ldots,r_{k})\in \mathcal{QS}(k)$ the equation
\begin{equation*}
(\mathcal{T}\circ_{i}\mathcal{S})\circ_{i+m-1}\mathcal{R}=\mathcal{T}\circ_{i}(\mathcal{S}\circ_{m}\mathcal{R}),
\end{equation*}
is not hold in the context of quiddity sequences and triangulations of labeled polygons.

In fact, for instance we have (in our example $i=2$ and $j=m=4$)
\begin{equation}\label{exclusion1(j=m)}
((31221)\circ_{2}(2121))\circ_{5=2+4-1}(111)=(34122321)\circ_{5}(111)=(3412412421),
\end{equation}
but from (\ref{productocasoextremo})
\begin{equation}\label{exclusion2(j=m)}
(31221)\circ_{2}((2121)\circ_{4}(111))=(31221)\circ_{2}(312312)=(3512313321).
\end{equation}

Thus, taking into account (\ref{exclusion1(j=m)}) and (\ref{exclusion2(j=m)}) one concludes that
$$((31221)\circ_{2}(2121))\circ_{5}(111)\neq (31221)\circ_{2}((2121)\circ_{4}(111)).$$
\end{remark}

We illustrate graphically the previous remark, thus in correspondence with this, the reader can check that

\begin{align}
& \left(\begin{array}{c}
   \includegraphics[height=3cm]{./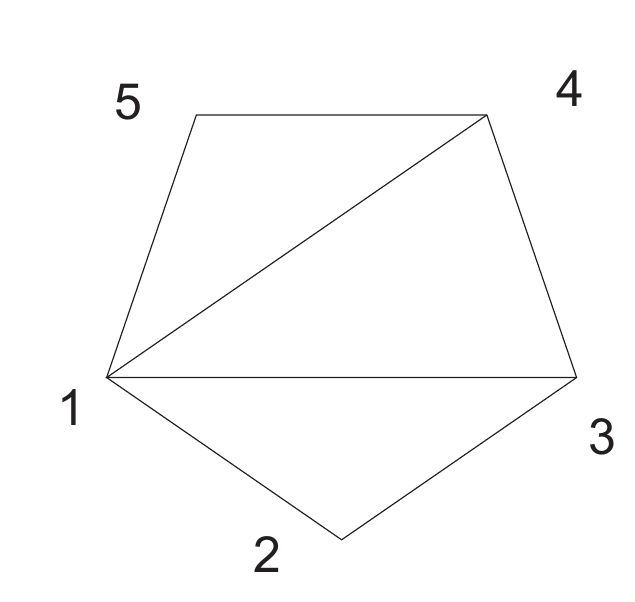}
 \end{array}\circ_{2} \begin{array}{c}
   \includegraphics[height=2cm]{./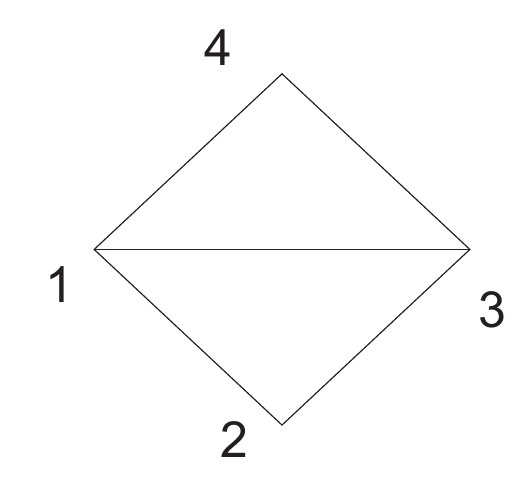}
 \end{array}\right)\circ_{5}\begin{array}{c}
   \includegraphics[height=2cm]{./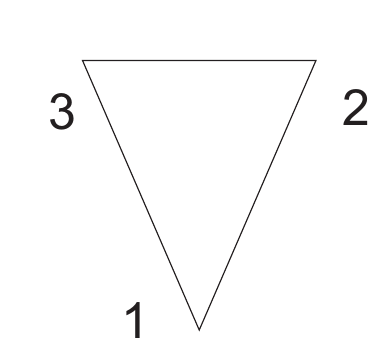}
 \end{array}= \nonumber \\
&\begin{array}{c}
   \includegraphics[height=4cm]{./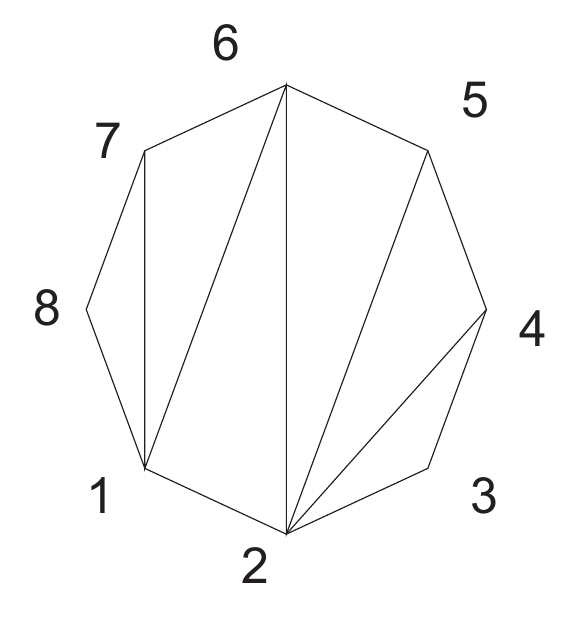}
 \end{array}\circ_{5}\begin{array}{c}
   \includegraphics[height=2cm]{./susy13.pdf}
 \end{array}=\begin{array}{c}
   \includegraphics[height=4cm]{./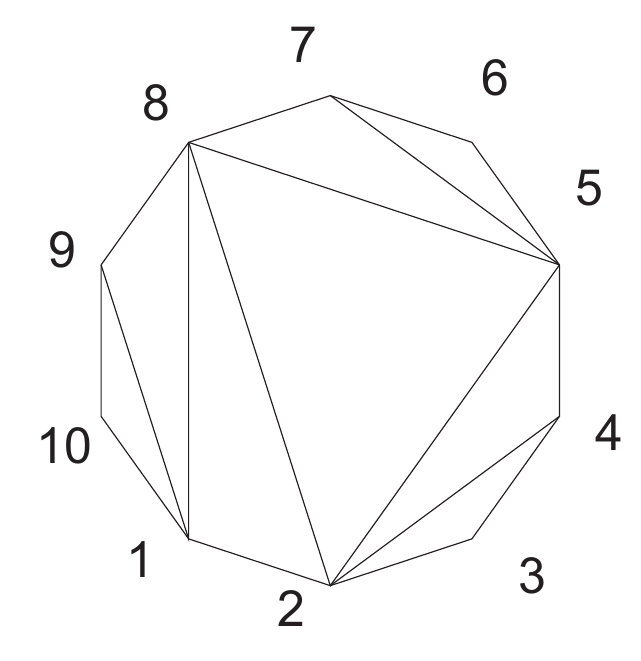}
 \end{array}. \nonumber
\end{align}

On other hand,

\begin{align}
 & \begin{array}{c}
   \includegraphics[height=3cm]{./susy11.pdf}
 \end{array}\circ_{2} \left(\begin{array}{c}
   \includegraphics[height=2cm]{./susy12.pdf}
 \end{array}\circ_{4}\begin{array}{c}
   \includegraphics[height=2cm]{./susy13.pdf}
 \end{array}\right)= \nonumber \\
 & \begin{array}{c}
   \includegraphics[height=3cm]{./susy11.pdf}
 \end{array}\circ_{2}\begin{array}{c}
   \includegraphics[height=3cm]{./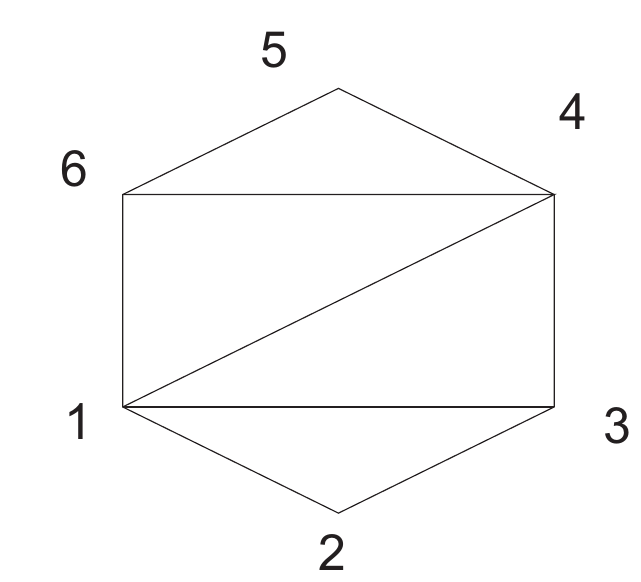}
   \end{array}=\begin{array}{c}
   \includegraphics[height=4cm]{./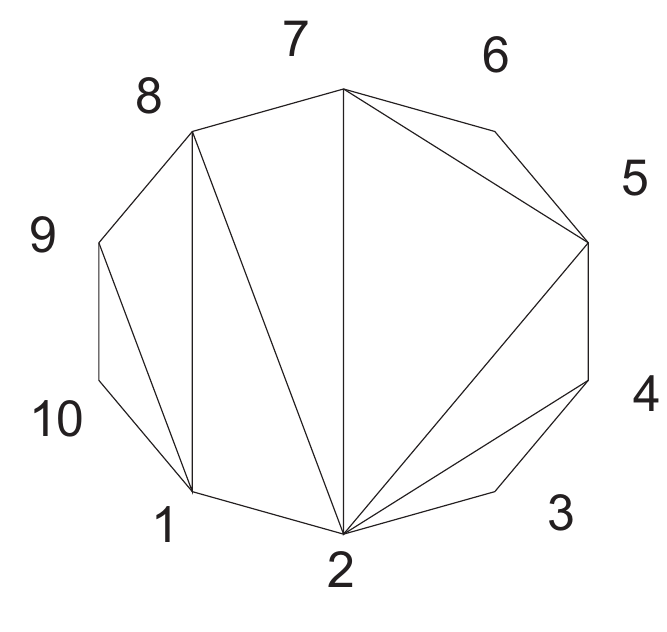}
   \end{array}. \nonumber
\end{align}

In order to reaffirm the remark \ref{observacion1}, we prove the following

\begin{proposition}Let us consider $3\leq n,m,k$. Then, for all $\mathcal{T}=(t_{1},\ldots,t_{n})\in \mathcal{QS}(n)$, all $\mathcal{S}=(s_{1},\ldots,s_{m})\in \mathcal{QS}(m)$ and any $\mathcal{R}=(r_{1},\ldots,r_{k})\in \mathcal{QS}(k)$ we have
\begin{equation}\label{9}
(\mathcal{T}\circ_{i}\mathcal{S})\circ_{i+m-1}\mathcal{R}\neq \mathcal{T}\circ_{i}(\mathcal{S}\circ_{m}\mathcal{R}),
\end{equation}
for any $i\in[n]$.
\end{proposition}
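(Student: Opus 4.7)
The plan is to verify the inequality (\ref{9}) by directly expanding both sides through the defining formulas (\ref{producto}) and (\ref{productocasoextremo}), and then reading off at least one coordinate where the resulting sequences visibly disagree. I split the argument into the two cases $i<n$ and $i=n$, since the ``cyclic'' position $i=n$ requires the modified formula (\ref{productocasoextremo}) instead of (\ref{producto}).

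First I treat the generic case $1\le i<n$. Applying (\ref{producto}) once gives
$\mathcal{T}\circ_{i}\mathcal{S}=(t_{1},\ldots,t_{i-1},t_{i}+s_{1}+1,s_{2},\ldots,s_{m-1},s_{m}+1,t_{i+1}+1,t_{i+2},\ldots,t_{n})$,
whose entry at position $i+m-1$ is exactly $s_{m}+1$. Since $i+m-1<n+m-1$, I can apply (\ref{producto}) a second time at that position to obtain the LHS, which I would then write out in full; the effect is to insert $r_{2},\ldots,r_{k-1},r_{k}+1$ after $s_{m}+1+r_{1}+1=s_{m}+r_{1}+2$ and to bump $t_{i+1}+1$ up to $t_{i+1}+2$. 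For the RHS, I first compute $\mathcal{S}\circ_{m}\mathcal{R}=(s_{1}+1,s_{2},\ldots,s_{m-1},s_{m}+r_{1}+1,r_{2},\ldots,r_{k-1},r_{k}+1)$ by (\ref{productocasoextremo}) and then feed this into (\ref{producto}) at position $i$ of $\mathcal{T}$. Comparing the two resulting vectors coordinate by coordinate, the discrepancy is transparent: the LHS has $t_{i}+s_{1}+1,\ s_{m}+r_{1}+2,\ r_{k}+1,\ t_{i+1}+2$ at positions $i$, $i+m-1$, $i+m+k-1$, $i+m+k$, while the RHS has the ``dual'' pattern $t_{i}+s_{1}+2,\ s_{m}+r_{1}+1,\ r_{k}+2,\ t_{i+1}+1$ at the same positions. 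Each of these four positions already suffices to guarantee strict inequality.

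For the remaining case $i=n$ I repeat the bookkeeping, but now the outer composition is governed by (\ref{productocasoextremo}). The same kind of redistribution of ``$+1$''s between positions $1$ and $n$ of $\mathcal{T}$, and between positions $n+m-1$ and $n+m+k-1$ of the final sequence, produces a similar discrepancy. The only real obstacle in the whole argument is careful indexing, since one must track which of (\ref{producto}) or (\ref{productocasoextremo}) applies at each of the two nested compositions; conceptually, the inequality reflects the fact that shifting the inner parenthesization from $\circ_{i+m-1}$ to $\circ_{m}$ moves the ``join seams'' of the glued polygons, and these seams contribute the distinctive $+1$ terms in different places. The hypothesis $n,m,k\ge 3$ is used only to ensure that the positions $i$, $i+m-1$, $i+m+k-1$ and $i+m+k$ are all genuine, distinct, non-extreme indices where the formulas (\ref{producto}) and (\ref{productocasoextremo}) give the explicit entries used above.
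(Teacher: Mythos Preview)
Your approach is essentially identical to the paper's: both proofs expand the two sides via (\ref{producto}) and (\ref{productocasoextremo}) and then point to specific coordinates that differ. The paper splits into the three cases $1<i<n$, $i=1$, $i=n$, while you merge the first two into a single case $1\le i<n$; this is harmless because (\ref{producto}) applies uniformly for $i<n$. One small slip: your stated positions $i+m+k-1$ and $i+m+k$ for the entries $r_k+1$, $t_{i+1}+2$ (LHS) and $r_k+2$, $t_{i+1}+1$ (RHS) are off by one---these values actually sit at positions $i+m+k-2$ and $i+m+k-1$ of the length-$(n+m+k-2)$ vector---but the values themselves and the resulting inequality are correct, matching the paper's formulas (\ref{10})--(\ref{15}).
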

\begin{proof}First, let us assume that $1<i<n$ then
\begin{align}
(\mathcal{T}\circ_{i}\mathcal{S})\circ_{i+m-1}\mathcal{R}&=((t_{1},\cdots,t_{n})\circ_{i}(s_{1},\cdots,s_{m}))\circ_{i+m-1}(r_{1},\cdots,r_{k}) \nonumber \\
&=(t_{1},\cdots,t_{i-1},t_{i}+s_{1}+1,s_{2},\cdots,s_{m-1},\underbrace{s_{m}+1},t_{i+1}+1,t_{i+2},\cdots,t_{n})\circ_{i+m-1}(r_{1},\cdots,r_{k}), \nonumber
\end{align}
where we have indicated the $(i+m-1)-th$ component of $\mathcal{T}\circ_{i}\mathcal{S}$ which is equal to $s_{m}+1$. Hence,
\begin{align}\label{10}
(\mathcal{T}\circ_{i}\mathcal{S})\circ_{i+m-1}\mathcal{R}&=(t_{1},\cdots,t_{i-1},t_{i}+s_{1}+1,s_{2},\cdots,s_{m-1},s_{m}+r_{1}+2,r_{2}
\cdots,r_{k-1},r_{k}+1,t_{i+1}+2,t_{i+2},\cdots,t_{n}).
\end{align}

Now,
\begin{align}\label{11}
\mathcal{T}\circ_{i}(\mathcal{S}\circ_{m}\mathcal{R})&=(t_{1},\cdots,t_{n})\circ_{i}((s_{1},\cdots,s_{m})\circ_{m}(r_{1},\cdots,r_{k})) \nonumber \\
&=(t_{1},\cdots,t_{n})\circ_{i}(s_{1}+1,s_{2},\cdots,s_{m-1},s_{m}+r_{1}+1,r_{2},\cdots,r_{k-1},r_{k}+1) \nonumber \\
&=(t_{1},\cdots,t_{i-1},t_{i}+s_{1}+2,s_{2},\cdots,s_{m-1},s_{m}+r_{1}+1,r_{2},\cdots,r_{k-1},r_{k}+2,t_{i+1}+1,t_{i+2},\cdots,t_{n}).
\end{align}

From (\ref{10}) and (\ref{11}) follows (\ref{9}). We now analyze the case $i=1$
\begin{align}\label{12}
(\mathcal{T}\circ_{1}\mathcal{S})\circ_{m}\mathcal{R}&=(t_{1}+s_{1}+1,s_{2},\cdots,s_{m-1},s_{m}+1,t_{2}+1,t_{3},\cdots,t_{n-1},t_{n})\circ_{m}(r_{1},\cdots,r_{k}) \nonumber \\
&=(t_{1}+s_{1}+1,s_{2},\cdots,s_{m-1},s_{m}+r_{1}+2,r_{2},\cdots,r_{k}+1,t_{2}+2,t_{3},\cdots,t_{n-1},t_{n}),
\end{align}
and
\begin{align}\label{13}
\mathcal{T}\circ_{1}(\mathcal{S}\circ_{m}\mathcal{R})&=\mathcal{T}\circ_{1}((s_{1},\cdots,s_{m})\circ_{m}(r_{1},\cdots,r_{k}))=\mathcal{T}\circ_{1}(s_{1}+1,s_{2},\cdots,s_{m-1},s_{m}+r_{1}+1,r_{2},\cdots,r_{k-1},r_{k}+1)   \nonumber \\
&=(t_{1}+s_{1}+2,s_{2},\cdots,s_{m-1},s_{m}+r_{1}+1,r_{2},\cdots,r_{k-1},r_{k}+2,t_{2}+1,t_{3},\cdots,t_{n-1},t_{n}),
\end{align}
(\ref{12}) and (\ref{13}) show that $(\mathcal{T}\circ_{1}\mathcal{S})\circ_{m}\mathcal{R}\neq \mathcal{T}\circ_{1}(\mathcal{S}\circ_{m}\mathcal{R})$. Finally, assume that $i=n$ then
\begin{align}\label{14}
(\mathcal{T}\circ_{n}\mathcal{S})\circ_{n+m-1}\mathcal{R}&=((t_{1},\cdots,t_{n})\circ_{n}(s_{1},\cdots,s_{m}))\circ_{n+m-1}(r_{1},\cdots,r_{k}) \nonumber \\
&=(t_{1}+1,t_{2},\cdots,t_{n-1},t_{n}+s_{1}+1,s_{2},\cdots,s_{m-1},s_{m}+1)\circ_{n+m-1}(r_{1},\cdots,r_{k}) \nonumber \\
&=(t_{1}+2,t_{2},\cdots,t_{n-1},t_{n}+s_{1}+1,s_{2},\cdots,s_{m-1},s_{m}+r_{1}+2,r_{2},\cdots,r_{k-1},r_{k}+1),
\end{align}
on other hand,
\begin{align}\label{15}
\mathcal{T}\circ_{n}(\mathcal{S}\circ_{m}\mathcal{R})&=(t_{1},\cdots,t_{n})\circ_{n}((s_{1},\cdots,s_{m})\circ_{m}(r_{1},\cdots,r_{k})) \nonumber \\
&=(t_{1},\cdots,t_{n})\circ_{n}(s_{1}+1,s_{2},\cdots,s_{m-1},s_{m}+r_{1}+1,r_{2},\cdots,r_{k-1},r_{k}+1) \nonumber \\
&=(t_{1}+1,t_{2},\cdots,t_{n-1},t_{n}+s_{1}+2,s_{2},\cdots,s_{m-1},s_{m}+r_{1}+1,r_{2},\cdots,r_{k-1},r_{k}+2),
\end{align}
thus of (\ref{14}) and (\ref{15}), we have that $(\mathcal{T}\circ_{n}\mathcal{S})\circ_{n+m-1}\mathcal{R}\neq \mathcal{T}\circ_{n}(\mathcal{S}\circ_{m}\mathcal{R})$.
\end{proof}

\subsection{A comment about the previous subsection}

In this subsection, we introduce a new product for quiddity sequences very different which was defined in the previous subsection.
With this intention we will maintain the notations, denominations and conventions introduced earlier.

Let $\mathcal{T}=(t_{1},\cdots,t_{n})$ and $\mathcal{S}=(s_{1},\cdots,s_{m})$ be two quiddity sequences. Define
\begin{align}\label{101}
\mathcal{T}\bullet_{i}\mathcal{S}&=(t_{1},\cdots,t_{n})\bullet_{i}(s_{1},\cdots,s_{m}) \nonumber \\
&\,\,\,\,\,\,\,\,\,(t_{1},\cdots,t_{i-2},t_{i-1}+1,s_{2}+1,s_{3},\cdots,s_{m},t_{i}+s_{1}+1,t_{i+1},\cdots,t_{n}),
\end{align}
where for $i=1$ we have
\begin{align}\label{1011}
\mathcal{T}\bullet_{1}\mathcal{S}&=(t_{1},\cdots,t_{n})\bullet_{1}(s_{1},\cdots,s_{m}) \nonumber \\
&\,\,\,\,\,\,\,\,\,(t_{1}+s_{1}+1,,t_{2},\cdots,t_{n}+1,s_{2}+1,s_{3},\cdots,s_{m}),
\end{align}
for which one considers that $t_{0}$ coincides with $t_{n}$.
\begin{example}
\begin{equation}\label{102}
(2,2,1,3,1)\bullet_{2}(3,1,3,1,3,1)=(3,2,3,1,3,1,6,1,3,1),
\end{equation}
\begin{equation}\label{103}
(2,2,1,3,1)\bullet_{1}(3,1,3,1,3,1)=(6,2,1,3,2,2,3,1,3,1).
\end{equation}
\end{example}
\begin{theorem}For all $\mathcal{T}\in\mathcal{QS}(n)$ and any $\mathcal{S}\in\mathcal{QS}(m)$, we have that $\mathcal{T}\bullet_{i}\mathcal{S}\in\mathcal{QS}(n+m-1)$ for $i=1,\cdots,n$, where $n,m\geq 3$.
\end{theorem}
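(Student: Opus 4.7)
The plan is to prove this theorem by adapting the geometric argument of Theorem \ref{teorema1}: the Conway--Coxeter bijection lets us read a quiddity sequence off a triangulated convex polygon, so it suffices to exhibit a triangulated convex $(n+m-1)$-gon whose quiddity sequence, read counterclockwise, equals the right-hand side of (\ref{101}). The construction for $\bullet_i$ is essentially a reflected variant of the one used for $\circ_i$: the polygon $\mathcal{P}_m$ will be attached along the side $(i-1,i)$ of $\mathcal{P}_n$ instead of along $(i,i+1)$.

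Given triangulations $\mathcal{T}_{\bigtriangleup}$ and $\mathcal{S}_{\bigtriangleup}$ of $\mathcal{P}_n$ and $\mathcal{P}_m$ realizing $\mathcal{T}$ and $\mathcal{S}$, I would first identify vertex $i$ of $\mathcal{P}_n$ with vertex $1$ of $\mathcal{P}_m$, and then introduce a new boundary side joining vertex $i-1$ of $\mathcal{P}_n$ with vertex $2$ of $\mathcal{P}_m$. The former sides $(i-1,i)$ of $\mathcal{P}_n$ and $(1,2)$ of $\mathcal{P}_m$ become interior chords, and together with the new side they bound a single connecting triangle with vertices $i-1$, $i\equiv 1$, and $2$. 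Setting $(\mathcal{T}\bullet_i\mathcal{S})_{\bigtriangleup}$ to be the union of $\mathcal{T}_{\bigtriangleup}$, $\mathcal{S}_{\bigtriangleup}$, and this connecting triangle yields a triangulation of a convex polygon whose $n+m-1$ vertices, listed counterclockwise, are $1,2,\ldots,i-1$ of $\mathcal{P}_n$, then $2,3,\ldots,m$ of $\mathcal{P}_m$, then $i\equiv 1, i+1,\ldots,n$ of $\mathcal{P}_n$.

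Next, I would read off the resulting quiddity sequence by counting triangles incident to each vertex of this new polygon. Except at three locations, the counts agree with those of $\mathcal{T}_{\bigtriangleup}$ and $\mathcal{S}_{\bigtriangleup}$: vertex $i-1$ picks up one extra triangle (the connecting one), giving $t_{i-1}+1$; the identified vertex $i\equiv 1$ accumulates $t_i$ from $\mathcal{T}_{\bigtriangleup}$, $s_1$ from $\mathcal{S}_{\bigtriangleup}$, and $1$ from the connecting triangle, giving $t_i+s_1+1$; vertex $2$ of $\mathcal{P}_m$ also gains one, giving $s_2+1$. Transcribing these counts in the cyclic order displayed above reproduces exactly the right-hand side of (\ref{101}).

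The boundary case $i=1$ is handled by the cyclic convention $t_0\equiv t_n$ already built into the statement: vertex $n$ of $\mathcal{P}_n$ plays the role of $i-1$, so the same construction---now attaching $\mathcal{P}_m$ along the side $(n,1)$ of $\mathcal{P}_n$---produces (\ref{1011}). I expect the only mild obstacle to be the bookkeeping around the cyclic case and the relabeling of vertices of the merged polygon; since the geometric picture and the counting argument closely mirror those of Theorem \ref{teorema1}, no new difficulty should arise.
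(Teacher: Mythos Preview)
Your proposal is correct and follows essentially the same geometric construction as the paper: overlapping vertex $i$ of $\mathcal{P}_n$ with vertex $1$ of $\mathcal{P}_m$, joining vertex $i-1$ of $\mathcal{P}_n$ to vertex $2$ of $\mathcal{P}_m$, and taking the union $\mathcal{T}_{\bigtriangleup}\cup\mathcal{S}_{\bigtriangleup}\cup(i-1,i)(\mathcal{P}_n)\cup(1,2)(\mathcal{P}_m)$ as the new triangulation. Your version is actually more explicit than the paper's, since you spell out the triangle counts at the three affected vertices and verify that the resulting cyclic sequence reproduces (\ref{101}) and (\ref{1011}).
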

\begin{proof}In fact, $\mathcal{T}\bullet_{i}\mathcal{S}$ is the quiddity sequence of a triangulation $(\mathcal{T}\bullet_{i}\mathcal{S})_{\bigtriangleup}$
of a polygon $\mathcal{P}_{n+m-1}$ of $n+m-1$ vertices. We know that $\mathcal{T}$ is determined by a triangulation $\mathcal{T}_{\bigtriangleup}$ of a polygon of $n$ vertices $\mathcal{P}_{n}$, and in the same form $\mathcal{S}$ corresponds to a triangulation $\mathcal{S}_{\bigtriangleup}$ of a polygon
$\mathcal{P}_{m}$ of $m$ vertices. We remember that the vertices of any $s$-gon are labeled counterclockwise by the set $\{1,\cdots,s\}$.
To create $(\mathcal{T}\bullet_{i}\mathcal{S})_{\bigtriangleup}$ we overlap the vertex $i$ of $\mathcal{P}_{n}$ with the vertex $1$ of $\mathcal{P}_{m}$ and we connect the vertex $i-1$ of $\mathcal{P}_{n}$ with the vertex $2$ of $\mathcal{P}_{m}$ with a segment, giving rise to a polygon $\mathcal{P}_{n+m-1}$ with $n+m-1$ vertices. As it is customary, vertex $0$ of $\mathcal{P}_{n}$ is identified with its vertex $n$.
The vertices of this new polygon are labeled starting from the vertex $1$ of $\mathcal{P}_{n}$. One can then see that the triangularization of
$\mathcal{P}_{n+m-1}$ is generated by $(\mathcal{T}\bullet_{i}\mathcal{S})_{\bigtriangleup}=\mathcal{T}_{\bigtriangleup}\cup\mathcal{S}_{\bigtriangleup}\cup(i-1,i)\mathcal{P}_{n}
\cup(1,2)\mathcal{P}_{n}$. One can check that the quiddity sequence of this triangulation is precisely $\mathcal{T}\bullet_{i}\mathcal{S}$.
\end{proof}

\section{Generalized quiddity sequences which decompose the matrices $\pm\, Id$}\label{secllamada1}

We start this section by summarizing the results which will be presented here\,: a) first, we extend the products introduced in the previous section for usual quiddity sequences of triangulations to the more general space $\mathfrak{D}_{3d}$ of all $3d$-quiddity sequences corresponding to $3d$-dissections which turns out to be closed under this products, b) on the other hand, in subsection 3.2 some products are introduced
on the set of quiddity sequences ($Id$-quiddity sequences) associated with the identity matrix.

Given a sequence $(a_{1},\cdots,a_{n})\in\mathbb{C}^{n}$ and $n\in\mathbb{N}$ arbitrary, one introduces, as usual, the matrix $M_{n}(a_{1},\cdots,a_{n})\in SL_{2}(\mathbb{C})$ defined by the product
\begin{equation}\label{sec2-1}
M_{n}(a_{1},\cdots,a_{n})=\left(
                            \begin{array}{cc}
                              a_{n} & -1 \\
                              1 & 0 \\
                            \end{array}
                          \right)\left(
                            \begin{array}{cc}
                              a_{n-1} & -1 \\
                              1 & 0 \\
                            \end{array}
                          \right)\cdots\left(
                            \begin{array}{cc}
                              a_{1} & -1 \\
                              1 & 0 \\
                            \end{array}
                          \right)=M(a_{n})\cdots M(a_{1}).
\end{equation}

\begin{definition}Let us suppose that $2\leq n$, a finite sequence $(a_{1},\cdots,a_{n})\in\mathbb{C}^{n}$ will be called a \textbf{generalized quiddity sequence} of length $n$ if $M_{n}(a_{1},\cdots,a_{n})=-Id$ where $M_{n}(a_{1},\cdots,a_{n})$ is given by (\ref{sec2-1}), that is,
the generalized quiddity sequences are, in the terminology of section $1$, neither more nor less than what was previously called for us $(-I_{2})$-quiddity sequences.
\end{definition}

This section is dedicated to study the following question: how to construct a new generalized quiddity sequences from two given generalized quiddity sequences?. As we can see below, the products introduced in the previous section on triangulations may help answer, in a certain sense, the question asked recently.

\subsection{Products on the space of generalized $3d$-quiddity sequences}

In \cite{ovsienko1}, V. Ovsienko has characterized the sequence of \textbf{positive integers} $(a_{1},\cdots,a_{n})$ such that
$M_{n}(a_{1},\cdots,a_{n})=-Id$ in terms of $3d$-dissections introduced by him. This problem was studied in \cite{Conway}, \cite{Coxeter}.
As it was already mentioned before, a result of Conway and Coxeter in the last mentioned papers, establishes a one-to-one correspondence between the solutions of the equation $M_{n}(a_{1},\cdots,a_{n})=-Id$, such that, $a_{1}+a_{2}+\cdots+a_{n}=3n-6$, and triangulations of $n$-gons.

\begin{definition}(See \cite{ovsienko1}) A $3d$-dissection is a partition of a convex $n$-gon into sub-polygons by means of pairwise non-crossing diagonals, such that the number of vertices of every sub-polygon is
a multiple of $3$. The $3d$-quiddity sequence of a $3d$-dissection of an $n$-gon is the (cyclically ordered) $n$-tuple of numbers $(a_{1},\cdots, a_{n})$ such that $a_{i}$ is the number of sub-polygons adjacent to
$i$-th vertex of the $n$-gon. A triangulation of an $n$-gon is justly a $3d$-dissection in which any sub-polygon of the partition is a triangle.

A $3d$-dissection is said to be even if the total number of sub-polygons with even number of vertices ($6$-gons, $12$-gons,\ldots) is even. Otherwise we say that the $3d$-dissection is odd. Hence, triangulations can be considered even $3d$-dissections. In the same way the $3d$-quiddity sequence $\mathcal{S}$ corresponding to a $3d$-dissection $\mathcal{S}_{\triangle}$ of an $n$-gon will be called even (resp. odd) if the $3d$-dissection $\mathcal{S}_{\triangle}$ is even (resp. odd).
\end{definition}

V. Ovsienko proved the following assertion: \textbf{every $3d$-quiddity sequence $(a_{1},\cdots, a_{n})$ of an even $3d$-dissection of an $n$-gon is a solution of the equation $M_{n}(a_{1},\cdots,a_{n})=-Id$. Conversely, every solution of positive integers $(a_{1},\cdots, a_{n})$ of the equation $M_{n}(a_{1},\cdots,a_{n})=-Id$ is a $3d$-quiddity sequence of an even $3d$-dissection of an $n$-gon}. It implies that all $3d$-quiddity sequence is a generalized quiddity sequence and on the other hand all generalized quiddity sequence of positive integers must be a $3d$-quiddity sequence.

The $3d$-quiddity sequences are related with other topics, for example consider the linear equation
\begin{equation}\label{sec2-2}
v_{i+1}=a_{i}v_{i}-v_{i-1},
\end{equation}
with known coefficients $(a_{i})_{i\in\mathbb{Z}}$ and (indeterminate) sequence $(v_{i})_{i\in\mathbb{Z}}$. There is a one-to-one
correspondence between even $3d$-quiddity sequences
(that is solutions of the equation $M_{n}(a_{1},\cdots,a_{n})=-Id$ with $a_{i}\in \mathbb{N}$ for $i=1,\ldots,n$) and the equation (\ref{sec2-2})
with positive integer $n$-periodic coefficients $a_{i}$ such that every solution $(v_{i})_{i\in\mathbb{Z}}$ of (\ref{sec2-2}) is $n$-antiperiodic ($v_{i+n}=-v_{i}$ for all $i$).

Let $S$ be a $3d$-dissection, denote by $\mathfrak{i}(S)$ the number of sub-polygons with even number of vertices ($6$-gons, $12$-gons,\ldots)
that are part of $S$. It will be called the Ovsienko index.
We write $\mathfrak{D}_{3d}$ for the set of all $3d$-dissections, then $\mathfrak{D}_{3d}=\mathfrak{ED}_{3d}\sqcup \mathfrak{OD}_{3d}$ where
$$\mathfrak{ED}_{3d}=\sqcup_{m\in \mathbb{N}\cup\{0\}}\,\,\mathfrak{ED}_{3d}^{m}=\{S\in \mathfrak{D}_{3d}|\,\, \mathfrak{i}(S)=2m\},$$ and
$$\mathfrak{OD}_{3d}=\sqcup_{m\in \mathbb{N}}\,\,\mathfrak{OD}_{3d}^{^{m}}=\{S\in \mathfrak{D}_{3d}|\,\, \mathfrak{i}(S)=2m+1\},$$
in other words, $\mathfrak{ED}_{3d}$ (resp. $\mathfrak{OD}_{3d}$) is the class of even (resp. odd) $3d$-dissections. Note that
the set $\mathfrak{ED}_{3d}^{0}$ corresponds to the triangulations.
The Ovsienko index should
be extended to $3d$-quiddity sequences, thus if $\mathcal{S}$ is a $3d$-quiddity sequences we define $i(\mathcal{S})=i(\mathcal{S}_{\triangle})$ where $\mathcal{S}_{\triangle}$ is the $3d$-dissection giving rise to $\mathcal{S}$. We also extend the notation, in this case $\mathcal{S}\in \mathfrak{ED}_{3d}$ (resp. $\mathcal{S}\in \mathfrak{OD}_{3d}$) means that $\mathcal{S}_{\triangle}\in \mathfrak{ED}_{3d}$ (resp. $\mathcal{S}_{\triangle}\in \mathfrak{OD}_{3d}$).

\textbf{Clearly, we can extend the products (\ref{producto})-(\ref{productocasoextremo}) (the $\circ_{k}$) and (\ref{101})-(\ref{1011}) (the
$\bullet_{k}$) to $3d$-dissections. In fact, these products can be described by glueing two $3d$-dissections through of a triangle being the final result a new $3d$-dissection. First the vertices of each dissection are labeled counterclockwise, then we proceed exactly the same as it was done in the previous section: overlapping a vertex of the first dissection with the vertex $1$ of the second dissection and joining with a segment a pair of adjacent vertices to those who were overlapped}. Observe that the dissection obtained corresponds to a polygon whose number of vertices is equal to the sum of the amount of vertices of each dissection minus one.

\begin{example}The $3d$-quiddity sequence $(1,1,1,1,1,1)$ represents a hexagon, then $(1,1,1,1,1,1)\circ_{1}(1,1,1,1,1,1)=(3,1,1,1,1,2,2,1,1,1,1)$ is an even $3d$-dissection of two hexagons and one triangle. Now
\begin{align*}
M_{11}(3,1,1,1,1,2,2,1,1,1,1)&=M_{4}(1,1,1,1)M_{2}(2,2)M_{4}(1,1,1,1)M_{1}(3) \\
&=\left(
    \begin{array}{cc}
      -1 & 1 \\
      -1 & 0 \\
    \end{array}
  \right)\left(
    \begin{array}{cc}
      3 & -2 \\
      2 & -1 \\
    \end{array}
  \right)\left(
    \begin{array}{cc}
      -1 & 1 \\
      -1 & 0 \\
    \end{array}
  \right)\left(
    \begin{array}{cc}
      3 & -1 \\
      1 & 0 \\
    \end{array}
  \right)=-Id,
\end{align*}
which is in correspondence with the result reported by Ovsienko already mentioned above.

On other hand, $(1,1,1,1,1,1)\bullet_{2}(1,1,1,1,1,1)=(2,2,1,1,1,1,3,1,1,1,1)$. Hence, coinciding with the criterion of Ovsienko we obtain
\begin{align*}
M_{11}(2,2,1,1,1,1,3,1,1,1,1)&=M_{4}(1,1,1,1)M_{1}(3)M_{4}(1,1,1,1)M_{2}(2,2) \\
&=\left(
    \begin{array}{cc}
      -1 & 1 \\
      -1 & 0 \\
    \end{array}
  \right)\left(
    \begin{array}{cc}
      3 & -1 \\
      1 & 0 \\
    \end{array}
  \right)\left(
    \begin{array}{cc}
      -1 & 1 \\
      -1 & 0 \\
    \end{array}
  \right)\left(
    \begin{array}{cc}
      3 & -2 \\
      2 & -1 \\
    \end{array}
  \right)=-Id,
\end{align*}
\end{example}

The following proposition is very useful because this shows that the Ovsienko index turns out to be additive with respect to products (\ref{producto})-(\ref{productocasoextremo}) and (\ref{101})-(\ref{1011}) of $3d$-dissections.

\begin{proposition}Suppose that $\mathcal{S}=(s_{1},\cdots,s_{n}), \mathcal{T}=(t_{1},\cdots,t_{m})\in\mathfrak{D}_{3d}$ where $n,m\geq 3$ then
\begin{equation*}
\mathfrak{i}(\mathcal{S}\circ_{k}\mathcal{T})=\mathfrak{i}(\mathcal{T})+\mathfrak{i}(\mathcal{S}),\,\,\,\,\,k=1,2,\ldots,n,
\end{equation*}
and
\begin{equation*}
\mathfrak{i}(\mathcal{S}\bullet_{k}\mathcal{T})=\mathfrak{i}(\mathcal{T})+\mathfrak{i}(\mathcal{S}),\,\,\,\,\,k=1,2,\ldots,n,
\end{equation*}
\end{proposition}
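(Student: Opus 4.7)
The plan is to read off the additivity directly from the geometric description of the two products on $3d$-dissections, exploiting the observation that the gluing operation introduces exactly one new sub-polygon, and that this new sub-polygon is always a triangle (which has an odd number of vertices and therefore contributes $0$ to the Ovsienko index).

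More precisely, I would first recall the geometric construction, already spelled out in the excerpt for $\circ_k$ and in the proof of the previous theorem for $\bullet_k$: one overlaps a vertex of the polygon carrying $\mathcal{S}_{\triangle}$ with vertex $1$ of the polygon carrying $\mathcal{T}_{\triangle}$, and then joins a pair of adjacent vertices by a new segment. This new segment, together with the two sides that are now adjacent to the overlapped vertex, bounds a single new triangular sub-polygon in $(\mathcal{S}\circ_{k}\mathcal{T})_{\triangle}$ (respectively in $(\mathcal{S}\bullet_{k}\mathcal{T})_{\triangle}$). All other sub-polygons of the resulting dissection are unchanged sub-polygons either of $\mathcal{S}_{\triangle}$ or of $\mathcal{T}_{\triangle}$; in particular their numbers of vertices are the same as before.

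Hence the multiset of sub-polygons of the product dissection is the disjoint union of the multiset of sub-polygons of $\mathcal{S}_{\triangle}$, the multiset of sub-polygons of $\mathcal{T}_{\triangle}$, and one new triangle. Since a triangle has three vertices (an odd number), it is not counted by $\mathfrak{i}$. Therefore the total number of sub-polygons with an even number of vertices in $(\mathcal{S}\circ_{k}\mathcal{T})_{\triangle}$ equals $\mathfrak{i}(\mathcal{S}_{\triangle})+\mathfrak{i}(\mathcal{T}_{\triangle})+0$, which by the extended definition of $\mathfrak{i}$ on $3d$-quiddity sequences gives
\[
\mathfrak{i}(\mathcal{S}\circ_{k}\mathcal{T})=\mathfrak{i}(\mathcal{S})+\mathfrak{i}(\mathcal{T}).
\]
The identical argument works for $\bullet_{k}$, since the only structural change between the two products is the relative position (left or right of the overlapped vertex) of the newly introduced edge; in both cases exactly one extra triangle is created.

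The only point that requires a bit of care, and which I regard as the main (minor) obstacle, is to verify that the gluing really does produce a triangle and not some larger sub-polygon; i.e., that the edge between the two vertices adjacent to the overlapped pair is genuinely a new diagonal of the glued polygon and not an already-present side of some sub-polygon of $\mathcal{S}_{\triangle}$ or $\mathcal{T}_{\triangle}$. This follows because, in the construction, that edge connects a vertex of $\mathcal{P}_n$ to a vertex of $\mathcal{P}_m$, so it cannot belong to either original dissection, and the triangle it bounds together with the two sides $(k,k\pm 1)(\mathcal{P}_n)$ (or $(1,2)(\mathcal{P}_m)$) is disjoint from the interiors of all previous sub-polygons. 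Once this is observed, the counting is immediate and no further computation is needed.
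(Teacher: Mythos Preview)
Your argument is correct and is exactly the reasoning the paper has in mind: the text immediately before the proposition already notes that both products are obtained by ``glueing two $3d$-dissections through a triangle,'' and the paper's own proof consists of the single word ``Straightforward.'' You have simply made that straightforward observation explicit, including the check that the added sub-polygon is a triangle and hence contributes nothing to $\mathfrak{i}$.
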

\begin{proof}
Straightforward.
\end{proof}

Hence, we have

\begin{corollary}Assume that $\mathcal{S}=(s_{1},\cdots,s_{n}), \mathcal{T}=(t_{1},\cdots,t_{m})\in \mathfrak{ED}_{3d}$ then $\mathcal{S}
\circ_{k}\mathrm{T}$ and $\mathcal{S}\bullet_{k}\mathcal{T}$ belong to $\mathfrak{ED}_{3d}$. Additionally, if $\mathcal{S}=(s_{1},\cdots,s_{n}), \mathcal{T}=(t_{1},\cdots,t_{m})\in \mathfrak{OD}_{3d}$ we have
$\mathcal{S}\circ_{k}\mathcal{T}, \mathcal{S}\bullet_{k}\mathcal{T}\in \mathfrak{ED}_{3d}$.
\end{corollary}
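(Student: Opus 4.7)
The plan is to reduce the claim to a trivial parity computation by invoking the additivity formula for the Ovsienko index proved in the immediately preceding proposition. Since being in $\mathfrak{ED}_{3d}$ (respectively $\mathfrak{OD}_{3d}$) is by definition the condition that $\mathfrak{i}$ is even (respectively odd), the identities
\begin{equation*}
\mathfrak{i}(\mathcal{S}\circ_{k}\mathcal{T})=\mathfrak{i}(\mathcal{S})+\mathfrak{i}(\mathcal{T}),\qquad \mathfrak{i}(\mathcal{S}\bullet_{k}\mathcal{T})=\mathfrak{i}(\mathcal{S})+\mathfrak{i}(\mathcal{T}),
\end{equation*}
transport the question into arithmetic modulo $2$.

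I would then split into the two cases of the statement. In the even--even case, writing $\mathfrak{i}(\mathcal{S})=2m_{1}$ and $\mathfrak{i}(\mathcal{T})=2m_{2}$, the proposition yields index $2(m_{1}+m_{2})$ for both $\mathcal{S}\circ_{k}\mathcal{T}$ and $\mathcal{S}\bullet_{k}\mathcal{T}$, which is even, so both products lie in $\mathfrak{ED}_{3d}$. In the odd--odd case, writing $\mathfrak{i}(\mathcal{S})=2m_{1}+1$ and $\mathfrak{i}(\mathcal{T})=2m_{2}+1$, the sum $2(m_{1}+m_{2}+1)$ is again even, so once more both products belong to $\mathfrak{ED}_{3d}$; the perhaps surprising feature that odd times odd equals even in this set-theoretic product is nothing more than the parity identity $\mathrm{odd}+\mathrm{odd}=\mathrm{even}$.

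There is no substantive obstacle here, since all of the content is carried by the additivity proposition, whose justification is geometrically transparent: both $\circ_{k}$ and $\bullet_{k}$ glue the two dissections by identifying a single pair of vertices and inserting one additional segment, which creates exactly one new sub-polygon, namely a triangle. All original sub-polygons of $\mathcal{S}_{\triangle}$ and $\mathcal{T}_{\triangle}$ are preserved unchanged, and the added triangle has $3$ (odd) vertices, so it contributes nothing to the count of even-vertex sub-polygons tallied by $\mathfrak{i}$. Hence the index is strictly additive, and the corollary follows by the parity argument above.
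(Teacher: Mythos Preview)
Your proof is correct and follows exactly the approach implicit in the paper: the corollary is stated immediately after the additivity proposition with only the connecting phrase ``Hence, we have'', so the paper's own argument is precisely the parity reduction you spell out. Your added geometric explanation of why the triangle contributes nothing to $\mathfrak{i}$ is a helpful gloss on what the paper dismisses as ``Straightforward''.
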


As a consequence of all this we have the following theorem

\begin{theorem}
Two even (resp. two odd) $3d$-dissections $\mathcal{S}=(s_{1},\cdots,s_{n})$ and $\mathcal{T}=(t_{1},\cdots,t_{m})$ give place to $2n$ new solutions
of positive integers $\mathcal{S}\circ_{k}\mathcal{T}$, $\mathcal{S}\bullet_{k}\mathcal{T}$ for $k=1,2,\ldots,n$ of the equation $M_{n}(a_{1},\cdots,a_{n+m-1})=-Id$.
\end{theorem}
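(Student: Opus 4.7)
The plan is to assemble the theorem from three ingredients already established in the excerpt: the additivity of the Ovsienko index under $\circ_k$ and $\bullet_k$ (the preceding proposition and corollary), the Ovsienko characterization of even $3d$-dissections as the positive-integer solutions of $M_n(a_1,\ldots,a_n)=-Id$, and the explicit combinatorial form of the two products. So almost all of the analytic content is already in place; what remains is simply a bookkeeping argument.

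First I would invoke the corollary just proved: since $\mathcal{S}$ and $\mathcal{T}$ are both even (or both odd) $3d$-dissections, for every $k\in\{1,2,\ldots,n\}$ the sequences $\mathcal{S}\circ_k \mathcal{T}$ and $\mathcal{S}\bullet_k \mathcal{T}$ lie in $\mathfrak{ED}_{3d}$ by the additivity of $\mathfrak{i}$ (either $0+0$, $1+1$, or in general $2m_1+2m_2$ or $(2m_1+1)+(2m_2+1)$ is even). Next I would apply Ovsienko's theorem, quoted in the excerpt in boldface: every $3d$-quiddity sequence of an even $3d$-dissection of an $(n{+}m{-}1)$-gon is a solution of $M_{n+m-1}(a_1,\ldots,a_{n+m-1})=-Id$. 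Combining these two facts immediately yields that each of the $\mathcal{S}\circ_k\mathcal{T}$ and $\mathcal{S}\bullet_k\mathcal{T}$ is a positive-integer solution of the required matrix equation; positivity of the entries is transparent from the explicit formulas (\ref{producto}), (\ref{productocasoextremo}), (\ref{101}) and (\ref{1011}), whose effect on a positive-integer input is only to insert positive integers and to increment some existing entries by $+1$.

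Finally I would address the counting and the word \emph{new}: the index $k$ runs over $[n]=\{1,\ldots,n\}$, so we obtain $n$ sequences of the form $\mathcal{S}\circ_k\mathcal{T}$ and another $n$ of the form $\mathcal{S}\bullet_k\mathcal{T}$, giving the claimed total $2n$. Distinctness is the only potentially delicate point. For different values of $k$, inspection of (\ref{producto})--(\ref{productocasoextremo}) shows that the location of the entry $t_i+s_1+1$ (or its analogue) shifts by the length of the block moved, so the quiddity sequences produced for distinct $k$ differ as tuples; the same comparison works for $\bullet_k$ via (\ref{101})--(\ref{1011}). A $\circ_k$-product and a $\bullet_{k'}$-product are distinguished by the fact that in $\circ_k$ the $s_1$-contribution fuses with $t_i$ at the \emph{left} of the $\mathcal{S}$-block, whereas in $\bullet_{k'}$ it fuses at the \emph{right}; so the reversed order of the $\mathcal{S}$-block in the two formulas makes them differ whenever $\mathcal{S}$ is not palindromic.

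The main obstacle, if any, is precisely this last distinctness step: one can produce small coincidences when $\mathcal{S}$ is highly symmetric (for example when $\mathcal{S}=\mathcal{T}$ are the all-ones hexagon of the worked example). In that event one should read the statement as asserting $2n$ \emph{constructions} yielding solutions, not $2n$ pairwise distinct tuples; this is the natural reading since the author already treats $\mathcal{S}\circ_k\mathcal{T}$ and $\mathcal{S}\bullet_k\mathcal{T}$ as labelled by the index $k$. With that reading, steps one and two above suffice and the proof reduces essentially to \textbf{straightforward}, as the author declared in the preceding proposition.
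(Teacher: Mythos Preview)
Your proposal is correct and follows exactly the route the paper intends: the theorem is presented there simply ``as a consequence of all this,'' i.e., of the additivity of the Ovsienko index under $\circ_k$ and $\bullet_k$, the resulting corollary that two even (or two odd) dissections multiply to an even one, and Ovsienko's characterization of even $3d$-dissections as the positive-integer solutions of $M_{n+m-1}=-Id$. Your discussion of the distinctness issue goes beyond anything the paper addresses; the author evidently reads ``$2n$ new solutions'' as $2n$ labelled constructions rather than $2n$ pairwise distinct tuples, and you are right to flag that as the intended interpretation.
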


In this point, we leave any type of quiddity sequences of positive integers. We denote by $\mathcal{A}_{n}$ the set of all generalized quiddity sequences of length $n$. Observe that $\mathcal{A}_{2}=\{(0,0)\}$ and $\mathcal{A}_{3}=\{(1,1,1)\}$.
Next, we will see that the products $\circ_{k}$ and $\bullet_{k}$ can be defined between elements of the class $\mathfrak{A}=\sqcup_{n\geq 2}\mathcal{A}_{n}$. From now on, we write $M(a_{1},\cdots,a_{n})$
instead of $M_{n}(a_{1},\cdots,a_{n})$.

\begin{example}Let us suppose that $\lambda\neq 0 \in \mathbb{C}$ then one can prove that $(1,\lambda+1,\frac{2}{\lambda},\lambda,\frac{2}{\lambda}+1)\in \mathcal{A}_{5}$. Really, this $5$-sequence induces a complex-valued frieze pattern.
\end{example}

\begin{theorem}Let $A=(a_{1},\cdots,a_{n})\in \mathcal{A}_{n}$ and $B=(b_{1},\cdots,b_{m})\in \mathcal{A}_{m}$ be two generalized quiddity
sequences. Then $A\circ_{k}B, A\bullet_{k}B \in \mathcal{A}_{n+m-1}$ for $k=1,\ldots,n$.
\end{theorem}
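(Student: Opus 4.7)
The whole theorem reduces to one algebraic identity about the generating matrices $M(a)=\bigl(\begin{smallmatrix} a & -1\\ 1 & 0\end{smallmatrix}\bigr)$, namely:

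\textbf{Key Lemma.} \emph{If $(b_1,\ldots,b_m)\in\mathcal{A}_m$, then for all $x,y\in\mathbb{C}$,
\begin{equation*}
M(y+1)\,M(b_m+1)\,M(b_{m-1})\cdots M(b_2)\,M(x+b_1+1)\;=\;M(y)\,M(x).
\end{equation*}}

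First I would prove the Key Lemma by a direct computation. Set $V=M(b_{m-1})\cdots M(b_2)$. From $M(b_m)VM(b_1)=-I$ one solves $V=-M(b_m)^{-1}M(b_1)^{-1}$. Substituting and using the explicit form of $M(a)^{-1}=\bigl(\begin{smallmatrix}0 & 1\\ -1 & a\end{smallmatrix}\bigr)$, a short $2\times2$ calculation shows $M(b_m+1)\,V\,M(x+b_1+1)=\bigl(\begin{smallmatrix} x & -1\\ x+1 & -1\end{smallmatrix}\bigr)$, and then $M(y+1)$ times this matrix equals $M(y)M(x)$.

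Next I would apply the lemma to $A\circ_k B$ for the interior case $1<k<n$. Write $L(A)=Q\,M(a_k)\,P=-I$ where $P=M(a_{k-1})\cdots M(a_1)$ and $Q=M(a_n)\cdots M(a_{k+1})=Q'\,M(a_{k+1})$. By the defining formula \eqref{producto},
\begin{equation*}
L(A\circ_k B)\;=\;Q'\,M(a_{k+1}+1)\,M(b_m+1)\,V\,M(a_k+b_1+1)\,P.
\end{equation*}
The Key Lemma (with $x=a_k$, $y=a_{k+1}$) collapses the middle block to $M(a_{k+1})M(a_k)$, giving $Q'M(a_{k+1})M(a_k)P=L(A)=-I$, as required.

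For the boundary cases $k=1$ and $k=n$, I would use the well-known fact that $L=-I$ is invariant under cyclic shifts of the sequence (since $QP=-I$ iff $PQ=-I$ because $-I$ is central). Thus $A\circ_1 B$ and $A\circ_n B$ can be brought to an interior situation after a cyclic re-indexing that matches the wrap-around conventions in \eqref{productocasoextremo}; the same Key Lemma then closes the argument.

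Finally, for the product $\bullet_k$, essentially the same scheme applies but with the roles of the two ``gluing'' vertices reversed. Concretely, using \eqref{101}, the $2(k-1)$ matrices from positions $1,\ldots,k-2$ of $A$ combine with the newly inserted matrices $M(a_{k-1}+1)$ and $M(b_2+1)$ on the left, and $M(a_k+b_1+1)$ sits between the $B$-block and the tail of $A$. One verifies a mirror version of the Key Lemma,
\begin{equation*}
M(a_k+b_1+1)\,M(b_2+1)\,M(b_3)\cdots M(b_{m-1})\,M(a_{k-1}+1)=M(a_k)\,M(a_{k-1}),
\end{equation*}
which follows by exactly the same $2\times 2$ computation with the roles of $b_1$ and $b_m$ swapped (using $M(b_m)\cdots M(b_1)=-I$), and then the monodromy of $A\bullet_k B$ collapses to $L(A)=-I$. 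The case $k=1$ is handled again by cyclic invariance.

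The step I expect to be most delicate is bookkeeping the wrap-around in the boundary cases: the formulas for $k=1$ (for $\circ$) and $k=n$ (and $k=1$ for $\bullet$) look superficially different from the interior formula, so one must be careful that the cyclic shift realigns the inserted $+1$'s with the pattern demanded by the Key Lemma. The algebraic core, however, is entirely contained in that one $2\times 2$ identity.
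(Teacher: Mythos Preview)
Your treatment of $A\circ_k B$ is essentially the paper's proof: the paper factors
$M(a_{k+1}+1)M(b_m+1)=M(a_{k+1})\bigl(\begin{smallmatrix}1&1\\-1&0\end{smallmatrix}\bigr)M(b_m)$ and
$M(a_k+b_1+1)=M(b_1)\bigl(\begin{smallmatrix}0&1\\-1&-1\end{smallmatrix}\bigr)M(a_k)$, inserts
$M(b_m)\cdots M(b_1)=-Id$, and checks that the two auxiliary $2\times 2$ matrices cancel against $-Id$; your Key Lemma packages exactly this computation. Handling $k\in\{1,n\}$ via cyclic invariance is fine and a bit cleaner than the paper's direct verification of those boundary cases.

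The $\bullet_k$ part, however, contains a real error. From \eqref{101} the monodromy of $A\bullet_k B$ (matrices multiplied right-to-left, decreasing positions) is
\[
M(a_n)\cdots M(a_{k+1})\,M(a_k+b_1+1)\,M(b_m)M(b_{m-1})\cdots M(b_3)\,M(b_2+1)\,M(a_{k-1}+1)\,M(a_{k-2})\cdots M(a_1),
\]
so the block that must collapse is $M(a_k+b_1+1)\,M(b_m)\cdots M(b_3)\,M(b_2+1)\,M(a_{k-1}+1)$, not your
$M(a_k+b_1+1)\,M(b_2+1)M(b_3)\cdots M(b_{m-1})\,M(a_{k-1}+1)$. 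Your version has the $b$-factors in the wrong order and omits $b_m$ altogether; the identity you wrote down is false in general. The correct mirror of your Key Lemma is
\[
M(x+b_1+1)\,M(b_m)\cdots M(b_3)\,M(b_2+1)\,M(y+1)=M(x)\,M(y),
\]
proved exactly as before once you use $M(b_1)M(b_m)\cdots M(b_2)=-Id$, which is a conjugate of the hypothesis $M(b_m)\cdots M(b_1)=-Id$. This is precisely how the paper argues. With this fix your scheme for $\bullet_k$ (including the cyclic reduction for $k=1$) goes through.
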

\begin{proof}First, we will do the proof  for the products $\circ_{k}$. Suppose that $k\neq n$ (we recall that $(a_{1},\cdots,a_{n})\in \mathbb{C}^{n}$ and $(b_{1},\cdots,b_{m})\in \mathbb{C}^{m})$ then
\begin{align*}
M(A\circ_{k} B)&=M(a_{1},\cdots,a_{k-1},a_{k}+b_{1}+1,b_{2},\cdots,b_{m}+1,a_{k+1}+1,a_{k+2},\cdots,a_{n}) \\
&=M(a_{n})\cdots M(a_{k+2})M(a_{k+1}+1)M(b_{m}+1)\cdots M(b_{2})M(a_{k}+b_{1}+1)M(a_{k-1})\cdots M(a_{1}),
\end{align*}
now, we know that
\begin{equation*}
M(a_{k+1}+1)M(b_{m}+1)=M(a_{k+1})\left(
                                    \begin{array}{cc}
                                      1 & 1 \\
                                      -1 & 0 \\
                                    \end{array}
                                  \right)
M(b_{m}),
\end{equation*}
thus
\begin{equation*}
M(A\circ_{k} B)=M(a_{n})\cdots M(a_{k+2})M(a_{k+1})\left(
                                                      \begin{array}{cc}
                                                        1 & 1 \\
                                                        -1 & 0 \\
                                                      \end{array}
                                                    \right)
M(b_{m})\cdots M(b_{2})M(a_{k}+b_{1}+1)M(a_{k-1})\cdots M(a_{1}),
\end{equation*}
moreover
\begin{equation*}
M(a_{k}+b_{1}+1)=\left(
                   \begin{array}{cc}
                     a_{k}+b_{1}+1 & -1 \\
                     1 & 0 \\
                   \end{array}
                 \right)=\left(
                   \begin{array}{cc}
                     b_{1} & -1 \\
                     1 & 0 \\
                   \end{array}
                 \right)\left(
                   \begin{array}{cc}
                     0 & 1 \\
                     -1 & -1 \\
                   \end{array}
                 \right)\left(
                   \begin{array}{cc}
                     a_{k} & -1 \\
                     1 & 0 \\
                   \end{array}
                 \right)=M(b_{1})\left(
                   \begin{array}{cc}
                     0 & 1 \\
                     -1 & -1 \\
                   \end{array}
                 \right)M(a_{k}),
\end{equation*}
it shows that
\begin{equation*}
M(A\circ_{k} B)=M(a_{n})\cdots M(a_{k+2})M(a_{k+1})\left(
                                                      \begin{array}{cc}
                                                        1 & 1 \\
                                                        -1 & 0 \\
                                                      \end{array}
                                                    \right)
M(b_{m})\cdots M(b_{2})M(b_{1})\left(
                   \begin{array}{cc}
                     0 & 1 \\
                     -1 & -1 \\
                   \end{array}
                 \right)M(a_{k})M(a_{k-1})\cdots M(a_{1}).
\end{equation*}

However, by our assumptions $M(b_{1},\cdots,b_{m})=M(b_{m})\cdots M(b_{2})M(b_{1})=-Id$. It implies
\begin{align*}
M(A\circ_{k} B)&=M(a_{n})\cdots M(a_{k+2})M(a_{k+1})\left(
                                                      \begin{array}{cc}
                                                        1 & 1 \\
                                                        -1 & 0 \\
                                                      \end{array}
                                                    \right)
\left(
  \begin{array}{cc}
    -1 & 0 \\
    0 & -1 \\
  \end{array}
\right)
\left(
                   \begin{array}{cc}
                     0 & 1 \\
                     -1 & -1 \\
                   \end{array}
                 \right)M(a_{k})M(a_{k-1})\cdots M(a_{1}) \\
                 &=M(a_{n})\cdots M(a_{k+2})M(a_{k+1})M(a_{k})M(a_{k-1})\cdots M(a_{1})=M(a_{1},\cdots,a_{n})=-Id,
\end{align*}
because $A=(a_{1},\cdots,a_{n})\in \mathcal{A}_{n}$ and
\begin{equation*}
\left(
                                                      \begin{array}{cc}
                                                        1 & 1 \\
                                                        -1 & 0 \\
                                                      \end{array}
                                                    \right)
\left(
  \begin{array}{cc}
    -1 & 0 \\
    0 & -1 \\
  \end{array}
\right)
\left(
                   \begin{array}{cc}
                     0 & 1 \\
                     -1 & -1 \\
                   \end{array}
                 \right)=\left(
                           \begin{array}{cc}
                             1 & 0 \\
                             0 & 1 \\
                           \end{array}
                         \right).
\end{equation*}

Consider now that case $k=n$, we have
\begin{align*}
M(A\circ_{n} B)&=M(a_{1}+1,\cdots,a_{n-1},a_{n}+b_{1}+1,b_{2},\cdots,b_{m}+1) \\
&=M(b_{m}+1)M(b_{m-1})\cdots M(b_{2})M(b_{1}+a_{n}+1)M(a_{n-1})\cdots M(a_{1}+1) \\
&=\left(
    \begin{array}{cc}
      1 & 1 \\
      0 & 1 \\
    \end{array}
  \right)
M(b_{m})M(b_{m-1})\cdots M(b_{2})M(b_{1}+a_{n}+1)M(a_{n-1})\cdots M(a_{1})\left(
                                                                            \begin{array}{cc}
                                                                              1 & 0 \\
                                                                              -1 & 1 \\
                                                                            \end{array}
                                                                          \right) \\
&=\left(
    \begin{array}{cc}
      1 & 1 \\
      0 & 1 \\
    \end{array}
  \right)
M(b_{m})M(b_{m-1})\cdots M(b_{2})M(b_{1})\left(
                                           \begin{array}{cc}
                                             0 & 1 \\
                                             -1 & -1 \\
                                           \end{array}
                                         \right)
M(a_{n})M(a_{n-1})\cdots M(a_{1})\left(
                                                                            \begin{array}{cc}
                                                                              1 & 0 \\
                                                                              -1 & 1 \\
                                                                            \end{array}
                                                                          \right) \\
                                                                          &=\left(
    \begin{array}{cc}
      1 & 1 \\
      0 & 1 \\
    \end{array}
  \right)
M(b_{1},\cdots,b_{m})\left(
                                           \begin{array}{cc}
                                             0 & 1 \\
                                             -1 & -1 \\
                                           \end{array}
                                         \right)
M(a_{1},\cdots,a_{n})\left(
                                                                            \begin{array}{cc}
                                                                              1 & 0 \\
                                                                              -1 & 1 \\
                                                                            \end{array}
                                                                          \right),
\end{align*}
so, taking into account that $M(a_{1},\cdots,a_{n})=M(b_{1},\cdots,b_{m})=-Id$, it implies
\begin{equation*}
M(A\circ_{n} B)=\left(
    \begin{array}{cc}
      1 & 1 \\
      0 & 1 \\
    \end{array}
  \right)\left(
    \begin{array}{cc}
      -1 & 0 \\
      0 & -1 \\
    \end{array}
  \right)\left(
                                           \begin{array}{cc}
                                             0 & 1 \\
                                             -1 & -1 \\
                                           \end{array}
                                         \right)
  \left(
    \begin{array}{cc}
      -1 & 0 \\
      0 & -1 \\
    \end{array}
  \right)\left(
                                                                            \begin{array}{cc}
                                                                              1 & 0 \\
                                                                              -1 & 1 \\
                                                                            \end{array}
                                                                          \right)=-Id.
\end{equation*}

Now, we focus our attention on the products $\bullet_{k}$. Let us assume that $k\neq 1$ then
\begin{align*}
M(A\bullet_{k} B)&=M(a_{1},\cdots,a_{k-2},a_{k-1}+1,b_{2}+1,b_{3}\cdots,b_{m},a_{k}+b_{1}+1,a_{k+1},\cdots,a_{n}) \\
&=M(a_{n})\cdots M(a_{k+1})M(a_{k}+b_{1}+1)M(b_{m})\cdots M(b_{3})M(b_{2}+1)M(a_{k-1}+1)M(a_{k-2})\cdots M(a_{1}) \\
&=M(a_{n})\cdots M(a_{k+1})M(a_{k})\left(
                                     \begin{array}{cc}
                                       0 & 1 \\
                                       -1 & -1 \\
                                     \end{array}
                                   \right)
M(b_{1})M(b_{m})\cdots M(b_{3})M(b_{2}+1) \\
&\,\,\,\,\,\,\,\,\,M(a_{k-1}+1)M(a_{k-2})\cdots M(a_{1}) \\
&=M(a_{n})\cdots M(a_{k+1})M(a_{k})\left(
                                     \begin{array}{cc}
                                       0 & 1 \\
                                       -1 & -1 \\
                                     \end{array}
                                   \right)
M(b_{1})M(b_{m})\cdots M(b_{3})M(b_{2})\left(
                                         \begin{array}{cc}
                                           1 & 1 \\
                                           -1 & 0 \\
                                         \end{array}
                                       \right) \\
&\,\,\,\,\,\,\,\,\,M(a_{k-1})M(a_{k-2})\cdots M(a_{1}).
\end{align*}

Clearly, we have $M(b_{2},b_{3},\cdots,b_{m},b_{1})=M(b_{1})M(b_{m})\cdots M(b_{3})M(b_{2})=-Id$. Indeed, we have assumed that the equality
\begin{equation*}
M(b_{1},\cdots,b_{m})=M(b_{m})\cdots M(b_{1})=-Id,
\end{equation*}
is hold, then
\begin{equation*}
M(b_{m})\cdots M(b_{2})=-(M(b_{1}))^{-1},
\end{equation*}
thus
\begin{equation*}
M(b_{2},b_{3},\cdots,b_{m},b_{1})=M(b_{1})M(b_{m})\cdots M(b_{2})=-Id.
\end{equation*}

It implies
\begin{align*}
M(A\bullet_{k} B)&=M(a_{n})\cdots M(a_{k+1})M(a_{k})\left(
                                     \begin{array}{cc}
                                       0 & 1 \\
                                       -1 & -1 \\
                                     \end{array}
                                   \right)\left(
                                            \begin{array}{cc}
                                              -1 & 0 \\
                                              0 & -1 \\
                                            \end{array}
                                          \right)\left(
                                         \begin{array}{cc}
                                           1 & 1 \\
                                           -1 & 0 \\
                                         \end{array}
                                       \right)M(a_{k-1})M(a_{k-2})\cdots M(a_{1}) \\
&=M(a_{n})\cdots M(a_{k+1})M(a_{k})\left(
                                            \begin{array}{cc}
                                              1 & 0 \\
                                              0 & 1 \\
                                            \end{array}
                                          \right)M(a_{k-1})M(a_{k-2})\cdots M(a_{1})=-Id.
\end{align*}

For $k=1$
\begin{align*}
M(A\bullet_{1} B)&=M(a_{1}+b_{1}+1,a_{2},\cdots,a_{n}+1,b_{2}+1,b_{3},\cdots,b_{m}) \\
&=M(b_{m})\cdots M(b_{3})M(b_{2}+1)M(a_{n}+1)M(a_{n-1})\cdots M(a_{2})M(a_{1}+b_{1}+1) \\
&=M(b_{m})\cdots M(b_{3})M(b_{2}+1)M(a_{n}+1)M(a_{n-1})\cdots M(a_{2})M(a_{1})\left(
                                                                                \begin{array}{cc}
                                                                                  0 & 1 \\
                                                                                  -1 & -1 \\
                                                                                \end{array}
                                                                              \right)M(b_{1}) \\
&=M(b_{m})\cdots M(b_{3})M(b_{2})\left(
                                   \begin{array}{cc}
                                     1 & 1 \\
                                     -1 & 0 \\
                                   \end{array}
                                 \right)
M(a_{n})M(a_{n-1})\cdots M(a_{2})M(a_{1})\left(
                                                                                \begin{array}{cc}
                                                                                  0 & 1 \\
                                                                                  -1 & -1 \\
                                                                                \end{array}
                                                                              \right)M(b_{1}) \\
&=M(b_{m})\cdots M(b_{3})M(b_{2})\left(
                                   \begin{array}{cc}
                                     1 & 1 \\
                                     -1 & 0 \\
                                   \end{array}
                                 \right)
M(a_{1}\cdots, a_{n})\left(
                                                                                \begin{array}{cc}
                                                                                  0 & 1 \\
                                                                                  -1 & -1 \\
                                                                                \end{array}
                                                                              \right)M(b_{1}) \\
&=M(b_{m})\cdots M(b_{3})M(b_{2})\left(
                                   \begin{array}{cc}
                                     1 & 1 \\
                                     -1 & 0 \\
                                   \end{array}
                                 \right)
\left(
  \begin{array}{cc}
    -1 & 0 \\
    0 & -1 \\
  \end{array}
\right)
\left(
\begin{array}{cc}
0 & 1 \\
-1 & -1 \\
\end{array}
\right)M(b_{1}) \\
&=M(b_{1},\cdots,b_{m})=-Id,
\end{align*}
it concludes the proof of this theorem.
\end{proof}

We notice that there are other operations that can be defined with similar properties. For example, if $\mathcal{S}$ and $\mathcal{T}$
are two $3d$-quiddity sequences which correspond to the $3d$-dissections $\mathcal{S}_{\triangle}$ and $\mathcal{T}_{\triangle}$ respectively,
then when we identify the segment joining the vertices $k$ and $k+1$ of the $3d$-dissection $\mathcal{S}_{\triangle}$ with the segment joining
the first and last vertices of the $3d$-dissection $\mathcal{T}_{\triangle}$, we obtain a $3d$-dissection $\mathcal{R}_{\triangle}$ for which its $3d$-quiddity sequence $\mathcal{R}$ satisfies the following property $i(R) = i(S)+i(T)$. This construction (and other similar ones) allows us
to construct new operations between generalized quiddity sequences.

Next, we introduce new interesting operations between generalized quiddity sequences. For $A=(a_{1},\cdots,a_{n})\in \mathcal{A}_{n}$ and $B=(b_{1},\cdots,b_{m})\in \mathcal{A}_{m}$ we define
\begin{equation}\label{productolado1}
A\boxplus_{i}B=(a_{1},\cdots,a_{n})\boxplus_{i}(b_{1},\cdots,b_{m})=(a_{1},\cdots,a_{i-1},a_{i}+b_{1},b_{2},\cdots,b_{n-1},b_{n}+a_{i+1},a_{i+2},\cdots,a_{n}),
\end{equation}
for $i=1,\cdots,n-1$, and
\begin{equation}\label{productolado2}
A\boxplus_{n}B=(a_{1},\cdots,a_{n})\boxplus_{n}(b_{1},\cdots,b_{m})=(a_{1}+b_{n},a_{2},\cdots,a_{n-1},a_{n}+b_{1},b_{2},\cdots,b_{n-1}).
\end{equation}

\begin{theorem}If $A=(a_{1},\cdots,a_{n})\in \mathcal{A}_{n}$ and $B=(b_{1},\cdots,b_{m})\in \mathcal{A}_{m}$ then $A\boxplus _{k} B\in \mathcal{A}_{n+m-2}$ for $k=1,\cdots,n$.
\end{theorem}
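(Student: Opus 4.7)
The plan is to follow the template used above for $\circ_k$ and $\bullet_k$: expand $M(A\boxplus_k B)$ as a product of single $M$-matrices, use a factorization of $M(x+y)$ to isolate the full subproduct $M(b_m)\cdots M(b_1)$, collapse it using $B\in\mathcal{A}_m$, and then collapse the remaining $M(a_j)$-factors using $A\in\mathcal{A}_n$ to obtain $-Id$.

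The algebraic engine is the identity
\[
M(x+y)\;=\;M(y)\,J\,M(x)\;=\;M(x)\,J\,M(y),\qquad J:=\begin{pmatrix}0 & 1\\-1 & 0\end{pmatrix},\quad J^{2}=-Id,
\]
which is a direct $2\times 2$ check. It plays here the role that the auxiliary matrix identities (such as $M(a_{k+1}+1)M(b_m+1)=M(a_{k+1})\bigl(\begin{smallmatrix}1 & 1\\-1 & 0\end{smallmatrix}\bigr)M(b_m)$) played in the earlier theorem for $\circ_k$ and $\bullet_k$.

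For the interior case $1\le k\le n-1$, I would expand
\[
M(A\boxplus_k B)=M(a_n)\cdots M(a_{k+2})\,M(b_m+a_{k+1})\,M(b_{m-1})\cdots M(b_2)\,M(a_k+b_1)\,M(a_{k-1})\cdots M(a_1),
\]
substitute $M(b_m+a_{k+1})=M(a_{k+1})\,J\,M(b_m)$ and $M(a_k+b_1)=M(b_1)\,J\,M(a_k)$, and observe that the middle segment
\[
M(a_{k+1})\,J\,\bigl[M(b_m)M(b_{m-1})\cdots M(b_1)\bigr]\,J\,M(a_k)\;=\;M(a_{k+1})\,J\,(-Id)\,J\,M(a_k)\;=\;M(a_{k+1})M(a_k)
\]
collapses using $J(-Id)J=-J^{2}=Id$. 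The whole product then telescopes to $M(a_n)\cdots M(a_1)=-Id$.

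For the wraparound case $k=n$ the $A$-block is split between the head and the tail of $A\boxplus_n B=(a_1+b_m,a_2,\ldots,a_{n-1},a_n+b_1,b_2,\ldots,b_{m-1})$, so the cleanest route is to reduce it to the previous case via the cyclic-shift invariance of $\mathcal{A}_n$: conjugating $M(a_n)\cdots M(a_1)=-Id$ by $M(a_1)$ shows $(a_2,\ldots,a_n,a_1)\in\mathcal{A}_n$, and a direct inspection shows that a single cyclic shift of $A\boxplus_n B$ coincides with $(a_2,\ldots,a_n,a_1)\boxplus_{n-1}B$, which is already covered by the interior case. The only genuine difficulty in the whole argument is this index bookkeeping in the wraparound case; everywhere else the algebra reduces to the single $J$-cancellation displayed above.
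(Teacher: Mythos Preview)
Your proof is correct and, for the interior indices $1\le k\le n-1$, coincides exactly with the paper's: both use the factorization $M(x+y)=M(y)\,J\,M(x)$ with $J=\left(\begin{smallmatrix}0&1\\-1&0\end{smallmatrix}\right)$ and the cancellation $J(-Id)J=Id$ to collapse the full $B$-block.

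For the wraparound case $k=n$ the paper does not reduce to the interior case but instead applies the same $J$-factorization directly to $M(a_n+b_1)$ and $M(a_1+b_m)$, collapses the inner $A$-block $M(a_n)\cdots M(a_1)=-Id$ first, and is then left with the cyclically shifted product $M(b_{m-1})\cdots M(b_1)M(b_m)=-Id$. So the paper invokes cyclic invariance on $B$, while you invoke it on $A$; the two arguments are mirror images of each other, and your reduction to $(a_2,\ldots,a_n,a_1)\boxplus_{n-1}B$ is a clean way to avoid writing out the second direct computation.
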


\begin{proof}Suppose first that $i\neq n$. Then
\begin{align*}
M(A\boxplus_{i}B)&=M(a_{1},\cdots,a_{i-1},a_{i}+b_{1},b_{2},\cdots,b_{n-1},b_{n}+a_{i+1},a_{i+2},\cdots,a_{n}) \\
&=M(a_{n})\cdots M(a_{i+2})M(b_{n}+a_{i+1})M(b_{n-1})\cdots M(b_{2})M(a_{i}+b_{1})M(a_{i-1})\cdots M(a_{1}) \\
&=M(a_{n})\cdots M(a_{i+2})M(a_{i+1})\left(
                                     \begin{array}{cc}
                                       0 & 1 \\
                                       -1 & 0 \\
                                     \end{array}
                                   \right)
M(b_{n})M(b_{n-1})\cdots M(b_{2})M(b_{1})\left(
                                     \begin{array}{cc}
                                       0 & 1 \\
                                       -1 & 0 \\
                                     \end{array}
                                   \right)
M(a_{i})   \\
&\,\,\,\,\,\,\,\,M(a_{i-1})\cdots M(a_{1})    \\
&=M(a_{n})\cdots M(a_{i+2})M(a_{i+1})\left(
                                     \begin{array}{cc}
                                       0 & 1 \\
                                       -1 & 0 \\
                                     \end{array}
                                   \right)\left(
                                            \begin{array}{cc}
                                              -1 & 0 \\
                                              0 & -1 \\
                                            \end{array}
                                          \right)\left(
                                     \begin{array}{cc}
                                       0 & 1 \\
                                       -1 & 0 \\
                                     \end{array}
                                   \right)
M(a_{i})M(a_{i-1})\cdots M(a_{1}) \\
&=-Id,
\end{align*}
because
\begin{equation*}
\left(
                                     \begin{array}{cc}
                                       0 & 1 \\
                                       -1 & 0 \\
                                     \end{array}
                                   \right)\left(
                                            \begin{array}{cc}
                                              -1 & 0 \\
                                              0 & -1 \\
                                            \end{array}
                                          \right)\left(
                                     \begin{array}{cc}
                                       0 & 1 \\
                                       -1 & 0 \\
                                     \end{array}
                                   \right)=\left(
                                             \begin{array}{cc}
                                               1 & 0 \\
                                               0 & 1 \\
                                             \end{array}
                                           \right).
\end{equation*}

For $i=n$ we obtain
\begin{align*}
M(A\boxplus_{n}B)&=M(a_{1}+b_{n},a_{2},\cdots,a_{n-1},a_{n}+b_{1},b_{2},\cdots,b_{n-1}) \\
&=M(b_{n-1})\cdots M(b_{2})M(a_{n}+b_{1})M(a_{n-1})\cdots M(a_{2})M(a_{1}+b_{n}) \\
&=M(b_{n-1})\cdots M(b_{2})M(b_{1})\left(
                                     \begin{array}{cc}
                                       0 & 1 \\
                                       -1 & 0 \\
                                     \end{array}
                                   \right)
M(a_{n})M(a_{n-1})\cdots M(a_{2})M(a_{1})\left(
                                           \begin{array}{cc}
                                             0 & 1 \\
                                             -1 & 0 \\
                                           \end{array}
                                         \right)
M(b_{n}) \\
&=M(b_{n-1})\cdots M(b_{2})M(b_{1})M(b_{n})=-Id.
\end{align*}
\end{proof}

\subsection{Products of quiddity sequences associated to the identity}

In this subsection we consider vectors $(a_{1},\cdots,a_{n})\in \mathbb{C}^{n}$ such that

\begin{equation}\label{secqsi-1}
M_{n}(a_{1},\cdots,a_{n})=\left(
                            \begin{array}{cc}
                              a_{n} & -1 \\
                              1 & 0 \\
                            \end{array}
                          \right)\left(
                            \begin{array}{cc}
                              a_{n-1} & -1 \\
                              1 & 0 \\
                            \end{array}
                          \right)\cdots\left(
                            \begin{array}{cc}
                              a_{1} & -1 \\
                              1 & 0 \\
                            \end{array}
                          \right)=M(a_{n})\cdots M(a_{1})=Id,
\end{equation}
and solution vectors of (\ref{secqsi-1}) will be called \textbf{$Id$-quiddity sequences}. We want to indicate that
the equation (\ref{secqsi-1}) is related with the recognition problem (or Deligne-Simpson problem) which can be formulated as follows:
given conjugacy classes $\mathbf{C}_{1},\cdots,\mathbf{C}_{n}$ determine whether or not one can solve the equation
\begin{equation}\label{secqsi-2}
\mathbf{A}_{1}\cdots\mathbf{A}_{n}=I,
\end{equation}
with $\mathbf{A}_{i}\in \mathbf{C}_{i}$ (see \cite{neto-silva}, \cite{crawley} and references in these). We are reviewing the case in which
$C_{i}=Cl(M(a_{i}))=\{AM(a_{i})A^{-1}|\,A\in GL_{2}\}$.
Let $\mathbf{a}=(a_{1},\cdots,a_{n})$ and $\mathbf{b}=(b_{1},\cdots,b_{m})$ be two $Id$-quiddity sequences. We propose products of the form
\begin{equation}\label{secqsi-3}
\mathbf{a}\circ_{k}\mathbf{b}=(a_{1},\cdots,a_{n})\circ_{k}(b_{1},\cdots,b_{m})=(a_{1},\cdots, a_{k-1},\mathbf{x}(a_{k},b_{1}),b_{2},\cdots,b_{m-1},\mathbf{y}(b_{m}),
\mathbf{w}(a_{k+1}),a_{k+2},\cdots,a_{n}),
\end{equation}
for $k=1,\cdots,n-1$, where $\mathbf{x}(a_{k},b_{1})$, $\mathbf{y}(b_{m})$ and $\mathbf{w}(a_{k+1})$ are transformations that should
be chose in such a way that $\mathbf{a}\circ_{k}\mathbf{b}$ is a new $Id$-quiddity sequence. With this purpose, we show two simple facts:
\begin{itemize}
  \item there is a matrix $Z$ such that
  \begin{equation}\label{secqsi-4}
  \left(
    \begin{array}{cc}
     a_{k+1}  & -1 \\
       1  & 0  \\
    \end{array}
  \right)\left(
           \begin{array}{cc}
             z_{11} & z_{12} \\
             z_{21} & z_{22} \\
           \end{array}
         \right)\left(
    \begin{array}{cc}
      b_{m}  & -1 \\
       1  & 0  \\
    \end{array}
  \right)=\left(
            \begin{array}{cc}
              \mathbf{w}(a_{k+1}) & -1 \\
              1 & 0 \\
            \end{array}
          \right)\left(
            \begin{array}{cc}
              \mathbf{y}(b_{m}) & -1 \\
              1 & 0 \\
            \end{array}
          \right),
  \end{equation}
  \item  It is possible to find a matrix $T$ satisfying
  \begin{equation}\label{secqsi-5}
  \left(
    \begin{array}{cc}
      \mathbf{x}(a_{k},b_{1}) & -1 \\
      1 & 0 \\
    \end{array}
  \right)=\left(
            \begin{array}{cc}
              b_{1} & -1 \\
              1 & 0 \\
            \end{array}
          \right)\left(
                   \begin{array}{cc}
                     t_{11} & t_{12} \\
                     t_{21} & t_{22} \\
                   \end{array}
                 \right)\left(
                          \begin{array}{cc}
                            a_{k} & -1 \\
                            1 & 0 \\
                          \end{array}
                        \right),
  \end{equation}
\end{itemize}
from (\ref{secqsi-4}) we obtain $z_{11}=1$, $z_{22}=z_{12}z_{21}+1$, $\mathbf{y}(b_{m})=b_{m}+z_{12}$ and $\mathbf{w}(a_{k+1})=a_{k+1}-z_{21}$.
On the other hand from (\ref{secqsi-5}) follows that $t_{11}=0$, $t_{12}=1$, $t_{21}=-1$ and $\mathbf{x}(a_{k},b_{1})=a_{k}+b_{1}-t_{22}$.

Now, to carry our goal to the end it is necessary to impose the condition
\begin{equation}\label{secqsi-6}
\left(
  \begin{array}{cc}
    1 & z_{12} \\
    z_{21} & z_{12}z_{21}+1 \\
  \end{array}
\right)\left(
         \begin{array}{cc}
           0 & 1 \\
           -1 & t_{22} \\
         \end{array}
       \right)=\left(
                 \begin{array}{cc}
                   1 & 0 \\
                   0 & 1 \\
                 \end{array}
               \right),
\end{equation}
then $z_{12}=-1$, $z_{21}=1$ and $t_{22}=1$ so
\begin{equation*}
\mathbf{x}(a_{k},b_{1})=a_{k}+b_{1}-1,\,\,\,\,\,\,\,\mathbf{y}(b_{m})=b_{m}-1,\,\,\,\,\,\,\,\mathbf{w}(a_{k+1})=a_{k+1}-1,
\end{equation*}
thus (\ref{secqsi-3}) takes of simple form
\begin{equation*}
\mathbf{a}\circ_{k}\mathbf{b}=(a_{1},\cdots,a_{n})\circ_{k}(b_{1},\cdots,b_{m})=(a_{1},\cdots, a_{k-1},a_{k}+b_{1}-1,b_{2},\cdots,b_{m-1},
b_{m}-1,a_{k+1}-1,a_{k+2},\cdots,a_{n}),
\end{equation*}
where $k=1,\cdots,n-1$.

We are able to prove the following result

\begin{theorem}Let $\mathbf{a}=(a_{1},\cdots,a_{n})$  and $\mathbf{b}=(b_{1},\cdots,b_{m})$ be two $Id$-quiddity sequences, then
$\mathbf{a}\circ_{k}\mathbf{b}$ is an $Id$-quiddity sequences for $k=1,\cdots,n$ if we define
\begin{equation}\label{secqsi-7}
\mathbf{a}\circ_{k}\mathbf{b}=(a_{1},\cdots,a_{n})\circ_{k}(b_{1},\cdots,b_{m})=(a_{1},\cdots, a_{k-1},a_{k}+b_{1}-1,b_{2},\cdots,b_{m-1},
b_{m}-1,a_{k+1}-1,a_{k+2},\cdots,a_{n}),
\end{equation}
for $k\neq n$ and
\begin{equation}\label{secqsi-8}
\mathbf{a}\circ_{n}\mathbf{b}=(a_{1}-1,a_{2},\cdots,a_{n-1},a_{n}+b_{1}-1,b_{2},\cdots,b_{m-1},b_{m}-1).
\end{equation}
\end{theorem}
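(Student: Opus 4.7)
The plan is to mimic the structure of the proof given just above for $(-Id)$-quiddity sequences: absorb the $-1$ shifts that appear in the definition of $\mathbf{a}\circ_{k}\mathbf{b}$ into constant $2\times 2$ matrices, and then use the hypotheses $M(a_{n})\cdots M(a_{1})=Id$ and $M(b_{m})\cdots M(b_{1})=Id$ to make the $\mathbf{b}$-block (and, in the wrap-around case, also the $\mathbf{a}$-block) collapse. The preceding discussion (\ref{secqsi-3})--(\ref{secqsi-6}) has already done the substantive design work: the values $z_{12}=-1,\,z_{21}=1,\,t_{22}=1$ were forced precisely so that the constant matrices surviving the collapse multiply to the identity.

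First I would record the three elementary identities produced by this design, each verified by a one-line $2\times 2$ computation: $M(x-1)=M(x)\bigl(\begin{smallmatrix}1&0\\1&1\end{smallmatrix}\bigr)=\bigl(\begin{smallmatrix}1&-1\\0&1\end{smallmatrix}\bigr)M(x)$ and $M(a+b-1)=M(b)\bigl(\begin{smallmatrix}0&1\\-1&1\end{smallmatrix}\bigr)M(a)$. For $1\leq k<n$ I would then plug (\ref{secqsi-7}) into (\ref{secqsi-1}) and apply these identities to the three shifted factors $M(a_{k+1}-1)$, $M(b_{m}-1)$, $M(a_{k}+b_{1}-1)$, pushing all constant matrices toward the middle. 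The product becomes
\begin{equation*}
M(a_{n})\cdots M(a_{k+1})\,C_{1}\,M(b_{m})\cdots M(b_{1})\,C_{2}\,M(a_{k})\cdots M(a_{1}),
\end{equation*}
with $C_{1}=\bigl(\begin{smallmatrix}1&-1\\1&0\end{smallmatrix}\bigr)$ and $C_{2}=\bigl(\begin{smallmatrix}0&1\\-1&1\end{smallmatrix}\bigr)$. The central $\mathbf{b}$-block equals $Id$ by hypothesis, $C_{1}C_{2}=Id$ by construction (this is exactly (\ref{secqsi-6})), and what remains is $M(a_{n})\cdots M(a_{1})=Id$.

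The case $k=n$ in (\ref{secqsi-8}) is essentially the same calculation under a cyclic relabelling: now the shifts $M(a_{1}-1)$ and $M(b_{m}-1)$ sit at the two extremities of the product while $M(a_{n}+b_{1}-1)$ is in the middle, so after applying the same identities one obtains a constant matrix on the far left, a constant on the far right, and a third constant sandwiched between the copies of $M(b_{m})\cdots M(b_{1})=Id$ and $M(a_{n})\cdots M(a_{1})=Id$. Both the $\mathbf{b}$- and $\mathbf{a}$-blocks collapse, and a final $2\times 2$ check shows that the remaining three constants multiply to $Id$. The only real obstacle in the whole argument is the bookkeeping in this wrap-around case, namely tracking carefully which constant ends up on each side under the cyclic shift so that the triple product of constants is indeed recognized as the identity; everything else is routine substitution guided by the design conditions already worked out before the theorem.
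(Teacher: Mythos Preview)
Your proposal is correct and follows essentially the same approach as the paper. The paper uses exactly the same three factorizations $M(x-1)=\bigl(\begin{smallmatrix}1&-1\\0&1\end{smallmatrix}\bigr)M(x)=M(x)\bigl(\begin{smallmatrix}1&0\\1&1\end{smallmatrix}\bigr)$ and $M(a+b-1)=M(b)\bigl(\begin{smallmatrix}0&1\\-1&1\end{smallmatrix}\bigr)M(a)$; since the case $k\neq n$ was already handled in the discussion leading to (\ref{secqsi-6}), the paper's proof only writes out the wrap-around case $k=n$, concluding with the same triple-product check $\bigl(\begin{smallmatrix}1&-1\\0&1\end{smallmatrix}\bigr)\bigl(\begin{smallmatrix}0&1\\-1&1\end{smallmatrix}\bigr)\bigl(\begin{smallmatrix}1&0\\1&1\end{smallmatrix}\bigr)=Id$ that you describe.
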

\begin{proof}It remains to prove the case $k=n$. For this purpose, note that
\begin{equation*}
\left(
  \begin{array}{cc}
    b_{m}-1 & -1 \\
    1 & 0 \\
  \end{array}
\right)=\left(
          \begin{array}{cc}
            1 & -1 \\
            0 & 1 \\
          \end{array}
        \right)\left(
  \begin{array}{cc}
    b_{m} & -1 \\
    1 & 0 \\
  \end{array}
\right),\,\,\,\,\,\,\,\,\,\left(
                            \begin{array}{cc}
                              a_{1}-1 & -1 \\
                              1 & 0 \\
                            \end{array}
                          \right)=\left(
                            \begin{array}{cc}
                              a_{1} & -1 \\
                              1 & 0 \\
                            \end{array}
                          \right)\left(
                                   \begin{array}{cc}
                                     1 & 0 \\
                                     1 & 1 \\
                                   \end{array}
                                 \right),
\end{equation*}
and, on the other hand, we already know that
\begin{equation*}
\left(
  \begin{array}{cc}
    a_{n}+b_{1}-1 & -1 \\
    1 & 0 \\
  \end{array}
\right)=\left(
  \begin{array}{cc}
    b_{1} & -1 \\
    1 & 0 \\
  \end{array}
\right)\left(
         \begin{array}{cc}
           0 & 1 \\
           -1 & 1 \\
         \end{array}
       \right)\left(
  \begin{array}{cc}
    a_{n} & -1 \\
    1 & 0 \\
  \end{array}
\right).
\end{equation*}

Now,
\begin{equation*}
\left(
  \begin{array}{cc}
    1 & -1 \\
    0 & 1 \\
  \end{array}
\right)\left(
         \begin{array}{cc}
           0 & 1 \\
           -1 & 1 \\
         \end{array}
       \right)\left(
                \begin{array}{cc}
                  1 & 0 \\
                  1 & 1 \\
                \end{array}
              \right)=\left(
                        \begin{array}{cc}
                          1 & 0 \\
                          0 & 1 \\
                        \end{array}
                      \right),
\end{equation*}
it concludes the proof of the theorem.
\end{proof}

\section{Decomposition of the monodromy for block matrices. Left and right matrix quiddity sequences}

\subsection{General results}

In this section, our propose is to find matrix vectors $(\mathfrak{a}_{1},\mathfrak{a}_{2},\cdots,\mathfrak{a}_{n})$ solutions
of the equation
\begin{equation}\label{i3}
M(\mathfrak{a}_{1},\cdots,\mathfrak{a}_{n})=\left(
                         \begin{array}{cc}
                           \mathfrak{a}_{n} & -I \\
                           I & O \\
                         \end{array}
                       \right)\left(
                         \begin{array}{cc}
                           \mathfrak{a}_{n-1} & -I \\
                           I & O \\
                         \end{array}
                       \right)\cdots \left(
                         \begin{array}{cc}
                           \mathfrak{a}_{1} & -I \\
                           I & O \\
                         \end{array}
                       \right)=M(\mathfrak{a}_{n})\cdots M(\mathfrak{a}_{1})=-\left(
                                  \begin{array}{cc}
                                    I & O \\
                                    O & I \\
                                  \end{array}
                                \right)=-Id,
\end{equation}
where the $\mathfrak{a}_{k}$ are square $l\times l$ matrices for $k=\overline{1,n}$, being $I$ and $O$ the identity matrix and the null
matrix $l\times l$ respectively. It is important to observe that we do not suppose that the entries of each $\mathfrak{a}_{k}$ are positive integers, neither we do not assume that $|\mathfrak{a}_{k}|\neq 0$ for all $k$. Such sequences will be called \textbf{left matrix quiddity sequences}.

We recall the following result which is known as the \textbf{Schur determinant lemma} (see \cite{zhang} for more details)

\begin{lemma}Let $P,Q,S,R$ denote $n\times n$ matrices and suppose that $P$ and $R$ commute. Then the determinant $|N|$ of the $2n\times 2n$
matrix
\begin{equation*}
N=\left(
    \begin{array}{cc}
      P & Q \\
      R & S \\
    \end{array}
  \right),
\end{equation*}
is equal to the determinant of the matrix $PS-RQ$.
\end{lemma}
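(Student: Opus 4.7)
The plan is to reduce to the case where $P$ is invertible by means of a block LU factorization, and then handle the general case by a polynomial perturbation argument. The commutation hypothesis $PR=RP$ will enter in exactly one place: converting $PRP^{-1}$ back to $R$.

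First, suppose $P$ is invertible. I would write down the block factorization
\begin{equation*}
\begin{pmatrix} P & Q \\ R & S \end{pmatrix} = \begin{pmatrix} I & O \\ RP^{-1} & I \end{pmatrix}\begin{pmatrix} P & Q \\ O & S-RP^{-1}Q \end{pmatrix},
\end{equation*}
which is verified by direct multiplication. Taking determinants and using the fact that the first factor is lower triangular with identity diagonal blocks (so determinant $1$) and the second is upper block-triangular, I obtain $|N|=|P|\,|S-RP^{-1}Q|$. Since $|P|\,|S-RP^{-1}Q|=|P(S-RP^{-1}Q)|=|PS-PRP^{-1}Q|$, the commutation relation $PR=RP$ gives $PRP^{-1}=R$, so this equals $|PS-RQ|$, as desired.

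To remove the invertibility assumption on $P$, I would use a standard Zariski-density / polynomial identity argument. Replace $P$ by $P_\lambda:=P-\lambda I$; then $P_\lambda$ commutes with $R$ (since $P$ does), and $P_\lambda$ is invertible for all but finitely many $\lambda\in\mathbb{C}$ (those that are eigenvalues of $P$). For every such $\lambda$ the previous paragraph yields
\begin{equation*}
\left|\begin{pmatrix} P_\lambda & Q \\ R & S \end{pmatrix}\right| = |P_\lambda S - RQ|.
\end{equation*}
Both sides are polynomial functions of $\lambda$, and they coincide at infinitely many values, hence they are equal as polynomials. Evaluating at $\lambda=0$ gives $|N|=|PS-RQ|$ in full generality.

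The only non-routine point is the step in which one cancels $PRP^{-1}$ to $R$; this is the whole reason the commutativity hypothesis is needed, and it is the main obstacle should one attempt to weaken the assumption. The passage from invertible $P$ to general $P$ is purely formal once one observes that the perturbation $P\mapsto P-\lambda I$ preserves the commutation relation with $R$.
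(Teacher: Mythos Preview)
Your proof is correct. The block LU factorization for invertible $P$ is verified by direct multiplication, the use of $PR=RP$ to rewrite $PRP^{-1}$ as $R$ is exactly the point where the hypothesis enters, and the perturbation $P\mapsto P-\lambda I$ preserves commutation with $R$ while making both sides polynomials in $\lambda$ that agree off a finite set, hence identically.

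Note that the paper does not supply its own proof of this lemma: it is simply recalled as a known result with a reference to Zhang's monograph on Schur complements. Your argument is the standard one found there, so there is nothing to compare against; you have filled in what the paper leaves to the literature.
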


There exists a generalization in certain sense of the previous result which can be found also in \cite{zhang} for any square matrix $N$. Consider now that $N$ is partitioned where $P,Q,S,R$ do not necessarily
have the same dimension. Suppose $P$ is nonsingular and denote the matrix $S-RP^{-1}Q$ by $N/P$ and call it the Schur complement of $P$ in $N$, or the Schur complement of $N$ relative to $P$. The following
result is well known
\begin{theorem}(Schur's Formula) Let $N$ be a square matrix partitioned. If $P$ is nonsingular, then
\begin{equation}\label{i7}
det(N/P)=\frac{det N}{det P}.
\end{equation}
\end{theorem}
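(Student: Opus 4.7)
The plan is to reduce Schur's formula to the preceding Schur determinant lemma via a block LU decomposition of $N$. Since $P$ is nonsingular, I would first write
\begin{equation*}
N=\left(
    \begin{array}{cc}
      P & Q \\
      R & S \\
    \end{array}
  \right)=\left(
    \begin{array}{cc}
      I & O \\
      RP^{-1} & I \\
    \end{array}
  \right)\left(
    \begin{array}{cc}
      P & Q \\
      O & S-RP^{-1}Q \\
    \end{array}
  \right),
\end{equation*}
which can be verified by direct block multiplication. From this factorization one obtains $\det N=\det L \cdot \det U$, where $L$ and $U$ are the two factors above, and the task is then to compute the determinants of $L$ and $U$ separately.

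For $L$, the two diagonal blocks are both $I$, and the lower left block is $RP^{-1}$, while the upper right block is $O$. The blocks $I$ (upper left) and $RP^{-1}$ (lower left) trivially commute because the identity commutes with every matrix, so the Schur determinant lemma stated just above applies and yields $\det L=\det(I\cdot I - RP^{-1}\cdot O)=\det I=1$. For $U$, the upper left block is $P$ and the lower left block is $O$, and again these commute, so the Schur determinant lemma gives $\det U=\det\bigl(P(S-RP^{-1}Q)-O\cdot Q\bigr)=\det\bigl(P\cdot(N/P)\bigr)=\det P\cdot\det(N/P)$, where in the last equality I use that $P$ and $N/P$ are square of the same size.

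Combining these two computations gives $\det N=\det L\cdot \det U=\det P\cdot \det(N/P)$, and dividing by $\det P\neq 0$ yields the desired identity (\ref{i7}). The only substantive steps are verifying the block LU factorization (a one-line check) and correctly invoking the previous Schur determinant lemma with the trivially commuting pair $(I,RP^{-1})$ for $L$ and $(P,O)$ for $U$; the main potential pitfall is making sure the partition used is compatible, i.e., that the off-diagonal blocks have the right shapes so that the factorization makes sense and the previous lemma (originally stated for $n\times n$ blocks) extends in the obvious way to blocks of compatible but not necessarily equal sizes, which follows from the same commuting-blocks argument.
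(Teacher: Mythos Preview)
Your proof is correct and follows the standard textbook route; the paper itself does not prove this theorem at all but simply states it as a well-known result with a reference to Zhang's book, so there is nothing to compare against. One small comment: invoking the Schur determinant lemma for $L$ and $U$ is slightly awkward because, as you yourself note, that lemma is stated for square blocks of equal size, whereas here $P$ and $S$ may have different sizes (so, e.g., the product $I\cdot I$ of the two diagonal identity blocks in $L$ is not even defined); it is cleaner to observe directly that $L$ is unit lower triangular as an ordinary matrix, hence $\det L=1$, and that $U$ is block upper triangular, hence $\det U=\det P\cdot\det(N/P)$ by the standard block-triangular determinant formula.
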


\begin{remark}The equality (\ref{i3}) is well defined, this is due to $|M(\mathfrak{a}_{k})|=1$ for $k=1,\cdots,n$. In fact, from the Schur determinant lemma we obtain
\begin{equation*}
\left |
          \begin{array}{cc}
            \mathfrak{a}_{k} & -I \\
            I & O \\
          \end{array}
 \right |=|\mathfrak{a}_{k}O-I(-I)|=1,
\end{equation*}
for all $k$.
\end{remark}

\begin{example}If $|\mathfrak{a}|\neq 0$, then the sequences $(\mathfrak{a},2\mathfrak{a}^{-1},\mathfrak{a},2\mathfrak{a}^{-1})$ and
$(\mathfrak{a},2\mathfrak{a}^{-1}+I,I,\mathfrak{a}+I,2\mathfrak{a}^{-1})$ are left matrix quiddity sequences.
\end{example}

\begin{remark}Suppose that $(\mathfrak{a}_{1},\mathfrak{a}_{2},\cdots,\mathfrak{a}_{n})$ is a left matrix quiddity sequences, then
$(\mathfrak{a}_{1},\cdots,\mathfrak{a}_{i}+I,I,\mathfrak{a}_{i+1}+I,\cdots,\mathfrak{a}_{n})$ are also left matrix quiddity sequences for $i=1,\ldots,n-1$.
Indeed, for two arbitrary matrices $\mathfrak{x}$ and $\mathfrak{y}$ we have
\begin{align*}
\left(
  \begin{array}{cc}
    \mathfrak{x}+I & -I \\
    I & O \\
  \end{array}
\right)\left(
  \begin{array}{cc}
    I & -I \\
    I & O \\
  \end{array}
\right)\left(
  \begin{array}{cc}
    \mathfrak{y}+I & -I \\
    I & O \\
  \end{array}
\right)&=\left(
  \begin{array}{cc}
    \mathfrak{x} & -(\mathfrak{x}+I) \\
    I & -I \\
  \end{array}
\right)\left(
  \begin{array}{cc}
    \mathfrak{y}+I & -I \\
    I & O \\
  \end{array}
\right) \\
&=\left(
    \begin{array}{cc}
      \mathfrak{x}\mathfrak{y}-I & -\mathfrak{x} \\
      \mathfrak{y} & -I \\
    \end{array}
  \right)=\left(
  \begin{array}{cc}
    \mathfrak{x} & -I \\
    I & O \\
  \end{array}
\right)\left(
  \begin{array}{cc}
    \mathfrak{y} & -I \\
    I & O \\
  \end{array}
\right).
\end{align*}
\end{remark}

\begin{lemma}Let $(\mathfrak{a}_{1},\mathfrak{a}_{2},\cdots,\mathfrak{a}_{n})$ be a left matrix quiddity sequence, that is, $M(\mathfrak{a}_{1},\cdots,\mathfrak{a}_{n})=-Id$, then
\begin{equation}\label{mqs1}
M(\mathfrak{a}_{k+1},\cdots,\mathfrak{a}_{n},\mathfrak{a}_{1},\cdots,\mathfrak{a}_{k})=-Id,
\end{equation}
for $k=1,\cdots,n-1$.
\end{lemma}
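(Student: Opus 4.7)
The plan is to exploit the fact that each block matrix $M(\mathfrak{a}_j)$ is invertible, together with the centrality of $-Id$, to deduce cyclic invariance from a single algebraic manipulation. No computation on the internal structure of the $\mathfrak{a}_j$ is needed; only the group-theoretic content of the relation $M(\mathfrak{a}_n)\cdots M(\mathfrak{a}_1)=-Id$ matters.

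First, I would fix $k\in\{1,\ldots,n-1\}$ and split the product at position $k$ by setting
\begin{equation*}
A=M(\mathfrak{a}_{k})M(\mathfrak{a}_{k-1})\cdots M(\mathfrak{a}_{1}),\qquad B=M(\mathfrak{a}_{n})M(\mathfrak{a}_{n-1})\cdots M(\mathfrak{a}_{k+1}),
\end{equation*}
so that the hypothesis reads $BA=M(\mathfrak{a}_{1},\ldots,\mathfrak{a}_{n})=-Id$. By the Schur determinant lemma invoked just above the statement, $|M(\mathfrak{a}_j)|=1$ for each $j$, so every factor belongs to $GL_{2l}(\mathbb{C})$; in particular $A$ and $B$ are both invertible.

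The next step is the key algebraic move. From $BA=-Id$ and the invertibility of $B$ we obtain $A=-B^{-1}$, and therefore
\begin{equation*}
AB=(-B^{-1})B=-Id.
\end{equation*}
But by the definition of the $M$-product,
\begin{equation*}
AB=M(\mathfrak{a}_{k})\cdots M(\mathfrak{a}_{1})\,M(\mathfrak{a}_{n})\cdots M(\mathfrak{a}_{k+1})=M(\mathfrak{a}_{k+1},\mathfrak{a}_{k+2},\ldots,\mathfrak{a}_{n},\mathfrak{a}_{1},\ldots,\mathfrak{a}_{k}),
\end{equation*}
which is exactly the identity (\ref{mqs1}) to be proved.

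There is essentially no obstacle here; the only subtle point worth emphasizing in the writeup is the justification that each $M(\mathfrak{a}_j)$ is invertible, which is where the Schur determinant lemma quoted just before the statement is used. An induction on $k$ is not required, since the argument handles arbitrary $k$ uniformly, but one could alternatively phrase it as an induction using the single step $k\rightsquigarrow k+1$ and the elementary observation that cyclic permutation by one position is itself just an instance of the identity $AB=-Id\Leftrightarrow BA=-Id$ applied with $A=M(\mathfrak{a}_{1})$ and $B=M(\mathfrak{a}_{n})\cdots M(\mathfrak{a}_{2})$.
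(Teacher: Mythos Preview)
Your proof is correct and is essentially the same as the paper's. The paper's one-line proof says the result ``follows simply by means of operations of conjugation on both sides of equality $M(\mathfrak{a}_{1},\cdots,\mathfrak{a}_{n})=-Id$ a finite number of times''; your argument that $BA=-Id$ implies $AB=A(BA)A^{-1}=-Id$ is exactly this conjugation step, carried out once with the block $A=M(\mathfrak{a}_{k})\cdots M(\mathfrak{a}_{1})$ rather than iterated $k$ times with single factors.
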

\begin{proof}The proof of this lemma follows simply by means of operations of conjugation on both sides of equality $M(\mathfrak{a}_{1},\cdots,\mathfrak{a}_{n})=-Id$ a finite number of times.
\end{proof}

As in the previous section there is a close relation between equation (\ref{i3}) and the left matrix-recurrent
relation
\begin{equation}\label{i4}
\mathfrak{u}_{k+1}=\mathfrak{c}_{k}\mathfrak{u}_{k}-\mathfrak{u}_{k-1},\,\,\,\,\,\,\,\,\,\,\,k\in\mathbb{Z},
\end{equation}
where $(\mathfrak{c}_{k})$ is a known $n$-periodic sequence of matrices of order $l\times l$ for each $k$ and the sequence $(\mathfrak{u}_{k})_{k\in \mathbb{Z}}\subset M_{l}(\mathbb{C})$ which is not known.

\begin{proposition}There is a one-to-one correspondence between left matrix quiddity sequences (that is matrix sequences $(\mathfrak{a}_{1},\cdots,\mathfrak{a}_{n})$
for which $M(\mathfrak{a}_{1},\cdots,\mathfrak{a}_{n})=-Id$) and left matrix-recurrent relations (\ref{i4}) for which $\mathfrak{c}_{k}=\mathfrak{a}_{k}$ is an $n$-periodic sequence that have only $n$-antiperiodic solutions.
\end{proposition}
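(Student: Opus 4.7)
The plan is to prove the equivalence by reformulating the second-order matrix recurrence (\ref{i4}) as a first-order recurrence of matrix blocks of size $2l$, and then identify the transfer matrix with $M(\mathfrak{c}_1,\ldots,\mathfrak{c}_n)$. Concretely, for a solution $(\mathfrak{u}_k)_{k\in\mathbb{Z}}$ of (\ref{i4}), one verifies at once that
$$
\begin{pmatrix} \mathfrak{u}_{k+1} \\ \mathfrak{u}_{k} \end{pmatrix}
= \begin{pmatrix} \mathfrak{c}_k & -I \\ I & O \end{pmatrix}
  \begin{pmatrix} \mathfrak{u}_{k} \\ \mathfrak{u}_{k-1} \end{pmatrix}
= M(\mathfrak{c}_k)\begin{pmatrix} \mathfrak{u}_{k} \\ \mathfrak{u}_{k-1} \end{pmatrix},
$$
so that iterating gives
$$
\begin{pmatrix} \mathfrak{u}_{k+n} \\ \mathfrak{u}_{k+n-1} \end{pmatrix}
= M(\mathfrak{c}_{k+n-1})\cdots M(\mathfrak{c}_{k})\begin{pmatrix} \mathfrak{u}_{k} \\ \mathfrak{u}_{k-1} \end{pmatrix}.
$$

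The correspondence in the statement is then the obvious one: to a left matrix quiddity sequence $(\mathfrak{a}_1,\ldots,\mathfrak{a}_n)$ I associate the recurrence (\ref{i4}) with $\mathfrak{c}_k=\mathfrak{a}_{((k-1)\bmod n)+1}$, and to an admissible coefficient sequence $(\mathfrak{c}_k)$ I associate its first period $(\mathfrak{c}_1,\ldots,\mathfrak{c}_n)$. These maps are inverse to one another, so it only remains to verify that they map the two classes onto each other.

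For the forward direction, assume $M(\mathfrak{a}_1,\ldots,\mathfrak{a}_n)=-Id$ and let $\mathfrak{c}_k$ be the $n$-periodic extension. For $k=1$ the display above yields $(\mathfrak{u}_{n+1},\mathfrak{u}_n)=(-\mathfrak{u}_1,-\mathfrak{u}_0)$. For arbitrary $k$, the $n$-periodicity of the coefficients together with the cyclic-shift lemma (\ref{mqs1}) gives $M(\mathfrak{c}_{k+n-1})\cdots M(\mathfrak{c}_k)=M(\mathfrak{a}_{k+n-1},\ldots,\mathfrak{a}_k)=-Id$, so $(\mathfrak{u}_{k+n},\mathfrak{u}_{k+n-1})=-(\mathfrak{u}_k,\mathfrak{u}_{k-1})$, i.e.\ $\mathfrak{u}_{k+n}=-\mathfrak{u}_k$ for all $k\in\mathbb{Z}$.

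For the converse direction, assume the coefficients $(\mathfrak{c}_k)$ are $n$-periodic and every solution of (\ref{i4}) is $n$-antiperiodic. Fix arbitrary matrices $\mathfrak{u}_0,\mathfrak{u}_1\in M_l(\mathbb{C})$ as initial data and define $\mathfrak{u}_k$ recursively. The assumed antiperiodicity combined with the transfer identity gives
$$
M(\mathfrak{c}_1,\ldots,\mathfrak{c}_n)\begin{pmatrix} \mathfrak{u}_1 \\ \mathfrak{u}_0 \end{pmatrix}
= \begin{pmatrix} \mathfrak{u}_{n+1} \\ \mathfrak{u}_n \end{pmatrix}
= -\begin{pmatrix} \mathfrak{u}_1 \\ \mathfrak{u}_0 \end{pmatrix}.
$$
Choosing successively $(\mathfrak{u}_0,\mathfrak{u}_1)=(I,O)$ and $(\mathfrak{u}_0,\mathfrak{u}_1)=(O,I)$ produces a $2l\times 2l$ matrix whose columns span $\mathbb{C}^{2l}$, so the identity $M(\mathfrak{c}_1,\ldots,\mathfrak{c}_n)X=-X$ for this invertible $X$ forces $M(\mathfrak{c}_1,\ldots,\mathfrak{c}_n)=-Id$, proving that $(\mathfrak{c}_1,\ldots,\mathfrak{c}_n)$ is a left matrix quiddity sequence.

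The only mildly delicate point, and what I would treat most carefully, is the passage from antiperiodicity of every solution to the equality $M(\mathfrak{c}_1,\ldots,\mathfrak{c}_n)=-Id$: one must be sure that matrix-valued initial data $(\mathfrak{u}_0,\mathfrak{u}_1)$ can be chosen to make $\begin{pmatrix}\mathfrak{u}_1\\\mathfrak{u}_0\end{pmatrix}$ an invertible block in $M_{2l}(\mathbb{C})$, since otherwise we would only recover the transfer matrix on a proper subspace. The rest of the argument consists of bookkeeping with the transfer matrix and the cyclic-shift lemma already proved.
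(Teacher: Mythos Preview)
Your proof is correct and follows essentially the same approach as the paper: both reformulate the recurrence via the block transfer matrices $M(\mathfrak{c}_k)$, invoke the cyclic-shift lemma (\ref{mqs1}) for the forward direction, and use arbitrariness of the initial data $(\mathfrak{u}_0,\mathfrak{u}_1)$ for the converse. Your explicit choice of $(\mathfrak{u}_0,\mathfrak{u}_1)=(I,O)$ and $(O,I)$ to build an invertible $2l\times 2l$ block is a slightly more careful justification of the step the paper phrases simply as ``being $\mathfrak{u}_1$ and $\mathfrak{u}_0$ arbitrary'', but the argument is the same.
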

\begin{proof}Let $(\mathfrak{a}_{1},\cdots,\mathfrak{a}_{n})$ be a left matrix quiddity sequence, $\mathfrak{u}_{1}$ and $\mathfrak{u}_{0}$
arbitrary matrices of order $l$, and let $(\mathfrak{u}_{k})_{k\in \mathbb{Z}}$ be the sequence induced by these initial matrices through the
left matrix-recurrent relation (\ref{i4}). Then, from the previous lemma
\begin{align*}
\left(
  \begin{array}{c}
    \mathfrak{u}_{k+n} \\
    \mathfrak{u}_{k-1+n} \\
  \end{array}
\right)&=\left(
          \begin{array}{cc}
            \mathfrak{a}_{n+k-1} & -I \\
            I & O \\
          \end{array}
        \right)\cdots\left(
          \begin{array}{cc}
            \mathfrak{a}_{k} & -I \\
            I & O \\
          \end{array}
        \right)
\left(
          \begin{array}{c}
            \mathfrak{u}_{k} \\
            \mathfrak{u}_{k-1} \\
          \end{array}
        \right) \\
&=\left(
    \begin{array}{cc}
      -I & O \\
      O & -I \\
    \end{array}
  \right)\left(
          \begin{array}{c}
            \mathfrak{u}_{k} \\
            \mathfrak{u}_{k-1} \\
          \end{array}
        \right),
\end{align*}
it implies that $(\mathfrak{u}_{k})_{k\in\mathbb{Z}}$ is $n$-antiperiodic, that is, $\mathfrak{u}_{k+n}=-\mathfrak{u}_{k}$ for all $k\in\mathbb{Z}$.
Thus, all solution of (\ref{i4}) is $n$-antiperiodic. Conversely, if every solution of (\ref{i4}) is $n$-antiperiodic where $(\mathfrak{c}_{k})$ is $n$-periodic. Then
\begin{equation*}
\left( \left(
          \begin{array}{cc}
            \mathfrak{a}_{n} & -I \\
            I & O \\
          \end{array}
        \right)\cdots\left(
          \begin{array}{cc}
            \mathfrak{a}_{1} & -I \\
            I & O \\
          \end{array}
        \right)-\left(
                  \begin{array}{cc}
                    -I & O \\
                    O & -I \\
                  \end{array}
                \right)
         \right )\left(
                   \begin{array}{c}
                     \mathfrak{u}_{1} \\
                     \mathfrak{u}_{0} \\
                   \end{array}
                 \right)=\left(
                           \begin{array}{c}
                             O \\
                             O \\
                           \end{array}
                         \right),
\end{equation*}
where $\mathfrak{a}_{k}=\mathfrak{c}_{k}$ for $k=1,\cdots,n$. Now, being $\mathfrak{u}_{1}$ and $\mathfrak{u}_{0}$ arbitrary we obtain $M(\mathfrak{a}_{1},\cdots,\mathfrak{a}_{n})=-Id$.
\end{proof}

Consider now the equation
\begin{equation}\label{porladerecha1}
N(\mathfrak{b}_{1},\cdots,\mathfrak{b}_{n})=\left(
                         \begin{array}{cc}
                           O & -I \\
                           I & \mathfrak{b}_{1} \\
                         \end{array}
                       \right)\left(
                         \begin{array}{cc}
                           O & -I \\
                           I & \mathfrak{b}_{2} \\
                         \end{array}
                       \right)\cdots \left(
                         \begin{array}{cc}
                           O & -I \\
                           I & \mathfrak{b}_{n} \\
                         \end{array}
                       \right)=N(\mathfrak{b}_{1})\cdots N(\mathfrak{b}_{n})=-Id,
\end{equation}
the solutions of (\ref{porladerecha1}) will be called \textbf{right matrix quiddity sequences}.

\begin{remark}A different alternative to (\ref{porladerecha1}), it will be the equality
\begin{equation}\label{porladerecha01}
L(\mathfrak{b}_{1},\cdots,\mathfrak{b}_{n})=\left(
                         \begin{array}{cc}
                           O & I \\
                           -I & \mathfrak{b}_{1} \\
                         \end{array}
                       \right)\left(
                         \begin{array}{cc}
                           O & I \\
                          -I & \mathfrak{b}_{2} \\
                         \end{array}
                       \right)\cdots \left(
                         \begin{array}{cc}
                           O & I \\
                           -I & \mathfrak{b}_{n} \\
                         \end{array}
                       \right)=L(\mathfrak{b}_{1})\cdots L(\mathfrak{b}_{n})=-Id,
\end{equation}
now, since for all matrix $\mathfrak{a}$ of order $l\times l$
\begin{equation*}
\left(
  \begin{array}{cc}
    \mathfrak{a} & -I \\
    I & O \\
  \end{array}
\right)^{-1}=\left(
               \begin{array}{cc}
                 O & I \\
                 -I & \mathfrak{a} \\
               \end{array}
             \right),
\end{equation*}
this type of equality can be obtained of (\ref{i3}) applying the inverse of each matrix on the left. However, we must recognize
that (\ref{porladerecha01}) makes more sense from of point of view of moving frame theory.
\end{remark}

\begin{theorem}If $(\mathfrak{a}_{1},\cdots,\mathfrak{a}_{n})$ is a left matrix quiddity sequence then $(\mathfrak{a^{\ast}}_{1},\cdots,\mathfrak{a^{\ast}}_{n})$ is a right matrix quiddity sequence.
Conversely, if $(\mathfrak{b}_{1},\cdots,\mathfrak{b}_{n})$ is a right matrix quiddity sequence then $(\mathfrak{b^{\ast}}_{1},\cdots,\mathfrak{b^{\ast}}_{n})$ is a left matrix quiddity sequence.
\end{theorem}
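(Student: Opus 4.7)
The plan is to find a fixed invertible matrix that conjugates each building block $M(\mathfrak{a})^{\ast}$ into $N(\mathfrak{a}^{\ast})$, so that the entire statement reduces to taking adjoints of the defining equation.

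First I would introduce the $2l\times 2l$ flip matrix
\[
J=\begin{pmatrix} O & I \\ I & O \end{pmatrix},
\]
which is self-adjoint and satisfies $J^{2}=Id$. A direct block computation then establishes the key identity
\[
J\,M(\mathfrak{a})^{\ast}\,J \;=\; N(\mathfrak{a}^{\ast})
\]
for every $l\times l$ matrix $\mathfrak{a}$, where
$M(\mathfrak{a})^{\ast}=\left(\begin{smallmatrix}\mathfrak{a}^{\ast} & I\\ -I & O\end{smallmatrix}\right)$.
A symmetric (and equally routine) block computation gives the companion identity $J\,N(\mathfrak{b})^{\ast}\,J=M(\mathfrak{b}^{\ast})$.

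With these two identities in hand, the forward direction proceeds as follows. Starting from $M(\mathfrak{a}_{n})\cdots M(\mathfrak{a}_{1})=-Id$, I would take the adjoint of both sides (using $Id^{\ast}=Id$) to obtain
\[
M(\mathfrak{a}_{1})^{\ast}\cdots M(\mathfrak{a}_{n})^{\ast}=-Id.
\]
Conjugating by $J$ and inserting the resolution $J^{2}=Id$ between consecutive factors yields
\[
\bigl(J M(\mathfrak{a}_{1})^{\ast} J\bigr)\bigl(J M(\mathfrak{a}_{2})^{\ast} J\bigr)\cdots \bigl(J M(\mathfrak{a}_{n})^{\ast} J\bigr)=-Id,
\]
which by the key identity is exactly $N(\mathfrak{a}_{1}^{\ast})N(\mathfrak{a}_{2}^{\ast})\cdots N(\mathfrak{a}_{n}^{\ast})=-Id$, showing that $(\mathfrak{a}_{1}^{\ast},\ldots,\mathfrak{a}_{n}^{\ast})$ is a right matrix quiddity sequence. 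The converse is then handled by the identical argument, this time starting from $N(\mathfrak{b}_{1})\cdots N(\mathfrak{b}_{n})=-Id$, taking adjoints, and using the companion identity $J N(\mathfrak{b})^{\ast} J=M(\mathfrak{b}^{\ast})$.

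There is no genuine obstacle here; the only substantive step is spotting the correct intertwining matrix $J$, and once the two identities $J M(\mathfrak{a})^{\ast} J=N(\mathfrak{a}^{\ast})$ and $J N(\mathfrak{b})^{\ast} J=M(\mathfrak{b}^{\ast})$ are verified by block multiplication, the rest is just bookkeeping about how the adjoint reverses the order of a product and how $J^{2}=Id$ lets one distribute the conjugation through the product. One minor point worth flagging in the write-up is that this argument works uniformly over $\mathbb{C}$ with $\ast$ meaning conjugate transpose, and would equally well work for the transpose over any field since $I^{\ast}=I$ is the only property of $I$ used.
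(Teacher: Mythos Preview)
Your proposal is correct and follows essentially the same route as the paper: introduce the flip matrix $J=\left(\begin{smallmatrix}O & I\\ I & O\end{smallmatrix}\right)$, use $J^{2}=Id$ and $J^{\ast}=J$, verify the block identity $J\,M(\mathfrak{a})^{\ast}\,J=N(\mathfrak{a}^{\ast})$, then take the adjoint of the defining equation and distribute the $J$-conjugation through the product. The paper packages the key identity as ``conjugation with respect to the anti-diagonal,'' but the content is identical to what you wrote.
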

\begin{proof}For any block matrix $\left(
                                     \begin{array}{cc}
                                       \mathfrak{x} & \mathfrak{y} \\
                                       \mathfrak{z} & \mathfrak{w} \\
                                     \end{array}
                                   \right)$ its conjugate block matrix with respect to the anti-diagonal is obtained in the following form
\begin{equation*}
\left( \begin{array}{cc}
                                       \mathfrak{w}^{\ast} & \mathfrak{y}^{\ast} \\
                                       \mathfrak{z}^{\ast} & \mathfrak{x}^{\ast} \\
                                     \end{array}
                                   \right)=\left(
                                             \begin{array}{cc}
                                               O & I \\
                                               I & O \\
                                             \end{array}
                                           \right)\left(
                                                    \begin{array}{cc}
                                                      \mathfrak{x} & \mathfrak{y} \\
                                                      \mathfrak{z} & \mathfrak{w} \\
                                                    \end{array}
                                                  \right)^{\ast}\left(
                                                                  \begin{array}{cc}
                                                                    O & I \\
                                                                    I & O \\
                                                                  \end{array}
                                                                \right)=\left(
                                             \begin{array}{cc}
                                               O & I \\
                                               I & O \\
                                             \end{array}
                                           \right)\left(
                                                    \begin{array}{cc}
                                                      \mathfrak{x}^{\ast} & \mathfrak{z}^{\ast} \\
                                                      \mathfrak{y}^{\ast} & \mathfrak{w}^{\ast} \\
                                                    \end{array}
                                                  \right)\left(
                                                                  \begin{array}{cc}
                                                                    O & I \\
                                                                    I & O \\
                                                                  \end{array}
                                                                \right),
\end{equation*}
where for a matrix $H$ of order $l$ arbitrary, the notation $H^{\ast}$ denotes its complex conjugate matrix. Note also that the matrix
$J=\left(
     \begin{array}{cc}
       O & I \\
       I & O \\
     \end{array}
   \right)
$ satisfies $J^{\ast}=J$ and $J^{2}=Id$.

Let us suppose that $(\mathfrak{a}_{1},\cdots,\mathfrak{a}_{n})$ is a left matrix quiddity sequence, that is
\begin{equation*}
M(\mathfrak{a}_{1},\cdots,\mathfrak{a}_{n})=\left(
                         \begin{array}{cc}
                           \mathfrak{a}_{n} & -I \\
                           I & O \\
                         \end{array}
                       \right)\left(
                         \begin{array}{cc}
                           \mathfrak{a}_{n-1} & -I \\
                           I & O \\
                         \end{array}
                       \right)\cdots \left(
                         \begin{array}{cc}
                           \mathfrak{a}_{1} & -I \\
                           I & O \\
                         \end{array}
                       \right)=M(\mathfrak{a}_{n})\cdots M(\mathfrak{a}_{1})=-\left(
                                  \begin{array}{cc}
                                    I & O \\
                                    O & I \\
                                  \end{array}
                                \right)=-Id,
\end{equation*}
and from it follows that
\begin{equation*}
M^{\ast}(\mathfrak{a}_{1},\cdots,\mathfrak{a}_{n})=M^{\ast}(\mathfrak{a}_{1})\cdots M^{\ast}(\mathfrak{a}_{n})=
\left(
  \begin{array}{cc}
    \mathfrak{a}^{\ast}_{1} & I \\
    -I &  O \\
  \end{array}
\right)\cdots \left(
  \begin{array}{cc}
    \mathfrak{a}^{\ast}_{n} & I \\
    -I &  O
     \\
  \end{array}
\right)=-\left(
                                     \begin{array}{cc}
                                       I & O \\
                                       O & I \\
                                     \end{array}
                                   \right)=- Id,
\end{equation*}
thus
\begin{align*}
-Id=-J^{2}=JM^{\ast}(\mathfrak{a}_{1},\cdots,\mathfrak{a}_{n})J&=JM^{\ast}(\mathfrak{a}_{1})J^{2}M^{\ast}(\mathfrak{a}_{2})\cdots M^{\ast}(\mathfrak{a}_{n-1})J^{2}M^{\ast}(\mathfrak{a}_{n})J \\
&=\left(
    \begin{array}{cc}
      O & I \\
      I & O \\
    \end{array}
  \right)
\left(
  \begin{array}{cc}
    \mathfrak{a}^{\ast}_{1} & I \\
    -I &  O \\
  \end{array}
\right)\left(
         \begin{array}{cc}
           O & I \\
           I & O \\
         \end{array}
       \right)
\cdots \left(
         \begin{array}{cc}
           O & I \\
           I & O \\
         \end{array}
       \right)
\left(
  \begin{array}{cc}
    \mathfrak{a}^{\ast}_{n} & I \\
    -I & O \\
  \end{array}
\right)\left(
         \begin{array}{cc}
           O & I \\
           I & O \\
         \end{array}
       \right) \\
&=\left(
  \begin{array}{cc}
    O & -I \\
    I &  \mathfrak{a}^{\ast}_{1} \\
  \end{array}
\right)\cdots \left(
  \begin{array}{cc}
    O & -I \\
    I &  \mathfrak{a}^{\ast}_{n} \\
  \end{array}
\right)=N(\mathfrak{a}^{\ast}_{1})\cdots N(\mathfrak{a}^{\ast}_{n})=N(\mathfrak{a}^{\ast}_{1},\cdots,\mathfrak{a}^{\ast}_{n}),
\end{align*}
it shows that $(\mathfrak{a}^{\ast}_{1},\cdots,\mathfrak{a}^{\ast}_{n})$ is a right matrix quiddity sequence. The reciprocal is proved in similar form.
\end{proof}

The following proposition is evident

\begin{proposition}Consider the right matrix-recurrent equation
\begin{equation}\label{porladerecha2}
\mathfrak{u}_{k+1}=\mathfrak{u}_{k}\mathfrak{c}_{k}-\mathfrak{u}_{k-1},\,\,\,\,\,\,\,\,\,\,\,k\in\mathbb{Z},
\end{equation}
where $(\mathfrak{c}_{k})$ is an $n$-periodic sequence, then all solution of (\ref{porladerecha2}) is $n$-antiperiodic if and only if
$(\mathfrak{b}_{1},\cdots,\mathfrak{b}_{n})$ is a right matrix quiddity sequence where $\mathfrak{b}_{k}=\mathfrak{c}_{k}$ for $k=1,\cdots,n$
\end{proposition}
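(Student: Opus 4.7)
The plan is to mimic the proof of the corresponding proposition for left matrix quiddity sequences, transposed so that the transfer matrices act on row vectors from the right. The key algebraic observation is the following rewriting of the recurrence: the identity
\[
(\mathfrak{u}_{k},\mathfrak{u}_{k+1})=(\mathfrak{u}_{k-1},\mathfrak{u}_{k})\left(
\begin{array}{cc} O & -I \\ I & \mathfrak{c}_{k} \end{array}\right)=(\mathfrak{u}_{k-1},\mathfrak{u}_{k})\,N(\mathfrak{c}_{k})
\]
is equivalent to \eqref{porladerecha2}. Hence iterating from an arbitrary starting index $k$ gives
\[
(\mathfrak{u}_{k+n-1},\mathfrak{u}_{k+n})=(\mathfrak{u}_{k-1},\mathfrak{u}_{k})\,N(\mathfrak{c}_{k})N(\mathfrak{c}_{k+1})\cdots N(\mathfrak{c}_{k+n-1}).
\]
This is the right-multiplication analogue of the matrix equation that drove the proof in the left case.

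For the implication ($\Leftarrow$), I would first need a cyclic lemma asserting that whenever $(\mathfrak{b}_{1},\ldots,\mathfrak{b}_{n})$ is a right matrix quiddity sequence, every cyclic permutation $(\mathfrak{b}_{k},\ldots,\mathfrak{b}_{n},\mathfrak{b}_{1},\ldots,\mathfrak{b}_{k-1})$ is again a right matrix quiddity sequence. This follows the same way as the lemma proved earlier for left sequences: conjugate the equality $N(\mathfrak{b}_{1})\cdots N(\mathfrak{b}_{n})=-Id$ by $N(\mathfrak{b}_{1})$ (which is invertible, since its determinant is $1$ by the Schur determinant lemma) a finite number of times. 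Once this cyclic invariance is established, the $n$-periodicity of $(\mathfrak{c}_{k})$ gives
\[
N(\mathfrak{c}_{k})\cdots N(\mathfrak{c}_{k+n-1})=-Id \quad \text{for every } k\in\mathbb{Z},
\]
so the displayed iterated formula collapses to $(\mathfrak{u}_{k+n-1},\mathfrak{u}_{k+n})=(-\mathfrak{u}_{k-1},-\mathfrak{u}_{k})$, i.e.\ antiperiodicity.

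For the converse ($\Rightarrow$), the plan is to run the iterated formula once with $k=1$:
\[
(\mathfrak{u}_{n},\mathfrak{u}_{n+1})=(\mathfrak{u}_{0},\mathfrak{u}_{1})\,N(\mathfrak{b}_{1})\cdots N(\mathfrak{b}_{n}),
\]
then use antiperiodicity to rewrite the left-hand side as $(-\mathfrak{u}_{0},-\mathfrak{u}_{1})=(\mathfrak{u}_{0},\mathfrak{u}_{1})(-Id)$. Because the initial data $\mathfrak{u}_{0},\mathfrak{u}_{1}\in M_{l}(\mathbb{C})$ are arbitrary, one can choose them so that the pair $(\mathfrak{u}_{0},\mathfrak{u}_{1})$ has a right inverse in the obvious $2l\times 2l$ block sense (for instance $\mathfrak{u}_{0}=I,\mathfrak{u}_{1}=O$ and then $\mathfrak{u}_{0}=O,\mathfrak{u}_{1}=I$), allowing one to cancel and conclude $N(\mathfrak{b}_{1})\cdots N(\mathfrak{b}_{n})=-Id$, which is exactly the defining equation \eqref{porladerecha1}.

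The only nontrivial point is the cyclic-invariance lemma used in the forward direction; everything else is the routine transpose of the earlier argument. The obstacle is purely bookkeeping: one must keep the order of the $N(\mathfrak{b}_{k})$ factors straight, because unlike in the left case the factors are written with indices increasing from left to right in \eqref{porladerecha1}, which matches the natural order of iteration on the right.
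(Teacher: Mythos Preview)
Your proof is correct. The paper itself does not give a proof of this proposition: it is introduced with the phrase ``The following proposition is evident'' and no argument follows. Your write-up supplies precisely the details one would expect, transposing the earlier proof for the left case (Proposition after Lemma~(\ref{mqs1})) to row vectors acted on from the right, together with the cyclic-invariance lemma obtained by conjugation. This is exactly the intended routine verification.
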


Next, we will make some calculations to find left matrix quiddity sequence in low dimensions. Since
\begin{equation*}
\left(
  \begin{array}{cc}
    \mathfrak{a}_{5} & -I \\
    I & O \\
  \end{array}
\right)\left(
  \begin{array}{cc}
    \mathfrak{a}_{4} & -I \\
    I & O \\
  \end{array}
\right)\left(
  \begin{array}{cc}
    \mathfrak{a}_{3} & -I \\
    I & O \\
  \end{array}
\right)=\left(
          \begin{array}{cc}
            (\mathfrak{a}_{5}\mathfrak{a}_{4}-I)\mathfrak{a}_{3}-\mathfrak{a}_{5} & -(\mathfrak{a}_{5}\mathfrak{a}_{4}-I) \\
            (\mathfrak{a}_{4}\mathfrak{a}_{3}-I) & -\mathfrak{a}_{4} \\
          \end{array}
        \right),
\end{equation*}
then it follows that the only one left matrix quiddity sequence $(\mathfrak{a}_{5},\mathfrak{a}_{4},\mathfrak{a}_{3})$ is accurately $(I,I,I)$. One takes another step
\begin{align*}
M(\mathfrak{a}_{2},\mathfrak{a}_{3},\mathfrak{a}_{4},\mathfrak{a}_{5})&=\left(
  \begin{array}{cc}
    \mathfrak{a}_{5} & -I \\
    I & O \\
  \end{array}
\right)\left(
  \begin{array}{cc}
    \mathfrak{a}_{4} & -I \\
    I & O \\
  \end{array}
\right)\left(
  \begin{array}{cc}
    \mathfrak{a}_{3} & -I \\
    I & O \\
  \end{array}
\right)\left(
  \begin{array}{cc}
    \mathfrak{a}_{2} & -I \\
    I & O \\
  \end{array}
\right) \\
&=\left(
          \begin{array}{cc}
            (\mathfrak{a}_{5}\mathfrak{a}_{4}-I)\mathfrak{a}_{3}-\mathfrak{a}_{5} & -(\mathfrak{a}_{5}\mathfrak{a}_{4}-I) \\
            (\mathfrak{a}_{4}\mathfrak{a}_{3}-I) & -\mathfrak{a}_{4} \\
          \end{array}
        \right)\left(
  \begin{array}{cc}
    \mathfrak{a}_{2} & -I \\
    I & O \\
  \end{array}
\right)  \\
&=\left(
    \begin{array}{cc}
      ((\mathfrak{a}_{5}\mathfrak{a}_{4}-I)\mathfrak{a}_{3}-\mathfrak{a}_{5})\mathfrak{a}_{2}-(\mathfrak{a}_{5}\mathfrak{a}_{4}-I) & -((\mathfrak{a}_{5}\mathfrak{a}_{4}-I)\mathfrak{a}_{3}-\mathfrak{a}_{5}) \\
      ((\mathfrak{a}_{4}\mathfrak{a}_{3}-I)\mathfrak{a}_{2}-\mathfrak{a}_{4}) & -(\mathfrak{a}_{4}\mathfrak{a}_{3}-I) \\
    \end{array}
  \right),
\end{align*}
hence, so that the equation $M(\mathfrak{a}_{2},\mathfrak{a}_{3},\mathfrak{a}_{4},\mathfrak{a}_{5})=-Id$ is tru one must has  $\mathfrak{a}_{4}\mathfrak{a}_{3}-I=I$, that is, $\frac{\mathfrak{a}_{4}\mathfrak{a}_{3}}{2}=I$ meaning that $\mathfrak{a}_{4}$ and
$\mathfrak{a}_{3}$ are invertible matrices, even more $\mathfrak{a}_{4}=2\mathfrak{a}^{-1}_{3}$. From the form of the remaining entries
of the previous matrix follows that $\mathfrak{a}_{3}=\mathfrak{a}_{5}=\mathfrak{m}$ and $\mathfrak{a}_{2}=\mathfrak{a}_{4}=2\mathfrak{a}^{-1}_{3}=2\mathfrak{m}^{-1}$. Thus $(2\mathfrak{m}^{-1},\mathfrak{m},2\mathfrak{m}^{-1},\mathfrak{m})$
and its cyclical permutation are the only ones left matrix quiddity sequences of length $4$. \textbf{The last step is interesting because it leads to a possible theory of matrix-valued frieze patterns as it will be seen after}.

We have
\begin{theorem}Each left matrix quiddity sequence $(\mathfrak{a}_{1},\mathfrak{a}_{2},\mathfrak{a}_{3},\mathfrak{a}_{4},\mathfrak{a}_{5})$
with $|\mathfrak{a}_{1}|\neq 0$ and $|\mathfrak{a}_{3}|\neq 0$ is of the forma
\begin{equation}\label{cont1}
(\mathfrak{a}_{1},\mathfrak{a}_{2},\mathfrak{a}_{3},\mathfrak{a}_{4},\mathfrak{a}_{5})=(\mathfrak{c},\mathfrak{c}^{-1}(I+(\mathfrak{c}+I)
\mathfrak{d}^{-1}),\mathfrak{d},(\mathfrak{c}+I)\mathfrak{d}^{-1},
(\mathfrak{c}^{-1}+I)(\mathfrak{c}+I)^{-1}(\mathfrak{d}+I)),
\end{equation}
for which $[\mathfrak{c},\mathfrak{d}]=O$.
\end{theorem}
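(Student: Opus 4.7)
The plan is to expand the length-five product $M(\mathfrak{a}_5) M(\mathfrak{a}_4) M(\mathfrak{a}_3) M(\mathfrak{a}_2) M(\mathfrak{a}_1)$ by right-multiplying the length-four expression already computed in the excerpt by $M(\mathfrak{a}_1)$, then set the resulting $2\times 2$ block matrix equal to $-Id$, and solve the four $l\times l$ block equations in turn. If $P, Q, R, S$ denote the blocks of the length-four product, the new blocks are $P\mathfrak{a}_1 + Q$, $-P$, $R\mathfrak{a}_1 + S$ and $-R$, which must equal $-I$, $O$, $O$, $-I$ respectively.

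I will adopt the notation $\mathfrak{c} := \mathfrak{a}_1$, $\mathfrak{d} := \mathfrak{a}_3$. First, the lower-row equations, $R = I$ and $R\mathfrak{a}_1 + S = O$, give $S = -\mathfrak{c}$; spelled out, $\mathfrak{a}_4\mathfrak{d} = \mathfrak{c} + I$ and $(\mathfrak{a}_4\mathfrak{d} - I)\mathfrak{a}_2 = \mathfrak{a}_4 + I$. Using $|\mathfrak{d}| \neq 0$, the first produces $\mathfrak{a}_4 = (\mathfrak{c}+I)\mathfrak{d}^{-1}$; substituting into the second yields $\mathfrak{c}\mathfrak{a}_2 = \mathfrak{a}_4 + I$, and $|\mathfrak{c}| \neq 0$ then gives $\mathfrak{a}_2 = \mathfrak{c}^{-1}\bigl(I + (\mathfrak{c}+I)\mathfrak{d}^{-1}\bigr)$. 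Second, the upper-row equations $P = O$ and $Q = -I$ give, after simplification, $(\mathfrak{a}_5 \mathfrak{a}_4 - I)\mathfrak{a}_3 - \mathfrak{a}_5 = I$ and $\mathfrak{a}_5\mathfrak{a}_4 = \mathfrak{a}_2 + I$; combining the first with $\mathfrak{a}_4\mathfrak{d} = \mathfrak{c}+I$ gives $\mathfrak{a}_5 \mathfrak{c} = \mathfrak{d} + I$, whence $\mathfrak{a}_5 = (\mathfrak{d}+I)\mathfrak{c}^{-1}$. One checks directly that the form $(\mathfrak{c}^{-1}+I)(\mathfrak{c}+I)^{-1}(\mathfrak{d}+I)$ appearing in the theorem simplifies to $\mathfrak{c}^{-1}(\mathfrak{d}+I)$, which matches $(\mathfrak{d}+I)\mathfrak{c}^{-1}$ exactly under the commutativity $[\mathfrak{c},\mathfrak{d}]=O$.

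Three of the four block equations have now been used; the remaining one, $\mathfrak{a}_5\mathfrak{a}_4 = \mathfrak{a}_2 + I$, plays the role of a consistency check. Substituting the derived expressions for $\mathfrak{a}_2$, $\mathfrak{a}_4$, $\mathfrak{a}_5$, expanding, and cancelling common terms reduces this equation to $\mathfrak{d}\mathfrak{c}^{-1} = \mathfrak{c}^{-1}\mathfrak{d}$, i.e., $[\mathfrak{c},\mathfrak{d}] = O$. Conversely, under this commutativity the explicit formulas can be verified to satisfy all four block equations. The main difficulty is the non-commutative bookkeeping: products must be kept in the correct order at every step, each inversion must be tied explicitly to the hypothesis $|\mathfrak{a}_1|\neq 0$ or $|\mathfrak{a}_3|\neq 0$, and one must observe that after using three of the four block relations, precisely one consistency condition remains, which collapses to the single commutator identity $[\mathfrak{c},\mathfrak{d}] = O$.
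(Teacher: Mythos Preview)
Your proof is correct and follows essentially the same approach as the paper: expand the length-five product block by block, set it equal to $-Id$, and solve the resulting four $l\times l$ equations for $\mathfrak{a}_2,\mathfrak{a}_4,\mathfrak{a}_5$ in terms of $\mathfrak{c}=\mathfrak{a}_1$ and $\mathfrak{d}=\mathfrak{a}_3$, with the leftover relation forcing $[\mathfrak{c},\mathfrak{d}]=O$. Your organization is slightly more economical than the paper's---you use the relation $S=-\mathfrak{c}$ (i.e.\ $\mathfrak{a}_4\mathfrak{d}=\mathfrak{c}+I$) directly and only invoke the stated hypotheses $|\mathfrak{a}_1|\neq 0$, $|\mathfrak{a}_3|\neq 0$, whereas the paper's derivation passes through $\mathfrak{a}_4^{-1}$ and $(\mathfrak{a}_1+I)^{-1}$ without justifying those inverses---but the underlying method is the same.
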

\begin{proof}Indeed,
\begin{align*}
M(\mathfrak{a}_{1},\mathfrak{a}_{2},\mathfrak{a}_{3},\mathfrak{a}_{4},\mathfrak{a}_{5})&=\left(
    \begin{array}{cc}
      ((\mathfrak{a}_{5}\mathfrak{a}_{4}-I)(\mathfrak{a}_{3}\mathfrak{a}_{2}-I)-\mathfrak{a}_{5}\mathfrak{a}_{2}) & -((\mathfrak{a}_{5}\mathfrak{a}_{4}-I)\mathfrak{a}_{3}-\mathfrak{a}_{5}) \\
      ((\mathfrak{a}_{4}\mathfrak{a}_{3}-I)\mathfrak{a}_{2}-\mathfrak{a}_{4}) & -(\mathfrak{a}_{4}\mathfrak{a}_{3}-I) \\
    \end{array}
  \right)\left(
           \begin{array}{cc}
             \mathfrak{a}_{1} & -I \\
             I & O \\
           \end{array}
         \right) \\
         &=\left(
    \begin{array}{cc}
      ((\mathfrak{a}_{5}\mathfrak{a}_{4}-I)(\mathfrak{a}_{3}\mathfrak{a}_{2}-I)-\mathfrak{a}_{5}\mathfrak{a}_{2})\mathfrak{a}_{1} -((\mathfrak{a}_{5}\mathfrak{a}_{4}-I)\mathfrak{a}_{3}-\mathfrak{a}_{5}) & -((\mathfrak{a}_{5}\mathfrak{a}_{4}-I)(\mathfrak{a}_{3}\mathfrak{a}_{2}-I)-\mathfrak{a}_{5}\mathfrak{a}_{2}) \\
      ((\mathfrak{a}_{4}\mathfrak{a}_{3}-I)\mathfrak{a}_{2}-\mathfrak{a}_{4})\mathfrak{a}_{1}-(\mathfrak{a}_{4}\mathfrak{a}_{3}-I) & -((\mathfrak{a}_{4}\mathfrak{a}_{3}-I)\mathfrak{a}_{2}-\mathfrak{a}_{4}) \\
    \end{array}
  \right).
\end{align*}

The equality $M(\mathfrak{a}_{1},\mathfrak{a}_{2},\mathfrak{a}_{3},\mathfrak{a}_{4},\mathfrak{a}_{5})=-Id$ results in four equations
\begin{enumerate}
  \item $(\mathfrak{a}_{4}\mathfrak{a}_{3}-I)\mathfrak{a}_{2}-\mathfrak{a}_{4}=I$,
  \item $(\mathfrak{a}_{5}\mathfrak{a}_{4}-I)\mathfrak{a}_{3}-\mathfrak{a}_{5}=I$,
  \item $(\mathfrak{a}_{5}\mathfrak{a}_{4}-I)(\mathfrak{a}_{3}\mathfrak{a}_{2}-I)-\mathfrak{a}_{5}\mathfrak{a}_{2}=O$,
  \item $(\mathfrak{a}_{4}\mathfrak{a}_{3}-I)(\mathfrak{a}_{2}\mathfrak{a}_{1}-I)-\mathfrak{a}_{4}\mathfrak{a}_{1}=O$.
\end{enumerate}

Now, from $3.$ and $2.$ follow that $((\mathfrak{a}_{5}\mathfrak{a}_{4}-I)\mathfrak{a}_{3}-\mathfrak{a}_{5})\mathfrak{a}_{2}=(\mathfrak{a}_{5}\mathfrak{a}_{4}-I)$, that is,
$\mathfrak{a}_{5}=(\mathfrak{a}_{2}+I)\mathfrak{a}_{4}^{-1}$. In the same way, using $4.$ and $1.$ we obtain $\mathfrak{a}_{4}=(\mathfrak{a}_{1}+I)\mathfrak{a}_{3}^{-1}$ which implies that $\mathfrak{a}_{5}=(\mathfrak{a}_{2}+I)\mathfrak{a}_{3}(\mathfrak{a}_{1}+I)^{-1}$. On the other hand, the first equation give us $\mathfrak{a}_{1}\mathfrak{a}_{2}=I+\mathfrak{a}_{4}=I+(\mathfrak{a}_{1}+I)\mathfrak{a}_{3}^{-1}$, thus $\mathfrak{a}_{2}=\mathfrak{a}_{1}^{-1}(I+(\mathfrak{a}_{1}+I)\mathfrak{a}_{3}^{-1})$. We claim that $[\mathfrak{a}_{1},\mathfrak{a}_{3}]=O$;
to see this we use the second equation. In fact, equation $2.$ implies that $\mathfrak{a}_{2}\mathfrak{a}_{3}=I+\mathfrak{a}_{5}$ hence
$\mathfrak{a}_{2}\mathfrak{a}_{3}(\mathfrak{a}_{1}+I)-\mathfrak{a}_{2}\mathfrak{a}_{3}-\mathfrak{a}_{3}=\mathfrak{a}_{1}+I$ thus $(\mathfrak{a}_{2}\mathfrak{a}_{3}-I)\mathfrak{a}_{1}=I+\mathfrak{a}_{3}$. Then $(\mathfrak{a}_{1}^{-1}(I+(\mathfrak{a}_{1}+I)\mathfrak{a}_{3}^{-1})\mathfrak{a}_{3}-I)\mathfrak{a}_{1}=\mathfrak{a}_{3}+I$, it shows that $(\mathfrak{a}_{1}^{-1}(\mathfrak{a}_{3}+\mathfrak{a}_{1}+I)-I)\mathfrak{a}_{1}=I+\mathfrak{a}_{3}$ and so
$\mathfrak{a}_{1}^{-1}\mathfrak{a}_{3}\mathfrak{a}_{1}=\mathfrak{a}_{3}$. This proves the statement. We can now write $\mathfrak{a}_{5}$ in a simple
form as function of $\mathfrak{a}_{1}$ and $\mathfrak{a}_{3}$, concretely $\mathfrak{a}_{5}=(\mathfrak{a}_{1}^{-1}+I)(\mathfrak{a}_{1}+I)^{-1}
(\mathfrak{a}_{3}+I)$. To prove the theorem it is enough take $\mathfrak{c}=\mathfrak{a}_{1}$ and $\mathfrak{d}=\mathfrak{a}_{3}$.
The theorem has been proved.
\end{proof}

The left matrix quiddity sequence $\mathfrak{G}=(\mathfrak{c},(I+\mathfrak{e})\mathfrak{c}^{-1}, (I+\mathfrak{c})\mathfrak{e}^{-1},\mathfrak{e},\mathfrak{c}^{-1}(I+\mathfrak{c}+\mathfrak{e})\mathfrak{e}^{-1})$ will be called the Gauss matrix
quiddity sequence (we recall that $[\mathfrak{c},\mathfrak{e}]=0$). To name $\mathfrak{G}$ in this way would be justified by the following proposition

\begin{proposition}Let us assume that $\mathfrak{A},\mathfrak{B}\in GL_{l}(\mathbb{K})$ such that $[\mathfrak{A},\mathfrak{B}]=0$ and suppose that $I+\mathfrak{A}$, $I+\mathfrak{B}$ and $I+\mathfrak{A+B}$ are invertible (for example, if
$\|\mathfrak{A}\|< \frac{1}{2},\,\|\mathfrak{B}\|< \frac{1}{2}$). Define the Gauss map
\begin{equation}\label{nf2}
\mathcal{G}\,:\,\,\,\,(\mathfrak{A},\mathfrak{B})\longrightarrow (\mathfrak{B},\mathfrak{A}^{-1}(\mathfrak{B}+I)),
\end{equation}
then, we have $\mathcal{G}^{5}(\mathfrak{A},\mathfrak{B})=(\mathfrak{A},\mathfrak{B})$.
\end{proposition}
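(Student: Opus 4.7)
The plan is to verify the identity $\mathcal{G}^{5}(\mathfrak{A},\mathfrak{B})=(\mathfrak{A},\mathfrak{B})$ by direct iteration, computing each of the four intermediate iterates explicitly. Since the initial pair $(\mathfrak{A},\mathfrak{B})$ generates a commutative subalgebra of $M_{l}(\mathbb{K})$, and every entry of every iterate will be built from $\mathfrak{A},\mathfrak{B}$ and their inverses together with $I$, commutativity is preserved throughout the orbit and may be applied freely to rearrange products. The invertibility hypotheses on $\mathfrak{A},\mathfrak{B},\, I+\mathfrak{A},\,I+\mathfrak{B},\,I+\mathfrak{A}+\mathfrak{B}$ will be used to justify each division, and we will confirm in passing that the first coordinate of every iterate is invertible so that $\mathcal{G}$ can indeed be applied five times in succession.

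First I would compute $\mathcal{G}^{2}(\mathfrak{A},\mathfrak{B})=(\mathfrak{A}^{-1}(\mathfrak{B}+I),\,(\mathfrak{A}\mathfrak{B})^{-1}(\mathfrak{A}+\mathfrak{B}+I))$ by applying the definition twice and collecting the factor $\mathfrak{A}^{-1}\mathfrak{B}^{-1}=(\mathfrak{A}\mathfrak{B})^{-1}$. Applying $\mathcal{G}$ again yields, after simplification using $(\mathfrak{B}+I)^{-1}$ and the identity $\mathfrak{A}+\mathfrak{B}+I+\mathfrak{A}\mathfrak{B}=(\mathfrak{A}+I)(\mathfrak{B}+I)$, the pair
\begin{equation*}
\mathcal{G}^{3}(\mathfrak{A},\mathfrak{B})=\bigl((\mathfrak{A}\mathfrak{B})^{-1}(\mathfrak{A}+\mathfrak{B}+I),\,\mathfrak{B}^{-1}(\mathfrak{A}+I)\bigr).
\end{equation*}
One more application, using the factorization $\mathfrak{A}(\mathfrak{A}+\mathfrak{B}+I)=(\mathfrak{A}+\mathfrak{B}+I)\mathfrak{A}$, collapses the expression to $\mathcal{G}^{4}(\mathfrak{A},\mathfrak{B})=(\mathfrak{B}^{-1}(\mathfrak{A}+I),\mathfrak{A})$. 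A final application produces $(\mathfrak{A},(\mathfrak{A}+I)^{-1}\mathfrak{B}(\mathfrak{A}+I))=(\mathfrak{A},\mathfrak{B})$, again by commutativity.

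The main obstacle, such as it is, lies not in algebraic manipulation but in bookkeeping: one must check at each of the five stages that the first coordinate is invertible so the next application of $\mathcal{G}$ makes sense, and that the necessary "shifted" factors $(\mathfrak{B}+I)$, $(\mathfrak{A}+I)$, $(\mathfrak{A}+\mathfrak{B}+I)$, and products thereof are invertible exactly when the standing hypotheses on $\mathfrak{A},\mathfrak{B}$ are in force. The norm bound $\|\mathfrak{A}\|,\|\mathfrak{B}\|<\tfrac{1}{2}$ supplied in the statement ensures this via the Neumann series, but the cleaner route is to note that every first coordinate appearing in the orbit lies in the commutative subring $\mathbb{K}[\mathfrak{A},\mathfrak{B},\mathfrak{A}^{-1},\mathfrak{B}^{-1},(\mathfrak{A}+I)^{-1},(\mathfrak{B}+I)^{-1},(\mathfrak{A}+\mathfrak{B}+I)^{-1}]$ and is explicitly invertible there. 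With these verifications in place, the five iterations reduce to routine commutative algebra and the identity follows.
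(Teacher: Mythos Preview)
Your proof is correct and is exactly the direct iteration the paper has in mind: the paper omits the argument entirely, saying only that ``the proof is similar to the scalar case,'' and the scalar-case verification is precisely the five-step computation you carry out. Your explicit check that each first coordinate is invertible (so that $\mathcal{G}$ may be iterated) is in fact more careful than anything the paper provides.
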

\begin{proof}The proof is similar to the scalar case and so it will be omitted.
\end{proof}

The connection of this result with a possible theory of matrix-valued frieze patterns will be made clear later in the paper.

\subsection{The previous subsection revisited for $n$-antiperiodic sequences in the projective space $\mathbb{P}_{1}(M_{l}(\mathbb{R}))$}

We begin this subsection showing a general process of decomposition for the monodromy.
Let $G$ be a group and let $1_{G}$ be the identity of $G$. A left action of $G$ on a set $\Omega$ is a map $G\times \Omega\longrightarrow \Omega$,
that is, $(g,\omega)\longrightarrow g \cdot \omega$ such that $(hg)\cdot \omega=h\cdot (g\cdot \omega)$ for all $\omega\in \Omega$ and $h,g\in G$
and $1_{G}\cdot \omega=\omega$ for all $\omega\in \Omega$. We will always assume unless we say otherwise that the action is transitive, that is
$\Omega$ is a $G$-space.

Suppose that $G$ is a finite group and let $V$ be a vector space over $\mathbb{C}$. Denote by $GL(V)$ the linear group consisting of all
invertible linear maps $A: V\longrightarrow V$. A representation of $G$ over $V$ is an action $G\times V: (g,v)\longrightarrow \rho(g)v$
where $\rho(g)\in GL(V)$ for all $g\in G$. It means that $\rho : G \longrightarrow GL(V)$ such that $\rho(hg)=\rho(h)\rho(g)$ and
$\rho(1_{G})=1_{V}$ where $1_{V}$ is the identity element of $GL(V)$. The dimension of $V$ is denoted by $l$. We shall denote the representation
by the pair $(\rho,V)$.

From now on, during the section unless stated otherwise, we work with a fixed representation $(\rho,V)$.

We say that a sequence $\mathfrak{v}=\{v_{k}\}_{k\in\mathbb{Z}}\subset V$ is a twisted $n$-gon with respect to the representation $\rho$
if there exists $m_{\mathfrak{v}}\in G$ such that $v_{k+n}=m_{\mathfrak{v}}\cdot v_{k}=\rho(m_{\mathfrak{v}})v_{k}$ for every $k\in \mathbb{Z}$.
The element $m_{\mathfrak{v}}\in G$ is called the monodromy of $\mathfrak{v}$. A twisted $n$-gon $\mathfrak{v}=\{v_{k}\}_{k\in\mathbb{Z}}$
is called regular if for all $k$ the vectors $v_{k}, v_{k+1},\cdots, v_{k+l-1}$ constitute a basis of $V$. We denote by $\mathcal{P}_{n}$ the
set of all twisted $n$-gons. Observe that $G$ acts on $\mathcal{P}_{n}$ in the following form $g\cdot \mathfrak{v}=\{g\cdot v_{k}\}_{k\in\mathbb{Z}}$
where the monodromy of $g\cdot \mathfrak{v}$ is $gm_{\mathfrak{v}}g^{-1}$.

Now, we will consider recurrent equations of the form
\begin{equation}\label{fm1}
v_{k+1}=g_{k}\cdot v_{k}=\rho(g_{k})v_{k},
\end{equation}
where $\{g_{k}\}_{k\in \mathbb{Z}}\subset G$ is an $n$-periodic sequence and $v_{0}\in V$ is arbitrary. We say that a twisted $n$-gon is of
recurrent type if this satisfies a recurrent equation (\ref{fm1}) for some $n$-periodic sequence $\{g_{k}\}_{k\in \mathbb{Z}}\subset G$.
More concretely, we will say that $\mathfrak{v}$ is a twisted $n$-gon with respect to  $\{g_{k}\}_{k\in \mathbb{Z}}$. The set of all the pairs
$(\mathfrak{v}=\{v_{k}\}_{k\in\mathbb{Z}},\{g_{k}\}_{k\in \mathbb{Z}})$ where $\mathfrak{v}$  is a twisted $n$-gon of recurrent type with
respect to $\{g_{k}\}_{k\in \mathbb{Z}}$ is denoted by $\mathcal{RP}_{n}$.

We have

\begin{theorem}Suppose that all the solutions of (\ref{fm1}) satisfy the condition $v_{k+n}=m\cdot v_{k}=\rho(m)v_{k}$ for every $k\in\mathbb{Z}$
and some $m\in G$ fixed, then
\begin{equation}\label{fm2}
\rho(m)=\rho(g_{n-1})\rho(g_{n-2})\cdots \rho(g_{1})\rho(g_{0}).
\end{equation}

Reciprocally, let us assume that (\ref{fm2}) is true and $mg_{k}=g_{k}m$ for $k=0,1,\ldots ,n-2,n-1$. Then, every solution of (\ref{fm1})
satisfies the condition  $v_{k+n}=m\cdot v_{k}=\rho(m)v_{k}$ for every $k\in\mathbb{Z}$.
\end{theorem}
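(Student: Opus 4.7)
The plan is to treat the two implications almost symmetrically, exploiting the fact that the recurrence (\ref{fm1}) is a first-order linear system and so a solution is determined uniquely by any single value $v_{k_0}$ (since $\rho(g_k)\in GL(V)$, the equation can be solved for $v_{k-1}$ as well as for $v_{k+1}$).

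For the forward direction, I would take an arbitrary $v_0\in V$ and extend it to a (two-sided) solution $\{v_k\}_{k\in\mathbb{Z}}$ of (\ref{fm1}). Iterating $v_{k+1}=\rho(g_k)v_k$ from $k=0$ up to $k=n-1$ yields
\[
v_n=\rho(g_{n-1})\rho(g_{n-2})\cdots\rho(g_0)\,v_0 .
\]
By the standing hypothesis this equals $\rho(m)v_0$. Since $v_0\in V$ was arbitrary, the two linear endomorphisms agree on all of $V$, which proves (\ref{fm2}).

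For the converse, fix a solution $\{v_k\}$ of (\ref{fm1}) and define two auxiliary sequences: the shift $w_k:=v_{k+n}$ and the twist $u_k:=\rho(m)v_k$. Using $n$-periodicity of $\{g_k\}$, a direct check gives $w_{k+1}=\rho(g_{k+n})w_k=\rho(g_k)w_k$, so $w$ satisfies (\ref{fm1}). For $u$, the commutation assumption $mg_k=g_km$ together with the multiplicativity of $\rho$ yields $\rho(m)\rho(g_k)=\rho(g_k)\rho(m)$, hence $u_{k+1}=\rho(m)\rho(g_k)v_k=\rho(g_k)u_k$, so $u$ also satisfies (\ref{fm1}). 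Moreover, by the forward-direction calculation applied at $k=0$, $u_0=\rho(m)v_0=\rho(g_{n-1})\cdots\rho(g_0)v_0=v_n=w_0$. By the uniqueness of the solution of the recurrence given its value at $k=0$ (valid in both directions because each $\rho(g_k)$ is invertible), $u_k=w_k$ for every $k\in\mathbb{Z}$, which is exactly $v_{k+n}=\rho(m)v_k$.

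The only subtle step is really the converse: one must not forget that (\ref{fm1}) has to be propagated in both directions of $\mathbb{Z}$, and this is precisely what ensures $u\equiv w$ from coincidence at a single index. The role of the commutation hypothesis $mg_k=g_km$ is also worth emphasising, since without it the sequence $u_k=\rho(m)v_k$ would not obey the same recurrence as $v_k$, and the uniqueness argument would collapse.
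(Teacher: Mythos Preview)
Your proof is correct and follows essentially the same approach as the paper: the forward direction is identical, and for the converse the paper carries out an explicit induction on $|k|$ while you package the same mechanism as a uniqueness-of-solutions argument via the auxiliary sequences $w_k=v_{k+n}$ and $u_k=\rho(m)v_k$. The underlying content---propagating the equality $v_n=\rho(m)v_0$ to all $k\in\mathbb{Z}$ using the first-order recurrence, $n$-periodicity of the coefficients, and the commutation hypothesis---is the same in both versions.
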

\begin{proof}Suppose that each solution of (\ref{fm1}) satisfies the condition $v_{k+n}=m\cdot v_{k}=\rho(m)v_{k}$ for every $k\in\mathbb{Z}$ and
some $m\in G$ fixed. Then,
\begin{equation*}
\rho(m)v_{0}=m\cdot v_{0}=v_{n}=g_{n-1}\cdot v_{n-1}=(g_{n-1}g_{n-2})\cdot v_{n-2}=(g_{n-1}g_{n-2}\cdots g_{1})\cdot v_{1}=
(g_{n-1}g_{n-2}\cdots g_{1}g_{0})\cdot v_{0},
\end{equation*}
hence
\begin{equation*}
\rho(m)v_{0}=\rho(g_{n-1}g_{n-2}\cdots g_{1}g_{0})v_{0}=\rho(g_{n-1})\rho(g_{n-2})\cdots \rho(g_{1})\rho(g_{0})v_{0},
\end{equation*}
now, $v_{0}$ is an arbitrary vector. It shows that (\ref{fm2}) holds.

Conversely, assume that (\ref{fm2}) is true, thus $v_{n}=\rho(m)v_{0}=m\cdot v_{0}$. It should be proven that $v_{k+n}=\rho(m)v_{k}=m\cdot v_{k}$ for very $k\in\mathbb{Z}$.
Recall that $\{g_{k}\}_{k\in\mathbb{Z}}$ is an $n$-periodic sequence. Hence,
\begin{equation*}
v_{n+1}=g_{n}\cdot v_{n}=g_{1}\cdot v_{n}=g_{1}\cdot (m\cdot v_{0})=(g_{1}m)\cdot v_{0}=(mg_{1})\cdot v_{0}=m\cdot(g_{1}\cdot v_{0})=m\cdot v_{1}.
\end{equation*}

On the other hand, from the recurrent equation (\ref{fm1}), $v_{0}=g_{n-1}\cdot v_{-1}$. So,
\begin{equation*}
g_{n-1}\cdot v_{n-1}=v_{n}=m\cdot v_{0}=m\cdot (g_{n-1}\cdot v_{-1})=g_{n-1}\cdot (m\cdot v_{-1}),
\end{equation*}
it shows that $v_{n-1}=m\cdot v_{-1}$. Therefore, the equality $v_{k+n}=\rho(m)v_{k}=m\cdot v_{k}$ is hold when $|k|\leq 1$. Suppose now that this equality holds for $|k|\leq r$ where $1< r$. From the induction
hypothesis, we obtain
\begin{equation*}
v_{(r+1)+n}=g_{r+n}\cdot v_{r+n}=g_{r+n}\cdot (m\cdot v_{r})=(g_{r+n}m)\cdot v_{r}=(g_{r}m)\cdot v_{r}=(mg_{r})\cdot v_{r}=m\cdot (g_{r}\cdot v_{r})=m\cdot g_{r+1},
\end{equation*}
and
\begin{equation*}
v_{-(r+1)+n}=g^{-1}_{-(r+1)+n}\cdot v_{-r+n}=g^{-1}_{-(r+1)}\cdot (m\cdot v_{-r})=(g^{-1}_{-(r+1)}m)\cdot v_{-r}=(mg^{-1}_{-(r+1)})\cdot v_{-r}=m\cdot(g^{-1}_{-(r+1)}\cdot v_{-r})=m\cdot v_{-(r+1)}.
\end{equation*}
\end{proof}

Two twisted $n$-gons, $\mathfrak{v}=\{v_{k}\}_{k\in\mathbb{Z}}\subset V$ and $\mathfrak{w}=\{w_{k}\}_{k\in\mathbb{Z}}\subset V$ are called
equivalent and we write $\mathfrak{v}\sim \mathfrak{w}$ if there is $h\in G$ such that $v_{k}=h\cdot w_{k}=\rho(h)w_{k}$ for all
$k\in \mathbb{Z}$. One can see that if $\mathfrak{v}\sim \mathfrak{w}$ then $m_{\mathfrak{w}}=h^{-1}m_{\mathfrak{v}}h$ where $m_{\mathfrak{v}}$
and $m_{\mathfrak{w}}$ are the monodromy of $\mathfrak{v}$ and $\mathfrak{w}$ respectively. Moreover, if $\mathfrak{v}$ is solution of a recurrent
equation (\ref{fm1}) with respect to an $n$-periodic sequence $\{g_{k}\}_{k\in Z}\subset G$ then the sequence $\mathfrak{w}$ satisfies the
recurrent equation $w_{k+1}=\hat{g}_{k}\cdot w_{k}=\rho(\hat{g}_{k})w_{k}$ in which $\hat{g}_{k}=h^{-1}g_{k}h$ for all $k\in\mathbb{Z}$, note
that the sequence $\{\hat{g}_{k}\}_{k\in\mathbb{Z}}$ is $n$-periodic.

The expression (\ref{fm2}) is called a monodromy decomposition and we say that $(\rho(g_{0}),\rho(g_{1}),\cdots,\rho(g_{n-2}),\rho(g_{n-1}))$ is a
$GL(V)$-quiddity sequence if $\rho(m)=-1_{V}=-\rho(1_{G})$.

Recall that we are working with a fixed representation $(V,\rho)$. It is well known that $G$ acts on $G^{n}$ by means of the diagonal action.
Define a map $\varrho :\mathcal{RP}_{n}\longrightarrow G^{n}$ of the following manner for all pair $(\mathfrak{v}=\{v_{k}\}_{k\in\mathbb{Z}},\{g_{k}\}_{k\in \mathbb{Z}})\in \mathcal{RP}_{n}$ we put  $\varrho(\mathfrak{v}=\{v_{k}\}_{k\in\mathbb{Z}})=(g_{1},\cdots ,g_{n})=(\varrho_{1}(\mathfrak{v}),
\cdots,\varrho_{i}(\mathfrak{v}),\cdots\varrho_{n}(\mathfrak{v}))$.

By way of motivation we will consider $n$-antiperiodic sequences in the projective space $\mathbb{P}_{1}(M_{l}(\mathbb{R}))$.
Briefly, we now review the $(k-1)$-dimensional right-projective spaces over the real $l\times l$ matrices \cite{schwarz-zaks}.
Real matrices of order $sl\times tl$ with $t,s\geq 1$ and $t\neq s$ or $t=s$ for $t,s\geq 2$ are denoted by calligraphic capital letters. One writes the $sl\times l$ matrix $\mathcal{Y}$
in block form: $\mathcal{Y}=\left(
                              \begin{array}{c}
                                Y_{1} \\
                                \vdots \\
                                Y_{s} \\
                              \end{array}
                            \right)
$, in which each $Y_{i}$ is an $l\times l$ matrix. $R_{0}(sl^{2})$ will be the set of real or complex
matrices $\mathcal{Y}$ of rank equal to $l$. $R_{0}(sl^{2})$ is a connected topological space and its topology is defined by means of any generalized matrix norm.

Two matrices $\mathcal{Y}=\left(
                            \begin{array}{c}
                              Y_{1} \\
                              \vdots \\
                              Y_{s} \\
                            \end{array}
                          \right)
$ and $\mathcal{U}=\left(
                     \begin{array}{c}
                       U_{1} \\
                       \vdots \\
                       U_{s} \\
                     \end{array}
                   \right)
$ of $R_{0}(sl^{2})$ are right- or column-equivalent if there exists an $l\times l$ invertible matrix $S$
such that
\begin{equation}\label{s1}
\mathcal{U}=\left( \begin{array}{c}
                       U_{1} \\
                       \vdots \\
                       U_{s} \\
                     \end{array}
                   \right)=\left(
                            \begin{array}{c}
                              Y_{1} \\
                              \vdots \\
                              Y_{s} \\
                            \end{array}
                          \right)S=\mathcal{Y}S,\,\,\,\,\,\,|S|\neq 0.
\end{equation}

This relation partitions $R_{0}(sl^{2})$ into equivalence classes of column-equivalent matrices. These equivalence classes are the points of the $(s-1)$-dimensional right-projective space over
the real or complex $l\times l$ matrices $\mathbb{P}_{(s-1)}(M_{l}(\mathbb{R}))$. The projective
mappings $\mathcal{C}$ of this left-projective space are given by means of constant invertible $sl\times sl$ matrices. $\mathcal{C}$ is written
in block form
\begin{equation}\label{s2}
\mathcal{C}=\left(
              \begin{array}{ccc}
                C_{11} & \cdots & C_{1s} \\
                \vdots &  & \vdots \\
                C_{s1} & \cdots & C_{ss} \\
              \end{array}
            \right),\,\,\,\,\,\,\,\,|\mathcal{C}|\neq 0,
\end{equation}
where each block $C_{ij}$, $i,j=1,\cdots,s$ is an $l\times l$ matrix. We denote the space of all projective mappings by $PGL_{sl}(M_{l}(\mathbb{R}))$ or simply $PGL_{sl}$.
For $\mathcal{C}$ fixed, one defines
\begin{equation}\label{s3}
\widetilde{\mathcal{Y}}=\left(
                          \begin{array}{c}
                            \widetilde{Y}_{1} \\
                            \vdots \\
                            \widetilde{Y}_{s} \\
                          \end{array}
                        \right)
=\mathcal{C}(\mathcal{Y})=\mathcal{C}\mathcal{Y}
=\left(
              \begin{array}{ccc}
                C_{11} & \cdots & C_{1s} \\
                \vdots &  & \vdots \\
                C_{s1} & \cdots & C_{ss} \\
              \end{array}
            \right)\left(
                     \begin{array}{c}
                       Y_{1} \\
                       \vdots \\
                       Y_{s} \\
                     \end{array}
                   \right)
            ,
\end{equation}
for all $\mathcal{Y}\in \mathbb{P}_{(s-1)}(M_{l}(\mathbb{R}))$, then $\mathcal{C}(\mathcal{Y})\in \mathbb{P}_{(s-1)}(M_{l}(\mathbb{R}))$.
If $\mathcal{U}=\mathcal{Y}S$ where $|S|\neq 0$, then $\widetilde{\mathcal{U}}=\mathcal{C}\mathcal{U}=\mathcal{C}\mathcal{Y}S=\widetilde{\mathcal{Y}}S$;
Hence, column-equivalent matrices have column-equivalent transformations. Thus, the transformation (\ref{s3}) induces a transformation of
$\mathbb{P}_{(s-1)}(M_{l}(\mathbb{R}))$ onto itself.

In this part, we will work in the projective space $\mathbb{P}_{1}(M_{l}(\mathbb{R}))$.

\begin{definition}A sequence $\left \{\Phi_{k}=\left(
                                  \begin{array}{c}
                                    Y^{k}_{1} \\
                                    Y^{k}_{2} \\
                                  \end{array}
                                \right) \right \}_{k\in\mathbb{Z}}\subset R_{0}(2l^{2})$ is said to be $n$-antiperiodic if $\Phi_{k+n}=-\Phi_{k}$ for all $k\in \mathbb{Z}$. An
                                $n$-antiperiodic sequence $\{\Phi_{k}\}_{k\in\mathbb{Z}}$ of $R_{0}(2l^{2})$ is called regular if $|\Phi_{k}\,\,\Phi_{k+1}|\neq 0$ for any $k\in\mathbb{Z}$.
\end{definition}

We can introduce an equivalence relation $\sim$ in the set $AP_{n}$ of $n$-antiperiodic sequences of matrices belong to $R_{0}(2l^{2})$. We say that two sequences $\{\Phi_{k}\}_{k\in\mathbb{Z}}, \{\Lambda_{k}\}_{k\in\mathbb{Z}}\in AP_{n}$ are related if there exists $G\in PGL_{2l}$ such that $\Lambda_{k}=G\Phi_{k}$ for every $k\in\mathbb{Z}$. Below, the notation $CAP_{n}=AP_{n}\diagup\sim$ will be used.

The following remark is trivial

\begin{remark}Suppose that $\mathcal{L}\in CAP_{n}$ such that there is $\{\Phi_{k}\}_{k\in\mathbb{Z}}\in\mathcal{L}$ regular. Then, all sequence belongs to $\mathcal{L}$ is regular. In this case, we say that the class $\mathcal{L}$ is regular, otherwise $\mathcal{L}$ is called non-regular.
\end{remark}

Let $\mathcal{L}$ be a regular class and $\{\Phi_{k}\}_{k\in\mathbb{Z}}\in\mathcal{L}$, then for all $k\in \mathbb{Z}$ there are two $n$-periodic sequences of matrices of order $l$, say $\{\mathfrak{l}_{k}\}_{k\in \mathbb{Z}}$ and  $\{\mathfrak{s}_{k}\}_{k\in \mathbb{Z}}$ such that
\begin{equation}\label{s4}
\Phi_{k+1}=\Phi_{k}\mathfrak{l}_{k}+\Phi_{k-1}\mathfrak{s}_{k},
\end{equation}
for all $k\in\mathbb{Z}$, thus for any $k\in\mathbb{Z}$
\begin{equation}\label{s5}
(\Phi_{k}\,\,\Phi_{k+1})=(\Phi_{k-1}\,\,\Phi_{k})\left(
                                                   \begin{array}{cc}
                                                     O & \mathfrak{s}_{k} \\
                                                     I & \mathfrak{l}_{k} \\
                                                   \end{array}
                                                 \right).
\end{equation}

From (\ref{s5}) and the Schur determinant lemma follow that $|\mathfrak{s}_{k}|\neq 0$ for $k\in\mathbb{Z}$. Define
\begin{equation*}
N(\mathfrak{l}_{1},\cdots,\mathfrak{l}_{n};\mathfrak{s}_{1},\cdots,\mathfrak{s}_{n})=
\left(
\begin{array}{cc}
 O & \mathfrak{s}_{1} \\
 I & \mathfrak{l}_{1} \\
 \end{array}
 \right)\cdots \cdots\left(
\begin{array}{cc}
 O & \mathfrak{s}_{n} \\
 I & \mathfrak{l}_{n} \\
 \end{array}
 \right)=N(\mathfrak{l}_{1};\mathfrak{s}_{1})\cdots\cdots N(\mathfrak{l}_{n};\mathfrak{s}_{n}),
\end{equation*}
observe that $|N(\mathfrak{l}_{k};\mathfrak{s}_{k})|=|\mathfrak{s}_{k}|$ where $k=1,\cdots,n$, hence $|N(\mathfrak{l}_{1},\cdots,\mathfrak{l}_{n};\mathfrak{s}_{1},\cdots,\mathfrak{s}_{n})|=|\mathfrak{s}_{1}|\cdots |\mathfrak{s}_{n}|$. Since $\{\Phi_{k}\}_{k\in\mathbb{Z}}$ is regular and
$n$-antiperiodic, we conclude that $|N(\mathfrak{l}_{1},\cdots,\mathfrak{l}_{n};\mathfrak{s}_{1},\cdots,\mathfrak{s}_{n})|=|s\mathfrak{}_{1}|\cdots |\mathfrak{s}_{n}|=1$.

It is not difficult to see that all regular solution $\{\Phi_{k}\}_{k\in\mathbb{Z}}\subset \mathbb{P}_{1}(M_{l}(\mathbb{R}))$ of the equation (\ref{s4}) with coefficients $\{\mathfrak{l}_{k}\}$ and $\{\mathfrak{s}_{k}\}$
which are $n$-periodic sequences is $n$-antiperiodic if and only if
\begin{equation*}
\left(
\begin{array}{cc}
 O & \mathfrak{s}_{1} \\
 I & \mathfrak{l}_{1} \\
 \end{array}
 \right)\cdots \cdots\left(
\begin{array}{cc}
 O & \mathfrak{s}_{n} \\
 I & \mathfrak{l}_{n} \\
 \end{array}
 \right)=-Id.
\end{equation*}

We arrive at the following definition
\begin{definition}A bi-vector $(\overline{\mathfrak{l}};\overline{\mathfrak{s}})_{n}=(\mathfrak{l}_{1},\cdots,\mathfrak{l}_{n};\mathfrak{s}_{1},\cdots,\mathfrak{s}_{n})$ is called a \textbf{right matrix quiddity bi-sequence} if
\begin{equation}\label{s6}
N((\overline{\mathfrak{l}};\overline{\mathfrak{s}})_{n})
=N(\mathfrak{l}_{1},\cdots,\mathfrak{l}_{n};\mathfrak{s}_{1},\cdots,\mathfrak{s}_{n})=N(\mathfrak{l}_{1};\mathfrak{s}_{1})\cdots\cdots N(\mathfrak{l}_{n};\mathfrak{s}_{n})=-\left(
                                                                  \begin{array}{cc}
                                                                    I & O \\
                                                                    O & I \\
                                                                  \end{array}
                                                                \right)=-Id,
\end{equation}
in this case, $n$ is called the length of $(\overline{\mathfrak{l}};\overline{\mathfrak{s}})_{n}$.
\end{definition}

It follows from (\ref{s6}) that $JN^{\ast}(\mathfrak{l}_{1},\cdots,\mathfrak{l}_{n};\mathfrak{s}_{1},\cdots,\mathfrak{s}_{n})J=(JN^{\ast}(\mathfrak{l}_{n};\mathfrak{s}_{n})J)
\cdots\cdots (JN^{\ast}(\mathfrak{l}_{1};\mathfrak{s}_{1})J)=-Id$, where as before
$J=\left(
\begin{array}{cc}
O & I \\
I & O \\
\end{array}
\right)$, thus
\begin{equation*}
\left(
\begin{array}{cc}
 \mathfrak{l}^{\ast}_{n} & \mathfrak{s}^{\ast}_{n} \\
 I & O \\
 \end{array}
 \right)\cdots\cdots \left(
\begin{array}{cc}
 \mathfrak{l}^{\ast}_{1} & \mathfrak{s}^{\ast}_{1} \\
 I & O \\
 \end{array}
 \right)=-\left(
 \begin{array}{cc}
 I & O \\
 O & I \\
 \end{array}
 \right)=-Id.
\end{equation*}

This leads to the next definition
\begin{definition}A bi-vector $(\overline{\mathfrak{p}};\overline{\mathfrak{q}})_{n}=(\mathfrak{p}_{1},\cdots,\mathfrak{p}_{n};\mathfrak{q}_{1},\cdots,\mathfrak{q}_{n})$ is called a
\textbf{left matrix quiddity bi-sequence} of length $n$ if
\begin{equation}\label{s7}
M((\overline{\mathfrak{p}};\overline{\mathfrak{q}})_{n})=M(\mathfrak{p}_{1},\cdots,\mathfrak{p}_{n};\mathfrak{q}_{1},\cdots,\mathfrak{q}_{n})
=\left(
\begin{array}{cc}
 \mathfrak{p}_{n} & \mathfrak{q}_{n} \\
 I & O \\
 \end{array}
 \right)\cdots\cdots \left(
\begin{array}{cc}
 \mathfrak{p}_{1} & \mathfrak{q}_{1} \\
 I & O \\
 \end{array}
 \right)=M(\mathfrak{p}_{n};\mathfrak{q}_{n})\cdots\cdots M(\mathfrak{p}_{1};\mathfrak{q}_{1})=-Id.
\end{equation}
\end{definition}

It is clear that there is a one-to-one correspondence between the set $\mathcal{RMQB}_{n}$ of all the right matrix quiddity bi-sequences of length
$n$ and the set $\mathcal{LMQB}_{n}$ of all the left matrix quiddity bi-sequences. So, in this part we work in the class of left matrix quiddity bi-sequences of any length $\mathcal{LMQB}=\sqcup_{n}\mathcal{LMQB}_{n}$.

\begin{example}Let us study the equation
\begin{equation}\label{s8}
M((\overline{\mathfrak{p}};\overline{\mathfrak{q}})_{3})=\left(
  \begin{array}{cc}
    \mathfrak{p}_{3} & \mathfrak{q}_{3} \\
    I & O \\
  \end{array}
\right)\left(
  \begin{array}{cc}
    \mathfrak{p}_{2} & \mathfrak{q}_{2} \\
    I & O \\
  \end{array}
\right)\left(
  \begin{array}{cc}
    \mathfrak{p}_{1} & \mathfrak{q}_{1} \\
    I & O \\
  \end{array}
\right)=-\left(
          \begin{array}{cc}
            I & O \\
            O & I \\
          \end{array}
        \right),
\end{equation}now
\begin{equation*}
M((\overline{\mathfrak{p}};\overline{\mathfrak{q}})_{3})=\left(
                                      \begin{array}{cc}
                                        \mathfrak{p}_{3}\mathfrak{p}_{2}+\mathfrak{q}_{3} & \mathfrak{p}_{3}\mathfrak{q}_{2} \\
                                        \mathfrak{p}_{2} & \mathfrak{q}_{2} \\
                                      \end{array}
                                    \right)\left(
  \begin{array}{cc}
    \mathfrak{p}_{1} & \mathfrak{q}_{1} \\
    I & O \\
  \end{array}
\right)=\left(
          \begin{array}{cc}
           (\mathfrak{p}_{3}\mathfrak{p}_{2}+\mathfrak{q}_{3})\mathfrak{p}_{1}+\mathfrak{p}_{3}\mathfrak{q}_{2}  & (\mathfrak{p}_{3}\mathfrak{p}_{2}+\mathfrak{q}_{3})\mathfrak{q}_{1} \\
            \mathfrak{p}_{2}\mathfrak{p}_{1}+\mathfrak{q}_{2} & \mathfrak{p}_{2}\mathfrak{q}_{1} \\
          \end{array}
        \right),
\end{equation*}
hence $\mathfrak{\mathfrak{p}}_{1}=\mathfrak{q}_{1}\mathfrak{q}_{2}$, $\mathfrak{p}_{2}=-\mathfrak{q}^{-1}_{1}$ and $\mathfrak{p}_{3}=\mathfrak{q}_{3}\mathfrak{q}_{1}$ such that $\mathfrak{q}_{3}\mathfrak{q}_{1}\mathfrak{q}_{2}=-I$. It follows that
$(\mathfrak{q}_{1}\mathfrak{q}_{2}, -\mathfrak{q}_{1}^{-1}, \mathfrak{q}_{3}\mathfrak{q}_{1}; \mathfrak{q}_{1},\mathfrak{q}_{2},-\mathfrak{q}_{2}^{-1}\mathfrak{q}_{1}^{-1})$
is a left matrix quiddity bi-sequence of length $3$.
\end{example}

\begin{proposition}Let $(\mathfrak{p}_{1},\cdots,\mathfrak{p}_{n};\mathfrak{q}_{1},\cdots,\mathfrak{q}_{n})$ be an element of $\mathcal{LMQB}_{n}$ then
\begin{equation}\label{s8}
(\mathfrak{p}_{1},\cdots,\mathfrak{p}_{k-1} ,\mathfrak{p}_{k}+I,I,\mathfrak{p}_{k+1}-\mathfrak{q}_{k+1},\mathfrak{p}_{k+2},\cdots,\mathfrak{p}_{n};
\mathfrak{q}_{1},\cdots,\mathfrak{q}_{k-1},\mathfrak{q}_{k},-I,\mathfrak{q}_{k+1},\mathfrak{q}_{k+2},\cdots, \mathfrak{q}_{n})\in \mathcal{LMQB}_{n+1},
\end{equation}
for $k=1,\cdots, n-1$.
\end{proposition}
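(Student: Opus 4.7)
The plan is to reduce the statement to a single local identity of block matrices, namely
\begin{equation*}
M(\mathfrak{p}_{k+1}-\mathfrak{q}_{k+1};\mathfrak{q}_{k+1})\,M(I;-I)\,M(\mathfrak{p}_{k}+I;\mathfrak{q}_{k})=M(\mathfrak{p}_{k+1};\mathfrak{q}_{k+1})\,M(\mathfrak{p}_{k};\mathfrak{q}_{k}).
\end{equation*}
Once this is verified, everything to the left of position $k+1$ and everything to the right of position $k+2$ in the extended product is unchanged, so the whole ordered product collapses onto the original product $M(\mathfrak{p}_{1},\ldots,\mathfrak{p}_{n};\mathfrak{q}_{1},\ldots,\mathfrak{q}_{n})=-Id$, which is the hypothesis.

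First I would carry out the local identity by direct block multiplication. Computing the left--hand side from right to left, one gets
\begin{equation*}
M(I;-I)\,M(\mathfrak{p}_{k}+I;\mathfrak{q}_{k})=\begin{pmatrix} I & -I \\ I & O \end{pmatrix}\begin{pmatrix} \mathfrak{p}_{k}+I & \mathfrak{q}_{k} \\ I & O \end{pmatrix}=\begin{pmatrix} \mathfrak{p}_{k} & \mathfrak{q}_{k} \\ \mathfrak{p}_{k}+I & \mathfrak{q}_{k} \end{pmatrix},
\end{equation*}
and then multiplying on the left by $M(\mathfrak{p}_{k+1}-\mathfrak{q}_{k+1};\mathfrak{q}_{k+1})$ one obtains exactly the $(1,1)$ entry $\mathfrak{p}_{k+1}\mathfrak{p}_{k}+\mathfrak{q}_{k+1}$ (the cross terms $-\mathfrak{q}_{k+1}\mathfrak{p}_{k}$ and $+\mathfrak{q}_{k+1}\mathfrak{p}_{k}$ cancel), $(1,2)$ entry $\mathfrak{p}_{k+1}\mathfrak{q}_{k}$, $(2,1)$ entry $\mathfrak{p}_{k}$, and $(2,2)$ entry $\mathfrak{q}_{k}$, which is precisely $M(\mathfrak{p}_{k+1};\mathfrak{q}_{k+1})\,M(\mathfrak{p}_{k};\mathfrak{q}_{k})$.

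Next I would plug this local identity into $M$ of the proposed extended bi-sequence. By definition of $M$ (read right-to-left), the factor block at positions $k$, ``inserted'', $k+2$ in the new sequence is exactly the left--hand side of the local identity, so after the substitution the extended product becomes
\begin{equation*}
M(\mathfrak{p}_{n};\mathfrak{q}_{n})\cdots M(\mathfrak{p}_{k+2};\mathfrak{q}_{k+2})\,M(\mathfrak{p}_{k+1};\mathfrak{q}_{k+1})\,M(\mathfrak{p}_{k};\mathfrak{q}_{k})\cdots M(\mathfrak{p}_{1};\mathfrak{q}_{1})=-Id,
\end{equation*}
which is the original hypothesis. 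Therefore the extended bi-sequence belongs to $\mathcal{LMQB}_{n+1}$.

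The only delicate point is the bookkeeping in the local identity: one must be careful that $\mathfrak{p}_{k+1}-\mathfrak{q}_{k+1}$ (not $\mathfrak{p}_{k+1}+\mathfrak{q}_{k+1}$) appears on the right, and that the inserted $\mathfrak{q}$-entry is $-I$ (so that the middle factor is $M(I;-I)=\left(\begin{smallmatrix} I & -I \\ I & O \end{smallmatrix}\right)$), which is exactly what makes the annihilation of the spurious $\mathfrak{q}_{k+1}\mathfrak{p}_{k}$ terms work. There is no obstacle beyond this verification; the argument is purely algebraic and does not require any invertibility assumption on the $\mathfrak{p}_{i}$ or $\mathfrak{q}_{i}$.
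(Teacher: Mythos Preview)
Your proof is correct and follows essentially the same approach as the paper: both reduce to the local identity $M(\mathfrak{p}_{k+1}-\mathfrak{q}_{k+1};\mathfrak{q}_{k+1})\,M(I;-I)\,M(\mathfrak{p}_{k}+I;\mathfrak{q}_{k})=M(\mathfrak{p}_{k+1};\mathfrak{q}_{k+1})\,M(\mathfrak{p}_{k};\mathfrak{q}_{k})$ and verify it by direct block multiplication. The only cosmetic difference is that the paper multiplies the three factors left-to-right whereas you multiply right-to-left.
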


\begin{proof}In order to prove the proposition it is sufficient to observe that for $k=1,\cdots,n-1$
\begin{align*}
M(\mathfrak{p}_{k+1}-\mathfrak{q}_{k+1};\mathfrak{q}_{k+1})M(I;-I)M(\mathfrak{p}_{k}+I;\mathfrak{q}_{k})&=\left(
  \begin{array}{cc}
    \mathfrak{p}_{k+1}-\mathfrak{q}_{k+1} & \mathfrak{q}_{k+1} \\
    I & O \\
  \end{array}
\right)\left(
         \begin{array}{cc}
           I & -I \\
           I & O \\
         \end{array}
       \right)\left(
                \begin{array}{cc}
                  \mathfrak{p}_{k}+I & \mathfrak{q}_{k} \\
                  I & O \\
                \end{array}
              \right) \\
&=\left(
    \begin{array}{cc}
      \mathfrak{p}_{k+1} & \mathfrak{q}_{k+1}-\mathfrak{p}_{k+1} \\
      I & -I \\
    \end{array}
  \right)\left(
                \begin{array}{cc}
                  \mathfrak{p}_{k}+I & \mathfrak{q}_{k} \\
                  I & O \\
                \end{array}
              \right) \\
&=\left(
    \begin{array}{cc}
       \mathfrak{p}_{k+1}\mathfrak{p}_{k} + \mathfrak{q}_{k+1} & \mathfrak{p}_{k+1}\mathfrak{q}_{k} \\
       \mathfrak{p}_{k} & \mathfrak{q}_{k} \\
    \end{array}
  \right)=M(\mathfrak{p}_{k+1};\mathfrak{q}_{k+1})M(\mathfrak{p}_{k};\mathfrak{q}_{k}).
\end{align*}
\end{proof}

We also have
\begin{theorem}Suppose that $(\overline{\mathfrak{p}};\overline{\mathfrak{q}})_{n}=(\mathfrak{p}_{1},\cdots,\mathfrak{p}_{n};\mathfrak{q}_{1},\cdots,\mathfrak{q}_{n})\in \mathcal{LMQB}_{n}$ and $(\overline{\mathfrak{l}};\overline{\mathfrak{s}})_{m}=(\mathfrak{l}_{1},\cdots,\mathfrak{l}_{m};\mathfrak{s}_{1},\cdots,\mathfrak{s}_{m})\in \mathcal{LMQB}_{m}$. Introduce the following products
\begin{align}\label{s8}
&(\overline{\mathfrak{p}};\overline{\mathfrak{q}})_{n}\circ_{k}(\overline{\mathfrak{l}};\overline{\mathfrak{s}})_{m}=  \\
&(\mathfrak{p}_{1},\cdots,\mathfrak{p}_{k-1},\mathfrak{l}_{1}-\mathfrak{s}_{1}(\mathfrak{p}_{k}+I),\mathfrak{l}_{2},\cdots,\mathfrak{l}_{m-1},
\mathfrak{l}_{m}+I,
\mathfrak{p}_{k+1}-\mathfrak{q}_{k+1},\mathfrak{p}_{k+2},\cdots,\mathfrak{p}_{n};\mathfrak{q}_{1},\cdots,\mathfrak{q}_{k-1},-\mathfrak{s}_{1}
\mathfrak{q}_{k},\mathfrak{s}_{2},
\cdots,\mathfrak{s}_{m},\mathfrak{q}_{k+1},\cdots,\mathfrak{q}_{n}), \nonumber
\end{align}
for $k=1,\cdots,n-1$ (of course $|\mathfrak{s}_{1}\mathfrak{q}_{k}|\neq 0$), and
\begin{equation}\label{s9}
(\overline{\mathfrak{p}};\overline{\mathfrak{q}})_{n}\circ_{n}(\overline{\mathfrak{l}};\overline{\mathfrak{s}})_{m}=
(\mathfrak{p}_{1}-\mathfrak{q}_{1},\mathfrak{p}_{2},\cdots,\mathfrak{p}_{n-1},\mathfrak{l}_{1}-\mathfrak{s}_{1}(\mathfrak{p}_{n}+I),\mathfrak{l}_{2},
\cdots,\mathfrak{l}_{m-1},
\mathfrak{l}_{m}+I;\mathfrak{q}_{1},\cdots,\mathfrak{q}_{n-1},-\mathfrak{s}_{1}
\mathfrak{q}_{n},\mathfrak{s}_{2},
\cdots,\mathfrak{s}_{m}),
\end{equation}

Then $(\overline{\mathfrak{p}};\overline{\mathfrak{q}})_{n}\circ_{k}(\overline{\mathfrak{l}};\overline{\mathfrak{s}})_{m}\in \mathcal{LMQB}_{n+m-1}$ for $k=1,\cdots,n$.
\end{theorem}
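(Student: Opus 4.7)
The approach is to expand $M\bigl((\overline{\mathfrak{p}};\overline{\mathfrak{q}})_{n}\circ_{k}(\overline{\mathfrak{l}};\overline{\mathfrak{s}})_{m}\bigr)$ as an ordered product of factors $M(\mathfrak{a};\mathfrak{b})=\bigl(\begin{smallmatrix}\mathfrak{a}&\mathfrak{b}\\I&O\end{smallmatrix}\bigr)$, identify the (at most three) positions where the entries have been modified with respect to the two original bi-sequences, and factor each such ``junction'' factor into a product of unmodified $M(\cdot;\cdot)$'s multiplied by a constant $2\times 2$ block matrix. Once this is done, the two long segments of untouched factors collapse, by hypothesis, to $-Id$ each, and the problem reduces to a finite combinatorial check on constant block matrices.

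The key step is the verification of three matrix identities, each proved by direct block-matrix multiplication:
\begin{align*}
M(\mathfrak{x}+I;\mathfrak{y})&=\begin{pmatrix}I&I\\O&I\end{pmatrix}M(\mathfrak{x};\mathfrak{y}),\\
M(\mathfrak{x}-\mathfrak{y};\mathfrak{y})&=M(\mathfrak{x};\mathfrak{y})\begin{pmatrix}I&O\\-I&I\end{pmatrix},\\
M\bigl(\mathfrak{l}-\mathfrak{s}(\mathfrak{p}+I);-\mathfrak{s}\mathfrak{q}\bigr)&=M(\mathfrak{l};\mathfrak{s})\begin{pmatrix}O&I\\-I&-I\end{pmatrix}M(\mathfrak{p};\mathfrak{q}).
\end{align*}
The ansatz for the third identity is to insert an unknown constant block matrix $K$ between $M(\mathfrak{l};\mathfrak{s})$ and $M(\mathfrak{p};\mathfrak{q})$ and match entries with $\mathfrak{p},\mathfrak{q},\mathfrak{l},\mathfrak{s}$ treated as independent; the resulting linear system forces the unique value $K=\bigl(\begin{smallmatrix}O&I\\-I&-I\end{smallmatrix}\bigr)$.

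For $1\le k\le n-1$, substituting these factorizations into the expanded $M$ of the product yields, reading right to left, the tail $M(\mathfrak{p}_{k};\mathfrak{q}_{k})\cdots M(\mathfrak{p}_{1};\mathfrak{q}_{1})$, then the central constant $\bigl(\begin{smallmatrix}O&I\\-I&-I\end{smallmatrix}\bigr)$, then the full $\mathfrak{l}$-segment $M(\mathfrak{l}_{m};\mathfrak{s}_{m})\cdots M(\mathfrak{l}_{1};\mathfrak{s}_{1})$ which collapses to $-Id$ by hypothesis, then the product $\bigl(\begin{smallmatrix}I&I\\O&I\end{smallmatrix}\bigr)\bigl(\begin{smallmatrix}I&O\\-I&I\end{smallmatrix}\bigr)$ wait, the correct order is $\bigl(\begin{smallmatrix}I&O\\-I&I\end{smallmatrix}\bigr)\bigl(\begin{smallmatrix}I&I\\O&I\end{smallmatrix}\bigr)=\bigl(\begin{smallmatrix}I&I\\-I&O\end{smallmatrix}\bigr)$, and finally the head $M(\mathfrak{p}_{n};\mathfrak{q}_{n})\cdots M(\mathfrak{p}_{k+1};\mathfrak{q}_{k+1})$. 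The central sandwich simplifies to $\bigl(\begin{smallmatrix}I&I\\-I&O\end{smallmatrix}\bigr)(-Id)\bigl(\begin{smallmatrix}O&I\\-I&-I\end{smallmatrix}\bigr)=Id$ using the elementary identity $\bigl(\begin{smallmatrix}I&I\\-I&O\end{smallmatrix}\bigr)\bigl(\begin{smallmatrix}O&I\\-I&-I\end{smallmatrix}\bigr)=-Id$, leaving exactly $M((\overline{\mathfrak{p}};\overline{\mathfrak{q}})_{n})=-Id$. The edge values $k=1$ and $k=n-1$ need no separate treatment, since the truncation of an empty segment on either side is harmless.

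The extremal case $k=n$ is handled analogously, the only difference being that the ``twist at position $k+1$'' wraps around to position $1$ and produces the entry $\mathfrak{p}_{1}-\mathfrak{q}_{1}$; the second factorization above applies verbatim to give $M(\mathfrak{p}_{1}-\mathfrak{q}_{1};\mathfrak{q}_{1})=M(\mathfrak{p}_{1};\mathfrak{q}_{1})\bigl(\begin{smallmatrix}I&O\\-I&I\end{smallmatrix}\bigr)$. After collapsing both long segments to $-Id$, the identity to verify is the single constant relation $\bigl(\begin{smallmatrix}I&I\\O&I\end{smallmatrix}\bigr)\bigl(\begin{smallmatrix}O&I\\-I&-I\end{smallmatrix}\bigr)\bigl(\begin{smallmatrix}I&O\\-I&I\end{smallmatrix}\bigr)=-Id$, which together with the two $(-Id)$'s from the hypotheses yields the required $-Id$. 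The main point requiring care, rather than a genuine obstacle, is the non-commutative bookkeeping: all matrix products must be kept in order, and one must check that the three factorization identities above nowhere assume commutativity between $\mathfrak{p}_{i}$'s, $\mathfrak{l}_{j}$'s or $\mathfrak{s}_{j}$'s; they do not, because the non-trivial cancellations are confined to the constant $2\times 2$ block matrices, which multiply among themselves by ordinary matrix arithmetic.
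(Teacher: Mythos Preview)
Your argument is correct and essentially identical to the paper's own proof: the same three factorization identities are used (the paper combines your first two into the single observation $M(\mathfrak{p}_{k+1}-\mathfrak{q}_{k+1};\mathfrak{q}_{k+1})\,M(\mathfrak{l}_{m}+I;\mathfrak{s}_{m})=M(\mathfrak{p}_{k+1};\mathfrak{q}_{k+1})\bigl(\begin{smallmatrix}I&I\\-I&O\end{smallmatrix}\bigr)M(\mathfrak{l}_{m};\mathfrak{s}_{m})$), and the constant-matrix collapses are the same. The only thing to fix is presentational: the mid-sentence ``wait, the correct order is\ldots'' should be excised and replaced by the correct computation from the outset.
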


\begin{proof}In fact, let $1\leq k \leq n-1$ then
\begin{align*}
(\overline{\mathfrak{p}};\overline{\mathfrak{q}})_{n}\circ_{k}(\overline{\mathfrak{l}};\overline{\mathfrak{s}})_{m}&=M(\mathfrak{p}_{n};\mathfrak{q}_{n})
\cdots M(\mathfrak{p}_{k+2};\mathfrak{q}_{k+2})
M(\mathfrak{p}_{k+1}-\mathfrak{q}_{k+1};\mathfrak{q}_{k+1})M(\mathfrak{l}_{m}+I;\mathfrak{s}_{m})M(\mathfrak{l}_{m-1};\mathfrak{s}_{m-1})\cdots \\
&\,\,\,\,\,\,\,M(\mathfrak{l}_{2};\mathfrak{s}_{2})M(\mathfrak{l}_{1}-\mathfrak{s}_{1}(\mathfrak{p}_{k}+I);-\mathfrak{s}_{1}\mathfrak{q}_{k})
M(\mathfrak{p}_{k-1};\mathfrak{q}_{k-1})\cdots
M(\mathfrak{p}_{1};\mathfrak{q}_{1}),
\end{align*}
now let us note that
\begin{align}
M(\mathfrak{l}_{1}-\mathfrak{s}_{1}(\mathfrak{p}_{k}+I);-\mathfrak{s}_{1}\mathfrak{q}_{k})&=\left(
                                                                                             \begin{array}{cc}
                                                                                               \mathfrak{l}_{1}-\mathfrak{s}_{1}(\mathfrak{p}_{k}+I) & -\mathfrak{s}_{1}\mathfrak{q}_{k} \\
                                                                                                I & O \\
                                                                                             \end{array}
                                                                                           \right)  \nonumber \\
&=\left(
   \begin{array}{cc}
     \mathfrak{l}_{1} & \mathfrak{s}_{1} \\
     I & O \\
   \end{array}
 \right)\left(
          \begin{array}{cc}
            O & I \\
            -I & -I \\
          \end{array}
        \right)\left(
                 \begin{array}{cc}
                   \mathfrak{p}_{k} & \mathfrak{q}_{k} \\
                   I & O \\
                 \end{array}
               \right)=M(\mathfrak{l}_{1};\mathfrak{s}_{1})\left(
                                                             \begin{array}{cc}
                                                               O & I \\
                                                               -I & -I \\
                                                             \end{array}
                                                           \right)M(\mathfrak{p}_{k};\mathfrak{s}_{k}), \nonumber
\end{align}
and
\begin{align*}
M(\mathfrak{p}_{k+1}-\mathfrak{q}_{k+1};\mathfrak{q}_{k+1})M(\mathfrak{l}_{m}+I;\mathfrak{s}_{m})&=\left(
                                                                                                   \begin{array}{cc}
                                                                                                     \mathfrak{p}_{k+1}-\mathfrak{q}_{k+1} & \mathfrak{q}_{k+1} \\
                                                                                                      I & O \\
                                                                                                   \end{array}
                                                                                                 \right)\left(
                                                                                                          \begin{array}{cc}
                                                                                                            \mathfrak{l}_{m}+I & \mathfrak{s}_{m} \\
                                                                                                            I & O \\
                                                                                                          \end{array}
                                                                                                        \right) \\
=&\left(
    \begin{array}{cc}
      \mathfrak{p}_{k+1} & \mathfrak{q}_{k+1} \\
      I & O \\
    \end{array}
  \right)\left(
           \begin{array}{cc}
             I & I \\
             -I & O \\
           \end{array}
         \right)\left(
                  \begin{array}{cc}
                    \mathfrak{l}_{m} & \mathfrak{s}_{m} \\
                    I & O \\
                  \end{array}
                \right)=M(\mathfrak{p}_{k+1};\mathfrak{q}_{k+1})\left(
                                                                  \begin{array}{cc}
                                                                     I & I \\
                                                                    -I & O \\
                                                                  \end{array}
                                                                \right)M(\mathfrak{l}_{m};\mathfrak{s}_{m}),
\end{align*}
then combining the hypotheses of the theorem and the last three equalities, it turns out that
\begin{align*}
(\overline{\mathfrak{p}};\overline{\mathfrak{q}})_{n}\circ_{k}(\overline{\mathfrak{l}};\overline{\mathfrak{s}})_{m}&=M(\mathfrak{p}_{n};\mathfrak{q}_{n})
\cdots M(\mathfrak{p}_{k+2};\mathfrak{q}_{k+2})
M(\mathfrak{p}_{k+1};\mathfrak{q}_{k+1})\left(
                                                                  \begin{array}{cc}
                                                                     I & I \\
                                                                    -I & O \\
                                                                  \end{array}
                                                                \right)M(\mathfrak{l}_{m};\mathfrak{s}_{m})M(\mathfrak{l}_{m-1};\mathfrak{s}_{m-1})\cdots \\
&\,\,\,\,\,\,\,M(\mathfrak{l}_{2};\mathfrak{s}_{2})M(\mathfrak{l}_{1};\mathfrak{s}_{1})\left(
                                                             \begin{array}{cc}
                                                               O & I \\
                                                               -I & -I \\
                                                             \end{array}
                                                           \right)M(\mathfrak{p}_{k};\mathfrak{s}_{k})
M(\mathfrak{p}_{k-1};\mathfrak{q}_{k-1})\cdots
M(\mathfrak{p}_{1};\mathfrak{q}_{1}) \\
&=M(\mathfrak{p}_{n};\mathfrak{q}_{n})
\cdots
M(\mathfrak{p}_{k+1};\mathfrak{q}_{k+1})\left(
                                                                  \begin{array}{cc}
                                                                     I & I \\
                                                                    -I & O \\
                                                                  \end{array}
                                                                \right)\left(
                                                                         \begin{array}{cc}
                                                                           -I & O \\
                                                                           O & -I \\
                                                                         \end{array}
                                                                       \right)\left(
                                                             \begin{array}{cc}
                                                               O & I \\
                                                               -I & -I \\
                                                             \end{array}
                                                           \right)M(\mathfrak{p}_{k};\mathfrak{s}_{k})
\cdots
M(\mathfrak{p}_{1};\mathfrak{q}_{1}) \\
&=M(\mathfrak{p}_{n};\mathfrak{q}_{n})
\cdots
M(\mathfrak{p}_{k+1};\mathfrak{q}_{k+1})M(\mathfrak{p}_{k};\mathfrak{s}_{k})
\cdots
M(\mathfrak{p}_{1};\mathfrak{q}_{1})=-Id .
\end{align*}

Here, we have taken into account that $(\overline{\mathfrak{p}};\overline{\mathfrak{q}})_{n}\in \mathcal{LMQB}_{n}$. Now,
\begin{align*}
(\overline{\mathfrak{p}};\overline{\mathfrak{q}})_{n}\circ_{n}(\overline{\mathfrak{l}};\overline{\mathfrak{s}})_{m}&=M(\mathfrak{l}_{m}+I;\mathfrak{s}_{m})
M(\mathfrak{l}_{m-1};\mathfrak{s}_{m-1})\cdots M(\mathfrak{l}_{2};\mathfrak{s}_{2})M(\mathfrak{l}_{1}-\mathfrak{s}_{1}(\mathfrak{p}_{n}+I);-\mathfrak{s}_{1}
\mathfrak{q}_{n})M(\mathfrak{p}_{n-1};\mathfrak{q}_{n-1}) \\
&\,\,\,\,\,\,\cdots M(\mathfrak{p}_{2};\mathfrak{q}_{2})M(\mathfrak{p}_{1}-\mathfrak{q}_{1};\mathfrak{q}_{1}) \\
&=\left(
    \begin{array}{cc}
      I & I \\
      O & I \\
    \end{array}
  \right)M(\mathfrak{l}_{m};\mathfrak{s}_{m})M(\mathfrak{l}_{m-1};\mathfrak{s}_{m-1})\cdots M(\mathfrak{l}_{2};\mathfrak{s}_{2})M(\mathfrak{l}_{1};\mathfrak{s}_{1})\left(
                                                             \begin{array}{cc}
                                                               O & I \\
                                                               -I & -I \\
                                                             \end{array}
                                                           \right)M(\mathfrak{p}_{n};\mathfrak{s}_{n}) \\
&\,\,\,\,\,\,M(\mathfrak{p}_{n-1};\mathfrak{q}_{n-1})\cdots M(\mathfrak{p}_{2};\mathfrak{q}_{2})M(\mathfrak{p}_{1};\mathfrak{q}_{1})\left(
                                                                                                                                      \begin{array}{cc}
                                                                                                                                        I & O \\
                                                                                                                                        -I & I \\
                                                                                                                                      \end{array}
                                                                                                                                    \right) \\
&=\left(
    \begin{array}{cc}
      I & I \\
      O & I \\
    \end{array}
  \right)\left(
           \begin{array}{cc}
             -I & O \\
             O & -I \\
           \end{array}
         \right)\left(
                                                             \begin{array}{cc}
                                                               O & I \\
                                                               -I & -I \\
                                                             \end{array}
                                                           \right)\left(
                                                                    \begin{array}{cc}
                                                                      -I & O \\
                                                                      O & -I \\
                                                                    \end{array}
                                                                  \right)\left(
                                                                  \begin{array}{cc}
                                                                  I & O \\
                                                                   -I & I \\
                                                                   \end{array}
                                                                   \right)=-Id.
\end{align*}

We finish the proof of the theorem.
\end{proof}

\begin{theorem}Let $(\overline{\mathfrak{p}};\overline{\mathfrak{q}})_{n}=(\mathfrak{p}_{1},\cdots,\mathfrak{p}_{n};\mathfrak{q}_{1},\cdots,\mathfrak{q}_{n})\in \mathcal{LMQB}_{n}$ and $(\overline{\mathfrak{l}};\overline{\mathfrak{s}})_{m}=(\mathfrak{l}_{1},\cdots,\mathfrak{l}_{m};\mathfrak{s}_{1},\cdots,\mathfrak{s}_{m})\in \mathcal{LMQB}_{m}$. Define
\begin{align}\label{s8}
&(\overline{\mathfrak{p}};\overline{\mathfrak{q}})_{n}\bullet_{k}(\overline{\mathfrak{l}};\overline{\mathfrak{s}})_{m}=  \\
&(\mathfrak{p}_{1},\cdots,\mathfrak{p}_{k-1}+I,\mathfrak{l}_{2}-\mathfrak{s}_{2},\mathfrak{l}_{3},\cdots,\mathfrak{l}_{m},
(\mathfrak{p}_{k}-\mathfrak{q}_{k})-\mathfrak{q}_{k}\mathfrak{l}_{1},\mathfrak{p}_{k+1},\cdots,\mathfrak{p}_{n};\mathfrak{q}_{1},\cdots,
\mathfrak{q}_{k-1},\mathfrak{s}_{2},
\cdots,\mathfrak{s}_{m},-\mathfrak{q}_{k}\mathfrak{s}_{1},\mathfrak{q}_{k+1},\cdots,\mathfrak{q}_{n}), \nonumber
\end{align}
for $k=2,\cdots,n$ (note that $|\mathfrak{q}_{k}\mathfrak{s}_{1}|\neq 0$), and
\begin{equation}\label{s9}
(\overline{\mathfrak{p}};\overline{\mathfrak{q}})_{n}\bullet_{1}(\overline{\mathfrak{l}};\overline{\mathfrak{s}})_{m}=
((\mathfrak{p}_{1}-\mathfrak{q}_{1})-\mathfrak{q}_{1}\mathfrak{l}_{1},\mathfrak{p}_{2},\cdots,\mathfrak{p}_{n-1},\mathfrak{p}_{n}+I,
\mathfrak{l}_{2}-\mathfrak{s}_{2},\mathfrak{l}_{3}
\cdots,\mathfrak{l}_{m};-\mathfrak{q}_{1}\mathfrak{s}_{1},\mathfrak{q}_{2},\cdots,\mathfrak{q}_{n},\mathfrak{s}_{2},\cdots,\mathfrak{s}_{m}),
\end{equation}

Then $(\overline{\mathfrak{p}};\overline{\mathfrak{q}})_{n}\bullet_{k}(\overline{\mathfrak{l}};\overline{\mathfrak{s}})_{m}\in \mathcal{LMQB}_{n+m-1}$ for $k=1,\cdots,n$.
\end{theorem}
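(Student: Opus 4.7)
The argument will parallel the one just given for the $\circ_k$ operation. The plan is to factor each ``modified'' block inside $M\bigl((\overline{\mathfrak{p}};\overline{\mathfrak{q}})_{n}\bullet_{k}(\overline{\mathfrak{l}};\overline{\mathfrak{s}})_{m}\bigr)$ as an unmodified $\mathfrak{p}$-factor, a fixed gluing block, and an unmodified $\mathfrak{l}$-factor; after substitution the hypotheses $M((\overline{\mathfrak{p}};\overline{\mathfrak{q}})_{n})=M((\overline{\mathfrak{l}};\overline{\mathfrak{s}})_{m})=-Id$ together with a single $2\times 2$ block-matrix computation will finish the proof.

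The two factorizations I need, each verified by direct block multiplication, are
\begin{equation*}
M\bigl((\mathfrak{p}_{k}-\mathfrak{q}_{k})-\mathfrak{q}_{k}\mathfrak{l}_{1};\,-\mathfrak{q}_{k}\mathfrak{s}_{1}\bigr)=M(\mathfrak{p}_{k};\mathfrak{q}_{k})\left(\begin{array}{cc} O & I \\ -I & -I \end{array}\right)M(\mathfrak{l}_{1};\mathfrak{s}_{1}),
\end{equation*}
\begin{equation*}
M(\mathfrak{l}_{2}-\mathfrak{s}_{2};\mathfrak{s}_{2})\,M(\mathfrak{p}_{k-1}+I;\mathfrak{q}_{k-1})=M(\mathfrak{l}_{2};\mathfrak{s}_{2})\left(\begin{array}{cc} I & I \\ -I & O \end{array}\right)M(\mathfrak{p}_{k-1};\mathfrak{q}_{k-1}).
\end{equation*}
These are mirror versions of the two identities used in the $\circ_k$ proof, the mirror coming from the fact that now $\mathfrak{q}_{k}$ multiplies $\mathfrak{l}_{1}$ and $\mathfrak{s}_{1}$ from the left (which is why the factorization pushes $M(\mathfrak{p}_{k};\mathfrak{q}_{k})$ to the \emph{left} of the gluing block).

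For $2\leq k\leq n$, substituting both factorizations into the defining product turns it into
\begin{equation*}
M(\mathfrak{p}_{n};\mathfrak{q}_{n})\cdots M(\mathfrak{p}_{k};\mathfrak{q}_{k})\left(\begin{array}{cc} O & I \\ -I & -I \end{array}\right)\bigl[M(\mathfrak{l}_{1};\mathfrak{s}_{1})M(\mathfrak{l}_{m};\mathfrak{s}_{m})\cdots M(\mathfrak{l}_{2};\mathfrak{s}_{2})\bigr]\left(\begin{array}{cc} I & I \\ -I & O \end{array}\right)M(\mathfrak{p}_{k-1};\mathfrak{q}_{k-1})\cdots M(\mathfrak{p}_{1};\mathfrak{q}_{1}).
\end{equation*}
The bracketed factor is a cyclic rotation of $M((\overline{\mathfrak{l}};\overline{\mathfrak{s}})_{m})=-Id$ and hence equals $-Id$, since $M(\mathfrak{l}_{m};\mathfrak{s}_{m})\cdots M(\mathfrak{l}_{2};\mathfrak{s}_{2})=-M(\mathfrak{l}_{1};\mathfrak{s}_{1})^{-1}$. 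The $2\times 2$ identity $\left(\begin{array}{cc} O & I \\ -I & -I \end{array}\right)\left(\begin{array}{cc} I & I \\ -I & O \end{array}\right)=-Id$ then makes the gluing-matrix sandwich around $-Id$ collapse to $Id$, and the remaining telescoping product equals $M((\overline{\mathfrak{p}};\overline{\mathfrak{q}})_{n})=-Id$.

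For $k=1$ the cyclic wrap-around places the two modified blocks at opposite ends of the output bi-sequence, so after factorization the product $M((\overline{\mathfrak{p}};\overline{\mathfrak{q}})_{n})=-Id$ now sits in the middle, while the gluing matrices appear in reversed order, with $M(\mathfrak{l}_{m};\mathfrak{s}_{m})\cdots M(\mathfrak{l}_{2};\mathfrak{s}_{2})$ on the far left and $M(\mathfrak{l}_{1};\mathfrak{s}_{1})$ on the far right. Using the mirror identity $\left(\begin{array}{cc} I & I \\ -I & O \end{array}\right)\left(\begin{array}{cc} O & I \\ -I & -I \end{array}\right)=-Id$, the central $(-Id)$ together with the reversed gluing sandwich again reduces to $Id$, and the result collapses to $M((\overline{\mathfrak{l}};\overline{\mathfrak{s}})_{m})=-Id$. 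The main obstacle is not conceptual but purely bookkeeping: tracking the cyclic wrap-around at $k=1$ and confirming that both orderings of the two gluing matrices yield $-Id$; everything else is subsumed by the two factorization identities above.
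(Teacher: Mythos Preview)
Your proof is correct and follows exactly the approach the paper intends: the paper omits the proof, saying only that it ``is similar to that of the previous theorem,'' and your argument is precisely the mirror of that $\circ_k$ proof, using the same two factorization identities (with $M(\mathfrak{p}_k;\mathfrak{q}_k)$ now appearing to the left of the gluing block), the same cyclic-rotation observation for the $(\overline{\mathfrak{l}};\overline{\mathfrak{s}})$ product, and the same $2\times 2$ collapse. Both factorizations and both gluing-matrix identities check out by direct computation, and the $k=1$ wrap-around is handled correctly.
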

\begin{proof}The proof of this theorem is similar to that of the previous theorem, hence it will be omitted.
\end{proof}

\subsection{The moving frame theory revisited in relation with the two previous subsections}

An alternative way of studying the fixed points under the action of the group of rigid movements on Euclidean space is through the notion of moving frames which leads to the construction of invariant functions, see \cite{Clelland}. For our purposes, in this part, we will be interested
in the theory of discrete moving frames which was founded in \cite{Mansfield}.

Let $\mathfrak{M}_{n}=(M_{l}(\mathbb{C}))^{n}$ equipped with the following metric
\begin{equation}\label{mvmf1}
\|\overline{X}\|=\sqrt{\sum_{k=1}^{n}\|\mathfrak{x}_{k}\|^{2}},
\end{equation}
for all $\overline{X}=(\mathfrak{x}_{1},\cdots,\mathfrak{x}_{n})\in \mathfrak{M}_{n}$, where $\|\mathfrak{x}_{k}\|=\sup_{r\neq 0\in \mathbb{C}^{l}}\frac{\|\mathfrak{x}_{k}r\|}{\|r\|}$ and $k=1,\ldots,n$.
Let us suppose that a group $G$ acts from the left on $M_{l}(\mathbb{C})$
and so on  $\mathfrak{M}_{n}$ for means of the product action, that is $g\cdot \overline{X}=(g\cdot \mathfrak{x}_{1},\cdots,g\cdot \mathfrak{x}_{n})$.
We will assume that the action of $G$ on $M_{l}(\mathbb{C})$ is free, that is, for all $\mathfrak{r}\in M_{l}(\mathbb{C})$ the corresponding isotropy
group $G_{\mathfrak{r}}=\{g\in G| g\cdot \mathfrak{r}=\mathfrak{r}\}$ is equal to $\{e\}$, where $e$ is the identity of $G$.

\begin{definition}A $M_{l}(\mathbb{C})$-valued function $I: \mathfrak{M}_{n}\longrightarrow M_{l}(\mathbb{C})$ is called $G$-invariant if
$I(g\cdot \overline{X})=I(\overline{X})$ for all $\overline{X}\in \mathfrak{M}_{n}$ and every $g\in G$. Note that an invariant function in our
context is a matrix-valued function.
\end{definition}

It is easy to see that the set of all $G$-invariant $M_{l}(\mathbb{C})$-valued functions is an algebra on $\mathbb{C}$. In fact, denote this set by
$I(G;M_{l}(\mathbb{C}))$. Let $I$ and $\widetilde{I}$ be two $G$-invariant $M_{l}(\mathbb{C})$-valued functions and define
$(I+\widetilde{I})(\overline{X})=I(\overline{X})+\widetilde{I}(\overline{X})$ for all $\overline{X}\in \mathfrak{M}_{n}$ then
$I+\widetilde{I}\in I(G;M_{l}(\mathbb{C}))$ because for every $g\in G$, we obtain $(I+\widetilde{I})(g\cdot\overline{X})=I(g\cdot\overline{X})+\widetilde{I}(g\cdot\overline{X})=
I(\overline{X})+\widetilde{I}(\overline{X})
=(I+\widetilde{I})(\overline{X})$. For $\alpha\in\mathbb{C}$, in the same way, we have $(\alpha I)(\overline{X})=\alpha I(\overline{X})\in I(G;M_{l}(\mathbb{C}))$ and $(I\cdot\widetilde{I})(\overline{X})=I(\overline{X})\widetilde{I}(\overline{X})\in I(G;M_{l}(\mathbb{C}))$. Now, since
$M_{l}(\mathbb{C})$ is a complex algebra it follows that $I(G;M_{l}(\mathbb{C}))$ is a complex algebra.

\begin{definition}A right moving frame $\rho$ is a function $\rho : \Omega\subset\mathfrak{M}_{n}\longrightarrow G$ such that $\rho(g\cdot \overline{X})=
\rho(\overline{X})g^{-1}$ for every $\overline{X}\in \Omega$ and any $g\in G$. On the other hand, $\rho : \Omega\subset\mathfrak{M}_{n}\longrightarrow G$
is a left moving frame if $\rho(g\cdot \overline{X})=g \rho(\overline{X})$  for $\overline{X}\in \Omega$ and $g\in G$ arbitraries. The set $\Omega$ is called of domain of $\rho$.
\end{definition}

Suppose that $\rho$ is a fixed right moving frame then for $k=1,\ldots,n$, the function $I_{k}(\overline{X})=\rho(\overline{X})\cdot \mathfrak{x}_{k}$
from $\mathfrak{M}_{n}$ into $M_{l}(\mathbb{C})$ is $G$-invariant, in other words $I_{k}\in I(G;M_{l}(\mathbb{C}))$. It is well know, observe that
$I_{k}(g\cdot\overline{X})=\rho(g\cdot\overline{X})\cdot (g\cdot\mathfrak{x}_{k})=(\rho(\overline{X})g^{-1})\cdot (g\cdot\mathfrak{x}_{k})
=\rho(\overline{X})\cdot \mathfrak{x}_{k}=I_{k}(\overline{X})$. The $I_{k}$ are called the normalized $G$-invariants and of course they depend of $\rho$.
On the other hand, any other $G$-invariant $M_{l}(\mathbb{C})$-valued function is a function of these normalized $G$-invariants, because if we take
$I\in I(G;M_{l}(\mathbb{C}))$ arbitrary, then since $I(g\cdot \overline{X})=I(\overline{X})$ for all $\overline{X}\in \mathfrak{M}_{n}$ and every $g\in G$,
in particular $I(\overline{X})=I(\rho(\overline{X})\cdot \overline{X})=I(\rho(\overline{X})\cdot \mathfrak{r}_{1},\cdots,\rho(\overline{X})\cdot \mathfrak{r}_{n})
=I(I_{1}(\overline{X}),\ldots,I_{n}(\overline{X}))$.

Next, we will examine an example, define $G_{\ltimes}=GL_{l}(\mathbb{C})\ltimes M_{l}(\mathbb{C})=\{(\mathfrak{z},\mathfrak{w})|\, \mathfrak{z}\in GL_{l}(\mathbb{C}),\mathfrak{w}\in M_{l}(\mathbb{C})\}$,
then $G_{\ltimes}$ is a group with respect to the following product $(\mathfrak{z}_{1},\mathfrak{w}_{1})(\mathfrak{z}_{2},\mathfrak{w}_{2})=(\mathfrak{z}_{1}\mathfrak{z}_{2},\mathfrak{z}_{1}\mathfrak{w}_{2}
+\mathfrak{w}_{1})$. Observe that the unit of $G_{\ltimes}$ is the pair $(I,O)$ and moreover $(\mathfrak{z},\mathfrak{w})^{-1}=(\mathfrak{z}^{-1},-\mathfrak{z}^{-1}\mathfrak{w})$. Consider the action of
$G_{\ltimes}$ on $M_{l}(\mathbb{C})$ defined of the following form $\mathfrak{r}\longrightarrow \mathfrak{z}\mathfrak{r}+\mathfrak{w}$, that is, $(\mathfrak{z},\mathfrak{w})\cdot \mathfrak{a}=\mathfrak{z}\mathfrak{r}+\mathfrak{w}$. It can be seen as a left action because
\begin{equation}\label{mvmf2}
\left(
         \begin{array}{c}
           \mathfrak{z}\mathfrak{r}+\mathfrak{w} \\
           I \\
         \end{array}
       \right)=\left(
  \begin{array}{cc}
    \mathfrak{z} & \mathfrak{w} \\
    O & I \\
  \end{array}
\right)\left(
         \begin{array}{c}
           \mathfrak{r} \\
           I \\
         \end{array}
       \right),
\end{equation}
hence, this left action extend its action to $\mathfrak{M}_{n}$ by means to the product action
\begin{equation}\label{mvmf3}
\left(
  \begin{array}{ccc}
     \mathfrak{z}\mathfrak{r}_{1}+\mathfrak{w} & \cdots & \mathfrak{z}\mathfrak{r}_{n}+\mathfrak{w} \\
    I & \cdots & I \\
  \end{array}
\right)=\left(
  \begin{array}{cc}
    \mathfrak{z} & \mathfrak{w} \\
    O & I \\
  \end{array}
\right)\left(
  \begin{array}{ccc}
     \mathfrak{r}_{1} & \cdots & \mathfrak{r}_{n} \\
    I & \cdots & I \\
  \end{array}
\right).
\end{equation}

We have
\begin{lemma}Denote $\Omega=\{\overline{X}=(\mathfrak{r}_{1},\cdots,\mathfrak{r}_{n})\in \mathfrak{M}_{n}\,|\,\mathfrak{r}_{2}-\mathfrak{r}_{1}\in GL_{l}\}$, then the function
\begin{equation}\label{mvmf4}
\rho(\overline{X})=\rho(\mathfrak{r}_{1},\cdots,\mathfrak{r}_{n})=((\mathfrak{r}_{2}-\mathfrak{r}_{1})^{-1},-(\mathfrak{r}_{2}-\mathfrak{r}_{1})^{-1}
\mathfrak{r}_{1}),
\end{equation}
from $\Omega$ into $G_{\ltimes}$ is a right moving frame.
\end{lemma}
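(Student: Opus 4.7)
The plan is to verify directly the defining property of a right moving frame, namely
\begin{equation*}
\rho(g\cdot \overline{X})=\rho(\overline{X})g^{-1},
\end{equation*}
for arbitrary $g=(\mathfrak{z},\mathfrak{w})\in G_{\ltimes}$ and $\overline{X}=(\mathfrak{r}_{1},\ldots,\mathfrak{r}_{n})\in \Omega$. This is essentially a direct computation exploiting the explicit formula for the inverse in $G_{\ltimes}$ and the semi-direct product structure of its multiplication.

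First I would observe that $g\cdot \overline{X}$ lies again in $\Omega$, since its first two components are $\mathfrak{z}\mathfrak{r}_{1}+\mathfrak{w}$ and $\mathfrak{z}\mathfrak{r}_{2}+\mathfrak{w}$, whose difference is $\mathfrak{z}(\mathfrak{r}_{2}-\mathfrak{r}_{1})$; this is a product of two invertible matrices (using $\mathfrak{z}\in GL_{l}$ and the defining condition on $\Omega$), so $\rho$ is well defined at $g\cdot\overline{X}$. Hence the first component of $\rho(g\cdot\overline{X})$ is $(\mathfrak{z}(\mathfrak{r}_{2}-\mathfrak{r}_{1}))^{-1}=(\mathfrak{r}_{2}-\mathfrak{r}_{1})^{-1}\mathfrak{z}^{-1}$, and its second component is $-(\mathfrak{r}_{2}-\mathfrak{r}_{1})^{-1}\mathfrak{z}^{-1}(\mathfrak{z}\mathfrak{r}_{1}+\mathfrak{w})=-(\mathfrak{r}_{2}-\mathfrak{r}_{1})^{-1}\mathfrak{r}_{1}-(\mathfrak{r}_{2}-\mathfrak{r}_{1})^{-1}\mathfrak{z}^{-1}\mathfrak{w}$.

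Next I would compute $\rho(\overline{X})g^{-1}$ using $g^{-1}=(\mathfrak{z}^{-1},-\mathfrak{z}^{-1}\mathfrak{w})$ and the product $(\mathfrak{z}_{1},\mathfrak{w}_{1})(\mathfrak{z}_{2},\mathfrak{w}_{2})=(\mathfrak{z}_{1}\mathfrak{z}_{2},\mathfrak{z}_{1}\mathfrak{w}_{2}+\mathfrak{w}_{1})$ stated in the paper. The first component becomes $(\mathfrak{r}_{2}-\mathfrak{r}_{1})^{-1}\mathfrak{z}^{-1}$, matching what was obtained above, and the second component is $(\mathfrak{r}_{2}-\mathfrak{r}_{1})^{-1}(-\mathfrak{z}^{-1}\mathfrak{w})+(-(\mathfrak{r}_{2}-\mathfrak{r}_{1})^{-1}\mathfrak{r}_{1})$, which also agrees. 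Comparing the two pairs concludes the verification.

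There is no serious obstacle here; the only point that requires a moment of care is keeping straight which side of the multiplication carries the translational component $\mathfrak{w}$, and checking that the semi-direct product convention is consistent with the left action (\ref{mvmf2})--(\ref{mvmf3}). Once that is fixed, both sides reduce to the same pair of matrices and the lemma follows. I would also note in passing that the condition $\mathfrak{r}_{2}-\mathfrak{r}_{1}\in GL_{l}$ is exactly what guarantees that the normalization $\rho(\overline{X})\cdot(\mathfrak{r}_{1},\mathfrak{r}_{2})=(O,I)$ can be solved uniquely, which explains the choice of $\Omega$ and of the formula (\ref{mvmf4}).
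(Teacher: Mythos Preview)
Your proof is correct and follows essentially the same direct-verification approach as the paper: compute both $\rho(g\cdot\overline{X})$ and $\rho(\overline{X})g^{-1}$ componentwise and check they agree. You even add the small observation that $g\cdot\overline{X}\in\Omega$, which the paper leaves implicit.
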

\begin{proof}Observe that for $g=(\mathfrak{z},\mathfrak{w})\in G_{\ltimes}$ arbitrary
\begin{equation*}
\rho(g\cdot\overline{X})=\rho(g\cdot\mathfrak{r}_{1},\cdots,g\cdot\mathfrak{r}_{n})=((g\cdot\mathfrak{r}_{2}-g\cdot\mathfrak{r}_{1})^{-1},
-(g\cdot\mathfrak{r}_{2}-g\cdot\mathfrak{r}_{1})^{-1}
g\cdot\mathfrak{r}_{1}),
\end{equation*}
now we obtain
\begin{equation*}
(g\cdot\mathfrak{r}_{2}-g\cdot\mathfrak{r}_{1})^{-1}=(\mathfrak{r}_{2}-\mathfrak{r}_{1})^{-1}\mathfrak{z}^{-1},
\end{equation*}
and
\begin{equation*}
-(g\cdot\mathfrak{r}_{2}-g\cdot\mathfrak{r}_{1})^{-1}
g\cdot\mathfrak{r}_{1}=-(\mathfrak{r}_{2}-\mathfrak{r}_{1})^{-1}\mathfrak{z}^{-1}(\mathfrak{z}\mathfrak{r}_{1}+\mathfrak{w})=
-(\mathfrak{r}_{2}-\mathfrak{r}_{1})^{-1}(\mathfrak{r}_{1}+\mathfrak{z}^{-1}\mathfrak{w}).
\end{equation*}

Hence,
\begin{equation*}
\rho(g\cdot\overline{X})=((\mathfrak{r}_{2}-\mathfrak{r}_{1})^{-1}\mathfrak{z}^{-1},-(\mathfrak{r}_{2}-\mathfrak{r}_{1})^{-1}
(\mathfrak{r}_{1}+\mathfrak{z}^{-1}\mathfrak{w})).
\end{equation*}

On the other hand
\begin{equation*}
\rho(\overline{X})g^{-1}=((\mathfrak{r}_{2}-\mathfrak{r}_{1})^{-1},-(\mathfrak{r}_{2}-\mathfrak{r}_{1})^{-1}
\mathfrak{r}_{1})(\mathfrak{z}^{-1},-\mathfrak{z}^{-1}\mathfrak{w})=((\mathfrak{r}_{2}-\mathfrak{r}_{1})^{-1}\mathfrak{z}^{-1},
-(\mathfrak{r}_{2}-\mathfrak{r}_{1})^{-1}\mathfrak{z}^{-1}\mathfrak{w}-(\mathfrak{r}_{2}-\mathfrak{r}_{1})^{-1}
\mathfrak{r}_{1})),
\end{equation*}
thus $\rho(g\cdot\overline{X})=\rho(\overline{X})g^{-1}$.
\end{proof}

From this lemma follows that the normalized invariants are in this example
\begin{equation}\label{mvmf5}
I_{k}(\mathfrak{r}_{1},\cdots,\mathfrak{r}_{n})=((\mathfrak{r}_{2}-\mathfrak{r}_{1})^{-1},
-(\mathfrak{r}_{2}-\mathfrak{r}_{1})^{-1}\mathfrak{r}_{1})\cdot \mathfrak{r}_{k}=(\mathfrak{r}_{2}-\mathfrak{r}_{1})^{-1}(\mathfrak{r}_{k}-\mathfrak{r}_{1}),
\end{equation}
for $k=1,2,\ldots,n$, thus $I_{1}=O$, $I_{2}=I$, etc.

We return to the general case. A twisted $n$-gon in $M_{l}(\mathbb{C})$ is a sequence which is constructed of the following form (we recall that $G$ is a left action over $M_{l}(\mathbb{C})$ and $\mathfrak{M}_{n}$)
\begin{equation}\label{mvmf6}
(\mathfrak{q}_{k})_{k\in \mathbb{Z}}=\cdots,m^{-2}\cdot\mathfrak{r}_{1},\cdots,m^{-2}\cdot\mathfrak{r}_{n},m^{-1}\cdot\mathfrak{r}_{1},\cdots,m^{-1}\cdot\mathfrak{r}_{n},
\mathfrak{r}_{1},\cdots,\mathfrak{r}_{n},m\cdot\mathfrak{r}_{1},\cdots,m\cdot
\mathfrak{r}_{n},m^{2}\cdot\mathfrak{r}_{1},\cdots,m^{2}\cdot\mathfrak{r}_{n},\cdots,
\end{equation}
where $\overline{X}=(\mathfrak{r}_{1},\cdots,\mathfrak{r}_{n})\in \mathfrak{M}_{n}$ and $m\in G$ are fixed. In this case $m$ is called the monodromy for the twisted $n$-gon $(\mathfrak{q}_{k})_{k\in \mathbb{Z}}$. Observe that if $(\mathfrak{q}_{k})_{k\in \mathbb{Z}}$ is a twisted
$n$-gon then $\mathfrak{q}_{k+n}=m\cdot \mathfrak{q}_{k}$ for all $k$. Moreover, $G$ acts over the set of all twisted $n$-gon in the form $g\cdot(\mathfrak{q}_{k})_{k\in \mathbb{Z}}=
(g\cdot\mathfrak{q}_{k})_{k\in \mathbb{Z}}$ being it a left action.

The space of all twisted $n$-gons can be identified with $\mathfrak{M}_{n}$. If $\rho$ is a right moving frame (resp. left moving frame) and
$(\mathfrak{q}_{k})_{k\in \mathbb{Z}}$
is a twisted $n$-gon such that $(\mathfrak{q}_{k},\cdots,\mathfrak{q}_{k+n-1})\in \Omega$ for all $k\in\mathbb{Z}$, we can construct the sequence $(\rho_{k}=\rho(\mathfrak{q}_{k},\cdots,
\mathfrak{q}_{k+n-1}))_{k\in \mathbb{Z}}\subset G$.
\begin{definition}For a right moving frame given (resp. left moving frame) the element of $G$, $\mathfrak{K}_{k}=\rho_{k+1}\rho^{-1}_{k}$ (resp. $\mathfrak{K}_{k}=\rho^{-1}_{k}\rho_{k+1}$) is called the right $k$-Maurer-Cartan element (resp. left $k$-Maurer-Cartan element). The equation $\mathfrak{K}_{k}\rho_{k}=\rho_{k+1}$ (resp. $\rho_{k}\mathfrak{K}_{k}=\rho_{k+1}$) is named the discrete right
$k$-Serret-Frenet equation (resp. discrete left $k$-Serret-Frenet equation).
\end{definition}

Clearly, the elements $\mathfrak{K}_{k}$ are invariants under the action of $G$. For our previous example $$\rho_{k}=((\mathfrak{q}_{k+1}-\mathfrak{q}_{k})^{-1},-(\mathfrak{q}_{k+1}-\mathfrak{q}_{k})^{-1}
\mathfrak{q}_{k}),$$
and so $\mathfrak{K}_{k}=((\mathfrak{q}_{k+2}-\mathfrak{q}_{k+1})^{-1}(\mathfrak{q}_{k+1}-\mathfrak{q}_{k}),-(\mathfrak{q}_{k+2}-\mathfrak{q}_{k+1})^{-1}
(\mathfrak{q}_{k+1}-\mathfrak{q}_{k}))$, or also
\begin{equation*}
\rho_{k}=\left(
  \begin{array}{cc}
    (\mathfrak{q}_{k+1}-\mathfrak{q}_{k})^{-1} & -(\mathfrak{q}_{k+1}-\mathfrak{q}_{k})^{-1}
\mathfrak{q}_{k} \\
    O & I \\
  \end{array}
\right),
\end{equation*}
hence
\begin{align*}
\mathfrak{K}_{k}&=\left(
  \begin{array}{cc}
    (\mathfrak{q}_{k+2}-\mathfrak{q}_{k+1})^{-1} & -(\mathfrak{q}_{k+2}-\mathfrak{q}_{k+1})^{-1}
\mathfrak{q}_{k+1} \\
    O & I \\
  \end{array}
\right)\left(
         \begin{array}{cc}
           (\mathfrak{q}_{k+1}-\mathfrak{q}_{k}) & \mathfrak{q}_{k} \\
           O & I \\
         \end{array}
       \right) \\
&=\left(
                 \begin{array}{cc}
                   (\mathfrak{q}_{k+2}-\mathfrak{q}_{k+1})^{-1}(\mathfrak{q}_{k+1}-\mathfrak{q}_{k}) & -(\mathfrak{q}_{k+2}-\mathfrak{q}_{k+1})^{-1}(\mathfrak{q}_{k+1}-\mathfrak{q}_{k}) \\
                   O & I \\
                 \end{array}
               \right).
\end{align*}

\begin{definition}Let $H:\,\mathfrak{M}_{n}\longrightarrow M_{l}(\mathbb{C})$ be a function defined on $n$-gons. We say that $H$ is a discrete $G$-invariant function if
for every twisted $n$-gon $(\mathfrak{q}_{k})_{k\in \mathbb{Z}}$, we have $H(g\cdot (\mathfrak{q}_{k},\cdots,\mathfrak{q}_{k+n-1}))=H(\mathfrak{q}_{k},\cdots,\mathfrak{q}_{k+n-1})$
for all $k\in \mathbb{Z}$ and any $g\in G$.
\end{definition}

For instance, the quantities $H_{k;j}=\rho_{k}\cdot \mathfrak{q}_{j}$ are discrete $G$-invariant functions for all $k,j\in \mathbb{Z}$ and any other $G$-invariant function is a function of these
$H_{k;j}$. In fact, $H(\mathfrak{q}_{j},\cdots,\mathfrak{q}_{j+n-1})=H(g_{k}\cdot (\mathfrak{q}_{j},\cdots,\mathfrak{q}_{j+n-1}))=H(g_{k}\cdot \mathfrak{q}_{j},\cdots,g_{k}\cdot \mathfrak{q}_{j+n-1})=H(H_{k;j},\cdots,H_{k;j+n-1})$.

\subsection{Elements to a theory of matrix-valued frieze patterns}

In this subsection, we explore a notion of matrix frieze pattern. \\

A \textbf{left matrix-valued frieze} $\mathcal{F}_{m}$ with $p-3$ non-trivial rows and $p$ periodic will be
seen in the form
\small
\begin{equation*}
  \mathcal{F}_{m}=\begin{array}{ccccccccccccccccc}
      &  &  & O &  & O &  & O &  &  &  &  & & & & & \\
      &  & I &  & I &  & I &  & I &  &  &  & & & & & \\
      &  &  & \mathfrak{M}_{-1,-1} &  & \mathfrak{M}_{0,0} &  & \mathfrak{M}_{1,1} &  & \mathfrak{M}_{2,2} & &  & & & & & \\
      &  & \cdots &  & \ddots &  & \ddots &  & \ddots & & \ddots & & \cdots & & & &  \\
      &  &  &  &  & \mathfrak{M}_{-1,p-5} &  & \mathfrak{M}_{0,p-4} &  & \mathfrak{M}_{1,p-3} & & \mathfrak{M}_{2,p-2} & & & & & \\
      &  &  &  &  &  & I &  & I &  & I &  & I &  & & & \\
      &  &  &  &  &  &  & O &  & O &  & O &  &  & & & \\
  \end{array},
  \end{equation*}
\normalsize
where $M_{i,j}\in GL_{l}(\mathbb{R})$ for any $i,j\in\mathbb{Z}$ and such that the following diamond matrix rule
\begin{equation}\label{f1}
\mathfrak{M}_{i,j}\mathfrak{M}_{i+1,j+1}-\mathfrak{M}_{i,j+1}\mathfrak{M}_{i+1,j}=I,
\end{equation}
holds for all $i,j\in\mathbb{Z}$. If we replace (\ref{f1}) by
\begin{equation}\label{f2}
\mathfrak{M}_{i+1,j+1}\mathfrak{M}_{i,j}-\mathfrak{M}_{i+1,j}\mathfrak{M}_{i,j+1}=I,
\end{equation}
then $\mathcal{F}_{m}$ is called \textbf{right matrix-valued frieze}. It is clear that we can define other types of matrix-valued friezes depending on
the order in which the matrices are located in the two terms of (\ref{f1}). However in this section, we prefer to concentrate only in the two types previously defined of matrix-valued frieze.
A matrix-valued finite frieze which is both left and right matrix-valued finite frieze will be call a \textbf{two-sided matrix-valued frieze}.

\begin{definition}Let $\mathcal{F}_{m}$ be a matrix-valued finite frieze (left or right). The finite sequence $(\mathfrak{M}_{0,0},\cdots,\mathfrak{M}_{p-1,p-1})$ is called the \textbf{matrix frieze quiddity
sequence} of $\mathcal{F}_{m}$.
\end{definition}

From now on, we identify a matrix-valued frieze with its quiddity sequence. If $\mathcal{F}_{m}$ is a matrix-valued finite frieze its nontrivial rows are those that are located between the two rows composed only of the identity matrix. In this point, we recall the definition the \textbf{joint spectrum}.
For an $s$-tuple $T=(T_{1},\cdots, T_{s})$ of complex $l\times l$-matrices, we define the joint spectrum $\sigma(T)$ as the set of all points
$\lambda=(\lambda_{1},\cdots,\lambda_{s})\in\mathbb{C}^{s}$ for which there exists a nonzero vector $x\in\mathbb{C}^{l}$ (called the joint eigenvector)
satisfying
\begin{equation*}
T_{k}x=\lambda_{k}x,
\end{equation*}
for $k=1,\ldots, s$. If the $(T_{k})^{,}$s are commuting then $\sigma(T)\neq \emptyset$.

\begin{proposition} We give two simple properties of the matrix-valued frieze patterns
\begin{enumerate}
  \item  If we transpose the non-trivial rows of a matrix-valued frieze pattern $\mathcal{F}_{m}$, we obtain a new matrix-valued frieze pattern denoted by $\mathcal{F}_{m}^{\ast}$. Suppose that
  $\mathcal{F}_{m}$ is a right matrix-valued frieze then $\mathcal{F}_{m}^{\ast}$ is a left matrix-valued frieze. Hence, if $\mathcal{F}_{m}$ is a two-sided
matrix-valued frieze then $\mathcal{F}_{m}^{\ast}$ is a two-sided
matrix-valued frieze.
  \item Let us assume that $\mathcal{F}_{m}$ is matrix-valued frieze pattern such that all its matrices commute and let $x\in \mathbb{C}^{l}$ be a
  common eigenvector to all matrices of $\mathcal{F}_{m}$ with $\|x\|=1$.
  Then, we can construct a scalar frieze pattern $\mathcal{F}_{s}(x)$ with complex entries of the following form: if the matrix $A$ is an entry of $\mathcal{F}_{m}$ then the corresponding entry of
  $\mathcal{F}_{s}(x)$ is $(Ax,x)_{\mathbb{C}^{l}}$.
\end{enumerate}
\end{proposition}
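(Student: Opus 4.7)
The plan for part (1) is to observe that both diamond identities are interchanged by the involution $A \mapsto A^*$. I would start from the right rule
\[
\mathfrak{M}_{i+1,j+1}\mathfrak{M}_{i,j} - \mathfrak{M}_{i+1,j}\mathfrak{M}_{i,j+1} = I,
\]
apply $*$ to both sides, and use the antimultiplicative identity $(AB)^* = B^* A^*$ together with $I^* = I$ to obtain
\[
\mathfrak{M}_{i,j}^{*}\mathfrak{M}_{i+1,j+1}^{*} - \mathfrak{M}_{i,j+1}^{*}\mathfrak{M}_{i+1,j}^{*} = I,
\]
which is precisely the left diamond rule (\ref{f1}) for the array of transposed entries of $\mathcal{F}_m^{*}$. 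I would also briefly check that the border rows behave correctly: since $O^* = O$ and $I^* = I$, the rows of zeros and identities remain rows of zeros and identities in $\mathcal{F}_m^{*}$. The converse direction is symmetric (a left frieze becomes a right one under $*$), so the two-sided statement follows immediately by applying both directions.

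For part (2), since all entries of $\mathcal{F}_m$ commute and $x\in\mathbb{C}^l$ is a common unit eigenvector, each entry $A$ satisfies $Ax=\lambda_A x$ for a scalar $\lambda_A\in\mathbb{C}$, and $\lambda_A = (Ax,x)_{\mathbb{C}^l}$ because $\|x\|=1$. I would first verify the boundary rows: for $A=O$ we get $(Ox,x)=0$, and for $A=I$ we get $(Ix,x)=\|x\|^2=1$, so the two rows of $O$'s and $I$'s in $\mathcal{F}_m$ map to the two rows of $0$'s and $1$'s in $\mathcal{F}_s(x)$.

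Next I would establish the unimodular diamond identity for $\mathcal{F}_s(x)$. Applying the left frieze rule (\ref{f1}) to the vector $x$ and using commutativity yields
\[
\mathfrak{M}_{i,j}\mathfrak{M}_{i+1,j+1}x - \mathfrak{M}_{i,j+1}\mathfrak{M}_{i+1,j}x = x,
\]
so that $(\lambda_{i,j}\lambda_{i+1,j+1} - \lambda_{i,j+1}\lambda_{i+1,j})x = x$, giving
\[
\lambda_{i,j}\lambda_{i+1,j+1} - \lambda_{i,j+1}\lambda_{i+1,j} = 1,
\]
which is the Conway–Coxeter scalar diamond rule. If $\mathcal{F}_m$ is a right frieze instead, the computation is analogous using (\ref{f2}). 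No step presents a real obstacle: the commutativity hypothesis ensures that every non-trivial product of entries acts on $x$ as multiplication by the product of the eigenvalues, and the unit norm assumption makes the inner-product projection an algebra homomorphism on the subalgebra generated by the entries when evaluated at $x$.
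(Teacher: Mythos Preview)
Your proof is correct and follows exactly the approach sketched in the paper: part (1) is obtained by applying the antimultiplicative involution $A\mapsto A^{*}$ to the right diamond rule (\ref{f2}) to recover the left rule (\ref{f1}), and part (2) reduces the matrix diamond identity to a scalar one by evaluating on the joint eigenvector $x$. The paper's own proof is a one-line reference to (\ref{f1}) and (\ref{f2}); your write-up simply supplies the details, including the boundary-row verification, that the paper leaves implicit.
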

\begin{proof} $1.$ follows of (\ref{f1}) and (\ref{f2}). To prove $2.$ observe that there exist $\lambda_{i,j}\in \mathbb{C}$ such that $\mathfrak{M}_{i,j}x=\lambda_{i,j}x$ where $x$ is a joint eigenvector, therefore again the assertion
follows for using (\ref{f1}) and (\ref{f2}).
  \end{proof}

The proof of the following proposition is a trivial calculate.

\begin{proposition}For all $\mathfrak{M}\in GL_{l}(\mathbb{R})$
\small
\begin{equation}\label{f3}
\mathcal{F}_{M}=\begin{array}{ccccccccccccc}
&  O & & O &   & O &  & O &  &  &  &  &  \\
&   & I & &  I &  & I &  & I &  &  &  &  \\
& \cdots  &  & \mathfrak{M} &  & 2\mathfrak{M}^{-1} &  & \mathfrak{M} &  & 2\mathfrak{M}^{-1} &  & \cdots &  \\
&   &  &  & I &  & I &  & I &  & I &  & \\
&   &  &  &  & O &  & O &  & O &  & O &
\end{array},
\end{equation}
\normalsize
is a two-sided matrix-valued finite frieze of period $4$, its matrix frieze quiddity sequence is $(\mathfrak{M},2\mathfrak{M}^{-1},\mathfrak{M},2\mathfrak{M}^{-1})$ called for us here
the \textbf{basic matrix frieze quiddity sequence}. Moreover, we already know that the basic matrix frieze quiddity sequence is a left
matrix quiddity sequence.
\end{proposition}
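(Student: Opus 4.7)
The statement combines three independent claims: that the displayed array $\mathcal{F}_M$ satisfies the left diamond rule (\ref{f1}); that it also satisfies the right diamond rule (\ref{f2}); and that its central row is the asserted left matrix quiddity sequence. Taken together these yield the two-sided, period-$4$ claim plus the quiddity-sequence observation.

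The approach is to enumerate the finitely many $2\times 2$ diamond configurations that actually occur in $\mathcal{F}_M$ and verify both (\ref{f1}) and (\ref{f2}) on each. With a single non-trivial row sandwiched between two $I$-rows and two outer $O$-rows, only three diamond types appear. A diamond that straddles the $O$-row and the adjacent $I$-row has entries from $\{O, I\}$ except possibly for one corner on the central row; both rules reduce to $I\cdot I - O\cdot X = I$ with $X \in \{I, \mathfrak{M}, 2\mathfrak{M}^{-1}\}$. A diamond straddling the two $I$-rows contains a consecutive pair from the central row, so $\mathfrak{M}$ and $2\mathfrak{M}^{-1}$ sit on one diagonal and two copies of $I$ on the other; both rules then reduce to $\mathfrak{M}\cdot(2\mathfrak{M}^{-1}) - I\cdot I = I$ or $(2\mathfrak{M}^{-1})\cdot\mathfrak{M} - I\cdot I = I$, which both hold because $\mathfrak{M}$ and $\mathfrak{M}^{-1}$ commute and their product is $I$. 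The bottom half of $\mathcal{F}_M$ is the mirror image of the top half, so nothing new arises there. This establishes that $\mathcal{F}_M$ is two-sided, and period $4$ is immediate from the explicit alternation $\mathfrak{M}, 2\mathfrak{M}^{-1}, \mathfrak{M}, 2\mathfrak{M}^{-1}$ in the central row.

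For the final assertion, I would invoke the computation already carried out earlier in the paper in the paragraphs preceding the theorem that produces (\ref{cont1}): the explicit expansion of $M(\mathfrak{a}_2,\mathfrak{a}_3,\mathfrak{a}_4,\mathfrak{a}_5)$ forces $\mathfrak{a}_4\mathfrak{a}_3 = 2I$ together with $\mathfrak{a}_3 = \mathfrak{a}_5$ and $\mathfrak{a}_2 = \mathfrak{a}_4$, so the unique length-$4$ left matrix quiddity sequences with invertible entries are exactly $(2\mathfrak{m}^{-1},\mathfrak{m},2\mathfrak{m}^{-1},\mathfrak{m})$ up to cyclic shift; specializing $\mathfrak{m} = \mathfrak{M}$ gives our sequence. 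No step is expected to be an obstacle: every verification reduces to one of the trivial identities $I\cdot I = I$, $O\cdot X = O$, or $\mathfrak{M}\cdot\mathfrak{M}^{-1} = I$, in line with the author's description of the computation as routine.
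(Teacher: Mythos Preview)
Your proposal is correct and matches the paper's own treatment: the paper states only that ``the proof of the following proposition is a trivial calculate'' and gives no further details, so your explicit enumeration of the three diamond types and verification of both rules \eqref{f1} and \eqref{f2} on each is exactly the routine check the author has in mind. Your reference to the length-$4$ classification carried out just before \eqref{cont1} for the ``already known'' left matrix quiddity claim is also the intended justification.
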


Starting from the basic matrix frieze quiddity sequence $(\mathfrak{M},2\mathfrak{M}^{-1},\mathfrak{M},2\mathfrak{M}^{-1})$ we can obtain other matrix frieze quiddity sequence. In this sense, we have
\begin{theorem}The following matrix vector $\mathcal{MQS}=(I,\mathfrak{M}+I,2\mathfrak{M}^{-1},\mathfrak{M},2\mathfrak{M}^{-1}+I)$
is a matrix frieze quiddity sequence (and therefore so are its cyclical permutations) corresponding to $5$-periodic matrix-valued frieze pattern.
\end{theorem}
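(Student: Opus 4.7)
The plan is to split the proof into two parts: first the algebraic verification that $M(\mathcal{MQS}) = -Id$, then the explicit construction of the associated 5-periodic matrix-valued frieze pattern.

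For the algebraic part, I will observe that $\mathcal{MQS}$ is exactly the specialization of the Gauss-form quiddity sequence from the preceding theorem (the one parameterizing all length-$5$ left matrix quiddity sequences with $|\mathfrak{a}_1|,|\mathfrak{a}_3|\neq 0$ by two commuting matrices $\mathfrak{c},\mathfrak{d}$) obtained by choosing $\mathfrak{c}=I$ and $\mathfrak{d}=2\mathfrak{M}^{-1}$. The commutation hypothesis $[\mathfrak{c},\mathfrak{d}]=O$ is trivial because $\mathfrak{c}=I$. The five substitutions required are: $\mathfrak{a}_1=\mathfrak{c}=I$; $\mathfrak{a}_2=\mathfrak{c}^{-1}(I+(\mathfrak{c}+I)\mathfrak{d}^{-1})=I+2I\cdot\tfrac{1}{2}\mathfrak{M}=\mathfrak{M}+I$; $\mathfrak{a}_3=\mathfrak{d}=2\mathfrak{M}^{-1}$; $\mathfrak{a}_4=(\mathfrak{c}+I)\mathfrak{d}^{-1}=2I\cdot\tfrac{1}{2}\mathfrak{M}=\mathfrak{M}$; and $\mathfrak{a}_5=(\mathfrak{c}^{-1}+I)(\mathfrak{c}+I)^{-1}(\mathfrak{d}+I)=2I\cdot\tfrac{1}{2}I\cdot(2\mathfrak{M}^{-1}+I)=2\mathfrak{M}^{-1}+I$, which exactly matches $\mathcal{MQS}$. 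Thus the theorem gives $M(\mathcal{MQS})=-Id$ immediately. (A parallel route, if one prefers: apply the insertion remark at position $i=2$ to the basic left matrix quiddity sequence $(\mathfrak{M},2\mathfrak{M}^{-1},\mathfrak{M},2\mathfrak{M}^{-1})$ to obtain $(\mathfrak{M},2\mathfrak{M}^{-1}+I,I,\mathfrak{M}+I,2\mathfrak{M}^{-1})$, then shift cyclically by $2$ using the cyclicity lemma for monodromy decompositions to reach $\mathcal{MQS}$.)

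For the frieze part, the crucial observation is that every entry of $\mathcal{MQS}$ lies in the commutative subalgebra $\mathbb{C}[\mathfrak{M},\mathfrak{M}^{-1}]\subset M_l(\mathbb{C})$, so any two entries commute. Consequently, the left and right diamond rules $\mathfrak{M}_{i,j}\mathfrak{M}_{i+1,j+1}-\mathfrak{M}_{i,j+1}\mathfrak{M}_{i+1,j}=I$ and $\mathfrak{M}_{i+1,j+1}\mathfrak{M}_{i,j}-\mathfrak{M}_{i+1,j}\mathfrak{M}_{i,j+1}=I$ coincide, and the construction of the pattern reduces formally to the scalar Gauss/pentagon case $(1,c+1,2/c,c,2/c+1)$ with $c$ replaced by $\mathfrak{M}$. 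I will thus write down the candidate $5$-periodic finite frieze: rows $1$ and $6$ filled with $O$; rows $2$ and $5$ filled with $I$; row $3$ taken to be the $5$-periodic extension of $\mathcal{MQS}$; and row $4$ computed entry-by-entry via $d=I+\mathfrak{a}_{i+1}\mathfrak{a}_i$ for the two consecutive row-$3$ entries bracketing each row-$4$ position. One checks routinely that the resulting four row-$4$ entries are $\mathfrak{M}+2I$, $3I+2\mathfrak{M}^{-1}$, and the other two obtained by shifting, and finally that each of the five diamonds between rows $4$ and $5$ yields $I$ on the fifth row, confirming that the pattern closes with period $5$.

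The main obstacle is purely bookkeeping: ensuring that the five closing diamonds between rows $4$ and $5$ all produce $I$ uniformly. Without commutativity this would fail in general, but the polynomial-in-$\mathfrak{M}$ structure makes each such verification a calculation in $\mathbb{C}[\mathfrak{M},\mathfrak{M}^{-1}]$ that mirrors the known scalar identity for $(1,c+1,2/c,c,2/c+1)$. The cyclic permutations of $\mathcal{MQS}$ inherit the same property by the cyclic lemma for monodromy decompositions, yielding the parenthetical assertion in the theorem.
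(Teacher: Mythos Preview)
Your algebraic verification that $M(\mathcal{MQS})=-Id$ via specialization of the length-$5$ classification (taking $\mathfrak{c}=I$, $\mathfrak{d}=2\mathfrak{M}^{-1}$) is correct and in fact supplies more detail than the paper, whose proof consists solely of exhibiting the closed frieze array.

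The frieze construction, however, contains a concrete error. From the paper's left diamond rule
\[
\mathfrak{M}_{i,j}\mathfrak{M}_{i+1,j+1}-\mathfrak{M}_{i,j+1}\mathfrak{M}_{i+1,j}=I,
\]
with the row of $I$'s sitting \emph{above} the quiddity row (i.e.\ $\mathfrak{M}_{i+1,i}=I$), the row-$4$ entry below two consecutive quiddity entries $\mathfrak{a}_{i},\mathfrak{a}_{i+1}$ is
\[
\mathfrak{M}_{i,i+1}=\mathfrak{a}_{i}\,\mathfrak{a}_{i+1}-I,
\]
not $I+\mathfrak{a}_{i+1}\mathfrak{a}_{i}$ as you wrote. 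Your formula has the wrong sign (and, outside the commutative setting, the wrong order). With the correct rule the five row-$4$ entries are
\[
I(\mathfrak{M}+I)-I=\mathfrak{M},\quad (\mathfrak{M}+I)(2\mathfrak{M}^{-1})-I=2\mathfrak{M}^{-1}+I,\quad (2\mathfrak{M}^{-1})\mathfrak{M}-I=I,
\]
\[
\mathfrak{M}(2\mathfrak{M}^{-1}+I)-I=\mathfrak{M}+I,\quad (2\mathfrak{M}^{-1}+I)I-I=2\mathfrak{M}^{-1},
\]
so row $4$ equals $(\mathfrak{M},\,2\mathfrak{M}^{-1}+I,\,I,\,\mathfrak{M}+I,\,2\mathfrak{M}^{-1})$, a cyclic shift of row $3$ by two positions. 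This is precisely the array the paper displays, and the closing diamonds between rows $3$, $4$ and the lower row of $I$'s then check immediately. Your claimed entries $\mathfrak{M}+2I$ and $3I+2\mathfrak{M}^{-1}$ do not occur, and a frieze built from them would not close.
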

\begin{proof}We justly show the matrix-valued frieze pattern corresponding to this matrix vector:
\begin{equation*}
\mathcal{F}_{\mathcal{MQS}}=
  \begin{array}{ccccccccccccccccc}
     &  & O &  & O &  & O &  & O &  & O &  &  & & & \\
     & I &  & I &  & I &  & I &  & I &  & I &  & & & \\
     &  & I &  & \mathfrak{M}+I &  & 2\mathfrak{M}^{-1} &  & \mathfrak{M} &  & 2\mathfrak{M}^{-1}+I & & I & & & \\
     & \cdots &  & \mathfrak{M} &  & 2\mathfrak{M}^{-1}+I &  & I &  & \mathfrak{M}+I &  & 2\mathfrak{M}^{-1} & & \mathfrak{M} &  & \cdots \\
     &  &  &  & I &  & I &  & I &  & I &  & I &  & I & \\
      &  &  &  &  & O &  & O &  & O &  & O &  & O &  & O \\
 \end{array}.
\end{equation*}
\end{proof}

A more general result is the following

\begin{theorem}Suppose that $\mathfrak{A},\mathfrak{B}\in GL_{l}(\mathbb{K})$ such that $[\mathfrak{A},\mathfrak{B}]=0$ then
\begin{equation}\label{nf1}
\mathfrak{Q}=(\mathfrak{A},\mathfrak{A}^{-1}(I+\mathfrak{B}),(I+\mathfrak{A})\mathfrak{B}^{-1},
\mathfrak{B},\mathfrak{B}^{-1}(I+\mathfrak{A}+\mathfrak{B})\mathfrak{A}^{-1}),
\end{equation}
is a matrix frieze quiddity sequence of a two-sided matrix-valued frieze of period $5$.
\end{theorem}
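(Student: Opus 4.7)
The plan is to construct the frieze pattern explicitly, with $\mathfrak{Q}$ as its first non-trivial row, and to verify that the resulting pattern satisfies both the left diamond rule (\ref{f1}) and the right diamond rule (\ref{f2}), closing into a bottom row of identities after exactly two non-trivial rows.

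The pivotal observation is that because $[\mathfrak{A},\mathfrak{B}]=O$, the subalgebra $\mathcal{C}\subset M_{l}(\mathbb{K})$ generated by $\mathfrak{A},\mathfrak{A}^{-1},\mathfrak{B},\mathfrak{B}^{-1}$ is commutative. Every entry of $\mathfrak{Q}$ lies in $\mathcal{C}$, and every entry subsequently produced by (\ref{f1}) remains in $\mathcal{C}$, so throughout the construction the frieze entries commute pairwise. This will ultimately reduce (\ref{f2}) to (\ref{f1}).

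Building the frieze: place the top rows of $O$'s and $I$'s, and as first non-trivial row set $\mathfrak{M}_{k,k}=q_{k}$, where $q_{0},\ldots,q_{4}$ are the entries of $\mathfrak{Q}$ extended periodically. Applying (\ref{f1}) to the diamond whose upper entry is the $I$ of the bordering row yields
\[
\mathfrak{M}_{k,k+1}=q_{k}q_{k+1}-I,
\]
determining the second non-trivial row. Applying (\ref{f1}) once more to the diamond with bottom-right entry $q_{k+1}$, and using commutativity to factor $q_{k+1}$ out on the right, one obtains
\[
\mathfrak{M}_{k,k+2}=q_{k}q_{k+1}q_{k+2}-q_{k}-q_{k+2}.
\]
The pattern has period $p=5$ with $p-3=2$ non-trivial rows precisely when $\mathfrak{M}_{k,k+2}=I$ for every $k$, which reduces to the five cyclic identities
\[
q_{k}q_{k+1}q_{k+2}=q_{k}+q_{k+2}+I, \qquad k\in \mathbb{Z}/5\mathbb{Z}.
\]
Each is a short algebraic check inside $\mathcal{C}$; for instance $q_{0}q_{1}q_{2}=\mathfrak{A}\cdot(I+\mathfrak{B})\mathfrak{A}^{-1}\cdot(I+\mathfrak{A})\mathfrak{B}^{-1}=I+\mathfrak{A}+\mathfrak{B}^{-1}+\mathfrak{A}\mathfrak{B}^{-1}=q_{0}+q_{2}+I$, and the remaining four cyclic cases are handled analogously, each reducing to a telescoping expansion enabled by $[\mathfrak{A},\mathfrak{B}]=O$.

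Once (\ref{f1}) and the closure are verified, the right diamond rule (\ref{f2}) is immediate: by commutativity, $\mathfrak{M}_{i+1,j+1}\mathfrak{M}_{i,j}-\mathfrak{M}_{i+1,j}\mathfrak{M}_{i,j+1}$ coincides with $\mathfrak{M}_{i,j}\mathfrak{M}_{i+1,j+1}-\mathfrak{M}_{i,j+1}\mathfrak{M}_{i+1,j}=I$, so the frieze is two-sided. The only genuine obstacle is the cyclic verification of the five closure identities; these are routine calculations in $\mathcal{C}$, but they would fail without $[\mathfrak{A},\mathfrak{B}]=O$, which is exactly why that hypothesis is essential.
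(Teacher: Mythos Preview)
Your proposal is correct and follows essentially the same approach as the paper: the paper's proof simply displays the $5$-periodic frieze pattern (whose second non-trivial row is the cyclic shift $(\mathfrak{B},\mathfrak{B}^{-1}(I+\mathfrak{A}+\mathfrak{B})\mathfrak{A}^{-1},\mathfrak{A},\mathfrak{A}^{-1}(I+\mathfrak{B}),(I+\mathfrak{A})\mathfrak{B}^{-1})$) and leaves the diamond-rule verification to the reader. Your reduction to the five cyclic identities $q_{k}q_{k+1}q_{k+2}=q_{k}+q_{k+2}+I$ and your explicit use of the commutative subalgebra $\mathcal{C}$ make the verification and the two-sidedness more transparent than the paper's bare display, but the underlying argument is the same direct check.
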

\begin{proof}Indeed, the corresponding matrix-valued frieze pattern for the row matrix vector (\ref{nf1}) is the following
\scriptsize
\begin{equation*}
  \begin{array}{ccccccccccccccccc}
   &  & O &  & O &  & O &  & O &  & O &  & \\
     & I &  & I &  & I &  & I &  & I &  &  I \\
     & \cdots & \mathfrak{A} &  & \mathfrak{A}^{-1}(I+\mathfrak{B}) &  & (I+\mathfrak{A})\mathfrak{B}^{-1} &  & \mathfrak{B} &  & \mathfrak{B}^{-1}(I+\mathfrak{A}+\mathfrak{B})\mathfrak{A}^{-1} & & \mathfrak{A} & \cdots  \\
     &  &  & \mathfrak{B} &  & \mathfrak{B}^{-1}(I+\mathfrak{A}+\mathfrak{B})\mathfrak{A}^{-1} &  & \mathfrak{A} &  & \mathfrak{A}^{-1}(I+\mathfrak{B}) &  & (I+\mathfrak{A})\mathfrak{B}^{-1} &  & \mathfrak{B} \\
     &  &  &  & I &  & I &  & I &  & I &  & I & \\
     &  &  &  &  & O &  & O &  & O &  & O &  & O \\
 \end{array}\,\,.
\end{equation*}
\end{proof}
\normalsize

\subsection{Noncommutative signed Chebyshev polynomials}

In this subsection, different types of matrix Chebyshev polynomials are introduced. Also, we show the relation of these polynomials with
the matrix periodic difference equations and their monodromy matrices. In the classic case (for scalars), the interested reader can consult
\cite{Conley}.

Let $(\mathfrak{a}_{k})_{k\geq 1}\subset GL_{l}(\mathbb{C})$ be a sequence of square matrices of certain order. The left matrix signed Chebyshev
polynomials are defined of the following recurrent form: $\mathfrak{p}_{-1}=O$, $\mathfrak{p}_{0}=I$ and
\begin{equation}\label{ii1}
\mathfrak{p}_{m}(\mathfrak{a}_{1},\cdots,\mathfrak{a}_{m})=\mathfrak{a}_{m}\mathfrak{p}_{m-1}(\mathfrak{a}_{1},\cdots,\mathfrak{a}_{m-1})-
\mathfrak{p}_{m-2}(\mathfrak{a}_{1},\cdots,\mathfrak{a}_{m-2}),
\end{equation}
for instance, $\mathfrak{p}_{1}(\mathfrak{a}_{1})=\mathfrak{a}_{1}$, $\mathfrak{p}_{2}(\mathfrak{a}_{1},\mathfrak{a}_{2})=
\mathfrak{a}_{2}\mathfrak{a}_{1}-I$, $\mathfrak{p}_{3}(\mathfrak{a}_{1},\mathfrak{a}_{2},\mathfrak{a}_{3})=
\mathfrak{a}_{3}(\mathfrak{a}_{2}\mathfrak{a}_{1}-I)-\mathfrak{a}_{1}$, etc.

\begin{lemma}For all $m\geq 1$, we have
\begin{equation}\label{ii2}
M(\mathfrak{a}_{1},\cdots,\mathfrak{a}_{m})=\left(
                                               \begin{array}{cc}
                                                 \mathfrak{p}_{m}(\mathfrak{a}_{1},\cdots,\mathfrak{a}_{m}) & -\mathfrak{p}_{m-1}(\mathfrak{a}_{2},\cdots,\mathfrak{a}_{m}) \\
                                                 \mathfrak{p}_{m-1}(\mathfrak{a}_{1},\cdots,\mathfrak{a}_{m-1}) & -\mathfrak{p}_{m-2}(\mathfrak{a}_{2},\cdots,\mathfrak{a}_{m-1}) \\
                                               \end{array}
                                             \right),
\end{equation}
we remember that $|M(\mathfrak{a}_{1},\cdots,\mathfrak{a}_{m})|=1$.
\end{lemma}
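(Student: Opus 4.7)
The plan is to argue by induction on $m$, using the recurrence \eqref{ii1} that defines the left matrix signed Chebyshev polynomials. The base cases are immediate: for $m=1$ the right hand side of \eqref{ii2} reads
$\bigl(\begin{smallmatrix}\mathfrak{p}_{1}(\mathfrak{a}_{1}) & -\mathfrak{p}_{0}\\ \mathfrak{p}_{0} & -\mathfrak{p}_{-1}\end{smallmatrix}\bigr)=\bigl(\begin{smallmatrix}\mathfrak{a}_{1} & -I\\ I & O\end{smallmatrix}\bigr)=M(\mathfrak{a}_{1})$, and a direct multiplication gives the $m=2$ case as well, since $M(\mathfrak{a}_{2})M(\mathfrak{a}_{1})=\bigl(\begin{smallmatrix}\mathfrak{a}_{2}\mathfrak{a}_{1}-I & -\mathfrak{a}_{2}\\ \mathfrak{a}_{1} & -I\end{smallmatrix}\bigr)$, matching $\mathfrak{p}_{2}(\mathfrak{a}_{1},\mathfrak{a}_{2})$, $\mathfrak{p}_{1}(\mathfrak{a}_{2})$, $\mathfrak{p}_{1}(\mathfrak{a}_{1})$, $\mathfrak{p}_{0}$.

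For the inductive step, I would write
\begin{equation*}
M(\mathfrak{a}_{1},\cdots,\mathfrak{a}_{m})=M(\mathfrak{a}_{m})\,M(\mathfrak{a}_{1},\cdots,\mathfrak{a}_{m-1}),
\end{equation*}
apply the induction hypothesis to the second factor, and expand the $2\times 2$ block product. The left column of the resulting matrix is handled directly by the defining recurrence: the $(1,1)$-entry equals $\mathfrak{a}_{m}\mathfrak{p}_{m-1}(\mathfrak{a}_{1},\cdots,\mathfrak{a}_{m-1})-\mathfrak{p}_{m-2}(\mathfrak{a}_{1},\cdots,\mathfrak{a}_{m-2})=\mathfrak{p}_{m}(\mathfrak{a}_{1},\cdots,\mathfrak{a}_{m})$, and the $(2,1)$-entry is simply $\mathfrak{p}_{m-1}(\mathfrak{a}_{1},\cdots,\mathfrak{a}_{m-1})$, as required.

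The only step that requires a small observation is the right column. The $(1,2)$-entry comes out as $-\mathfrak{a}_{m}\mathfrak{p}_{m-2}(\mathfrak{a}_{2},\cdots,\mathfrak{a}_{m-1})+\mathfrak{p}_{m-3}(\mathfrak{a}_{2},\cdots,\mathfrak{a}_{m-2})$; one must recognize this as $-\mathfrak{p}_{m-1}(\mathfrak{a}_{2},\cdots,\mathfrak{a}_{m})$. That is true precisely because the recurrence \eqref{ii1} depends only on the final two arguments in the obvious shifted way, i.e. applying \eqref{ii1} to the shifted sequence $(\mathfrak{a}_{2},\mathfrak{a}_{3},\dots)$ gives
\begin{equation*}
\mathfrak{p}_{m-1}(\mathfrak{a}_{2},\cdots,\mathfrak{a}_{m})=\mathfrak{a}_{m}\mathfrak{p}_{m-2}(\mathfrak{a}_{2},\cdots,\mathfrak{a}_{m-1})-\mathfrak{p}_{m-3}(\mathfrak{a}_{2},\cdots,\mathfrak{a}_{m-2}).
\end{equation*}
The $(2,2)$-entry is then just $-\mathfrak{p}_{m-2}(\mathfrak{a}_{2},\cdots,\mathfrak{a}_{m-1})$, which completes the induction.

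I do not anticipate a genuine obstacle: the argument is a routine block-matrix induction. The only point worth flagging in the write-up is the index bookkeeping for the shifted polynomials in the second column, together with checking that the conventions $\mathfrak{p}_{-1}=O$, $\mathfrak{p}_{0}=I$ correctly handle the small values of $m$ (in particular $m=1,2$ where some of the shifted sequences become empty).
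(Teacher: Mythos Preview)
Your proposal is correct and follows essentially the same route as the paper: induction on $m$, with $M(\mathfrak{a}_{1},\dots,\mathfrak{a}_{m})=M(\mathfrak{a}_{m})\,M(\mathfrak{a}_{1},\dots,\mathfrak{a}_{m-1})$ and the recurrence \eqref{ii1} (applied also to the shifted sequence $(\mathfrak{a}_{2},\dots)$) to identify each block entry. The paper verifies only $m=1$ as a base case and writes the inductive expansion without separately commenting on the shifted recurrence for the right column, but the argument is otherwise identical.
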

\begin{proof}We prove the lemma by induction. For $m=1$ we have
\begin{equation*}
M(\mathfrak{a}_{1})=\left(
                       \begin{array}{cc}
                         \mathfrak{a}_{1} & -I \\
                         I & O \\
                       \end{array}
                     \right)=\left(
                               \begin{array}{cc}
                                 \mathfrak{p}_{1}(\mathfrak{a}_{1}) & -\mathfrak{p}_{0} \\
                                 \mathfrak{p}_{0} & -\mathfrak{p}_{-1} \\
                               \end{array}
                             \right),
\end{equation*}
thus the result is true for $m=1$. Let us suppose that (\ref{ii2}) is hold for $m=s$, then
\begin{align*}
M(\mathfrak{a}_{1},\cdots,\mathfrak{a}_{s+1})&=\left(
                                                 \begin{array}{cc}
                                                   \mathfrak{a}_{s+1} & -I \\
                                                   I & O \\
                                                 \end{array}
                                               \right)
M(\mathfrak{a}_{1},\cdots,\mathfrak{a}_{s})=\left(
                                                 \begin{array}{cc}
                                                   \mathfrak{a}_{s+1} & -I \\
                                                   I & O \\
                                                 \end{array}
                                               \right)\left(
                                               \begin{array}{cc}
                                                 \mathfrak{p}_{s}(\mathfrak{a}_{1},\cdots,\mathfrak{a}_{s}) & -\mathfrak{p}_{s-1}(\mathfrak{a}_{2},\cdots,\mathfrak{a}_{s}) \\
                                                 \mathfrak{p}_{s-1}(\mathfrak{a}_{1},\cdots,\mathfrak{a}_{s-1}) & -\mathfrak{p}_{s-2}(\mathfrak{a}_{2},\cdots,\mathfrak{a}_{s-2}) \\
                                               \end{array}
                                             \right) \\
&=\left(
    \begin{array}{cc}
    \mathfrak{a}_{s+1}\mathfrak{p}_{s}(\mathfrak{a}_{1},\cdots,\mathfrak{a}_{s})-\mathfrak{p}_{s-1}(\mathfrak{a}_{1},\cdots,\mathfrak{a}_{s-1})  &
    -\mathfrak{a}_{s+1}\mathfrak{p}_{s-1}(\mathfrak{a}_{2},\cdots,\mathfrak{a}_{s})+\mathfrak{p}_{s-2}(\mathfrak{a}_{2},\cdots,\mathfrak{a}_{s-1}) \\
      \mathfrak{p}_{s}(\mathfrak{a}_{1},\cdots,\mathfrak{a}_{s}) & -\mathfrak{p}_{s-1}(\mathfrak{a}_{2},\cdots,\mathfrak{a}_{s}) \\
    \end{array}
  \right) \\
&=\left(
    \begin{array}{cc}
      \mathfrak{p}_{s+1}(\mathfrak{a}_{1},\cdots,\mathfrak{a}_{s},\mathfrak{a}_{s+1}) &  -\mathfrak{p}_{s}(\mathfrak{a}_{2},\cdots,\mathfrak{a}_{s+1}) \\
       \mathfrak{p}_{s}(\mathfrak{a}_{1},\cdots,\mathfrak{a}_{s}) & -\mathfrak{p}_{s-1}(\mathfrak{a}_{2},\cdots,\mathfrak{a}_{s}) \\
    \end{array}
  \right).
\end{align*}
\end{proof}

Define
\begin{equation}\label{ii3}
\mathfrak{Q}_{m}=\left(
                   \begin{array}{ccccc}
                     \mathfrak{a}_{m} & I & O & \cdots & O \\
                     I & \mathfrak{a}_{m-1} & \ddots & \ddots & \vdots \\
                     O & \ddots & \ddots & \ddots & O \\
                     \vdots & \ddots & \ddots & \ddots & I \\
                     O & \cdots & O & I & \mathfrak{a}_{1} \\
                   \end{array}
                 \right),
\end{equation}
for $m\geq 2$ and $\mathfrak{Q}_{1}=\mathfrak{a}_{1}$, where $\mathfrak{a}_{k}\in GL_{l}$ for $k=\overline{1,m}$, then we have
\begin{proposition}For all $m\geq 1$
\begin{equation}\label{ii4}
|\mathfrak{Q}_{m}|=|\mathfrak{p}_{m}(\mathfrak{a}_{1},\cdots,\mathfrak{a}_{m})|.
\end{equation}
\end{proposition}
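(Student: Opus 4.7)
The plan is to prove the identity by induction on $m$, with Schur's formula as the workhorse for the inductive step.

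First I would note that both sides of (\ref{ii4}) are polynomial functions in the matrix entries of $\mathfrak{a}_1,\ldots,\mathfrak{a}_m$: the right-hand side because $\mathfrak{p}_m$ is built from iterated matrix products and subtractions, the left-hand side because the determinant of a matrix is polynomial in its entries. Hence, by Zariski density, it is enough to verify the equality on the open dense subset where $\mathfrak{a}_m$ (and the intermediate quantity $\mathfrak{a}_{m-1}-\mathfrak{a}_m^{-1}$) is invertible; the general case then follows by continuity, which allows us to drop any auxiliary invertibility hypotheses needed along the way.

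The base cases $m=1$ (trivial) and $m=2$ follow at once: $|\mathfrak{Q}_2| = \left|\left(\begin{smallmatrix}\mathfrak{a}_2 & I \\ I & \mathfrak{a}_1\end{smallmatrix}\right)\right| = |\mathfrak{a}_2\mathfrak{a}_1-I|$ by the Schur determinant lemma (since $I$ commutes with $\mathfrak{a}_1$), and this is exactly $|\mathfrak{p}_2(\mathfrak{a}_1,\mathfrak{a}_2)|$. For the inductive step, I would partition $\mathfrak{Q}_m$ with the block $P=\mathfrak{a}_m$ in the upper left, $Q=(I,0,\ldots,0)$, $R=Q^{T}$, and $S$ equal to the block-tridiagonal matrix with diagonal $\mathfrak{a}_{m-1},\mathfrak{a}_{m-2},\ldots,\mathfrak{a}_1$. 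Applying Schur's formula (\ref{i7}),
\begin{equation*}
|\mathfrak{Q}_m| = |\mathfrak{a}_m|\cdot|S - R\mathfrak{a}_m^{-1}Q|.
\end{equation*}
The outer product $R\mathfrak{a}_m^{-1}Q$ is a block matrix whose only nonzero block is $\mathfrak{a}_m^{-1}$ in the upper-left, so $S - R\mathfrak{a}_m^{-1}Q$ is precisely the tridiagonal block matrix $\mathfrak{Q}_{m-1}(\mathfrak{a}_1,\ldots,\mathfrak{a}_{m-2},\mathfrak{a}_{m-1}-\mathfrak{a}_m^{-1})$.

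By the induction hypothesis applied to this $(m-1)$-tuple,
\begin{equation*}
|S - R\mathfrak{a}_m^{-1}Q| = \bigl|\mathfrak{p}_{m-1}(\mathfrak{a}_1,\ldots,\mathfrak{a}_{m-2},\mathfrak{a}_{m-1}-\mathfrak{a}_m^{-1})\bigr|,
\end{equation*}
and the recurrence (\ref{ii1}) gives $\mathfrak{p}_{m-1}(\mathfrak{a}_1,\ldots,\mathfrak{a}_{m-2},\mathfrak{a}_{m-1}-\mathfrak{a}_m^{-1}) = \mathfrak{p}_{m-1}(\mathfrak{a}_1,\ldots,\mathfrak{a}_{m-1}) - \mathfrak{a}_m^{-1}\mathfrak{p}_{m-2}(\mathfrak{a}_1,\ldots,\mathfrak{a}_{m-2})$. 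Multiplying on the left by $\mathfrak{a}_m$ inside the determinant then yields $|\mathfrak{a}_m\mathfrak{p}_{m-1}-\mathfrak{p}_{m-2}| = |\mathfrak{p}_m(\mathfrak{a}_1,\ldots,\mathfrak{a}_m)|$, closing the induction.

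The main obstacle I anticipate is the non-commutativity: the clean recurrence trick above hinges on $\mathfrak{a}_m$ being invertible and on the algebraic identity $\mathfrak{a}_m((\mathfrak{a}_{m-1}-\mathfrak{a}_m^{-1})X - Y) = \mathfrak{a}_m\mathfrak{a}_{m-1}X - X - \mathfrak{a}_m Y$ being compatible with the one-sided polynomial recursion (\ref{ii1}). One must be careful that the induction hypothesis is applied to the \emph{left} Chebyshev polynomial with the correct argument order; the polynomial/continuity reduction at the start is what makes the Schur pivoting legitimate even when $\mathfrak{a}_{m-1}-\mathfrak{a}_m^{-1}$ fails to be invertible.
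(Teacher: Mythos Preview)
Your proof is correct and follows essentially the same route as the paper: induction on $m$ with Schur's formula applied to the top-left block $\mathfrak{a}_m$, reducing to $\mathfrak{Q}_{m-1}$ with last argument $\mathfrak{a}_{m-1}-\mathfrak{a}_m^{-1}$, then the same recurrence manipulation to recover $\mathfrak{p}_m$. Your Zariski-density remark is a welcome refinement, since the paper tacitly assumes $\mathfrak{a}_k\in GL_l$ but does not address why the perturbed entry $\mathfrak{a}_{m-1}-\mathfrak{a}_m^{-1}$ remains admissible for the induction hypothesis.
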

\begin{proof}We prove the proposition first for $m=1$, $m=2$ and $m=3$. It is clear that $|\mathfrak{Q}_{1}|=|\mathfrak{a}_{1}|=|\mathfrak{p}_{1}(\mathfrak{a}_{1})|$. From the Schur determinant lemma, we obtain
\begin{equation*}
|\mathfrak{Q}_{2}|=\left |
                             \begin{array}{cc}
                               \mathfrak{a}_{2} & I \\
                               I & \mathfrak{a}_{1} \\
                             \end{array}
\right |=|\mathfrak{a}_{2}\mathfrak{a}_{1}-I|=|\mathfrak{p}_{2}(\mathfrak{a}_{1},\mathfrak{a}_{2})|.
\end{equation*}

Next, we compute $|\mathfrak{Q}_{3}|$ through the Schur's formula (also we use the Schur determinant lemma). We have
\begin{equation*}
|\mathfrak{Q}_{3}|=\left |
                             \begin{array}{ccc}
                               \mathfrak{a}_{3} & I & O \\
                               I & \mathfrak{a}_{2} & I \\
                               O & I & \mathfrak{a}_{1} \\
                             \end{array}
\right |=|\mathfrak{a}_{3}|\,\left | \begin{array}{cc}
           \mathfrak{a}_{2}-\mathfrak{a}^{-1}_{3} & I \\
           I &  \mathfrak{a}_{1}
         \end{array}
\right |=|\mathfrak{a}_{3}|\,|(\mathfrak{a}_{2}-\mathfrak{a}^{-1}_{3})\mathfrak{a}_{1}-I|=|\mathfrak{a}_{3}|\,|\mathfrak{a}^{-1}_{3}|\,
|\mathfrak{a}_{3}\mathfrak{a}_{2}\mathfrak{a}_{1}-\mathfrak{a}_{3}-\mathfrak{a}_{1}|=|\mathfrak{p}_{3}(\mathfrak{a}_{1},\mathfrak{a}_{2},
\mathfrak{a}_{3})|.
\end{equation*}

Hence, the result is true for $m=1$, $m=2$ and $m=3$.
Next, we will proceed by induction. Suppose that $|\mathfrak{Q}_{s}|=|\mathfrak{p}_{s}(\mathfrak{a}_{1},\cdots,\mathfrak{a}_{s})|$. Then
\begin{equation}\label{ii5}
|\mathfrak{Q}_{s+1}|=|\mathfrak{a}_{s+1}|\left | \begin{array}{ccccc}
                     \mathfrak{a}_{s}-\mathfrak{a}^{-1}_{s+1} & I & O & \cdots & O \\
                     I & \mathfrak{a}_{s-1} & \ddots & \ddots & \vdots \\
                     O & \ddots & \ddots & \ddots & O \\
                     \vdots & \ddots & \ddots & \ddots & I \\
                     O & \cdots & O & I & \mathfrak{a}_{1} \\
                   \end{array}  \right | =|\mathfrak{a}_{s+1}|\,|\mathfrak{p}_{s}(\mathfrak{a}_{1},\cdots,\mathfrak{a}_{s}-\mathfrak{a}^{-1}_{s+1})|.
\end{equation}

Nevertheless, observe that
\begin{align}\label{ii6}
\mathfrak{p}_{s}(\mathfrak{a}_{1},\cdots,\mathfrak{a}_{s}-\mathfrak{a}^{-1}_{s+1})&=(\mathfrak{a}_{s}-\mathfrak{a}^{-1}_{s+1})
\mathfrak{p}_{s-1}(\mathfrak{a}_{1},\cdots,\mathfrak{a}_{s-1})-\mathfrak{p}_{s-2}(\mathfrak{a}_{1},\cdots,\mathfrak{a}_{s-2}) \nonumber \\
&=\mathfrak{p}_{s}(\mathfrak{a}_{1},\cdots,\mathfrak{a}_{s})-\mathfrak{a}^{-1}_{s+1}\mathfrak{p}_{s-1}(\mathfrak{a}_{1},\cdots,\mathfrak{a}_{s-1})=
\mathfrak{a}^{-1}_{s+1}(\mathfrak{a}_{s+1}\mathfrak{p}_{s}(\mathfrak{a}_{1},\cdots,\mathfrak{a}_{s})-\mathfrak{p}_{s-1}(\mathfrak{a}_{1},
\cdots,\mathfrak{a}_{s-1})) \nonumber \\
&=\mathfrak{a}^{-1}_{s+1}\mathfrak{p}_{s+1}(\mathfrak{a}_{1},\cdots,\mathfrak{a}_{s+1}),
\end{align}
so combining (\ref{ii5}) and (\ref{ii6}) we obtain (\ref{ii4}).
\end{proof}

Let us denote by $\mathfrak{R}_{m}$ the formal inverse of $\mathfrak{Q}_{m}$ for $1\leq m$ and define $\mathfrak{P}_{m}=(-1)^{m-1}\left ((\mathfrak{R}_{m})_{m1} \right )^{-1}$ where
$(\mathfrak{R}_{m})_{m1}$ is the $l\times l$ matrix located in the position $m1$ of the matrix $\mathfrak{R}_{m}$.

\begin{proposition}For all $m\geq 1$, we have
\begin{equation}\label{ii7}
\mathfrak{P}_{m}=\mathfrak{p}_{m}(\mathfrak{a}_{1},\cdots,\mathfrak{a}_{m}).
\end{equation}
\end{proposition}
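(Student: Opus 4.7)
The plan is to extract $(\mathfrak{R}_m)_{m1}$ directly by solving the block linear system $\mathfrak{Q}_m \mathfrak{R}_m = I$ restricted to its first block column, and then to identify the solution with a signed Chebyshev polynomial.

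First, I would let $(X_1,\ldots,X_m)^T$ denote the first block column of $\mathfrak{R}_m$, so that $(\mathfrak{R}_m)_{m1}=X_m$. Reading off the tridiagonal structure of $\mathfrak{Q}_m$ in (\ref{ii3}), the equation $\mathfrak{Q}_m (X_1,\ldots,X_m)^T=(I,O,\ldots,O)^T$ yields the system
\begin{equation*}
\mathfrak{a}_m X_1+X_2=I,\qquad X_{i-1}+\mathfrak{a}_{m-i+1}X_i+X_{i+1}=O\,\,\,(2\le i\le m-1),\qquad X_{m-1}+\mathfrak{a}_1 X_m=O.
\end{equation*}
It is cleaner to reverse the indexing. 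Setting $Y_i:=X_{m+1-i}$ for $i=1,\ldots,m$ (so $Y_1=X_m$ is the object of interest), the system transforms into the recurrence
\begin{equation*}
Y_2=-\mathfrak{a}_1 Y_1,\qquad Y_{j+1}=-\mathfrak{a}_j Y_j-Y_{j-1}\,\,\,(2\le j\le m-1),
\end{equation*}
together with the closing condition $\mathfrak{a}_m Y_m+Y_{m-1}=I$.

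Next, I would prove by induction on $j$ that
\begin{equation*}
Y_j=(-1)^{j-1}\,\mathfrak{p}_{j-1}(\mathfrak{a}_1,\ldots,\mathfrak{a}_{j-1})\,Y_1,\qquad 1\le j\le m,
\end{equation*}
with the convention $\mathfrak{p}_0=I$ so the base cases $j=1,2$ are immediate. The induction step is a direct comparison: using $(-1)^{j-2}=(-1)^j$ we compute
\begin{equation*}
-\mathfrak{a}_j Y_j-Y_{j-1}=(-1)^j\bigl(\mathfrak{a}_j\mathfrak{p}_{j-1}-\mathfrak{p}_{j-2}\bigr)Y_1=(-1)^j\mathfrak{p}_j(\mathfrak{a}_1,\ldots,\mathfrak{a}_j)Y_1,
\end{equation*}
which matches the required formula for $Y_{j+1}$ by the defining recurrence (\ref{ii1}) of the noncommutative signed Chebyshev polynomials.

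Finally, substituting this expression into the closing condition gives
\begin{equation*}
I=\mathfrak{a}_m Y_m+Y_{m-1}=(-1)^{m-1}\bigl(\mathfrak{a}_m\mathfrak{p}_{m-1}-\mathfrak{p}_{m-2}\bigr)Y_1=(-1)^{m-1}\mathfrak{p}_m(\mathfrak{a}_1,\ldots,\mathfrak{a}_m)\,Y_1,
\end{equation*}
so that $Y_1=(-1)^{m-1}\mathfrak{p}_m^{-1}$. Thus $(\mathfrak{R}_m)_{m1}=(-1)^{m-1}\mathfrak{p}_m(\mathfrak{a}_1,\ldots,\mathfrak{a}_m)^{-1}$, and multiplying by $(-1)^{m-1}$ and inverting gives $\mathfrak{P}_m=\mathfrak{p}_m(\mathfrak{a}_1,\ldots,\mathfrak{a}_m)$, as claimed. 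The only mildly delicate point is keeping the sign bookkeeping straight and noting that invertibility of $\mathfrak{p}_m$, needed to solve for $Y_1$, is precisely the condition $\mathfrak{Q}_m$ is invertible established in the preceding proposition (\ref{ii4}), so the use of $\mathfrak{R}_m=\mathfrak{Q}_m^{-1}$ is consistent.
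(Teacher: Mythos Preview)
Your proof is correct and follows essentially the same approach as the paper: both solve the block linear system for the first column of $\mathfrak{R}_m$ from the bottom row upward, identify each entry as $(-1)^{j-1}\mathfrak{p}_{j-1}$ times $(\mathfrak{R}_m)_{m1}$ by induction via the Chebyshev recurrence, and then use the top equation as a closing condition to isolate $(\mathfrak{R}_m)_{m1}$. Your explicit reindexing $Y_i=X_{m+1-i}$ is a tidy cosmetic simplification of what the paper does by working through the equations ``from the bottom to the top,'' and your final remark on invertibility via (\ref{ii4}) is a nice touch the paper leaves implicit.
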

\begin{proof}It will be useful to do the calculation of $\mathfrak{P}_{1}, \mathfrak{P}_{2}$ and $\mathfrak{P}_{3}$. For $m=1$, $\mathfrak{R}_{1}=
\mathfrak{a}_{1}^{-1}$ hence $\mathfrak{P}_{1}=\left (\mathfrak{a}_{1}^{-1}\right )^{-1}=\mathfrak{a}_{1}
=\mathfrak{p}_{1}(\mathfrak{a}_{1})$. Lets us suppose that $m=2$, then from a straightforward calculation, we can see that
$(\mathfrak{R}_{2})_{21}=-(\mathfrak{a}_{2}\mathfrak{a}_{1}-I)^{-1}$, thus (\ref{ii7}) holds. If $m=3$ the entries of $\mathfrak{R}_{3}$,
$(\mathfrak{R}_{3})_{11}$, $(\mathfrak{R}_{3})_{21}$ and $(\mathfrak{R}_{3})_{31}$ can be calculated of the following equations
\begin{equation*}
\mathfrak{a}_{3}(\mathfrak{R}_{3})_{11}+(\mathfrak{R}_{3})_{21}=I, (\mathfrak{R}_{3})_{11}+\mathfrak{a}_{2}(\mathfrak{R}_{3})_{21}+(\mathfrak{R}_{3})_{31}=O,
(\mathfrak{R}_{3})_{21}+\mathfrak{a}_{1}(\mathfrak{R}_{3})_{31}=O,
\end{equation*}
thus $(\mathfrak{R}_{3})_{21}=-\mathfrak{a}_{1}(\mathfrak{R}_{3})_{31}$ and $(\mathfrak{R}_{3})_{11}=(\mathfrak{a}_{2}\mathfrak{a}_{1}-I)(\mathfrak{R}_{3})_{31}$. It shows that
$$(\mathfrak{R}_{3})_{31}=\left (\mathfrak{a}_{3}(\mathfrak{a}_{2}\mathfrak{a}_{1}-I)-\mathfrak{a}_{1} \right )^{-1},$$
hence $\mathfrak{P}_{3}=\left (\mathfrak{a}_{3}(\mathfrak{a}_{2}\mathfrak{a}_{1}-I)-\mathfrak{a}_{1} \right )=
\mathfrak{p}(\mathfrak{a}_{1},\mathfrak{a}_{2},\mathfrak{a}_{3})$.

In the general case, in order to find the first column of $\mathfrak{R}_{m}$, we must solve the linear system $:$
\begin{equation*}
\left(
                   \begin{array}{ccccc}
                     \mathfrak{a}_{m} & I & O & \cdots & O \\
                     I & \mathfrak{a}_{m-1} & \ddots & \ddots & \vdots \\
                     O & \ddots & \ddots & \ddots & O \\
                     \vdots & \ddots & \ddots & \ddots & I \\
                     O & \cdots & O & I & \mathfrak{a}_{1} \\
                   \end{array}
                 \right)\left(
                          \begin{array}{c}
                            (\mathfrak{R}_{s})_{11} \\
                            (\mathfrak{R}_{s})_{21} \\
                            \vdots \\
                            (\mathfrak{R}_{s})_{(s-1)1} \\
                            (\mathfrak{R}_{s})_{s1} \\
                          \end{array}
                        \right)=\left(
                                  \begin{array}{c}
                                    I \\
                                    O \\
                                    \vdots \\
                                    \vdots \\
                                    O \\
                                  \end{array}
                                \right),
\end{equation*}
now, using the last $(m-1)$ equations of this system from the bottom to the top, we obtain
\begin{align*}
(\mathfrak{R}_{m})_{(m-1)1}&=-\mathfrak{p}_{1}(\mathfrak{a}_{1})(\mathfrak{R}_{m})_{m1}, \\
(\mathfrak{R}_{m})_{(m-2)1}&=-\mathfrak{a}_{2}(\mathfrak{R}_{m})_{(m-1)1}-(\mathfrak{R}_{m})_{m1}=(\mathfrak{a}_{2}\mathfrak{p}_{1}(\mathfrak{a}_{1})-I)
(\mathfrak{R}_{m})_{m1}
=\mathfrak{p}_{2}(\mathfrak{a}_{1},\mathfrak{a}_{2})(\mathfrak{R}_{m})_{m1}, \\
&\qquad\qquad\cdots\qquad\cdots\qquad\cdots\qquad\cdots\qquad\cdots\qquad\cdots \\
(\mathfrak{R}_{m})_{21}&=(\mathfrak{R}_{m})_{(m-(m-2))1}=(-1)^{(m-2)}\mathfrak{p}_{m-2}(\mathfrak{a}_{1},\cdots,\mathfrak{a}_{m-2})(\mathfrak{R}_{m})_{m1}, \\
(\mathfrak{R}_{m})_{11}&=(\mathfrak{R}_{m})_{(m-(m-1))1}=(-1)^{(m-1)}\mathfrak{p}_{m-1}(\mathfrak{a}_{1},\cdots,\mathfrak{a}_{m-1})(\mathfrak{R}_{m})_{m1}.
\end{align*}

The first equation is $\mathfrak{a}_{m}(\mathfrak{R}_{s})_{11}+(\mathfrak{R}_{s})_{21}=I$, then
\begin{align*}
I&=\mathfrak{a}_{m}\left ( (-1)^{(m-1)}\mathfrak{p}_{m-1}(\mathfrak{a}_{1},\cdots,\mathfrak{a}_{m-1})+(-1)^{(m-2)}\mathfrak{p}_{m-2}(\mathfrak{a}_{1},\cdots,\mathfrak{a}_{m-2}) \right )(\mathfrak{R}_{m})_{m1} \\
&=(-1)^{(m-1)}\left (\mathfrak{a}_{m}\mathfrak{p}_{m-1}(\mathfrak{a}_{1},\cdots,\mathfrak{a}_{m-1})-\mathfrak{p}_{m-2}(\mathfrak{a}_{1},\cdots,\mathfrak{a}_{m-2}) \right )(\mathfrak{R}_{m})_{m1} \\
&=(-1)^{(m-1)}\mathfrak{p}_{m}(\mathfrak{a}_{1},\cdots,\mathfrak{a}_{m})(\mathfrak{R}_{m})_{m1},
\end{align*}
it shows that $(\mathfrak{R}_{m})_{m1}=(-1)^{(m-1)}(\mathfrak{p}_{m}(\mathfrak{a}_{1},\cdots,\mathfrak{a}_{m}))^{-1}$. Therefore (\ref{ii7}) holds.
\end{proof}

In this moment, we will show the relation between left matrix Chebyshev polynomials and matrix difference equations. Consider the recurrent equation
\begin{equation}\label{sf1}
\mathfrak{y}_{n+1}=\mathfrak{a}_{n}\mathfrak{y}_{n}-\mathfrak{y}_{n-1},\quad\quad\quad 1\leq n,
\end{equation}
where $(\mathfrak{a}_{n})\subset M_{l}(\mathbb{C})$. We put $\mathfrak{a}_{0}=I$, then
\begin{lemma}For $\mathfrak{y}_{0}, \mathfrak{y}_{1}$ given and $1\leq n$, we have
\begin{equation}\label{polche1}
\mathfrak{y}_{n+1}=\mathfrak{p}_{n}(\mathfrak{a}_{1},\cdots,\mathfrak{a}_{n})\mathfrak{y}_{1}-
\mathfrak{p}_{n-1}(\mathfrak{a}_{2},\cdots,\mathfrak{a}_{n})\mathfrak{y}_{0}.
\end{equation}
\end{lemma}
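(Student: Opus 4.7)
The plan is to prove (\ref{polche1}) by strong induction on $n$, treating both Chebyshev polynomial arguments (the one starting at $\mathfrak{a}_1$ and the shifted one starting at $\mathfrak{a}_2$) symmetrically, since each satisfies the same three-term recurrence (\ref{ii1}) by definition. The base case $n=1$ just says $\mathfrak{y}_2=\mathfrak{p}_1(\mathfrak{a}_1)\mathfrak{y}_1-\mathfrak{p}_0\mathfrak{y}_0=\mathfrak{a}_1\mathfrak{y}_1-\mathfrak{y}_0$, which is (\ref{sf1}) at $n=1$; it is also worth spelling out $n=2$ explicitly to make the shifted-index polynomial appear naturally, namely $\mathfrak{y}_3=\mathfrak{a}_2\mathfrak{y}_2-\mathfrak{y}_1=(\mathfrak{a}_2\mathfrak{a}_1-I)\mathfrak{y}_1-\mathfrak{a}_2\mathfrak{y}_0=\mathfrak{p}_2(\mathfrak{a}_1,\mathfrak{a}_2)\mathfrak{y}_1-\mathfrak{p}_1(\mathfrak{a}_2)\mathfrak{y}_0$.

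For the inductive step, I assume (\ref{polche1}) holds for all indices up to $n$ and substitute into (\ref{sf1}) at level $n+1$. Writing
\begin{align*}
\mathfrak{y}_{n+2}&=\mathfrak{a}_{n+1}\mathfrak{y}_{n+1}-\mathfrak{y}_{n} \\
&=\mathfrak{a}_{n+1}\bigl(\mathfrak{p}_{n}(\mathfrak{a}_{1},\ldots,\mathfrak{a}_{n})\mathfrak{y}_{1}
-\mathfrak{p}_{n-1}(\mathfrak{a}_{2},\ldots,\mathfrak{a}_{n})\mathfrak{y}_{0}\bigr) \\
&\quad-\bigl(\mathfrak{p}_{n-1}(\mathfrak{a}_{1},\ldots,\mathfrak{a}_{n-1})\mathfrak{y}_{1}
-\mathfrak{p}_{n-2}(\mathfrak{a}_{2},\ldots,\mathfrak{a}_{n-1})\mathfrak{y}_{0}\bigr),
\end{align*}
I then group the $\mathfrak{y}_1$-coefficients and the $\mathfrak{y}_0$-coefficients. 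The coefficient of $\mathfrak{y}_1$ is exactly $\mathfrak{a}_{n+1}\mathfrak{p}_{n}(\mathfrak{a}_{1},\ldots,\mathfrak{a}_{n})-\mathfrak{p}_{n-1}(\mathfrak{a}_{1},\ldots,\mathfrak{a}_{n-1})$, which equals $\mathfrak{p}_{n+1}(\mathfrak{a}_{1},\ldots,\mathfrak{a}_{n+1})$ by (\ref{ii1}); the coefficient of $\mathfrak{y}_0$ is $\mathfrak{a}_{n+1}\mathfrak{p}_{n-1}(\mathfrak{a}_{2},\ldots,\mathfrak{a}_{n})-\mathfrak{p}_{n-2}(\mathfrak{a}_{2},\ldots,\mathfrak{a}_{n-1})$, which equals $\mathfrak{p}_{n}(\mathfrak{a}_{2},\ldots,\mathfrak{a}_{n+1})$ by applying (\ref{ii1}) to the shifted sequence $(\mathfrak{a}_{2},\mathfrak{a}_{3},\ldots)$ in place of $(\mathfrak{a}_{1},\mathfrak{a}_{2},\ldots)$. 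This completes the induction.

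There is essentially no obstacle beyond a bookkeeping nuisance: the inductive hypothesis must be strong (it is needed at both $n$ and $n-1$ to produce $\mathfrak{p}_{n+1}$ and $\mathfrak{p}_n$ with the right index patterns), and the shifted-index recurrence $\mathfrak{p}_{m}(\mathfrak{a}_{j+1},\ldots,\mathfrak{a}_{j+m})=\mathfrak{a}_{j+m}\mathfrak{p}_{m-1}(\mathfrak{a}_{j+1},\ldots,\mathfrak{a}_{j+m-1})-\mathfrak{p}_{m-2}(\mathfrak{a}_{j+1},\ldots,\mathfrak{a}_{j+m-2})$ should be stated once as a direct consequence of (\ref{ii1}) so that both the closing identifications are unambiguous. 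Since the definition multiplies $\mathfrak{a}_m$ from the \emph{left}, placing it correctly (outside the bracket, acting first on the already left-multiplied $\mathfrak{p}_{n}(\mathfrak{a}_{1},\ldots,\mathfrak{a}_{n})\mathfrak{y}_1$) is the only thing that requires care in a noncommutative setting; this is also the reason the proof does not invoke lemma (\ref{ii2}), which encodes the same information from the right.
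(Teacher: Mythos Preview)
Your proof is correct and follows essentially the same approach as the paper: complete (strong) induction with base cases $n=1,2$ verified directly, and the inductive step obtained by substituting the two previous instances into the recurrence (\ref{sf1}) and recognizing the Chebyshev recurrence (\ref{ii1}) in each coefficient. Your explicit mention of the shifted-index version of (\ref{ii1}) makes the identification of the $\mathfrak{y}_0$-coefficient slightly cleaner than in the paper, but the argument is otherwise identical.
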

\begin{proof}We do the proof by complete induction. Clearly, the lemma holds for $n=1$. Now, if $n=2$ then
$$\mathfrak{y}_{3}=\mathfrak{a}_{2}\mathfrak{y}_{2}-\mathfrak{y}_{1}=\mathfrak{a}_{2}(\mathfrak{a}_{1}\mathfrak{y}_{1}-\mathfrak{y}_{0})-\mathfrak{y}_{1}
=(\mathfrak{a}_{2}\mathfrak{a}_{1}-I)\mathfrak{y}_{1}-\mathfrak{a}_{2}\mathfrak{y}_{0}=\mathfrak{p}_{2}(\mathfrak{a}_{1},\mathfrak{a}_{2})\mathfrak{y}_{1}
-\mathfrak{p}_{1}(\mathfrak{a}_{2})\mathfrak{y}_{0}.$$
Suppose the result holds for $n\leq k$, then
\begin{equation}\label{sf2}
\mathfrak{y}_{k+1}=\mathfrak{p}_{k}(\mathfrak{a}_{1},\cdots,\mathfrak{a}_{k})\mathfrak{y}_{1}-
\mathfrak{p}_{k-1}(\mathfrak{a}_{2},\cdots,\mathfrak{a}_{k})\mathfrak{y}_{0},
\end{equation}
and
\begin{equation}\label{sf3}
\mathfrak{y}_{k}=\mathfrak{p}_{k-1}(\mathfrak{a}_{1},\cdots,\mathfrak{a}_{k-1})\mathfrak{y}_{1}-
\mathfrak{p}_{k-2}(\mathfrak{a}_{2},\cdots,\mathfrak{a}_{k-1})\mathfrak{y}_{0},
\end{equation}
thus, if we multiply to the left of (\ref{sf2}) by $\mathfrak{a}_{k+1}$ and to the resulting equality one subtracts (\ref{sf3}) we obtain
$$\mathfrak{y}_{k+2}=\mathfrak{p}_{k+1}(\mathfrak{a}_{1},\cdots,\mathfrak{a}_{k+1})\mathfrak{y}_{1}-
\mathfrak{p}_{k}(\mathfrak{a}_{2},\cdots,\mathfrak{a}_{k+1})\mathfrak{y}_{0},$$
hence the result is also true for $n=k+1$.
\end{proof}

Suppose now that the sequence $(\mathfrak{a}_{n})$ is $N$-periodic, then from the previous lemma follows that in order to any solution
$(\mathfrak{y}_{n})$ of (\ref{sf1}) constitutes an $N$-antiperiodic sequence is necessary and sufficient that
\begin{equation}\label{sf4}
\mathfrak{p}_{N-1}(\mathfrak{a}_{1},\cdots,\mathfrak{a}_{N-1})=0,\,\,\,\,\,\,\,\,\,\,\,\mathfrak{p}_{N-2}(\mathfrak{a}_{2},\cdots,\mathfrak{a}_{N-1})=I,
\end{equation}
and
\begin{equation}\label{sf5}
\mathfrak{p}_{N}(\mathfrak{a}_{1},\cdots,\mathfrak{a}_{N})=-I,\,\,\,\,\,\,\,\,\,\,\,\mathfrak{p}_{N-1}(\mathfrak{a}_{2},\cdots,\mathfrak{a}_{N})=0,
\end{equation}
indeed, the conditions in (\ref{sf4}) and (\ref{sf5}) imply that $\mathfrak{y}_{N}=-\mathfrak{y}_{0}$, $\mathfrak{y}_{N+1}=-\mathfrak{y}_{1}$.

We present a more general result

\begin{proposition}Consider a recurrent relation (\ref{sf1}) for which the sequence $(\mathfrak{a}_{n})_{1\leq n}$ is $N$-periodic. Suppose that
the matrix $\mathfrak{m}$ satisfies $[\mathfrak{m}, \mathfrak{a}_{k}]=0$ for $k=1,2,\cdots,N$. Then, $\mathfrak{m}$ is the monodromy matrix
of all solution of (\ref{sf1}) (that is, if $(\mathfrak{y}_{k})_{0\leq k}$ is any solution of (\ref{sf1}), then $\mathfrak{y}_{k+N}=\mathfrak{m}\mathfrak{y}_{k}$
for all $0\leq k$) if and only if
\begin{equation}\label{sf6}
\mathfrak{p}_{N-1}(\mathfrak{a}_{1},\cdots,\mathfrak{a}_{N-1})=0,\,\,\,\,\,\,\,\,\,\,\,\mathfrak{p}_{N-2}(\mathfrak{a}_{2},\cdots,\mathfrak{a}_{N-1})=
-\mathfrak{m},
\end{equation}
and
\begin{equation}\label{sf7}
\mathfrak{p}_{N}(\mathfrak{a}_{1},\cdots,\mathfrak{a}_{N})=\mathfrak{m},\,\,\,\,\,\,\,\,\,\,\,\mathfrak{p}_{N-1}(\mathfrak{a}_{2},\cdots,\mathfrak{a}_{N})=0.
\end{equation}

Under the hypothesis of the proposition
\begin{equation}\label{sf71}
M(\mathfrak{a}_{1},\cdots,\mathfrak{a}_{N})=\left(
                                              \begin{array}{cc}
                                                \mathfrak{m} & O \\
                                                O & \mathfrak{m} \\
                                              \end{array}
                                            \right).
\end{equation}
\end{proposition}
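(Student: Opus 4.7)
The plan is to reduce everything to the explicit formula for $\mathfrak{y}_{n+1}$ proved in the preceding lemma,
$$\mathfrak{y}_{n+1}=\mathfrak{p}_{n}(\mathfrak{a}_{1},\cdots,\mathfrak{a}_{n})\mathfrak{y}_{1}-\mathfrak{p}_{n-1}(\mathfrak{a}_{2},\cdots,\mathfrak{a}_{n})\mathfrak{y}_{0},$$
and to exploit that $\mathfrak{y}_{0},\mathfrak{y}_{1}\in M_{l}(\mathbb{C})$ are totally free initial matrices.

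For the forward direction, assume $\mathfrak{y}_{k+N}=\mathfrak{m}\mathfrak{y}_{k}$ for every solution and every $k$. Specialize to $k=0$ and $k=1$, so that $\mathfrak{y}_{N}=\mathfrak{m}\mathfrak{y}_{0}$ and $\mathfrak{y}_{N+1}=\mathfrak{m}\mathfrak{y}_{1}$. Substituting in the above formula with $n=N-1$ and $n=N$ yields
$$\bigl(\mathfrak{p}_{N-1}(\mathfrak{a}_{1},\cdots,\mathfrak{a}_{N-1})\bigr)\mathfrak{y}_{1}-\bigl(\mathfrak{p}_{N-2}(\mathfrak{a}_{2},\cdots,\mathfrak{a}_{N-1})+\mathfrak{m}\bigr)\mathfrak{y}_{0}=O,$$
$$\bigl(\mathfrak{p}_{N}(\mathfrak{a}_{1},\cdots,\mathfrak{a}_{N})-\mathfrak{m}\bigr)\mathfrak{y}_{1}-\bigl(\mathfrak{p}_{N-1}(\mathfrak{a}_{2},\cdots,\mathfrak{a}_{N})\bigr)\mathfrak{y}_{0}=O.$$
Since $\mathfrak{y}_{0}$ and $\mathfrak{y}_{1}$ are arbitrary matrices in $M_{l}(\mathbb{C})$, I successively set $\mathfrak{y}_{0}=O$ (arbitrary $\mathfrak{y}_{1}$) and $\mathfrak{y}_{1}=O$ (arbitrary $\mathfrak{y}_{0}$) in each identity. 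This forces the four equalities (\ref{sf6}) and (\ref{sf7}) immediately; here the commutation hypothesis $[\mathfrak{m},\mathfrak{a}_{k}]=0$ plays no role.

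For the converse, assume (\ref{sf6})–(\ref{sf7}). Then the formula from the lemma, applied with $n=N-1$ and $n=N$, gives directly $\mathfrak{y}_{N}=\mathfrak{m}\mathfrak{y}_{0}$ and $\mathfrak{y}_{N+1}=\mathfrak{m}\mathfrak{y}_{1}$ for every solution. The key step is then an induction on $k\geq 0$, using only $N$-periodicity of the coefficients and the commutation hypothesis: assuming $\mathfrak{y}_{k+N}=\mathfrak{m}\mathfrak{y}_{k}$ and $\mathfrak{y}_{k+1+N}=\mathfrak{m}\mathfrak{y}_{k+1}$,
$$\mathfrak{y}_{k+2+N}=\mathfrak{a}_{k+1+N}\mathfrak{y}_{k+1+N}-\mathfrak{y}_{k+N}=\mathfrak{a}_{k+1}(\mathfrak{m}\mathfrak{y}_{k+1})-\mathfrak{m}\mathfrak{y}_{k}=\mathfrak{m}\bigl(\mathfrak{a}_{k+1}\mathfrak{y}_{k+1}-\mathfrak{y}_{k}\bigr)=\mathfrak{m}\mathfrak{y}_{k+2},$$
so the monodromy property propagates. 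This is the one moment where $[\mathfrak{m},\mathfrak{a}_{k+1}]=O$ is actually used; the whole argument would break without it.

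Finally, the identity (\ref{sf71}) is not a separate computation but an immediate rereading of the preceding lemma: the closed expression
$$M(\mathfrak{a}_{1},\cdots,\mathfrak{a}_{N})=\begin{pmatrix} \mathfrak{p}_{N}(\mathfrak{a}_{1},\cdots,\mathfrak{a}_{N}) & -\mathfrak{p}_{N-1}(\mathfrak{a}_{2},\cdots,\mathfrak{a}_{N}) \\ \mathfrak{p}_{N-1}(\mathfrak{a}_{1},\cdots,\mathfrak{a}_{N-1}) & -\mathfrak{p}_{N-2}(\mathfrak{a}_{2},\cdots,\mathfrak{a}_{N-1}) \end{pmatrix}$$
becomes $\mathrm{diag}(\mathfrak{m},\mathfrak{m})$ once we substitute (\ref{sf6})–(\ref{sf7}). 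The only conceptual point to highlight is that the freedom to choose $\mathfrak{y}_{0}$ and $\mathfrak{y}_{1}$ as independent matrices (not vectors) is what lets me decouple each vector equation into two matrix equations; apart from that, the argument is mechanical.
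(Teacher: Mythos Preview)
Your proof is correct and follows essentially the same approach as the paper: both directions rest on the explicit formula $\mathfrak{y}_{n+1}=\mathfrak{p}_{n}\mathfrak{y}_{1}-\mathfrak{p}_{n-1}\mathfrak{y}_{0}$ from the preceding lemma, with sufficiency established by an induction using $N$-periodicity and $[\mathfrak{m},\mathfrak{a}_{k}]=0$, necessity by the freedom of the initial data, and (\ref{sf71}) by substituting (\ref{sf6})--(\ref{sf7}) into the Chebyshev-polynomial expression (\ref{ii2}) for $M(\mathfrak{a}_{1},\cdots,\mathfrak{a}_{N})$. Your write-up is in fact slightly more explicit than the paper's in the necessity direction, and your remark that commutation is not needed there is a correct refinement the paper leaves implicit.
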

\begin{proof}Let $(\mathfrak{y}_{k})_{0\leq k}$ be an arbitrary solution. From (\ref{polche1}) and (\ref{sf6}) follow that $\mathfrak{y}_{N}=\mathfrak{m}\mathfrak{y}_{0}$, and taking into account (\ref{polche1}) and
(\ref{sf7}) is easy to obtain that $\mathfrak{y}_{N+1}=\mathfrak{m}\mathfrak{y}_{1}$. Now using complete induction it is immediate to see that for all $0\leq k$
we have $\mathfrak{y}_{k+N}=\mathfrak{m}\mathfrak{y}_{k}$. In fact, let us assume that this is hold for $k\leq r$ (we already know that the statement is true for
$k=0$ and $k=1$) then
\begin{equation*}
\mathfrak{y}_{k+1+N}=\mathfrak{a}_{k+N}\mathfrak{y}_{k+N}-\mathfrak{y}_{k-1+N}=\mathfrak{a}_{k}\mathfrak{m}\mathfrak{y}_{k}-\mathfrak{m}\mathfrak{y}_{k-1}
=\mathfrak{m}(\mathfrak{a}_{k}\mathfrak{y}_{k}-\mathfrak{y}_{k-1})=\mathfrak{m}\mathfrak{y}_{k+1},
\end{equation*}
since $\mathfrak{y}_{0}$ and $\mathfrak{y}_{1}$ are arbitrary it implies that if (\ref{sf6}) and (\ref{sf7}) are satisfied, then all solution of (\ref{sf1}) has to $\mathfrak{m}$ as monodromy matrix. The proof of the necessity follows from (\ref{polche1}).
Finally, observe that (\ref{sf71}) can be obtained from (\ref{ii2}).
\end{proof}

\begin{remark}Under the conditions of the previous proposition any solution $(\mathfrak{y}_{k})_{0\leq k}$ of (\ref{sf1}) where $(\mathfrak{a}_{n})_{1\leq n}$ is $N$-periodic can be extended to the left obtaining a solution $(\widehat{\mathfrak{y}}_{k})_{k\in\mathbb{Z}}$ of (\ref{sf1}) which maintains the property that $\widehat{\mathfrak{y}}_{k+N}=\mathfrak{m}\widehat{\mathfrak{y}}_{k}$ for all $k\in\mathbb{Z}$. On the other hand, if $[\mathfrak{m}, \mathfrak{a}_{k}]=0$ for $k=1,2,\cdots,N$ then for any solution $(\mathfrak{a}_{1},\cdots,\mathfrak{a}_{N})$ of the equation (\ref{sf71}) each of its
cyclic permutations is also a solution (\ref{sf71}). Indeed, suppose that $[\mathfrak{m}, \mathfrak{b}]=0$ then
\begin{equation*}
\left [\left(
         \begin{array}{cc}
           \mathfrak{m} & O \\
           O & \mathfrak{m} \\
         \end{array}
       \right)
,\left(
   \begin{array}{cc}
     \mathfrak{b} & -I \\
     I & O \\
   \end{array}
 \right)
 \right ]=\left(
            \begin{array}{cc}
              O & O \\
              O & O \\
            \end{array}
          \right),
\end{equation*}
and this implies the assertion.
\end{remark}

Consider again the recurrent relation (\ref{sf1}) where the sequence $(\mathfrak{a}_{n})_{1\leq n}$ is $N$-periodic such that for $0\leq k$
\begin{equation}\label{simulmon1}
\left(
  \begin{array}{c}
    \mathfrak{y}_{k+1+N} \\
    \mathfrak{y}_{k+N} \\
  \end{array}
\right)=\left(
          \begin{array}{cc}
            \mathfrak{m}_{11} & \mathfrak{m}_{12} \\
            \mathfrak{m}_{21} & \mathfrak{m}_{22} \\
          \end{array}
        \right)\left(
  \begin{array}{c}
    \mathfrak{y}_{k+1} \\
    \mathfrak{y}_{k} \\
  \end{array}
\right),
\end{equation}
for every solution $(\mathfrak{y}_{k})_{0\leq k}$. Then, we obtain (taking $k=0$)
\begin{equation}\label{simulmon2}
\left(
\begin{array}{cc}
\mathfrak{p}_{N}(\mathfrak{a}_{1},\cdots,\mathfrak{a}_{N}) & -\mathfrak{p}_{N-1}(\mathfrak{a}_{2},\cdots,\mathfrak{a}_{N}) \\
\mathfrak{p}_{N-1}(\mathfrak{a}_{1},\cdots,\mathfrak{a}_{N-1}) & -\mathfrak{p}_{N-2}(\mathfrak{a}_{2},\cdots,\mathfrak{a}_{N-1}) \\
\end{array}
\right )=M(\mathfrak{a}_{1},\cdots,\mathfrak{a}_{N})=\left(
          \begin{array}{cc}
            \mathfrak{m}_{11} & \mathfrak{m}_{12} \\
            \mathfrak{m}_{21} & \mathfrak{m}_{22} \\
          \end{array}
        \right)=\mathfrak{M},
\end{equation}
even more, the matrix sequence $(\mathfrak{a}_{2},\cdots,\mathfrak{a}_{N})$ is a cyclic solution of the equation $M(\mathfrak{a}_{1},\cdots,\mathfrak{a}_{N})=\mathfrak{M}$. But then, necessarily each block matrix $M(\mathfrak{a}_{k})$ must commute with
$\mathfrak{M}$ and this leads to the following facts $\mathfrak{m}_{11}=\mathfrak{m}_{22}=\mathfrak{m}$, $\mathfrak{m}_{12}=\mathfrak{m}_{21}=O$
and $[\mathfrak{m}, \mathfrak{a}_{k}]=0$ where $k=1,2,\cdots,N$.

Let $(\mathfrak{l}_{n})_{1\leq n}$ and $(\mathfrak{s}_{n})_{1\leq n}$ be two sequences of $l\times l$ matrices. We define a pair of left signed Chebychev polynomials $\left ((\mathfrak{p}_{n})_{1\leq n}, (\mathfrak{q}_{n})_{1\leq n} \right )$ of the following form
\begin{equation}\label{sf8}
\mathfrak{p}_{n}=\mathfrak{l}_{n}\mathfrak{p}_{n-1}+\mathfrak{s}_{n}\mathfrak{p}_{n-2},\qquad\qquad \mathfrak{p}_{0}=I, \mathfrak{p}_{-1}=O,
\end{equation}
and
\begin{equation}\label{sf9}
\mathfrak{q}_{n}=\mathfrak{l}_{n}\mathfrak{q}_{n-1}+\mathfrak{s}_{n}\mathfrak{q}_{n-2},\qquad\qquad \mathfrak{q}_{0}=O, \mathfrak{q}_{-1}=I,
\end{equation}
for instance
$\mathfrak{p}_{1}=\mathfrak{l}_{1}$, $\mathfrak{q}_{1}=\mathfrak{s}_{1}$, $\mathfrak{p}_{2}=\mathfrak{l}_{2}\mathfrak{l}_{1}+\mathfrak{s}_{2}$,
$\mathfrak{q}_{2}=\mathfrak{l}_{2}\mathfrak{s}_{1}$, etc.

We should call to $\left ((\mathfrak{p}_{n})_{1\leq n}, (\mathfrak{q}_{n})_{1\leq n} \right )$ a left Chebychev polynomial pair. Consider the recurrent equation
\begin{equation}\label{sf10}
\mathfrak{Y}_{k+1}=\mathfrak{l}_{k}\mathfrak{Y}_{k}+\mathfrak{s}_{k}\mathfrak{Y}_{k-1},
\end{equation}
where for all $1\leq k$, $\mathfrak{Y}_{k}=(\mathfrak{y}_{k}^{1}\,\,\, \mathfrak{y}_{k}^{2})\in M_{l\times 2l}(\mathbb{C})$.

We have
\begin{equation}\label{sf11}
\mathfrak{Y}_{k+1}=\mathfrak{p}_{k}\mathfrak{Y}_{1}+\mathfrak{q}_{k}\mathfrak{Y}_{0},
\end{equation}
indeed, it is clear that $\mathfrak{Y}_{2}=\mathfrak{l}_{1}\mathfrak{Y}_{1}+\mathfrak{s}_{1}\mathfrak{Y}_{0}=\mathfrak{p}_{1}\mathfrak{Y}_{1}+\mathfrak{q}_{1}\mathfrak{Y}_{0}$
and $\mathfrak{Y}_{3}=\mathfrak{l}_{2}\mathfrak{Y}_{2}+\mathfrak{s}_{2}\mathfrak{Y}_{1}=\mathfrak{l}_{2}(\mathfrak{p}_{1}\mathfrak{Y}_{1}+
\mathfrak{q}_{1}\mathfrak{Y}_{0}) +\mathfrak{s}_{2}\mathfrak{Y}_{1}=\mathfrak{l}_{2}(\mathfrak{l}_{1}\mathfrak{Y}_{1}+\mathfrak{s}_{1}\mathfrak{Y}_{0}) +\mathfrak{s}_{2}\mathfrak{Y}_{1}=\mathfrak{p}_{2}\mathfrak{Y}_{1}+\mathfrak{q}_{2}\mathfrak{Y}_{0}$, thus (\ref{sf11}) holds for $k=1$
and $k=2$. Let us suppose that (\ref{sf11}) is true for $k\leq r$ then combining (\ref{sf10})-(\ref{sf11}) conveniently, we obtain
$$\mathfrak{Y}_{r+1}=\mathfrak{l}_{r}\mathfrak{Y}_{r}+\mathfrak{s}_{r}\mathfrak{Y}_{r-1}=
\mathfrak{l}_{r}(\mathfrak{p}_{r-1}\mathfrak{Y}_{1}+\mathfrak{q}_{r-1}\mathfrak{Y}_{0})+\mathfrak{s}_{r}
(\mathfrak{p}_{r-2}\mathfrak{Y}_{1}+\mathfrak{q}_{r-2}\mathfrak{Y}_{0})=\mathfrak{p}_{r}\mathfrak{Y}_{1}+\mathfrak{q}_{r}\mathfrak{Y}_{0}.$$

Observe that for $1\leq n$
\begin{equation}\label{sf12}
\left(
  \begin{array}{cc}
    \mathfrak{p}_{n} & \mathfrak{q}_{n} \\
    \mathfrak{p}_{n-1} & \mathfrak{q}_{n-1} \\
  \end{array}
\right)=\left(
          \begin{array}{cc}
            \mathfrak{l}_{n} & \mathfrak{s}_{n} \\
            I & O \\
          \end{array}
        \right)\left(
  \begin{array}{cc}
    \mathfrak{p}_{n-1} & \mathfrak{q}_{n-1} \\
    \mathfrak{p}_{n-2} & \mathfrak{q}_{n-2} \\
  \end{array}
\right)
,\,\,\,\,\,\,\,\,\,\,\,\,\,\,\,\,\,\,\,\,\,\,\,\,\left(
  \begin{array}{cc}
    \mathfrak{p}_{0} & \mathfrak{q}_{0} \\
    \mathfrak{p}_{-1} & \mathfrak{q}_{-1} \\
  \end{array}
\right)=\left(
          \begin{array}{cc}
            I & O \\
            O & I \\
          \end{array}
        \right),
\end{equation}
and so from (\ref{sf12}) follows that
\begin{equation}\label{sf13}
M((\overline{\mathfrak{l}};\overline{\mathfrak{s}})_{n})=\left(
                                                           \begin{array}{cc}
                                                             \mathfrak{l}_{n} & \mathfrak{s}_{n} \\
                                                             I & O \\
                                                           \end{array}
                                                         \right)\cdots \left(
                                                           \begin{array}{cc}
                                                             \mathfrak{l}_{1} & \mathfrak{s}_{1} \\
                                                             I & O \\
                                                           \end{array}
                                                         \right)
=\left(
  \begin{array}{cc}
    \mathfrak{p}_{n} & \mathfrak{q}_{n} \\
    \mathfrak{p}_{n-1} & \mathfrak{q}_{n-1} \\
  \end{array}
\right).
\end{equation}

\section*{Acknowledgment}

The author was supported by CONAHCYT project $45886$.


\begin{thebibliography}{20}

\bibitem{baur} Baur K., Parsons M. J. and Tschabold M.  \emph {Infinite friezes.} European J. Combin. \textbf{54} (2016), 220-237.
\bibitem{baur1} Baur K. and Marsh R. J.  \emph {Frieze patterns for punctured discs.} J Algebr Comb \textbf{30} (2009), 349-379.
\bibitem{Chapoton} Chapoton F.  \emph {Operads and Algebraic Combinatorics of Trees.} S\'{e}m. Lothar. Combin. \textbf{58} (2008), Article B58c.
\bibitem{Clelland} Clelland J. N. \emph {From Frenet to Cartan: the method of moving frames.} Graduate Studies in Mathematics. v 178, 2017.
\bibitem{Conley} Conley C. H., Ovsienko V.  \emph {Rotundus: Triangulations, Chebyshev polynomials, and Pfaffians.}  Math. Intelligencer
\textbf{40} (2018), no. 3, 45-50.
\bibitem{Conway} Conway J.H., Coxeter, H.S.M. \emph {Triangulated polygons and frieze patterns.} Math. Gaz. \textbf{57} (1973), 87-94 and 175-183.
\bibitem{Coxeter} Coxeter H.S.M. \emph {Frieze patterns.} Acta Arith. \textbf{18} (1971), 297-310.
\bibitem{crawley} Crawley-Boevey w. \emph {On matrices in prescribed conjugacy classes with no common invariant subspace and sum zero.}
Duke Math. J. \textbf{118} (2003), 339-352.
\bibitem{cuntz} Cuntz M., Heckenberger I. \emph {Reflection groupoids of rank two and cluster algebras of type $A$.} J. Combin. Theory Ser. A
\textbf{118} (2011), 1350-1363.
\bibitem{cuntz1} Cuntz M. \emph {On wild fireze patterns.} Exp. Math. \textbf{26} no. 3 (2017), 342-348.
\bibitem{cuntz2} Cuntz M. \emph {On subsequences of quiddity cycles and Nichols algebras.} J. Algebra \textbf{502} (2018), 315-327.
\bibitem{felipe} Felipe R., Mari-Beffa G. \emph {The pentagram map on Grassmannians.} Ann. Inst. Fourier, Grenoble \textbf{61}
no.1 (2019), 421-456.
\bibitem{glick}  Glick M. \emph {The pentagram map and Y-patterns.} Adv. Math. \textbf{227} (2011), 1019-1045.
\bibitem{kedem} Kedem R., Vichitkunakorn P. \emph {T-systems and the pentagram map.} J. Geom. Phys. \textbf{87} (2015), 233-247.
\bibitem{Krich} Krichever I. \emph {Commuting difference operators and the combinatorial gale transform.} Funct. Anal. Appl. \textbf{49}
(2015), 175-188.
\bibitem{Ilina} Ilina A. V., Krichever I. \emph {Triangular reductions of the $2D$ Toda hierarchy.} Funct. Anal. Appl. \textbf{51} (2017), 48-65.
\bibitem{male} Male C. \emph {The distribution of traffics and their free product.} preprint. arXiv:math111.4662v4 [math. PR], 2013.
\bibitem{Mansfield} Mansfield E., Mari-Beffa G. and Wang J. P.  \emph {Discrete moving frames and discrete integrable systems.}
Found Comput Math \textbf{13} (2013), 545-582.
\bibitem{Maribeffa} Mari-Beffa G. and Wang J. P.  \emph {Hamiltonian evolutions of twisted polygons in $\mathbb{RP}^{n}$.} Nonlinearity
\textbf{26} (2013), 2515-2551.
\bibitem{Maribeffa1} Mari-Beffa G. \emph {On generalizations of the pentagram maps: Discretizacions of a AGD flows.} J. Nonlinear Sci.
\textbf{23} (2013), 303-334.
\bibitem{MorGen} Morier-Genoud S. \emph {Coxeter's frieze patterns at the crossroads of algebra, geometry and combinatorics.}
Bull. London Math. Soc. \textbf{47} (2015), 895-938.
\bibitem{MorGen1} Morier-Genoud S. and Ovsienko V. \emph {Farey boat: continued fractions and triangulations, modular group
and polygon dissections.} Jahresber. Dtsch. Math-Ver \textbf{121} (2019), 91-136.
\bibitem{neto-silva} Neto O. and Silva F. C. \emph {Singular regular differential equations and eigenvalues of products of matrices.}
Linear Multilinear Algebra \textbf{46} (1999), 145-164.x
\bibitem{oven} Ovenhouse N.  \emph {Non-commutative integrability of the Grassmann pentagram map.} Adv. Math. \textbf{373} (2020), 1-54.
\bibitem{ovsienko1} Ovsienko V. \emph {Partitions of unity in $SL(2,\mathbb{Z})$}, negative continued fractions, and dissections of
polygons. Res Math Sci (2018) 5:21.
\bibitem{ovsienko} Ovsienko V., Schwartz R., Tabachnikov S. \emph {The Pentagram Map: A Discrete Integrable System.} Commun. Math. Phys.
\textbf{299} (2010), 409-446.
\bibitem{schwarz-zaks} Schwarz B. and Zaks A. \emph{Geometry of matrix differential systems.} J. Math. Anal. Appl. \textbf{112} (1985), 165-177.
\bibitem{spi} Spivak D. \emph {The operad of wiring diagrams: formalizing a graphical language for
databases, recursion, and plug-and-play circuits.} arXiv:1305.0297 [cs.DB], 2013.
\bibitem{zabrodin} Zabrodin A., \emph {Discrete Hirota’s equation in quantum integrable models.} Internat. J. Modern
Phys. B, \textbf{11}:26–27 (1997), 3125–3158.
\bibitem{zhang} Zhang F. \emph{The Schur complement and its applications.} Springer, $2005$.

\end{thebibliography}
\end{document}